\documentclass[a4paper,11pt,reqno]{amsart}

\usepackage{xurl}
\usepackage{graphicx}
\usepackage{graphics}
\usepackage{relsize}
\usepackage{caption}
\usepackage{amsmath}
\usepackage{amssymb}
\usepackage{amsthm}
\usepackage{float}
\usepackage{pst-node}
\usepackage{tikz-cd} 
\usepackage[original]{imakeidx}
\makeindex[columns=2, title=Index]
\usepackage{hyperref}
\usepackage{cleveref}
\usepackage{needspace}
\usepackage{etoolbox}

\usepackage{geometry}
\author{Francisco Araújo}
\title{Sarnak's Program for Erd\H{o}s Sieves. Part II: Measure Systems and Applications}

\theoremstyle{plain}
\newtheorem{theorem}{Theorem}[section]

\newtheorem{lemma}[theorem]{Lemma}
\newtheorem{definition}[theorem]{Definition}
\newtheorem{proposition}[theorem]{Proposition}
\newtheorem{corollary}[theorem]{Corollary}
\newtheorem{conjecture}[theorem]{Conjecture}

\theoremstyle{remark}
\newtheorem{example}[theorem]{Example}
\newtheorem{remark}[theorem]{Remark}

\makeatletter

\pretocmd{\theorem}{\needspace{4\baselineskip}}{}{}
\pretocmd{\lemma}{\needspace{3\baselineskip}}{}{}
\pretocmd{\proposition}{\needspace{4\baselineskip}}{}{}
\pretocmd{\corollary}{\needspace{3\baselineskip}}{}{}
\pretocmd{\conjecture}{\needspace{3\baselineskip}}{}{}

\pretocmd{\definition}{\needspace{2\baselineskip}}{}{}

\makeatother

\DeclareMathOperator{\Gen}{Gen}
\DeclareMathOperator{\vol}{vol}

\newcommand{\q}{\mathbb{Q}}
\newcommand{\z}{\mathbb{Z}}

\newcommand{\cb}{\mathcal{B}}
\newcommand{\cbr}{\mathcal{B}_R}
\newcommand{\cc}{\mathcal{C}}
\newcommand{\co}{\mathcal{O}}

\newcommand{\cf}{\mathcal{F}}

\newcommand{\cp}{\mathcal{P}}
\newcommand{\cs}{\mathcal{S}}

\newcommand{\fr}{\mathcal{F}_R}

\newcommand{\frf}{\mathcal{F}_{R^f}}

\newcommand{\frp}{\mathcal{F}_{R'}}

\newcommand{\ga}{\mathfrak{a}}
\newcommand{\gb}{\mathfrak{b}}
\newcommand{\gc}{\mathfrak{c}}
\newcommand{\gp}{\mathfrak{p}}

\newcommand{\gr}{G_{R}}
\newcommand{\grf}{G_{R,F}}
\newcommand{\vrf}{\varphi_{R,F}}
\newcommand{\mmp}{\mathbb{P}}

\newcommand*{\one}{\text{\usefont{U}{bbold}{m}{n}1}}

\begin{document}
	
	\begin{abstract}
		This paper is the second part of a two-part article where we generalize Sarnak's program to sets where we remove congruence classes modulo some infinite set $\mathcal{B}$ of ideals of an étale $\mathbb{Q}-$algebra $K$, which we denote by Erdős sieves. Given a sieve $R$ we define the set $\mathcal{F}_R$ of algebraic integers in $K$ not contained in any of the congruence classes of $R$. We associate to each sieve two measure-theoretical dynamical systems $X_R$ (the orbit closure of $\mathcal{F}_R$) and $\Omega_R$ (the set of $R-$admissible sets) and show how they are related. We show that the system associated to $\Omega_R$ is isomorphic to an ergodic rotation of a compact abelian group, and compute its spectrum. As applications we show results about infinite sumsets in the integers, investigate the case where $\mathcal{F}_R$ is the squarefree values of some polynomial, and show a prime number theorem for $R-$free numbers. 
	\end{abstract}
	
	\maketitle

	\section{Introduction}
	
	In this paper we continue our investigation of Erdős Sieves, following what we have already done in Part I. We will conclude our generalization of Sarnak's program, and then provide number theoretic applications of our results.
	
	As in Part I, by a sieve $R$ we mean a collection of congruence classes $R_\gb$ indexed in some infinite set $\cb_R$ of pairwise coprime ideals of some étale $\q-$algebra $K$. We say a sieve is Erdős if $\sum_{\gb \in \cb_R}|R_\gb|/N(\gb)<\infty$, where $|R_\gb|$ is the number of congruence classes in this set, and $N(\gb)$ is the norm of the ideal, which equals $|\co_K/\gb|$, the total number of possible congruence classes modulo $\gb$.  We want to study the set of $R-$free numbers $\fr$, which correspond to elements of the ring of integers of $K$, denoted  $\co_K$,  not contained in any of the congruence classes in $R_\gb$ for any $\gb \in \cb_R$. To do this, we investigate the orbit closure of $\fr$ in $\{0,1\}^{\co_K}$, which we denote by $X_R$, and the set of $R-$admissible sets (those $A\subset \co_K$ such that $-A+R_\gb \neq \co_K$ for all $\gb \in \cb_R$), which we denote by $\Omega_R$. 
	
	In the case where $R$ is the squarefree sieve, that is $R$ is the collection of the congruence classes $R_p = p^2\z$ for every prime $p$, Sarnak pointed out that $X_R = \Omega_{R}$, which we referred to as Point $(3)$ of Sarnak's program in the introduction of Part I.  This does not have a straightforward generalization for general sieves. Sieves with strong light tails (see the definition in Section 2 or in Part I) are our most `well behaved' sieves, and even for these, it might be the case that $X_R \neq \Omega_{R}$, as shown in \Cref{ex: sieve such that X different Omega}. Yet, in Section 5 we provide a number of results which clarify the relation between $X_R$ and $\Omega_{R}$ when $R$ is a sieve with weak light tails. We show the following result, given in \Cref{thm: nu_R(X_R) = 1}.
	
	\begin{theorem}
		\label{thm: intro nuR XR equal 1}
		Let $R$ be an Erdős sieve. Then, there is a Følner sequence $I_N$ with respect to which $R$ has weak light tails if and only if $$\nu_R(X_R) = 1.$$
		
	\end{theorem}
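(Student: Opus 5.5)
The plan is to route both directions through the fact, presumably established in Part I, that weak light tails along a Følner sequence is equivalent to $\fr$ being \emph{quasi-generic} for the Mirsky-type measure $\nu_R$ along that sequence. Recall that $\nu_R=(\vrf)_*(m)$ is the push-forward of Haar measure $m$ on $\gr=\prod_{\gb}\co_K/\gb$ under the map $g\mapsto\{a\in\co_K: a+g_\gb\notin R_\gb\ \forall \gb\}$. Quasi-genericity means that for every cylinder set $C$ the frequency of $\fr$ in $C$ along $I_N$ converges to $\nu_R(C)$.

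\textbf{($\Rightarrow$).} Assume $R$ has weak light tails with respect to $I_N$.
\begin{enumerate}
\item Invoke the Part I statement that $\fr$ is then quasi-generic for $\nu_R$ along $I_N$. If only the density statement is available, I would reprove it by truncation. For a finite set $\cb'\subset\cb_R$, the truncated sieve is periodic, so its free set is generic for the corresponding finite-level measure along any Følner sequence. Weak light tails then says the discrepancy from the remaining ideals has vanishing upper density uniformly as $\cb'$ grows. Since cylinder frequencies change by at most $|C|$ times that discrepancy, passing to the limit gives the frequencies for $\fr$.
\item The empirical measures $\frac{1}{|I_N|}\sum_{n\in I_N}\delta_{\fr-n}$ are then supported on the closed invariant set $X_R$ and converge weak-$*$ to $\nu_R$.
\item By the portmanteau theorem applied to the closed set $X_R$, we get $\nu_R(X_R)\ge\limsup_N 1=1$.
\end{enumerate}

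\textbf{($\Leftarrow$).} Assume $\nu_R(X_R)=1$.
\begin{enumerate}
\item Since $\nu_R$ is the push-forward of an ergodic rotation (shown earlier), it is ergodic. Fix a tempered Følner sequence $J_N$ in $\co_K$ (for instance boxes). By the pointwise ergodic theorem applied to the countably many cylinder indicators, $\nu_R$-a.e.\ point is generic for $\nu_R$ along $J_N$.
\item Choose such a generic point $x$ with $x\in X_R$.
\item Because $x$ lies in the orbit closure of $\fr$, for each $N$ there is $t_N\in\co_K$ such that $\fr-t_N$ agrees with $x$ on the finite window $J_N$.
\item Set $I_N=t_N+J_N$. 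Translates of a Følner sequence are Følner in an abelian group. Cylinder frequencies of $\fr$ along $I_N$ coincide with those of $x$ along $J_N$, up to a boundary error of size $o(|J_N|)$ that accounts for the cylinder support sticking out of the window. Hence $\fr$ is quasi-generic for $\nu_R$ along $I_N$.
\item It remains to convert quasi-genericity into weak light tails along $I_N$, i.e.\ the converse of the Part I implication. Since $\fr$ is contained in the free set of each truncated sieve, it suffices to compare densities. The density of $\fr$ along $I_N$ tends to $\nu_R([1]_0)=\prod_{\gb}(1-|R_\gb|/N(\gb))$. The density of the $\cb'$-truncated free set along $I_N$ tends to the finite product over $\cb'$. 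Their difference is exactly the density of the elements removed only by ideals outside $\cb'$. This tends to $0$ as $\cb'\uparrow\cb_R$, because the Erdős condition makes the tail products converge to $1$.
\end{enumerate}

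I expect the main obstacle to be step 5 of ($\Leftarrow$): it needs the definition of weak light tails to be a density (limsup) condition on the union of the tail classes along $I_N$. If the definition instead involves uniformity in the tail over all translates, I would modify step 3 and choose $t_N$ so that windows match on a much larger box. The mild technical points are the choice of a tempered sequence for the pointwise theorem and the countable intersection of full-measure sets; these are routine.
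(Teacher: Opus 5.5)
Your proof is correct. The forward direction is the paper's argument: genericity of $\fr$ from \Cref{thm:Fr is generic}, then Portmanteau applied to the closed set $X_R$.

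For the converse you follow the same skeleton as the paper:
\begin{enumerate}
\item pick a suitable point of $X_R$;
\item match it on windows with translates of $\fr$;
\item translate the F{\o}lner sequence;
\item conclude by the density criterion (3)$\Rightarrow$(2) of \Cref{thm:Fr is generic}.
\end{enumerate}
The difference is the choice of the point in step 1.

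The paper takes the point to be $\frp$ for some $R'\in\cs_R$ with strong light tails for $B_N$, whose existence comes from \Cref{thm: R is OmegaR=OmegaRp to some sieve with strong light tails}. That theorem rests on Theorem 5.29 of Part I. To identify $d(\frp)$ with $\nu_R(C^R_{\{0\},\emptyset})$, the paper then also needs $\nu_R=\nu_{R'}$ from \Cref{thm: Omegar = Omegar' implies equal measures}, which depends on the minimal-sieve and contraction machinery of Sections 3--4.

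You instead take an abstract $\nu_R$-generic point $x\in X_R$. It comes directly from ergodicity of $\nu_R$ and the pointwise ergodic theorem along a tempered sequence such as $B_N$, and its density along $J_N$ equals $\nu_R(C^R_{\{0\},\emptyset})$ by genericity. So you need none of that machinery. The paper also uses the ergodic theorem, but only inside the heavier full-measure statement.

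Your route is more elementary and self-contained. The paper's yields a more concrete witness: along $a_N+B_N$, $\fr$ copies the free set of a sieve with strong light tails. That extra structure is not needed for this theorem.

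Two small remarks:
\begin{itemize}
\item In step 4 only the single cylinder $C^R_{\{0\},\emptyset}$ matters. For it, the window matching gives the exact identity $|\fr\cap I_N|=|x\cap J_N|$, with no boundary error, so the general quasi-genericity argument is unnecessary.
\item Your worry about step 5 is unfounded. Weak light tails is by definition the upper-density condition on $\bigcup_{i>L}R_i\setminus\bigcup_{j\le L}R_j$. Your comparison with the truncated free sets is exactly the (3)$\Rightarrow$(2) implication of \Cref{thm:Fr is generic}, which you could simply cite.
\end{itemize}
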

	
	In order to present our other major result from Section 5, we must introduce the concept of a \textit{minimal} sieve (this is done in Section 3 of this article). By using different ideals, the same sets can be expressed by different unions of congruence classes, for example $\{0,2\}+4\z$ is the same set as $2\z$, or $\{0,2,3,4\}+6\z$ is the same as $2\z \cup 3\z$. This means that there can exist two sieves $R$ and $R'$ that are effectively the same, but that are technically distinct because $\cb_R \neq \cb_{R'}$. In this case, we define the notion of a contraction (see \Cref{def: contraction}) which takes a sieve $R$ and returns a sieve $R'$ where the congruence classes being sieved out are exactly the same, but every ideal in $\cb_{R'}$  divides some unique ideal in $\cb_R$. When a sieve can no longer be contracted, we say it is minimal. We have the following result (see \Cref{thm: equivalence XR =XRp and OmegaR = OmegaRp}).
	
	\begin{theorem}
		\label{thm: intro XR equal XRp implies OmegaR equal OmegaRp}
		Let $R$ and $R'$ be minimal Erdős sieves with weak light tails for some (not necessarily common) Følner sequences. The following are equivalent. 
		
		\begin{enumerate}
			\item $\cb_R = \cb_{R'}$, and for every $\gb \in \cb_R$, there is some $\delta_\gb\in \co_K$ such that $R_\gb = \delta_\gb+ R'_\gb,$ 
			\item $X_R = X_{R'}$,
			\item $\Omega_{R} = \Omega_{R'}.$
		\end{enumerate} 
	\end{theorem}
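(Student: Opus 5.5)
The plan is to prove the cycle of implications (1)$\Rightarrow$(2), (1)$\Rightarrow$(3), (3)$\Rightarrow$(1), and (2)$\Rightarrow$(1). The work lies in the reverse implications, and minimality will be the tool there.

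For (1)$\Rightarrow$(2),(3), write $R_\gb=\delta_\gb+R'_\gb$. By the Chinese remainder theorem, the finite shifts $\delta_{\gb_1},\dots,\delta_{\gb_k}$ can be realised simultaneously by a single $t_k\in\co_K$. Hence every window of $\fr$ coincides, up to a translate, with a window of $\fr$ sieved only by finitely many moduli of $R'$. Admissibility is a condition on each $\gb$ separately and is invariant under translating $R_\gb$, because $-A+R_\gb\neq\co_K$ if and only if $-A+\delta+R_\gb\neq\co_K$. This gives $\Omega_R=\Omega_{R'}$ immediately. For $X_R=X_{R'}$, I would use the characterisation from Section 5 that, under weak light tails, $X_R$ is the support of $\nu_R$, or equivalently that $\nu_R(X_R)=1$ by \Cref{thm: intro nuR XR equal 1}. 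The measure $\nu_R$ is the pushforward of Haar measure on $\prod_\gb \co_K/\gb$. Translating each coordinate by $\delta_\gb$ preserves Haar measure, so $\nu_R=\nu_{R'}$, and therefore $X_R=\operatorname{supp}\nu_R=\operatorname{supp}\nu_{R'}=X_{R'}$.

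For (3)$\Rightarrow$(1), the idea is to recover the sieve from $\Omega_R$. For each finite set $A$, record whether $A\in\Omega_R$. A finite $A$ fails to be admissible exactly when some $\gb$ satisfies $A+(\co_K\setminus R_\gb)$-type covering, that is, $-A+R_\gb=\co_K$. The minimal non-admissible finite sets are then essentially complete systems of residues in $\co_K\setminus(-R_\gb)$ modulo $\gb$, twisted appropriately. Using minimality, no $R_\gb$ is a union of classes modulo a proper divisor, so $\gb$ is the exact period of the obstruction. I would show that $\gb$ can be read off as the smallest ideal $\ga$ such that the family of minimal non-admissible sets is invariant under translation by $\ga$. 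The pattern $R_\gb$ is then determined up to translation, and comparing $R$ with $R'$ gives $\cb_R=\cb_{R'}$ and $R_\gb=\delta_\gb+R'_\gb$. Pairwise coprimality together with the Erdős condition should keep different moduli from interfering with one another. Concretely, a minimal obstruction for $\gb$ cannot be produced by combining other moduli, since each obstruction involves a single ideal.

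For (2)$\Rightarrow$(1), I would reduce to (3). Under weak light tails, Section 5 shows that $\Omega_R$ is determined by $X_R$, for instance as the hereditary closure of $X_R$ or via the maximal equicontinuous factor computed in Section 4. The main obstacle is that $X_R\neq\Omega_R$ in general, as \Cref{ex: sieve such that X different Omega} shows. My first attempt would therefore be to prove directly that the finite patterns appearing in $X_R$ determine which finite sets are admissible. By the Chinese remainder theorem combined with weak light tails, every finite admissible $A$ satisfies $\nu_R(\{x:A\subset x\})>0$, so $A$ occurs as a subset of some element of $X_R$. Conversely, every support of an element of $X_R$ is admissible. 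Hence the finite members of $\Omega_R$ are exactly the finite subsets of supports in $X_R$. Since $\Omega_R$ is defined by finitary conditions, it is determined by its finite members, and so $X_R=X_{R'}$ implies $\Omega_R=\Omega_{R'}$.
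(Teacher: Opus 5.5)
Your arguments for (1)$\Rightarrow$(3) and (2)$\Rightarrow$(3) are sound and essentially match the paper. $\Omega_R$ is determined by its finite members, and under weak light tails these are exactly the finite subsets of elements of $X_R$; the paper phrases this as $\widetilde{X}_R=\Omega_R$. In (1)$\Rightarrow$(2), however, $\nu_R(X_R)=1$ only gives $\mathrm{supp}\,\nu_R\subseteq X_R$. The reverse inclusion, that every pattern occurring in $\fr$ has positive $\nu_R$-measure, is a separate fact, supplied by the second part of \Cref{thm: Finite Admissible sets have positive density}. So the two statements are not ``equivalent'', although your conclusion is correct.

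The genuine gap is (3)$\Rightarrow$(1). Your whole cycle depends on it, since (2)$\Rightarrow$(1) is routed through it, and what you give there is a heuristic, not a proof.

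\begin{itemize}
\item Your description of minimal obstructions is inaccurate. For $R_\gb=\gb$ they are complete residue systems modulo $\gb$, not residue systems of $\co_K\setminus(-R_\gb)$, which are admissible.
\item ``The smallest ideal under which the family of minimal non-admissible sets is invariant'' is not well defined. That family is the union of the obstructions of all $\gb\in\cb_R$ and has no common period. You also have no intrinsic way to split it by ideal without already knowing $\cb_R$.
\item The real difficulty is cross-sieve. The same family is also the union of the obstructions of the ideals of $\cb_{R'}$. A priori these ideals could share prime factors with several elements of $\cb_R$ without coinciding with any of them, with different translates of a $\gb$-obstruction blocked by different $\gb'$. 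Your remark that ``each obstruction involves a single ideal'' concerns one sieve at a time and does not exclude this.
\end{itemize}

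The paper handles this in \Cref{lm: OmegaR =omegar' implies equality of base}, in four steps.
\begin{itemize}
\item A Chinese remainder/Erdős construction shows that each $\gb$ is not coprime to some $\gb'\in\cb_{R'}$.
\item Dilating $R'$ along $V_\gb=\{\gb':(\gb,\gb')\neq 1\}$ and taking an admissible lift of the complement of the merged class gives $R_\gb\subset x+\bigcup_{\gb'\in V_\gb}R'_{\gb'}$.
\item \Cref{lm: if x gb in union must belong to one} then puts each class of $R_\gb$ inside a single $R'_{\gb'_i}$. The symmetric inclusion shows that $R_\gb$ is a union of classes modulo the ideals $(\gb,\gb'_i)$.
\item Minimality of both sieves then forces $\gb=\gb'_i$.
\end{itemize}
Even once $\cb_R=\cb_{R'}$ is known, the claim that ``$R_\gb$ is determined up to translation'' needs the lemma that $R_\gb^c$ has an $R$-admissible lift. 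That lemma gives shifts with $\delta+R'_\gb\subset R_\gb$ and conversely, and equality follows by cardinality. Without arguments of this kind, neither (3)$\Rightarrow$(1) nor (2)$\Rightarrow$(1) is established.
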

	
	We also used the notion of minimal sieve to characterize equivalence of sieves. We say $R$ and $R'$ are equivalent, and write $R \sim R'$, if $\fr = \frp$. The following theorem (see \Cref{thm: equivalence minimal sieves main}) describes the relation of uniqueness between a sieve $R$ and $\fr$.
	
	\begin{theorem}
		\label{thm: intro minimal sieves}
		Let $R$ be an Erdős sieve. 
		\begin{itemize}
			\item There exists a minimal Erdős sieve $R'$ such that $R \sim R'$.
			\item If $R$ has weak light tails for some Følner sequence $I_N$, there exists a minimal Erdős sieve $R'$  with weak light tails for $I_N$, such that if $W$ is minimal and $W \sim R$, then $W = R'$, or $W$ does not have weak light tails for any Følner sequence.
			\item If $R$ has strong light tails for $I_N$, then there exists a unique minimal sieve $R'$ (which will have strong light tails for $I_N$) such that $R \sim R'$. 
		\end{itemize}
	\end{theorem}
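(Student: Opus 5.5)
The plan is to treat the three bullets in order. The first is a purely combinatorial statement about contractions. The second and third reduce to \Cref{thm: intro XR equal XRp implies OmegaR equal OmegaRp} together with density arguments.

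\textbf{Existence of a minimal sieve.} For a single $\gb\in\cb_R$, the set $R_\gb\subset\co_K/\gb$ is a finite object. Write $\gb=\prod_i\gb_i$ as a product of pairwise coprime factors, and use the Chinese remainder theorem to consider all ways of writing $R_\gb+\gb$ as a union of sets $\bigcup_i (R_i+\gb_i)$ with $\gb_i\mid\gb$ pairwise coprime, each $R_i$ a union of classes modulo $\gb_i$. Among these we replace each $\gb_i$ by the smallest period ideal of $R_i+\gb_i$, namely the largest ideal $\gc\supseteq\gb_i$ with $R_i+\gb_i+\gc=R_i+\gb_i$.

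I would choose, for each $\gb$ separately, a representation that cannot be refined further. Such a representation exists because $\gb$ has finitely many divisors, so any chain of refinements terminates. Doing this independently for every $\gb$ gives a sieve $R'$, and it has the following properties:
\begin{itemize}
\item $R'$ removes exactly the same integers as $R$, so $R'\sim R$.
\item Its ideals are pairwise coprime, because ideals coming from distinct $\gb$'s divide coprime ideals.
\item It is Erdős, because each new piece satisfies $|R_i|/N(\gb_i)\le |R_\gb|/N(\gb)\cdot N(\gb)/N(\gb_i)\cdot N(\gb_i)/N(\gb)$, or more simply has density at most that of $R_\gb$. The number of pieces per $\gb$ is bounded by the number of prime factors, and I would check that the sum still converges. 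If it does not, I would bound it by $\sum_\gb 1-\prod_i(1-|R_i|/N(\gb_i))\asymp\sum_\gb |R_\gb|/N(\gb)$ whenever the latter is small.
\item It is minimal, because no contraction can act inside one $\gb$ without contradicting maximal refinement, and contractions never merge different $\gb$'s.
\end{itemize}

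\textbf{Uniqueness under weak light tails.} I would first check that performing the contraction above on a sieve with weak light tails for $I_N$ preserves weak light tails for $I_N$. The tail sets only shrink, since the new congruence classes remove the same points.

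Now suppose $W$ is minimal, $W\sim R'$, and $W$ has weak light tails for some Følner sequence. Since $\mathcal F_W=\mathcal F_{R'}$, we have $X_W=X_{R'}$. Then \Cref{thm: intro XR equal XRp implies OmegaR equal OmegaRp} gives $\cb_W=\cb_{R'}$ and $W_\gb=\delta_\gb+R'_\gb$ for every $\gb$. It remains to show that $\delta_\gb+R'_\gb=R'_\gb$.

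Suppose this fails. Then there is a class $c\in R'_\gb\setminus W_\gb$. The next step is to produce an element $x\equiv c \pmod\gb$ avoiding all other $R'_{\gb'}$; such an $x$ lies in $\mathcal F_W$ but not in $\mathcal F_{R'}$, a contradiction. To find $x$, I would use the Chinese remainder theorem for finitely many $\gb'$ and weak light tails for the rest. Weak light tails makes the upper density, along $I_N$, of points removed by $\gb'$ outside a large finite set arbitrarily small. This is smaller than the positive density $\frac1{N(\gb)}\prod_{\gb'\neq\gb\text{ small}}(1-|R'_{\gb'}|/N(\gb'))$ of the CRT-selected progression.

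\textbf{Strong light tails.} Here I would show that any minimal $W\sim R$ automatically has weak light tails, after which the second bullet applies.

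Strong light tails gives that $\mathcal F_R$ has natural density $\prod_\gb(1-|R_\gb|/N(\gb))$ along every Følner sequence, hence along $I_N$. For $W$, the density of $\mathcal F_W$ is at most $\prod_{\gb\in\cb_W}(1-|W_\gb|/N(\gb))$, with equality along $I_N$ exactly when $W$ has weak light tails for $I_N$. So it suffices to compare the two Euler products locally. Since $\mathcal F_W=\mathcal F_R$, the finite truncations of both sieves must remove the same residue patterns modulo every ideal, up to densities controlled by the tails of $R$. Letting the truncation grow, this should force the two products to coincide. I expect this local comparison of the two products to be the main obstacle, together with making the "maximal refinement" in the first bullet canonical.
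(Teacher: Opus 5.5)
Your first bullet is the paper's contraction argument (\Cref{lm: every sieve is equivalent to minimal sieve}). For the Erdős check, the clean route is the CRT identity $1-|R_\gb|/N(\gb)=\prod_i\bigl(1-|R_i|/N(\gb_i)\bigr)$ for the pieces of $R_\gb$, which is what \Cref{lm: Erds and light tails preserved by change of base} uses.

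Your second bullet takes a different route from the paper. Instead of the elementary argument of \Cref{thm: uniqueness of minimal sieve} (via \Cref{lm:Characterization of equivalent sieves} and \Cref{lm: if x gb in union must belong to one}), you use $X_W=X_{R'}$ and \Cref{thm: intro XR equal XRp implies OmegaR equal OmegaRp}. This is not circular: that theorem is proved from \Cref{thm: tilde X R = Omega R}, \Cref{lm: OmegaR =omegar' implies equality of base} and \Cref{lm: OmegaR = OmegaS same base}, none of which uses the uniqueness statement. The cost is that you import the admissible-set machinery of Sections 4--5, where the paper needs only CRT and density lemmas. Your last step has a slip. With $c\in R'_\gb\setminus W_\gb$, you must make $x$ avoid the other $W_{\gb'}$ (along a Følner sequence for which $W$ has weak light tails), not the $R'_{\gb'}$. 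Alternatively, take $c\in W_\gb\setminus R'_\gb$ and apply \Cref{lm: Fr intercepted by ideal} to $R'$ along $I_N$.

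The genuine gap is in the third bullet. You correctly reduce it to showing that every minimal $W\sim R$ has weak light tails for $I_N$. But the ``local comparison of Euler products'' is never carried out, and it is not really a reduction. Since $d_I(\cf_W)=d_I(\fr)=\prod_{\gb\in\cb_R}\bigl(1-|R_\gb|/N(\gb)\bigr)$ and $\overline{d}_I(\cf_W)\le\prod_{\gc\in\cb_W}\bigl(1-|W_\gc|/N(\gc)\bigr)$, the inequality ``$W$-product $\ge$ $R$-product'' is automatic. The reverse inequality is, by \Cref{thm:Fr is generic}, exactly equivalent to $W$ having weak light tails for $I_N$, which is the claim itself. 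Moreover, the local information you can extract from $\cf_W=\fr$ bounds what $W$ removes near each modulus from above by what $R$ removes. That only reproduces the automatic inequality. Separately, strong light tails for $I_N$ gives a density only along $I_N$, not along every Følner sequence: intervals placed inside long runs of non-squarefree integers show this for the squarefree sieve.

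The missing idea is to transfer the tails themselves rather than compare products.
\begin{enumerate}
\item $W$ is Erdős, because $0<d_I(\fr)=d_I(\cf_W)\le\prod_{\gc\in\cb_W}\bigl(1-|W_\gc|/N(\gc)\bigr)$.
\item \Cref{lm: aux for Characterization of equivalent sieves}, which needs only weak light tails of $R$, gives $W_\gc\subset\bigcup_{\gb\in\cb_R:(\gb,\gc)\neq 1}R_\gb$ for every $\gc\in\cb_W$.
\item Each $\gb_j$ has finitely many prime factors and the ideals of $\cb_W$ are pairwise coprime. Hence $g(i)=\min\{j:(\gb_j,\gc_i)\neq 1\}\to\infty$, and $\bigcup_{i\ge L}W_i\subset\bigcup_{j\ge G(L)}R_j$ with $G(L)=\min_{l\ge L}g(l)\to\infty$.
\item Strong light tails of $R$ for $I_N$ then give strong (hence weak) light tails of $W$ for $I_N$. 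This is \Cref{thm: Strong light tails preserved under isomorphism of sieves}, after which your second bullet gives $W=R'$.
\end{enumerate}
Strong light tails is used essentially: the containment in step 3 is between unions of tails. It says nothing about the differences $\bigcup_{i>L}R_i\setminus\bigcup_{j\le L}R_j$ in the weak light tails condition, so this argument does not work from weak light tails alone.
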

	
 As was done in \cite{Bartnicka} and \cite{celarosi2} for particular $\cb-$free systems, we also show that for every sieve $R$ there is a group $\grf$ and a rotation $T^F$ of this group such that we have an isomorphism of dynamical systems, which constitutes the central result of section 4 (see \Cref{thm: main theorem}).
	
	\begin{theorem}
		\label{thm: intro main thm}
		Let $R$ be an Erdős sieve. For any $\gb \in \cb_R$, let $$F(R_\gb) = \{x \in \co_K:x+R_\gb = R_\gb\},$$ and define the group $$\grf := \prod_{\gb \in \cb_R}\co_K/F(R_\gb).$$ Letting $T^F$ be the action of $\co_K$ on $\grf$ given by $T^F_a(g)_\gb = g_\gb+a$ and $\mmp^F$ the Haar measure on $\grf$, we have that  $(\Omega_R,S,\nu_R)$ is isomorphic to $(\grf,T^F, \mmp^F)$.
	\end{theorem}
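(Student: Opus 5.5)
We need to prove that $(\Omega_R,S,\nu_R)$ is isomorphic to $(\grf,T^F,\mmp^F)$. We recall from Part I that $\nu_R$ is the Mirsky-type measure on $\Omega_R$. It is obtained by pushing forward the Haar measure $\mmp$ on $G_R=\prod_{\gb}\co_K/\gb$ under the map
$$\varphi_R(g)=\{n\in\co_K:\ n+g_\gb\notin R_\gb \text{ for all }\gb\}.$$
Here $\varphi_R(g)$ is the translate of $\fr$ ``shifted by $g$'', and it is $R$-admissible, so $\varphi_R(G_R)\subset\Omega_R$. It intertwines the rotation $g\mapsto g+a$ with the shift $S_a$; we must check the sign convention carefully against the definition of $S$.

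\textbf{Step 1: descend to $\grf$.} Since $F(R_\gb)\supseteq\gb$ is the stabilizer subgroup of $R_\gb$, the condition $n+g_\gb\notin R_\gb$ depends only on $g_\gb$ modulo $F(R_\gb)$. So $\varphi_R$ factors through the quotient map $\pi:G_R\to\grf$, giving $\vrf:\grf\to\Omega_R$. The pushforward $\pi_*\mmp$ is the Haar measure $\mmp^F$, since it is a translation-invariant probability measure on a compact group. Hence $\nu_R=(\vrf)_*\mmp^F$, and $\vrf\circ T^F_a=S_a\circ\vrf$. This shows $\vrf$ is a factor map from $(\grf,T^F,\mmp^F)$ onto $(\Omega_R,S,\nu_R)$. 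It remains to show $\vrf$ is injective on a set of full $\mmp^F$-measure, since a measurable injective factor map between standard Borel spaces is an isomorphism.

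\textbf{Step 2: recover $g$ from $\vrf(g)$ almost surely.} For each $\gb$ we want to read off the class $g_\gb+F(R_\gb)$ from the set $A=\vrf(g)$. Note that $A\subseteq \co_K\setminus(-g_\gb+R_\gb)$. The set $-g_\gb+R_\gb$ determines $g_\gb$ modulo $F(R_\gb)$ exactly, by definition of the stabilizer. So it suffices to show that almost surely
$$\co_K\setminus\bigcup_{n\in A}(n+\gb)= -g_\gb+R_\gb,$$
that is, every class mod $\gb$ outside $-g_\gb+R_\gb$ meets $A$.

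Fix $\gb$ and a class $c\notin -g_\gb+R_\gb$. Condition on $g_\gb$, and use the independence of the coordinates $g_{\gb'}$ for $\gb'\neq\gb$. By pairwise coprimality and CRT, the events ``$n+g_{\gb'}\notin R_{\gb'}$ for all $\gb'\ne\gb$'' for $n$ ranging in $c$ have positive density. Indeed, by the Erdős condition $\sum|R_{\gb'}|/N(\gb')<\infty$, the density of $R$-free elements in a class mod $\gb$ with the $\gb$ condition removed is $\prod_{\gb'\ne\gb}(1-|R_{\gb'}|/N(\gb'))>0$. Here we assume $R_{\gb'}\neq\co_K$; such trivial factors can be discarded or else everything is empty. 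I would then invoke the Part I result that $\mmp$-almost every $g$ is generic, so that $\varphi_R(g)$ has this density in each class. That yields that $A\cap c\neq\emptyset$ almost surely. Intersecting over the countably many pairs $(\gb,c)$ gives a full-measure set on which $\vrf$ is injective.

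\textbf{Main obstacle.} I expect the hardest part to be Step 2's density/genericity claim, together with measurability of the inverse. First I would try a direct Borel–Cantelli-free argument. Pick infinitely many disjoint $n_1,n_2,\ldots\in c$ whose events are nearly independent, with the truncation to finitely many $\gb'$ controlled by the tail sum. Then show that the probability that all of them fail tends to $0$. The inverse map $A\mapsto(\text{class of }g_\gb)_\gb$ is Borel, being defined coordinatewise by countably many membership conditions, so $\vrf$ is a measurable isomorphism mod null sets.
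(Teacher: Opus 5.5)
Your proof is correct, and its injectivity step takes a genuinely different route from the paper's.

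Step 1 is exactly the paper's factor-map lemma. The difference is in how you show $\vrf$ is injective on a full-measure set.

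\textbf{The paper's route.} It reduces to a minimal sieve. It then shows that $R(g)$ has strong light tails for $\mmp^F$-a.e.\ $g$, using three ingredients: the pointwise ergodic theorem, the Part I result that weak light tails for all finitely supported translates force strong light tails, and invariance of $\sigma_R$ under $\bigoplus_i\co_K/F(R_i)$. It observes that $R(g)$ is again minimal. Finally it invokes the Section 3 uniqueness of a minimal sieve with light tails in an equivalence class, so that $\cf_{R(g)}$ determines $R(g)$.

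\textbf{Your route.} You reconstruct $R(g)$ directly. Almost surely, the residue classes mod $\gb$ missed by $\vrf(g)$ are exactly $-g_\gb+R_\gb$, and this fixes $g_\gb$ modulo the stabilizer $F(R_\gb)$.

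\textbf{What each buys.} Your approach avoids minimality, contractions, strong light tails and the uniqueness theorems altogether. It gives an explicit Borel inverse and makes it transparent why $F(R_\gb)$ is the right quotient. The paper's approach instead identifies the full-measure set as the arithmetically meaningful set of $g$ for which $R(g)$ has strong light tails. The paper reuses that fact later, for $\nu_R(X_R)=1$ and for the infinite sumset results.

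\textbf{Where to tighten Step 2.} As written, the positive-density claim reads as if it held for every $g$. For a fixed sieve the density need not even exist, so it holds only almost surely. Also, passing from genericity to a density inside a fixed residue class needs an argument. Either of two routes works:
\begin{itemize}
\item Apply the pointwise ergodic theorem on the ergodic rotation $(G_R,T,\mmp)$, along a tempered Følner sequence, to the finitely many functions $g'\mapsto\one[g'_\gb=y]\,\one[0\in\varphi_R(g')]$.
\item Combine ``$R(g)$ has weak light tails almost surely'' with the Part I lemma computing $d_I(\fr\cap(x+\gb_i))$.
\end{itemize}

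The direct argument in your last paragraph should rest on covering, not near-independence, because the events at different points of $c$ are strongly correlated. Take $\gb=\gb_1$ and pick $n_r\in c$ representing every class $r$ modulo $\gb_2\cdots\gb_L$; this is possible by coprimality. Given $g_2,\dots,g_L$, the Chinese Remainder Theorem gives some $n_r$ avoiding $-g_i+R_i$ for $2\le i\le L$. By independence of the remaining coordinates, that $n_r$ lies in some $-g_i+R_i$ with $i>L$ with probability at most $\sum_{i>L}|R_i|/N(\gb_i)$. This tends to $0$ by the Erdős condition, so the bad event for the pair $(\gb,c)$ is null.
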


Given $g  \in \grf$, if we write $R(g)$ to be the sieve defined by $$R(g)_\gb = g_\gb + R_\gb,$$ then the isomorphism is exactly the map that sends $g$ to $\cf_{R(g)}$. Using this result, we compute the spectrum of the system $(\Omega_R,S,\nu_R)$.

	In Section 6 we provide applications to number theory of our work into Sarnak's program for sieves. In \cite{Moreira} Moreira, Richter and Robertson showed that  for any $C\subset \mathbb{N}$ such that $\overline{d}(C)>0$, there are infinite $A,B \subset \mathbb{N}$ such that $A+B \subset C$. Host has showed in \cite{Host} that there are sets $C$ with $\overline{d}(C)>0$ such that if there are infinite $A$ and $B$ such that $A+B \subset C$, then we must have $\overline{d}(A) = \overline{d}(B) = 0$. For this reason, it is interesting to study sets $C\subset \mathbb{N}$ such that there are $A,B\subset \mathbb{N}$ which satisfy $A+B \subset C$ and $\overline{d}(B) >0$. We show that $R-$free numbers for sieves with strong light tails provide plenty of examples of such sets.
	
	\begin{theorem}
		\label{thm: intro infinite sumsets}
		Let $A$ be a subset of $\z$ and $R$ an Erdős sieve such that $$\prod_{b\z \in \cb_R} \left(1-\frac{|-A+R_b|}{b}\right) >0.$$ Then, there exists a sequence $g \in\grf$ and some $B\subset \z$ with $d(B)>0$ such that $$A+B \subset \cf_{R(g)}.$$ 
	\end{theorem}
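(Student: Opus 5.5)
Let $R'$ be the sieve with $\cb_{R'} = \cb_R$ and $R'_b := -A + R_b$, i.e. $R'_b$ is the set of residues $r$ with $a + r \in R_b$ for some $a \in A$. Then the hypothesis reads
$$\prod_{b\z\in\cb_R}\Bigl(1-\frac{|R'_b|}{b}\right)>0 .$$
Two remarks about $R'$:
\begin{itemize}
\item Since $\cf_{R'}$ is nonempty in general, we need $|R'_b|<b$ for every $b$, which the positivity of the product already guarantees.
\item Positivity of the product also gives $\sum_b |R'_b|/b<\infty$, so $R'$ is itself an Erdős sieve.
\end{itemize}
The strategy is to produce $B$ as an $R'$-free set, suitably translated by some $g$. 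The key observation is the following. Suppose $x \in \cf_{R'(h)}$ for some shift $h$, meaning $x \notin h_b - A + R_b$ for all $b$. Then for every $a\in A$ we get $a + x \notin h_b + R_b$, i.e. $a+x\in \cf_{R(h)}$. Hence
$$A + \cf_{R'(h)} \subset \cf_{R(h)}.$$
One small point needs care: the shifted sets are defined with $R(g)_b = g_b + R_b$, and $-A + (g_b + R_b) = g_b + R'_b$, so shifting commutes with forming $R'$. Thus for every $g \in \prod_b \z/b\z$ we have $A+\cf_{R'(g)}\subset\cf_{R(g)}$.

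Strictly, $g$ must lie in $\grf = \prod_b \z/F(R_b)$ rather than in $\prod_b \z/b\z$. But $F(R_b) \supseteq b\z$, so $g$ simply projects to $\grf$. Moreover, $R(g)$ depends only on this projection, because adding an element of $F(R_b)$ to $g_b$ does not change $g_b + R_b$.

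It remains to choose $g$ so that $B := \cf_{R'(g)}$ has positive natural density. We use the density formula for Erdős sieves from Part I. For a \emph{fixed} sieve, $\cf_{R'}$ has logarithmic or natural density equal to $\prod_b(1-|R'_b|/b)$; this is the Erdős analogue of the density of $\cb$-free numbers. That holds only for the logarithmic density in general, however; natural density requires some light-tails hypothesis. This is where the main obstacle lies.

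To get around it, we use the freedom in choosing $g$ together with \Cref{thm: intro main thm} applied to $R'$.
\begin{enumerate}
\item Under the isomorphism $(\Omega_{R'},S,\nu_{R'})\cong(G_{R',F},T^F,\mmp^F)$, which is an ergodic rotation, Haar-almost every $g$ makes $\cf_{R'(g)}$ a generic point for $\nu_{R'}$ along the F\o lner sequence $[1,N]$.
\item For such generic $g$, the density of $\cf_{R'(g)}$ equals $\nu_{R'}(\{\omega : 0\in\omega\})$.
\item By direct computation with Haar measure and independence of the coordinates, this equals $\prod_b(1-|R'_b|/b)>0$.
\end{enumerate}
The step I expect to be hardest is justifying that a.e.\ $g$ is generic, i.e.\ that the natural density (not merely the upper density) exists. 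Because the isomorphism is only measure-theoretic, I would proceed as follows. First, approximate by the finite sieves $R'_{\le K}$, for which $\cf_{R'_{\le K}(g)}$ is periodic with density $\prod_{b\le K}(1-|R'_b|/b)$. Second, control the tail: the set of $n\le N$ hit by some $g_b+R'_b$ with $b>K$ has expected proportion at most $\sum_{b>K}|R'_b|/b$, averaged over $g$. Third, apply a Borel--Cantelli / second-moment argument along $N=2^j$, which gives a.e.\ convergence of the lower density to the full product; the upper density is trivially at most the truncated product.

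Finally, pick any such $g$ and project it to $\grf$. This yields $A+B\subset\cf_{R(g)}$ with $d(B)>0$.
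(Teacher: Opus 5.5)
Your reduction matches the paper's. With $R'=-A+R$ you have $A+\cf_{R'(g)}\subset\cf_{R(g)}$, and the paper (\Cref{crl: A B subset of FR}) likewise takes $B=\cf_{-A+R(g)}$ for a $g$ at which this set has density $\prod_b(1-|-A+R_b|/b)$. Steps (1)--(3) are correct, but the way you propose to justify (1) has a gap.

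Your reason for distrusting (1) is not a real obstacle, because the isomorphism of \Cref{thm: intro main thm} is never needed. By definition $\nu_{R'}=(\varphi_{R'})_*\mmp$, and $\varphi_{R'}$ intertwines the rotation with the shift, so $\nu_{R'}$ is ergodic. Since $B_N$ (like $[1,N]$) is tempered, \Cref{thm: ergodic theorem}, applied to a countable dense subset of $C(\Omega_{R'})$, gives $\nu_{R'}(\Gen(\nu_{R'},B_N))=1$. Hence $\mmp\bigl(\varphi_{R'}^{-1}(\Gen(\nu_{R'},B_N))\bigr)=1$. Because $\one_{C^{R'}_{\{0\},\emptyset}}$ is continuous, (2)--(3) then hold for all these $g$, up to $g\mapsto -g$, which preserves $\mmp$. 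This is the paper's mechanism. The paper runs the $L^1$ ergodic theorem on $\Omega_R$ for the indicator of $C^R_{A,\emptyset}$, converts via \Cref{thm:Fr is generic} into weak light tails of $-A+R(g)$, and then upgrades to strong light tails, which the corollary does not actually need. Your variant on $\Omega_{-A+R}$ with a continuous observable is slightly leaner. Note also that $d$ in the statement is density along $B_N=[-N,N]$ rather than $[1,N]$; nothing changes.

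The fallback you commit to (truncation, the first-moment bound, then second moment and Borel--Cantelli along $N=2^j$) fails under the Erdős hypothesis alone. The fluctuations at scale $N$ come from moduli $b\gg N$. Take $b=p_i^2$, $R_b$ a block of $\ell_i\approx p_i^2/(i\log^2 i)\approx i$ consecutive residues, and $A=\{0\}$; this is an Erdős sieve. For each $i$ with $\ell_i\ge 2N$, the set $g_b+R_b$ contains $[1,N]$ with probability at least $\ell_i/(2p_i^2)$, independently in $i$. Hence $\mmp\bigl(\{g:[1,N]\subset\bigcup_{b>K}(g_b+R_b)\}\bigr)\gg\sum_{i\ge 3N}1/(i\log^2 i)\asymp 1/\log N$. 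Along $N=2^j$ these probabilities are $\gg 1/j$, which is not summable. So no Borel--Cantelli argument on that subsequence, Chebyshev-based or otherwise, yields the a.e.\ statement, even though the statement is true (the bad events at different scales are strongly correlated). What you need is a maximal inequality, i.e.\ the ergodic theorem.

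If you prefer to stay on $G_{R'}$, apply Birkhoff's theorem to $h_K(g)=\sum_{b>K}\one_{-R'_b}(g_b)$. This lies in $L^1(\mmp)$ with integral $\sum_{b>K}|R'_b|/b$, and its ergodic averages over $B_N$ dominate $|B_N\cap\bigcup_{b>K}(g_b+R'_b)|/|B_N|$. So for a.e.\ $g$ the sieve $R'(g)$ has strong light tails. Then \Cref{thm:Fr is generic}, or directly your truncation bound, gives $d(\cf_{R'(g)})=\prod_b(1-|R'_b|/b)>0$.

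Finally, your aside that the logarithmic density of $\cf_{R'}$ always equals the product is false: for $R_i=\{-i,i\}+p_i^2\z$ one has $\fr=\{0\}$. You do not use it, so drop it.
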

	
	Inspired by the work in \cite{Wang} we also show, using a result from \cite{Bergelson2}, a Prime Number Theorem for $R-$free numbers. Let $v_p$ be the $p-$adic valuation\index{$p-$adic Valuation} of the integers, that is, $v_p(m)$ is the largest non-negative integer $k$ such that $p^k \mid m$, and write $$\Omega(m) = \sum_{p \text{ prime}} v_p(m).$$ We have the following result (see \Cref{thm: ergodic prime number theorem for weak light tails}).
	\begin{theorem}
		\label{thm: intro prime num theorem rfree}
		Let $R$ be an Erdős sieve with weak light tails for $I_N = [1,N]$. Let $(X,T)$ be a uniquely ergodic dynamical system, and $\mu$ its unique invariant measure. Then for every function $f \in C(X)$ and $x \in X$ we have
		$$\lim_{N\rightarrow\infty} \frac{1}{N}\sum_{m \in \fr \cap I_N} f(T^{\Omega(m)}x) = d_I(\fr)\int_X f \,d \mu.$$
	\end{theorem}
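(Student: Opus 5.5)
The plan is to combine the Bergelson–Richter ergodic prime number theorem from \cite{Bergelson2} with the weak light tails of $R$. That theorem says that for a uniquely ergodic $(X,T)$ with invariant measure $\mu$, every $f\in C(X)$ and every $x\in X$,
$$\frac1N\sum_{m\le N} f(T^{\Omega(m)}x)\to\int_X f\,d\mu,$$
and it also holds along arithmetic progressions:
$$\frac1N\sum_{m\le N,\ m\equiv r \ (q)} f(T^{\Omega(m)}x)\to \frac1q\int_X f\,d\mu$$
for every modulus $q\ge1$ and residue $r$. If the progression version is not stated explicitly in \cite{Bergelson2}, I would derive it from their method: the proof goes through the equidistribution of $\Omega(pm)=\Omega(m)+1$ against $\Omega(m)$, and restricting to a progression only introduces the Dirichlet-character twists, which are handled in the same way. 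This extension is the step I expect to be the main obstacle.

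With this in hand, I would approximate $\fr$ by finite truncations. Enumerate $\cb_R=\{\gb_1,\gb_2,\dots\}$ and let $R^{(k)}$ be the finite sieve using only $\gb_1,\dots,\gb_k$. Its free set $\cf_{R^{(k)}}$ is a periodic set, a union of residue classes modulo $q_k=\prod_{i\le k} b_i$ (here $\gb_i=b_i\z$, with $b_i>0$). The progression version above then gives
$$\frac1N\sum_{m\in \cf_{R^{(k)}}\cap I_N} f(T^{\Omega(m)}x)\to d(\cf_{R^{(k)}})\int_X f\,d\mu.$$

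Next I would control the error. Since $\fr\subset\cf_{R^{(k)}}$, the difference between the sums for $\fr$ and for $\cf_{R^{(k)}}$ is bounded by
$$\|f\|_\infty\cdot\frac1N\bigl|(\cf_{R^{(k)}}\setminus\fr)\cap I_N\bigr|.$$
The weak light tails condition for $I_N=[1,N]$ should give exactly that
$$\limsup_N \frac1N\,\Bigl|\bigcup_{i>k}(R_{\gb_i})\cap I_N\Bigr|\to0 \quad\text{as } k\to\infty,$$
so this error vanishes in the limit (I would cite the definition in Section 2 / Part I). The same condition gives $d_I(\fr)=\lim_k d(\cf_{R^{(k)}})$; this is presumably the result stated earlier that the density of $\fr$ exists along $I_N$ and equals $\prod_i(1-|R_{\gb_i}|/b_i)$.

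Finally, I would take the $\limsup$ and $\liminf$ in $N$ for each fixed $k$, and then let $k\to\infty$. This yields the claimed limit $d_I(\fr)\int_X f\,d\mu$.
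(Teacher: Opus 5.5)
Your argument is essentially the paper's. The paper approximates $\one_{\fr}$ by the indicator of the free set of the sieve truncated at $L$ (its trigonometric polynomial $P_L$), notes that the two differ exactly on $\bigcup_{i>L}R_i\setminus\bigcup_{j\le L}R_j$, deduces that $\one_{\fr}$ is Besicovitch almost periodic, and applies \Cref{lm:Richter} once. You instead apply that result to each periodic truncation and exchange the limits by hand. Your ``main obstacle'' therefore disappears: \Cref{lm:Richter} already covers any Besicovitch almost periodic weight, in particular the periodic function $\one_{\cf_{R^{(k)}}}$, so no Dirichlet-character argument is needed.

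One correction. The condition you display, $\lim_{k}\limsup_N\frac1N\bigl|\bigcup_{i>k}R_{\gb_i}\cap I_N\bigr|=0$, is strong light tails, which is not assumed and does not follow from weak light tails (compare Example 5.14 of Part I). What your error term actually needs is $\overline{d}_I(\cf_{R^{(k)}}\setminus\fr)\to0$. Since $\cf_{R^{(k)}}\setminus\fr=\bigcup_{i>k}R_i\setminus\bigcup_{j\le k}R_j$, this is exactly the weak light tails hypothesis, so your proof goes through once the condition is stated correctly.
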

	
	This article is divided as follows. In Section 2 we present some basic results about dynamical systems and give an overview of the definitions and results from Part I that we will use. In Section 3 we introduce minimal sieves and show a number of results about these, namely \Cref{thm: intro minimal sieves}. We also define a notion of union of sieves, which allows us to build simpler sieves from more complex ones. In section $4$ we show \Cref{thm: intro main thm} and compute the spectrum of $(\Omega_R,S,\nu_R)$. Additionally, we determine under which conditions two sieves $R$ and $R'$ can be such that $\Omega_R = \Omega_{R'}$. In Section $5$, we investigate the relationship between the spaces $X_R$ and $\Omega_R$, showing both \Cref{thm: intro nuR XR equal 1} and \Cref{thm: intro XR equal XRp implies OmegaR equal OmegaRp}. Finally in Section 6 we show multiple number theoretic results, namely \Cref{thm: intro infinite sumsets}, \Cref{thm: intro prime num theorem rfree}, and a number of facts about the square free values of polynomials.\\
	
\textbf{Acknowledgments}\\

The author would like to thank Jürgen Klüners, Joanna Kułaga-Przymus, Fabian Gundlach, Aurelia Dymek and Michael Baake for many helpful discussions and comments that greatly contributed for this work. This research was supported by the Deutsche Forschungsgemeinschaft (DFG, German Research Foundation) - Project-ID 491392403 - TRR 358 (project A2).

	\section{Preliminaries and results from Part I}

	In order to make this article as self contained as possible, we provide a number of definitions and results from Part I.
	
	By a number field $K$ we mean a finite field extension of the rationals $\q$. An étale $\q-$algebra is a finite product of number fields. If $K = K_1\times \dots \times K_m$ is an étale $\q-$algebra, then its ring of integers $\co_K$ is equal to  $\co_{K_1} \times \dots \co_{K_l}$, and any ideal $I$ of $\co_K$ can be written as a product $I_1 \times \dots \times I_l$, where each $I_i$ is an ideal of $\co_{K_i}$. If each $I_i$ is different from $0$, then we say that $I$ is an \textit{invertible ideal}. Given an invertible ideal $I$, we write $N(I)$ for its norm, which equals $|\co_K/I|$.
	
		If $K$ has degree $n$, then we have the Minkowski embedding $\sigma: \co_K \rightarrow \mathbb{C}^n$ given by $$\sigma(x):= (\phi(x))_{\phi \in \text{Hom}_\q(K, \mathbb{C})}.$$
	We can define a (vector field) norm on $K$ by taking the norm inherited from the supremum norm in the Minkowski embedding, that is
	\begin{equation}
		\label{eq: minkowsky norm}
		\|x\| := \sup_{\phi \in \text{Hom}_\q(K,\mathbb{C})}|\phi(x)|.
	\end{equation} 
	We will write for any $N \in \mathbb{R}_{\geq 0}$\index{Balls in Number Fields}
	\begin{equation}
		\label{eq: definition of BN}
		B_N := \{x \in \co_K: \|x\| \leq N\}.
	\end{equation} 

  We now give an overview of the dynamical systems results we will use. Let $G$ be a group. Since we will only be working with $G$ isomorphic to $\z^n$, we will assume that $G$ is finitely generated, abelian and locally compact. We say a triple $(X,T,\mu)$ is a measure theoretical dynamical system, if $X$ is a compact topological space, $T$ is an action of $G$ on $X$ by homeomorphisms, and $\mu$ is a $T-$invariant probability measure on $X$ (we always assume that $\mu$ is defined on the Borel $\sigma-$algebra of $X$). Looking at $T$ as a map $T:G \times X \rightarrow X$, we will always write  $T_g(x) := T(g,x)$. By $T-$invariant, we mean that for any measurable set $U$, and $g\in G$ we have $\mu(T_g^{-1}(A)) = \mu(A)$. We say a $T$-invariant measure is ergodic if for any measurable set $U$ such that $\mu(U \Delta T_g(U)) = 0$ for every $g \in G$, we have $\mu(U) \in \{0,1\}$.

Given a measure dynamical system $(X,T,\mu)$, we can consider the induced \index{Koopman Representation}Koopman representation $U$ of $G$ on $L^2(X,\mu)$ given by $$U_g(f)(x) = f(T_{-g}x).$$ We can associate to any dynamical system the point spectrum $\sigma_p$ of the corresponding Koopman representation, which is given by\index{Dynamical System ! Spectrum of} \[
\sigma_p(X,T,\mu)
=
\Bigl\{
\chi\in\widehat{G}\;:\;
\exists\, f\in L^2(X,\mu)\setminus\{0\}
\ \text{such that}\ 
U_g f = \chi(g)\,f
\ \text{for all } g\in G
\Bigr\}.
\] where $\widehat{G}$ denotes the group of characters of $G$, that is, the homomorphisms $\chi:G \rightarrow S^1$ (where $S^1$ denotes the unitary circle in $\mathbb{C}$).

A system is said to have \index{Discrete Spectrum}\textit{discrete spectrum} if $L^2(X,\mu)$ has an orthonormal basis formed by eigenfunctions of $U$ (that is, those $f\in L^2(X,\mu)$ such that $U_g(f) = \chi(g)f$ for some character $\chi$). When working on systems with discrete spectrum, the Halmos-von Neumann Theorem\index{Halmos-von Neumann Theorem} is a powerful tool for determining which systems are isomorphic. The theorem states the following (see Theorem 5.5 in \cite{Hermle}).
\begin{theorem}
	\label{thm: Halmos Von Neumann}
	Let $(X_1,T_1,\mu_1)$ and $(X_2,T_2,\mu_2)$ be ergodic systems with discrete spectrum. The systems are isomorphic if and only if $\sigma_p(X_1,T_1,\mu_1) = \sigma_p(X_2,T_2,\mu_2)$.
\end{theorem}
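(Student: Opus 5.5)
The statement is the Halmos--von Neumann theorem, which the paper cites from Theorem 5.5 of \cite{Hermle}. I would give the classical argument: first reduce both systems to ergodic rotations of compact abelian groups, then compare the groups through their duals. The forward implication is immediate. An isomorphism $\Phi: X_1 \to X_2$ intertwining $T_1$ and $T_2$ induces a unitary $f \mapsto f\circ \Phi$ from $L^2(X_2,\mu_2)$ to $L^2(X_1,\mu_1)$. This unitary intertwines the Koopman representations, so it carries eigenfunctions to eigenfunctions with the same character, and the point spectra coincide.

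For the converse, I would use ergodicity to organise the eigenfunctions.
\begin{itemize}
\item Each eigenfunction has constant modulus, since $|f|$ is invariant.
\item Each eigenspace is one-dimensional, since the ratio of two eigenfunctions for the same $\chi$ is invariant.
\item The point spectrum $\Lambda := \sigma_p$ is a subgroup of $\widehat G$, because products and conjugates of eigenfunctions are again eigenfunctions.
\end{itemize}
Choose for each $\chi\in\Lambda$ an eigenfunction $f_\chi$ with $|f_\chi|=1$. Then $c(\chi,\psi) := f_\chi f_\psi/f_{\chi\psi}$ is an invariant function of modulus one, hence a constant in $S^1$.

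The key step is to show that $(X,T,\mu)$ is isomorphic to the rotation on the compact group $H := \widehat{\Lambda_d}$, where $\Lambda_d$ is $\Lambda$ with the discrete topology. The rotation is by the image of $G$ under $g\mapsto (\chi\mapsto \chi(g))$, and $H$ carries Haar measure. The difficulty is that $c$ is a $2$-cocycle on $\Lambda$ which must be trivialised, i.e.\ we need $f_\chi$ chosen so that $f_\chi f_\psi = f_{\chi\psi}$.

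I would sidestep this by the standard algebraic route. By discrete spectrum, the span $\mathcal A$ of the eigenfunctions is dense in $L^2$. It is a conjugation-closed algebra of bounded functions, so its uniform closure is a commutative unital C$^*$-algebra $\cc(Y)$. The Gelfand space $Y$ is the space of characters of $\mathcal A$. Any multiplicative functional $\omega$ with $\omega(f_\chi)$ of modulus one gives a map $\chi\mapsto\omega(f_\chi)$ that is multiplicative up to the constant $c$. Fix one such $\omega_0$ and set $\phi(\chi)=\omega(f_\chi)/\omega_0(f_\chi)$. Then $\phi$ is a genuine homomorphism $\Lambda\to S^1$, which identifies $Y$ with $H$. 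Under this identification $\omega_0$ becomes the identity, the $G$-action becomes the rotation, and $\mu$ pushes forward to an invariant measure whose Fourier coefficients vanish off the trivial character, namely Haar measure.

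The inclusion $\cc(Y)\subset L^\infty(X,\mu)$ is a measure-algebra isomorphism onto its image. It is onto all of the measure algebra because $\mathcal A$ is dense in $L^2$. By the measurable realisation of measure-algebra isomorphisms for standard spaces, we obtain a point isomorphism $X\cong H$. This requires assuming standard Borel spaces, which I would add as a hypothesis, as in \cite{Hermle}.

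Doing this for both systems, equality of spectra gives the same group $\Lambda$. Both systems are therefore isomorphic to the same rotation on $\widehat{\Lambda_d}$, hence to each other. The main obstacle is the cocycle/Gelfand step, in particular checking that the identification of the Gelfand space with $H$ really intertwines the actions and the measures.
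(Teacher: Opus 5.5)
The paper does not prove this statement. It imports it from Theorem 5.5 of \cite{Hermle} and uses it only as motivation for computing $\sigma_p$ in Section 4. Your outline is the standard operator-theoretic proof, and it is correct.

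Its neatest feature is that the base point $\omega_0$ of the Gelfand spectrum trivialises the cocycle for free. Indeed $c(\chi,\psi)=\omega_0(f_\chi)\omega_0(f_\psi)/\omega_0(f_{\chi\psi})$, so $f_\chi/\omega_0(f_\chi)$ is a genuinely multiplicative choice of eigenfunctions. This replaces the classical argument, which proves by hand (via divisibility of $S^1$) that a symmetric $S^1$-valued $2$-cocycle on an abelian group is a coboundary. The classical argument then uses $x\mapsto(\chi\mapsto f_\chi(x))$ directly as a point map $X\to\widehat{\Lambda_d}$. Your route builds a topological model first. The classical route gets a point map at once, but has to manage countably many a.e.\ identities.

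Two steps are asserted rather than proved, and you should write them out.

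\textbf{Surjectivity of $Y\to H$.} Continuity and injectivity of $\omega\mapsto\phi_\omega$ are clear, since $\omega$ is determined by the values $\omega(f_\chi)$. Hence the image is closed. Since $U_g$ is a $*$-automorphism of the C$^*$-algebra, $\omega\circ U_g\in Y$ and $\phi_{\omega\circ U_g}(\chi)=\chi(g)\phi_\omega(\chi)$. So the image contains $1=\phi_{\omega_0}$ and is invariant under translation by $\iota(G)$, where $\iota(g)(\chi)=\chi(g)$. Finally, $\iota(G)$ is dense in $H$: its annihilator in $\widehat H=\Lambda$ is $\{\chi\in\Lambda:\chi(g)=1 \text{ for all } g\}$, which is trivial because $\Lambda\subset\widehat G$. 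Trying instead to extend $f_\chi\mapsto\phi(\chi)\omega_0(f_\chi)$ directly runs into the problem of proving sup-norm boundedness, which is why this density argument is the right one.

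\textbf{From $C(Y)$ to a measure-algebra isomorphism.} The Gelfand transform gives an integral-preserving embedding $C(Y)\cong\mathcal B\subset L^\infty(X,\mu)$. Extend it to an isometry $L^2(Y,m)\to L^2(X,\mu)$. This isometry is onto because its range contains $\mathcal A$. It preserves $|\cdot|$ and fixes $1$, so it maps indicator functions to indicator functions, which gives the measure-algebra isomorphism.

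The standardness hypothesis you add is harmless for this paper, whose spaces are compact metrizable. Then $L^2$ is separable, so $\Lambda$ is countable and $H$ is compact metrizable. Countability of $G$ then lets you restrict the point isomorphism to invariant conull sets. Alternatively, you can stop at the measure-algebra isomorphism: your argument yields it without any separability assumption.
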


 We say that a sequence of finite, non-empty sets $I_N \subset G$, $N \geq 1$, is a \index{Følner Sequence}\textit{Følner sequence} if for every $x\in G$,  $$\lim_{N \rightarrow \infty} \frac{|(x+I_N) \Delta I_N|}{|I_N|} = 0.$$
	
	 We say that a point $x\in X$ is generic with respect to $I_N$ if for every continuous function $f \in C(X)$ we have $$\lim_{N \rightarrow \infty} \frac{1}{|I_N|}\sum_{a\in I_N} f(T_a(x)) = \int_X f \; d \mu.$$

	The Ergodic Theorem gives conditions for almost every point in a system to be generic. The following very general form was shown by Lindenstrauss in \cite{Lindestrauss}. We say that a Følner sequence $I_N$ is \index{Følner Sequence!Tempered}\textit{tempered} if there is some constant $C$ such that for all $N$,
	\begin{equation}
		\label{eq:tempered Følner sequence}
		\left|\bigcup_{L<N} -I_L + I_N\right| < C|I_N|.
	\end{equation}  Note that every Følner sequence has a tempered subsequence (see Proposition 1.4 in \cite{Lindestrauss}). The Pointwise Ergodic Theorem states the following.
	
	\begin{theorem}
		\label{thm: ergodic theorem}
		Let $(X,T,\mu)$ be an ergodic dynamical system, and $I_N$ a tempered Følner sequence. Then, for every $f\in L^1(X)$, we have $$\lim_{N \rightarrow \infty} \frac{1}{|I_N|} \sum_{a\in I_N} f(T_a(x)) = \int_X f d \mu$$ for $\mu-$almost ever $x \in X$.
		
	\end{theorem}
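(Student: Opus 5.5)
The plan is the standard two-part scheme for pointwise theorems: prove convergence on a dense subclass of $L^1(X,\mu)$, prove a weak-type $(1,1)$ maximal inequality for the averages $A_Nf(x)=\frac{1}{|I_N|}\sum_{a\in I_N}f(T_a x)$, and combine the two via the Banach principle. Since this statement is the theorem of Lindenstrauss cited above, the plan is essentially a reconstruction of his argument.

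\emph{Dense class.} Let $\mathcal{D}$ be the span of the constants together with the coboundaries $h-U_gh$, where $h\in L^\infty(X,\mu)$ and $g\in G$.
\begin{itemize}
\item For constants there is nothing to prove.
\item For a coboundary, $A_N(h-U_gh)(x)$ is bounded by $\|h\|_\infty\,|I_N\,\Delta\,(g+I_N)|/|I_N|$, up to the sign convention in $U_g$. This tends to $0$ for every $x$ by the Følner property.
\end{itemize}
Density of $\mathcal{D}$ in $L^2$ comes from the mean ergodic theorem argument. The orthogonal complement of the coboundaries in $L^2$ consists of the $G$-invariant functions. By ergodicity these are the constants, so $\mathcal{D}$ is dense in $L^2$. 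Because $L^2$ is dense in $L^1$ and $L^\infty$ is dense in $L^2$, $\mathcal{D}$ is also dense in $L^1$. On $\mathcal{D}$ the limit is exactly $\int_X f\,d\mu$.

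\emph{Maximal inequality.} The goal is
$$\mu\Bigl(\sup_N |A_Nf|>\lambda\Bigr)\le \frac{C'}{\lambda}\|f\|_1$$
with $C'$ depending only on the constant $C$ in \eqref{eq:tempered Følner sequence}. By Calderón transference it suffices to prove the analogous inequality on $G$ itself, for finitely supported functions and finitely many $N$. This reduces to a covering lemma: given a finite family of translates $c+I_{N(c)}$, each carrying a large average of $|f|$, one must select a subfamily that covers a fixed proportion of the union and has bounded average multiplicity. For general Følner sets the Vitali approach fails, since there is no doubling, and this is the main obstacle. I would follow Lindenstrauss's random covering lemma.
\begin{enumerate}
\item Process the scales from largest $N$ to smallest.
\item At each scale, select each candidate translate independently with a small probability, and discard the points already covered by larger selected sets.
\item The tempered condition $|\bigcup_{L<N}-I_L+I_N|<C|I_N|$ controls how much a set of scale $N$ can interact with all smaller scales.
\item This gives expected multiplicity $O(C)$ while the expected covered mass stays comparable to the union.
\end{enumerate}
The weak-$(1,1)$ bound then follows by summing $|f|$ over the selected sets.

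\emph{Conclusion.} Given $f\in L^1$ and $\varepsilon>0$, write $f=d+r$ with $d\in\mathcal{D}$ and $\|r\|_1<\varepsilon$. The oscillation $\limsup_N A_Nf-\liminf_N A_Nf$ is then bounded by $2\sup_N|A_Nr|$. By the maximal inequality, the set where this exceeds $\eta$ has measure at most $2C'\varepsilon/\eta$. Letting $\varepsilon\to0$ shows that $A_Nf$ converges almost everywhere. The limit lies in $L^1$ by Fatou, is $G$-invariant, and so by ergodicity is a constant. Finally, integrating and using the maximal inequality together with the density approximation identifies this constant as $\int_X f\,d\mu$.
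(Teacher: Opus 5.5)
The paper gives no proof of this statement (it is quoted from Lindenstrauss), and your outline is a faithful reconstruction of his argument: constants plus $L^\infty$-coboundaries as a dense class, a weak-$(1,1)$ maximal inequality transferred from $G$ via Lindenstrauss's random covering lemma, then the Banach principle — so the approach is correct and is the one the paper relies on. One correction to your paraphrase of the covering lemma: at scale $N$ you discard every candidate translate $c+I_N$ that \emph{meets} a set already selected at a larger scale (not just the already covered points), and keep each survivor independently with probability $\delta/|I_N|$; then selected sets from different scales are disjoint, so the multiplicity function $\Lambda$ satisfies $\mathbb{E}(\Lambda(g)\mid\Lambda(g)\geq 1)\leq 1+\delta$ regardless of $C$, while the tempered condition only bounds the discarded centres (each selected $c'+I_M$ blocks fewer than $C|I_M|$ centres at all smaller scales), giving expected selected mass at least $(C+\delta^{-1})^{-1}$ times the size of the union of all centres.
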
  
	
	\begin{remark}
		\label{rmk: ergodic measures are mutualy singular}
		In particular, we have that given an ergodic measure $\mu$ and a tempered Følner sequence, the set $\Gen(\mu,I_N)$ of generic points with respect to $I_N$ satisfies $\mu(\Gen(\mu,I_N)) = 1$ (see Corollary 8 in \cite{Host}). 
	\end{remark}
	
	Given a point $x \in X$, let $\mathbb{O}_T(x)$ be the orbit closure\index{Orbit Closure} of $x$, that is $$\mathbb{O}_T(x) := \overline{\{T_a(x):a \in G\}}.$$
	We will use the following property of generic points in Section 5.

	\begin{lemma}
		\label{lm: x generic implies full meausure orbit closure}
		Let $(X,T,\mu)$ be an ergodic dynamical system. If there exists a  Følner sequence $I_N$ such that $x\in \Gen(\mu,I_N)$ then $$\mu\left(\mathbb{O}_T(x)\right) = 1.$$
	\end{lemma}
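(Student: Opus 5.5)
The plan is to show that the support of $\mu$ is contained in $\mathbb{O}_T(x)$, using genericity of $x$ together with Urysohn's lemma. Write $Y := \mathbb{O}_T(x)$. This set is closed, hence Borel, and it is $T$-invariant. We argue by contradiction and suppose $\mu(Y) < 1$, so that $\mu(X\setminus Y) > 0$.

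First we produce a compact set of positive measure away from $Y$. The set $U = X \setminus Y$ is open with $\mu(U)>0$. If $X$ is metrizable (the case of interest, $\{0,1\}^{\co_K}$), $\mu$ is inner regular, so there is a compact $C \subset U$ with $\mu(C) > 0$. In general we use regularity of the Borel probability measure; the paper implicitly assumes Radon measures on compact spaces.

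Next we separate $C$ from $Y$ by a continuous function. Since $C$ and $Y$ are disjoint closed subsets of a compact Hausdorff (hence normal) space, Urysohn's lemma gives $f \in C(X)$ with $0 \le f \le 1$, $f \equiv 1$ on $C$ and $f \equiv 0$ on $Y$.

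Finally we apply genericity. For every $a \in G$, the point $T_a(x)$ lies in $Y$, so $f(T_a(x)) = 0$. The ergodic averages along $I_N$ are therefore all zero. Since $x \in \Gen(\mu, I_N)$, this yields
$$\int_X f\, d\mu = \lim_{N\to\infty} \frac{1}{|I_N|}\sum_{a \in I_N} f(T_a(x)) = 0.$$
On the other hand, $\int_X f\, d\mu \geq \mu(C) > 0$. This is a contradiction, so $\mu(Y) = 1$.

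Ergodicity is not actually needed for this argument. The only mildly delicate point is the regularity of $\mu$ used to find the compact set $C$, which is automatic in the metrizable setting where the lemma is applied.
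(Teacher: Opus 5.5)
Your proof is correct and is essentially the paper's argument made explicit. The paper treats genericity as weak convergence of the empirical measures $\frac{1}{|I_N|}\sum_{a\in I_N}\delta_{T_a(x)}$, each of which gives full mass to $\mathbb{O}_T(x)$, and then applies the closed-set half of the Portmanteau theorem, $\limsup_N \mu_N(C)\le\mu(C)$; you instead prove the needed instance of that inequality directly via inner regularity and Urysohn's lemma. Your regularity caveat applies equally to the paper's version, since Billingsley's Portmanteau theorem is stated for metric spaces, and, as you note, neither argument uses ergodicity.
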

	
	\begin{proof}	Define the sequence of measures $$\mu_N := \frac{1}{|I_N|}\sum_{a\in I_N} \delta_{T_a(x)}. $$ Since $x$ is generic with respect to $I_N$, we know that the sequence $\mu_N$ converges weakly to $\mu$. We have that $\mu_N(\{T_a(x):a \in G\}) = 1$ for every $N$.
		
		We now use Portmanteau's theorem \index{Portmanteau's theorem} (see Theorem 2.1 in \cite{Billingsley}), which states that if $\mu_N $ converges weakly to $\mu$, and $C$ is a closed set, then $$\limsup_N \mu_N(C) \leq \mu(C). $$  By applying it with $C =\mathbb{O}_T(x)$, we get $$1 =\limsup_N \mu_N(\mathbb{O}_T(x)) \leq \mu(\mathbb{O}_T(x)), $$ which concludes the proof.
	\end{proof}

\subsection{Results from Part I}

We will provide a number of definitions and results from Part I that we will use throughout. Most of these definitions are given in Section 3 of \cite{part1}. Throughout, we assume $K$ is an étale $\q-$algebra of degree $n$. We start with the definition of sieve.

\begin{definition}
	\label{def: Sieve}
	A \index{Sieve}sieve over an étale $\q-$algebra $K$ is a pair $(\cb_R,(R_\gb)_{\gb \in \cb_R})$ where $\cb_R$ is an infinite set of pairwise coprime invertible ideals of $\co_K$, and each $R_\gb$ is a set of the form $R_\gb =  S_\gb + \gb $ with $S_\gb$ finite sets such that $R_\gb \neq \co_K$ for every $\gb \in \cb_R$.
\end{definition}

We will say that $R$ is supported on the set $\cb_R$. Additionally, we will many times just assume that $\cb_R$ is ordered, and write $R_i$ for $R_{\gb_i}$. We always use the notation $|R_\gb|$ for the number of congruence classes modulo $\gb$ in $R_\gb$, that is, the cardinality of this set inside of $\co_K/\gb$. We say a sieve $R$ is a $\cb-$free system if $R_\gb = \gb$ for all $\gb \in \cb_R$.

 We say $R$ is an Erdős sieve if $$\sum_{\gb \in \cb_R} \frac{|R_\gb|}{N(\gb)} < \infty.$$ By the $R-$free numbers we mean the set $$\fr := \co_K\setminus\bigcup_{\gb \in \cb_R} R_\gb.$$ We will study this set by considering two distinct shift spaces. First, we identify $\{0,1\}^{\co_K}$ with the powerset of $\co_K$, and define the shift action $S$ of $\co_K$ on $\{0,1\}^{\co_K}$, by $S_a(A) = A-a$ for any $a\in \co_K$ and $A \subset \co_K$. We say a set $A$ is $R-$admissible if for all $\gb \in \cb_R$ we have $-A+R_\gb \neq \co_K$, and write $\Omega_{R}$ for the set of all $R-$admissible sets. We write $$X_R := \overline{\{S_a(\fr):a \in \co_K\}}$$ for the orbit closure of $\fr$ in $\{0,1\}^{\co_K}$.

Both sets are compact and they become dynamical systems with the Mirsky measure $\nu_R$. This is defined as follows. For any $R$, let $G_R$ be the group
\begin{equation}
	\label{eq:GR}
	G_R = \prod_{\gb \in \cb_R} \co_K/\gb,
\end{equation}
  and let $\mmp$ be the Haar measure in $G_R$. Consider the map $\varphi_R:G_R \rightarrow \Omega_R$, defined by the relation \begin{equation}
	a \in \varphi_R(g) \Leftrightarrow \mathlarger \forall_{\gb \in \cbr} \hspace{5pt} a+g_\gb \not \in R_\gb.
\end{equation} 	We define the Mirsky measure $\nu_R$ as $(\varphi_R)_*\mmp$ (that is, for every measurable $U$, $\nu_R(U) = \mmp((\varphi_R)^{-1}(U))$). This measure has the following simpler description. Given disjoint sets $A,B\subset \co_K$, let $C^R_{A,B}$ be the set of all $R-$admissible sets that contain $A$ and are disjoint from $B$, that is,  $$C^R_{A,B} := \{Y\in \Omega_R: A \subset Y,  Y \cap B = \emptyset\}.$$ Taking finite $A$ and $B$, sets of the form $C^R_{A,B}$ generate the topology of $\Omega_{R}$, and \begin{equation}
\label{eq:Formula for nu_R with B empty}
\nu_R(C^R_{A,\emptyset}) =\prod_{\gb \in \cbr} \left(1-\frac{|-A+R_\gb|}{N(\gb)}\right).
\end{equation} Using the inclusion-exclusion principle, we have more generally that if $B$ is finite, then $$\nu_R(C^R_{A,B})  = \sum_{A \subset D \subset  A\cup B} (-1)^{|D\setminus A|}\nu_R(C^R_{D,\emptyset}).$$

Throughout, we will use the following definition of density with respect to a Følner sequence.

\begin{definition}
	\label{def: density}
	Given a set $A \subset \co_K$ and a Følner sequence $I_N$, we define the upper and lower \index{Density}densities with respect to $I_N$ to be respectively,  $$\overline{d}_I(A) := \limsup_{N \rightarrow \infty} \frac{|A \cap I_N|}{|I_N|} \text{ and } \underline{d}_I(A) := \liminf_{N \rightarrow \infty} \frac{|A \cap I_N|}{|I_N|}.$$ If these agree, we write the limit as $d_I(A)$, which we call the density of $A$ with respect to $I_N$.
\end{definition}

In the case where $I_N = B_N$ (as defined in \Cref{eq: definition of BN}), we omit the $I$, and write $\overline{d}(A),\underline{d}(A),d(A)$ for the corresponding densities. We can use the following lemma to count points that don't belong to any of a finite number of congruence classes.

\begin{lemma}
	\label{lm: equidistribution for Følner sequences}
	Let $L \geq 1$ be an integer,  $I_N$ a Følner sequence, $\gb_1,\dots,\gb_L$ a collection of pairwise coprime ideals. For each $i$, take $A_i \subset \co_K$ and let $R_i = A_i + \gb_i$. If  $$C_L := \{x \in \co_K: \mathlarger \forall_i \hspace{2pt} x\not \in R_i \},$$ then $$\frac{1}{|I_N|}\sum_{a\in I_N} \one_{C_L}(a) \rightarrow d_I(C_L) = \prod_{i=1}^L\left(1-\frac{|R_i|}{N(\gb_i)}\right),$$ as $N \rightarrow \infty$.
\end{lemma}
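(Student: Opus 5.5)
The set $C_L$ is periodic modulo $\gb := \gb_1\cdots\gb_L$, so the plan is to reduce to counting residues modulo $\gb$ and then use the Følner property to see that $I_N$ equidistributes among residue classes. Since the $\gb_i$ are pairwise coprime, the Chinese Remainder Theorem gives a ring isomorphism $\co_K/\gb \cong \prod_{i=1}^L \co_K/\gb_i$. Membership of $x$ in $R_i = A_i+\gb_i$ depends only on $x \bmod \gb_i$, so membership in $C_L$ depends only on $x \bmod \gb$. Hence $C_L$ is a union of cosets of $\gb$. Under the CRT isomorphism, the image of $C_L$ in $\co_K/\gb$ corresponds to $\prod_i \bigl((\co_K/\gb_i)\setminus (R_i/\gb_i)\bigr)$. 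This has cardinality
$$\prod_{i=1}^L \bigl(N(\gb_i)-|R_i|\bigr),$$
and dividing by $N(\gb)=\prod_i N(\gb_i)$ (norm is multiplicative on coprime invertible ideals) gives the claimed product. Here $|R_i|$ is finite because $R_i$ is a union of cosets of an ideal of finite index. The $A_i$ may be infinite, but they only matter modulo $\gb_i$.

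It therefore suffices to show that for any coset $c+\gb$ of a finite-index subgroup $\gb\subset\co_K$,
$$\frac{|I_N\cap(c+\gb)|}{|I_N|}\to\frac{1}{N(\gb)}.$$
Then summing over the finitely many cosets making up $C_L$ gives both the existence of the limit $d_I(C_L)$ and its value.

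For the equidistribution step, fix a set of coset representatives $r_1,\dots,r_M$ of $\co_K/\gb$, with $M=N(\gb)$. Translation by $r_j-r_k$ maps $r_k+\gb$ bijectively onto $r_j+\gb$. Therefore
$$\bigl||I_N\cap(r_j+\gb)| - |I_N\cap(r_k+\gb)|\bigr| \le |(r_j-r_k+I_N)\,\Delta\, I_N|.$$
To see this, note that $|I_N\cap(r_j+\gb)| = |(I_N-(r_j-r_k))\cap(r_k+\gb)|$, and intersecting two sets with the same coset changes the count by at most the size of their symmetric difference. By the Følner property, the right-hand side is $o(|I_N|)$ for each of the finitely many pairs $(j,k)$. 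The proportions $|I_N\cap(r_j+\gb)|/|I_N|$ sum to $1$ and pairwise differ by $o(1)$, so each tends to $1/M$.

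The only point requiring care is this Følner step, and it is mild: the Følner condition is needed only for the finitely many translates $r_j-r_k$, so no uniformity is required. The rest is bookkeeping with the CRT.
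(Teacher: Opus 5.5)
Your argument is correct and complete. This paper does not prove the lemma itself; it is quoted from Part I, so there is no in-text proof to compare against.

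Your route is the natural proof and gives both the existence of $d_I(C_L)$ and its value. You use the Chinese Remainder Theorem to write $C_L$ as a union of exactly $\prod_{i}(N(\gb_i)-|R_i|)$ cosets of $\gb_1\cdots\gb_L$. You then apply the F\o lner condition to the finitely many differences of coset representatives, which shows that $I_N$ equidistributes among the cosets of a finite-index subgroup.

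Two details are harmless but worth making explicit:
\begin{itemize}
\item The $\gb_i$ must be invertible, as they are throughout the paper, so that $\co_K/\gb_i$ is finite and $N(\gb_i)$ is defined.
\item The translate in your inequality is really $-(r_j-r_k)+I_N$ rather than $(r_j-r_k)+I_N$. This changes nothing, since $|(-d+I_N)\,\Delta\, I_N| = |(d+I_N)\,\Delta\, I_N|$ for every $d$.
\end{itemize}
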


 Let $R$ be an Erdős sieve. We can assume that $\cb_R$ has been ordered. We then say that $R$ has \textit{weak light tails} with respect to a Følner sequence $I_N$ if 	$$\lim_{L \rightarrow \infty} \overline{d}_I\left(\bigcup_{i >L } R_i \setminus \bigcup_{j \leq L } R_j\right) = 0.$$ We say it has strong light tails if $$\lim_{L \rightarrow \infty} \overline{d}_I\left(\bigcup_{i >L } R_i\right) = 0.$$ By Lemmas 5.15 and 5.16 in \cite{part1}, both of these properties are invariant under ordering of $\cb_R$. In Example 5.14 of \cite{part1} we show that the sieve $R$ defined by $R_{i} = 1+4(i-1)+p_{i}^2\z$ has weak light tails for the sequence $I_N = [0,N]$, but does not have strong light tails with respect to this Følner sequence. 
 
 As shown in Theorem 5.7 of \cite{part1}, every Erdős $\cb-$free system has strong light tails.
 
 \begin{theorem}
 	\label{thm: K etale algebra light tails for B_n finite}
 	Let $R$ be a sieve over an étale $\q-$algebra $K$ of degree $n$, such that $\cb_R =\{\gb_1,\gb_2,\dots\}$ and there is a finite set $T$ for which $R_i \subset T + \gb_i$ for every $i$. If $\sum_i \frac{1}{N(\gb_i)} < \infty$, then $R$ is Erdős and has strong light tails for $B_N$.
 \end{theorem}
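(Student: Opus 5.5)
The goal is to show that for every $\epsilon>0$ there is $L$ with $\overline{d}(\bigcup_{i>L} R_i)<\epsilon$ along $B_N$. Erdős-ness is immediate: $|R_i|\le |T|$ gives $\sum_i |R_i|/N(\gb_i)\le |T|\sum_i 1/N(\gb_i)<\infty$. Since $R_i\subset T+\gb_i$, we have $\bigcup_{i>L}R_i\subset \bigcup_{t\in T}\bigl(t+\bigcup_{i>L}\gb_i\bigr)$. Translation by a fixed $t$ does not change upper density along $B_N$, because $B_N$ is Følner. So it suffices to prove the $\cb$-free statement
\[
\lim_{L\to\infty}\overline{d}\Bigl(\bigcup_{i>L}\gb_i\Bigr)=0
\]
for pairwise coprime invertible ideals with $\sum_i 1/N(\gb_i)<\infty$. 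The final bound then follows by multiplying by $|T|$.

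For the $\cb$-free statement I would split the union into ideals of small and large norm. Fix a threshold $M$. For ideals with $N(\gb_i)\le M$ I would use the lattice-point estimate
\[
|\gb\cap B_N|=\frac{\vol(B_N)}{\mathrm{covol}(\sigma(\gb))}+O\bigl(N^{n-1}\cdot \text{(error depending on }\gb)\bigr),
\]
with $\mathrm{covol}(\sigma(\gb))$ proportional to $N(\gb)$. This gives $\overline{d}(\gb)=1/N(\gb)$ for each fixed $\gb$, which is also the content of \Cref{lm: equidistribution for Følner sequences} with $L=1$. By subadditivity of upper density over the finitely many ideals with $L<i$ and $N(\gb_i)\le M$, their contribution is at most $\sum_{i>L}1/N(\gb_i)$, and this tends to $0$ as $L\to\infty$.

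The main obstacle is the infinitely many ideals of large norm, for which no uniform asymptotic is available. What I need is a uniform bound $|\gb\cap B_N|\ll 1+ N^n/N(\gb)$, or more precisely a bound uniform in $\gb$. I would try the standard argument for a full-rank lattice $\Lambda=\sigma(\gb)$: the point count in a convex body satisfies $|\Lambda\cap B_N|\ll \sum_{k=0}^{n}N^k/(\lambda_1\cdots\lambda_k)$ with successive minima $\lambda_j$. For an ideal, every nonzero $x\in\gb$ satisfies $|N_{K/\q}(x)|\ge N(\gb)$, so $\|x\|\ge N(\gb)^{1/n}$ and hence $\lambda_1\ge N(\gb)^{1/n}$. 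In the étale case $N(x)$ may vanish, so I would apply this componentwise on each factor $K_j$, where $\gb$ projects to a nonzero ideal. Counting the elements lying in a proper subspace separately, this should yield $|\gb\cap B_N|\le C(1+N^n/N(\gb))$ up to lower-order terms of the form $N^{k}/N(\gb)^{k/n}$. A term $N^k/N(\gb)^{k/n}$ with $k<n$ is $o(N^n)\cdot N(\gb)^{-k/n}$ and is summable over $\gb$ only if $\sum_i N(\gb_i)^{-k/n}<\infty$, which is not guaranteed. I would handle this by cutting again. Ideals with $N(\gb)>N^n$ meet $B_N$ only in $0$ (and in a few degenerate points in the étale case), because a nonzero element of $B_N$ has $|N_{K/\q}(x)|\le N^n$. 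For ideals with $N(\gb)\le N^n$ we have $N^k/N(\gb)^{k/n}\le N^n/N(\gb)$, so the whole bound collapses to $C N^n/N(\gb)$. Summing over $i$ with $N(\gb_i)>M$ gives at most $C\,\vol(B_N)\sum_{N(\gb_i)>M}1/N(\gb_i)+O(1)$ per point of $B_N$ considered, so the density contribution is at most $C'\sum_{N(\gb_i)>M}1/N(\gb_i)$, which is small for $M$ large.

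Combining the two ranges and letting $L\to\infty$ (which eventually excludes every ideal of norm at most $M$, since only finitely many ideals have bounded norm) and then $M\to\infty$ gives strong light tails. In the étale case the degenerate elements, those with some zero coordinate, lie in a union of finitely many proper sublattices, each of density zero, so they do not affect the argument.
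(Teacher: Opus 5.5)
The paper does not prove this statement; it quotes it from Part I (Theorem 5.7 there), so there is no in-paper argument to compare yours with. Judged on its own, your proof is correct.

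The reduction to the $\cb$-free case via $R_i\subset T+\gb_i$ and translation invariance of $\overline{d}$ along the F\o lner sequence $B_N$ is fine. The decisive input is the one you isolate: in a number field, every nonzero $x\in\gb$ has $\|x\|\ge N(\gb)^{1/n}$.

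You can use this with much less machinery than successive minima. Distinct points of $\gb\cap B_N$ differ by a nonzero element of $\gb$, so their images under $\sigma$ are $N(\gb)^{1/n}$-separated in the sup norm. Compare the total volume of the disjoint open balls of radius $\tfrac12 N(\gb)^{1/n}$ around them with the volume of the ball of radius $N+\tfrac12 N(\gb)^{1/n}$. This gives
\[
|\gb\cap B_N|\le\bigl(1+2N/N(\gb)^{1/n}\bigr)^n\le 3^n\,\frac{N^n}{N(\gb)}\qquad\text{whenever } N(\gb)\le N^n,
\]
while $\gb\cap B_N=\{0\}$ otherwise. With this uniform bound you no longer need the split at a threshold $M$, and the terms $N^k/N(\gb)^{k/n}$ never appear. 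Summing over $i>L$ gives $\overline{d}\bigl(\bigcup_{i>L}\gb_i\bigr)\ll\sum_{i>L}1/N(\gb_i)$ in one step.

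The \'etale case needs tightening. There, the per-ideal bound $|\gb\cap B_N|\le C(1+N^n/N(\gb))$ is false. In $K=\q\times\q$ take $\gb=p\z\times\z$ with $p>N^2$. Then $\gb\cap B_N=\{0\}\times([-N,N]\cap\z)$ has $2N+1$ points, while the right-hand side is bounded. So one ideal can contain $\asymp N^{n-n_j}$ degenerate points of $B_N$, not ``a few''. Infinitely many $\gb_i$ may share these points, so they must never be summed over $i$.

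What rescues the argument is removing them globally, which your last sentence gestures at. Write $K=K_1\times\dots\times K_m$, $n_j=[K_j:\q]$, $\gb=\gb^{(1)}\times\dots\times\gb^{(m)}$ and $x=(x^{(1)},\dots,x^{(m)})$. Then $B_N=\prod_j B^{(j)}_N$, where $B^{(j)}_N$ is the ball in $\co_{K_j}$. Let $D$ be the set of $x$ with some $x^{(j)}=0$, so $|D\cap B_N|=O(N^{n-1})$. For every $\gb$,
\[
\bigl|(\gb\cap B_N)\setminus D\bigr|=\prod_{j=1}^m\bigl|(\gb^{(j)}\setminus\{0\})\cap B^{(j)}_N\bigr|.
\]
Each factor is $0$ unless $N(\gb^{(j)})\le N^{n_j}$, and otherwise is at most $3^{n_j}N^{n_j}/N(\gb^{(j)})$ by the packing bound in $K_j$. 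Hence the nondegenerate part is at most $3^nN^n/N(\gb)$ for every $\gb$, and
\[
\Bigl|B_N\cap\bigcup_{i>L}\gb_i\Bigr|\le O(N^{n-1})+3^nN^n\sum_{i>L}\frac{1}{N(\gb_i)}.
\]
The relevant cut is componentwise, $N(\gb^{(j)})\le N^{n_j}$ for all $j$, not $N(\gb)\le N^n$. With this made explicit, your proof is complete.
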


 We will need the following results about sieves with weak light tails. First, we have Theorem 3.21 of \cite{part1}, which generalizes point (1) of Sarnak's program for Erdős sieves.
 
  	\begin{theorem}
 	\label{thm:Fr is generic}
 	Let $R$ be an Erdős sieve. For a given Følner sequence $I_N$, the following are equivalent.
 	\begin{enumerate}
 		\item $\fr$ is a generic point of $(\Omega_{R},S,\nu_R)$ with respect to $I_N$.
 		\item $R$ has weak light tails with respect to $I_N$.
 		\item The set $\fr$ has a density with respect to $I_N$ given by $$d_I(\fr) = \nu_R(C^R_{\{0\},\emptyset}) =  \prod_{\gb \in \cbr}  \left(1-\frac{|R_\gb|}{N(\gb)}\right). $$
 	\end{enumerate}
 	
 \end{theorem}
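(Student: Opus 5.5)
The plan is to prove the cycle of implications $(1)\Rightarrow(2)\Rightarrow(3)\Rightarrow(2)$ and then $(2)\Leftrightarrow(3)$ with $(1)$. The only tool needed is \Cref{lm: equidistribution for Følner sequences}, combined with a truncation argument that compares $\fr$ with the finite-level sets
$$C_L := \co_K\setminus\bigcup_{i\le L}R_i.$$

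\textbf{(2)$\Leftrightarrow$(3).} By \Cref{lm: equidistribution for Følner sequences}, $C_L$ has density $\prod_{i\le L}(1-|R_i|/N(\gb_i))$. Since $\fr\subset C_L$ and
$$C_L\setminus \fr=\bigcup_{i>L}R_i\setminus\bigcup_{j\le L}R_j,$$
we have
$$\overline{d}_I(\fr)\le d_I(C_L) \quad\text{and}\quad \underline{d}_I(\fr)\ge d_I(C_L)-\overline{d}_I(C_L\setminus\fr).$$
Let $L\to\infty$. The products converge to $\prod_{\gb}(1-|R_\gb|/N(\gb))$, because the sieve is Erdős. Hence weak light tails give the density formula in (3). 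Conversely, if (3) holds, then
$$\overline{d}_I(C_L\setminus\fr)=d_I(C_L)-d_I(\fr),$$
using that both densities exist and that $\fr\subset C_L$. The right-hand side tends to $0$, which is exactly (2). The identification of the product with $\nu_R(C^R_{\{0\},\emptyset})$ follows from \eqref{eq:Formula for nu_R with B empty} with $A=\{0\}$.

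\textbf{(1)$\Rightarrow$(3).} The indicator of the cylinder $C^R_{\{0\},\emptyset}$, viewed inside $\{0,1\}^{\co_K}$, is continuous. Applying genericity to it gives $d_I(\fr)=\nu_R(C^R_{\{0\},\emptyset})$ directly.

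\textbf{(2)$\Rightarrow$(1).} This is the main step. Functions depending on finitely many coordinates are dense in $C(\{0,1\}^{\co_K})$, so it suffices to check, for all finite disjoint $A,B$, that the frequency of $a\in I_N$ with $A+a\subset\fr$ and $(B+a)\cap\fr=\emptyset$ equals $\nu_R(C^R_{A,B})$. By inclusion–exclusion it is enough to treat $B=\emptyset$, i.e. to count the $a\in I_N$ such that $a\notin -A+R_i$ for all $i$. This is the sieve $R^A$ with $R^A_i=-A+R_i$. At each finite level, \Cref{lm: equidistribution for Følner sequences} gives the frequency $\prod_{i\le L}(1-|-A+R_i|/N(\gb_i))$. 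The error set is contained in
$$\bigcup_{x\in A}\Bigl(-x+\bigl(\textstyle\bigcup_{i>L}R_i\setminus\bigcup_{j\le L}R_j\bigr)\Bigr).$$
Indeed, if $a$ survives levels $\le L$ for the shifted sieve, then each $a+x$ lies outside $\bigcup_{j\le L}R_j$, and some $a+x$ lies in the tail. Shifts by a fixed $x$ do not change upper density along a Følner sequence, and $A$ is finite, so this error set has upper density at most $|A|$ times the weak-tail quantity, which tends to $0$. Letting $L\to\infty$ and comparing with \eqref{eq:Formula for nu_R with B empty} gives the result. I expect the main obstacle to be this containment of the error set, together with the shift-invariance of $\overline{d}_I$; both are routine once written down.
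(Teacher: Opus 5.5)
Your proof is correct and complete. The present paper does not prove this statement; it is quoted from Part I (Theorem 3.21 there), so there is no in-text proof to compare against line by line. The natural comparison is with how the paper uses the result, and there your route matches the paper's own mechanism.

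In (2)$\Rightarrow$(1) you reduce cylinder frequencies along the orbit of $\fr$ to the density of $\cf_{-A+R}$. You then control its error by $\bigcup_{x\in A}(-x+T_L)$, where $T_L=\bigcup_{i>L}R_i\setminus\bigcup_{j\le L}R_j$. This is exactly the tail estimate behind \Cref{lm: union of sieves}, applied to the shifted sieves $-x+R$. The identification ``$\frp\in\Gen_A(\Omega_R,I_N)$ iff $-A+R'$ has weak light tails'' is also precisely what the paper invokes, via this theorem, in the proof of \Cref{thm: meaure of sieves such that A+R' has weak light tails is 1}.

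A few points of polish, none of them gaps:
\begin{itemize}
\item Your opening sentence announces a garbled cycle of implications. The ones you actually prove, (2)$\Leftrightarrow$(3), (1)$\Rightarrow$(3) and (2)$\Rightarrow$(1), do close the loop.
\item The partial products converge by monotonicity alone. The Erdős hypothesis is never actually used in your argument; it only makes the limits positive.
\item The density of finite-coordinate functions should be stated in $C(\Omega_R)$, not in $C(\{0,1\}^{\co_K})$. This follows from uniform continuity on the compact set $\Omega_R$, or from Tietze.
\item The sets $D$ produced by inclusion--exclusion need not be $R$-admissible. This is harmless: for such $D$, both the frequency of $\{a: a+D\subset\fr\}$ and the product $\prod_i\bigl(1-|-D+R_i|/N(\gb_i)\bigr)$ vanish.
\end{itemize}
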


As a consequence of showing that a sieve $R$ has weak light tails, we get that $\fr$ contains infinitely many copies of any finite $R-$admissible set $A$.

\begin{theorem}
	\label{thm: density of x+A in fr}
	Let $R$ be an Erdős sieve with weak light tails for some Følner sequence $I_N$, and $A$ a finite $R-$admissible. Then,
	$$d_I(\{x: x+A \subset \fr\}) = \prod_{i} \left(1-\frac{|-A+R_i|}{N(\gb_i)}\right)>0.$$ In particular, there are infinitely many $x$ such that $x+A \subset \fr$.
\end{theorem}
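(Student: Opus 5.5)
The plan is to reduce the statement to \Cref{thm:Fr is generic} by translating the set $\{x : x+A\subset \fr\}$ into the $R$-free set of an auxiliary sieve. Define $R^A$ by $\cb_{R^A}=\cb_R$ and
\[
R^A_{\gb} := -A + R_\gb = \{y - a : a\in A,\ y\in R_\gb\}.
\]
This is again a finite union of congruence classes modulo $\gb$, and it is a proper subset of $\co_K$ because $A$ is $R$-admissible. Hence $R^A$ is a sieve. One checks directly that
\[
x+A\subset \fr \iff \forall_{\gb}\ \forall_{a\in A}\ x+a\notin R_\gb \iff x\notin -A+R_\gb \text{ for all } \gb,
\]
so that $\{x: x+A\subset\fr\} = \cf_{R^A}$. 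Since $|R^A_\gb|\le |A|\,|R_\gb|$, the sieve $R^A$ is Erdős.

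Next we show that $R^A$ inherits weak light tails for $I_N$ from $R$. Write $R^A_i=\bigcup_{a\in A}(R_i-a)$. For each $L$,
\[
\bigcup_{i>L}R^A_i\setminus\bigcup_{j\le L}R^A_j \;\subset\; \bigcup_{a\in A}\Bigl(\bigcup_{i>L}R_i\setminus\bigcup_{j\le L}R_j\Bigr)-a.
\]
To verify this, suppose $x$ lies in the left-hand side, say $x+a\in R_i$ for some $i>L$ and $a\in A$. Since $x\notin R^A_j$ for every $j\le L$, we have $x+a\notin R_j$ for all $j\le L$. So $x$ lies in the translate by $-a$ of the tail set of $R$.

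Translation by a fixed element changes upper density with respect to a Følner sequence by nothing, because $|(I_N+a)\Delta I_N|=o(|I_N|)$. Therefore the upper density of the left-hand side is at most $|A|$ times that of the tail set of $R$, which tends to $0$. This is the key step; the only subtlety is the translation invariance of $\overline{d}_I$, which follows immediately from the Følner property.

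Finally, apply the equivalence (2)$\Rightarrow$(3) of \Cref{thm:Fr is generic} to $R^A$. This gives that $\cf_{R^A}$ has density
\[
\prod_i\left(1-\frac{|-A+R_i|}{N(\gb_i)}\right).
\]
It remains to show positivity. Each factor is positive because $-A+R_i\neq\co_K$. Moreover
\[
\sum_i \frac{|-A+R_i|}{N(\gb_i)}\le |A|\sum_i\frac{|R_i|}{N(\gb_i)}<\infty,
\]
so the infinite product converges to a nonzero limit. A set of positive density with respect to a Følner sequence is infinite, since each $I_N$ is finite and $|I_N|\to\infty$ (unless the group is finite, which $\co_K$ is not). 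This gives the final claim that there are infinitely many $x$ with $x+A\subset\fr$.
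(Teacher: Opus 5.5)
Your proof is correct, and it follows the same route this paper uses whenever it needs such statements (the theorem itself is quoted from Part I without proof). That route identifies $\{x:x+A\subset\fr\}$ with $\cf_{-A+R}$, observes that $-A+R=\bigcup_{a\in A}(-a+R)$ inherits the Erdős property and weak light tails (your tail inclusion is \Cref{lm: union of sieves} specialized to translates), and then reads off the density from \Cref{thm:Fr is generic}, with positivity following from admissibility and the Erdős condition exactly as you argue.
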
 

In what follows we will many time want to show that $\fr$ has elements in some congruence class, so we will use Lemma 5.19 from \cite{part1}.

\begin{lemma}
	\label{lm: Fr intercepted by ideal}
	Let $\mathcal{B} = \{\gb_1, \gb_2, \cdots\}$ be an infinite set of pairwise coprime ideals. Let $R$ be an Erdős sieve over $\mathcal{B}$ with weak light tails for $I_N$. If $\gb$ is an ideal coprime to every ideal in $\mathcal{B}$, then for every $x,b \in \co_K$, $$d_I((x+\fr) \cap (b+\gb)) = \frac{d_I(\fr)}{N(\gb)}.$$
	Additionally, we have for any $i$ that  $$d_I(\fr \cap(x+\gb_i))  = \frac{d_I(\fr)}{|R_i^c|}$$ if $x + \gb_i \not \in R_i $ 
\end{lemma}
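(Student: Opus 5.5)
The plan is to realise each intersection as the set of $R'$-free numbers for a modified sieve $R'$. We then check that $R'$ still has weak light tails for $I_N$, and read off the density from \Cref{thm:Fr is generic}.

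\textbf{First claim.} First remove the translate. Since $I_N$ is Følner, $|(x+S)\cap I_N|$ and $|S\cap I_N|$ differ by at most $|(I_N-x)\Delta I_N| = o(|I_N|)$. Hence $d_I(x+S)=d_I(S)$ whenever either density exists, and it suffices to treat $\fr\cap(c+\gb)$ with $c=b-x$. Define the sieve $R'$ by $\cb_{R'}=\cb_R\cup\{\gb\}$, with $R'_{\gb_i}=R_i$ and $R'_\gb=\co_K\setminus(c+\gb)$. The ideals of $\cb_{R'}$ are pairwise coprime by hypothesis, and $R'_\gb\neq\co_K$. Adding the single term $(N(\gb)-1)/N(\gb)$ keeps $R'$ Erdős. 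By construction $\cf_{R'}=\fr\cap(c+\gb)$.

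Now order $\cb_{R'}$ with $\gb$ first, which is allowed since weak light tails does not depend on the ordering. For every $L\ge1$ the tail set of $R'$ is
$$\bigcup_{i>L}R_i\setminus\Bigl(R'_\gb\cup\bigcup_{j\le L}R_j\Bigr)\subset \bigcup_{i>L}R_i\setminus\bigcup_{j\le L}R_j .$$
So the upper densities of the tail sets of $R'$ tend to $0$, and $R'$ has weak light tails for $I_N$. \Cref{thm:Fr is generic} then gives
$$d_I(\cf_{R'})=\Bigl(1-\tfrac{N(\gb)-1}{N(\gb)}\Bigr)\prod_i\Bigl(1-\tfrac{|R_i|}{N(\gb_i)}\Bigr)=\frac{d_I(\fr)}{N(\gb)}.$$

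\textbf{Second claim.} Suppose $x+\gb_i\not\subset R_i$. Since $R_i$ is a union of classes mod $\gb_i$, this gives $R_i\subset\co_K\setminus(x+\gb_i)$. Define $R'$ by replacing $R_i$ with $R'_i=\co_K\setminus(x+\gb_i)$ and keeping all other $R_j$. Then $R'_i\neq\co_K$, $R'$ is still Erdős, and $\cf_{R'}=\fr\cap(x+\gb_i)$.

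For $L\ge i$ the union $\bigcup_{j\le L}R'_j$ contains $\bigcup_{j\le L}R_j$, while the tail unions $\bigcup_{j>L}$ coincide. Hence the tail sets of $R'$ are contained in those of $R$, and $R'$ has weak light tails. By \Cref{thm:Fr is generic}, $d_I(\cf_{R'})$ equals $d_I(\fr)$ with the factor $1-|R_i|/N(\gb_i)$ replaced by $1/N(\gb_i)$. This factor is nonzero because $R_i\neq\co_K$. The ratio is $1/(N(\gb_i)-|R_i|)=1/|R_i^c|$, where $|R_i^c|$ counts the residue classes mod $\gb_i$ outside $R_i$.

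\textbf{Main point to check.} The only step needing care is the verification that weak light tails persists under these modifications. The inclusions of tail sets above make this routine once the modified ideal is placed among the first $L$, and the ordering-invariance of weak light tails (Lemmas 5.15–5.16 of \cite{part1}) permits that placement.
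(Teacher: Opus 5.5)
Your proof is correct. This paper does not reprove the lemma; it quotes it as Lemma 5.19 of Part I, so there is no in-text proof to compare against.

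Your argument is the natural one, and it uses the same device the paper uses elsewhere, for instance in the proof of \Cref{lm: if contains coprime ideal then no weak light tails}. You encode the extra congruence condition as one more sieve class modulo a coprime ideal, or as the enlargement of $R_i$ to $\co_K\setminus(x+\gb_i)$. You then observe that the tail sets can only shrink, so weak light tails persists, and read off the density from \Cref{thm:Fr is generic}.

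There is one harmless indexing slip. With $\gb$ placed first, the set you display is the level-$(L+1)$ tail of $R'$, not the level-$L$ tail. It is contained in the level-$L$ tail of $R$, so the conclusion is unaffected.
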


Finally, we will need Theorem 5.29 of \cite{part1}.

	\begin{theorem}
	\label{thm: R has strong light tails if finite translations}
	Let $R$ be an Erdős sieve with weak light tails with respect to some Følner sequence $I_N$. Suppose that for any finite set $A \subset \mathbb{N}$ and choice of $x_i \in \co_K$ for $i \in A$, the sieve $R'$ defined by $R'_i = x_i + R_i$ with $i \in A $, and $R'_i = R_i$ otherwise, also has weak light tails. Then $R$ has strong light tails for $I_N$.
\end{theorem}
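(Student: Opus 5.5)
The plan is to turn the weak light tails of finitely many translated sieves into a bound on the whole tail $T_{M} := \bigcup_{i>M} R_i$ by averaging over translations. The only thing weak light tails gives us is control of $T_{M}$ off the first $M$ classes, so we must recover the part of the tail that hides inside $\bigcup_{j\le M}R_j$. Translations move the first $M$ classes around, and an Erdős sieve leaves a positive proportion of every point's translates uncovered. Throughout, set
$$c := \prod_{\gb\in\cb_R}\Bigl(1-\frac{|R_\gb|}{N(\gb)}\Bigr) > 0,$$
which is positive because each factor is positive (as $R_\gb\neq\co_K$) and the sieve is Erdős.

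\textbf{Step 1 (pointwise averaging).} Fix $M$. For $t\in\co_K$, let $R^{(t)}$ be the sieve with $R^{(t)}_j = t+R_j$ for $j\le M$ and $R^{(t)}_j=R_j$ otherwise. Only the class of $t$ modulo $\gb_1\cdots\gb_M$ matters, so there are finitely many such sieves, and by hypothesis each has weak light tails for $I_N$. For $M'\ge M$ put $U^{(t)}_{M'} := \bigcup_{j\le M'}R^{(t)}_j$ and $W_{M,M'} := \bigcup_{M<j\le M'}R_j$. Fix $y\notin W_{M,M'}$. By the Chinese remainder theorem, the number of residues $t$ with $y\notin\bigcup_{j\le M}(t+R_j)$ is exactly $\prod_{j\le M}|\co_K/\gb_j \setminus R_j|$. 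Hence, with $t$ averaged uniformly over $\co_K/(\gb_1\cdots\gb_M)$,
$$\one_{T_{M'}\setminus W_{M,M'}}(y)\;\le\; c^{-1}\,\mathbb{E}_t\,\one_{T_{M'}\setminus U^{(t)}_{M'}}(y).$$
Summing over $I_N$ and using that the average over $t$ is finite, we get
$$\overline{d}_I(T_{M'}\setminus W_{M,M'})\le c^{-1}\max_t \overline{d}_I\bigl(T_{M'}\setminus U^{(t)}_{M'}\bigr).$$
By weak light tails of the finitely many $R^{(t)}$, the right-hand side tends to $0$ as $M'\to\infty$, for each fixed $M$.

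\textbf{Step 2 (the overlap with $W_{M,M'}$).} It remains to bound $\overline{d}_I(T_{M'}\cap W_{M,M'})$. We have $T_{M'}\cap W_{M,M'}\subset \bigcup_{M<j}(T_{M'}\cap R_j)$. The aim is an estimate of the form
$$\overline{d}_I(T_{M'}\cap R_j)\;\lesssim\;\frac{|R_j|}{N(\gb_j)}\,\overline{d}_I(T_{M'}) + o_{M'\to\infty}(1).$$
Summing this over $j>M$ would give $\overline{d}_I(T_{M'}) \le o(1) + \varepsilon_M\, C\,\overline{d}_I(T_{M'})$, where $\varepsilon_M = \sum_{j>M}|R_j|/N(\gb_j)\to 0$. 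Taking $M$ large and absorbing then yields $\limsup_{M'}\overline{d}_I(T_{M'})=0$, i.e.\ strong light tails.

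To get the estimate, I would exploit equidistribution modulo $\gb_j$. I would first apply Step 1 to the sieve translated only at the single index $j$, to compare $T_{M'}\cap(s+R_j)$ across the residues $s$. I would then use the coprimality of $\gb_j$ with all $\gb_i$, $i>M'$, in the spirit of \Cref{lm: Fr intercepted by ideal} and \Cref{lm: equidistribution for Følner sequences}, to show that the tail is asymptotically equidistributed among the classes modulo $\gb_j$.

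\textbf{Main obstacle.} Step 2 is where I expect the real difficulty. Upper densities are only subadditive, and the tail $T_{M'}$ is an infinite union, so equidistribution modulo $\gb_j$ is not automatic. Moreover, the constants must be uniform in $j$ for the absorption to work. If the direct estimate fails, my fallback is to run Step 1 with $M$ replaced by a block $\{1,\dots,M\}\cup\{j\}$ and to track $t$ modulo $\gb_j$ separately. The uniform constant $c$ (which does not depend on how many indices are translated) should then give the bound, with the weak-light-tail limits taken for each fixed finite set of translated indices before letting $j$ and $M$ grow.
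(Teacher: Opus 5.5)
Your Step 1 is correct, and it carries the real content. Averaging over a common translation $t$ of the first $M$ classes modulo $\gb_1\cdots\gb_M$ uses the hypothesis for only finitely many translated sieves. It gives $\lim_{M'\to\infty}\overline{d}_I(T_{M'}\setminus W_{M,M'})=0$ for each fixed $M$.

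The gap is Step 2. There you only state a target estimate, $\overline{d}_I(T_{M'}\cap R_j)\lesssim \frac{|R_j|}{N(\gb_j)}\overline{d}_I(T_{M'})+o(1)$ with constants uniform in $j$, to be fed into an absorption argument. Nothing in the proposal proves this estimate, and the fallback is too vague to count as an argument, so as written the theorem is not proved. The estimate is also the wrong target. It says the infinite union $T_{M'}$ is equidistributed modulo $\gb_j$, which is a statement of the same nature as the light-tail property you are trying to establish. That is why it looked hard.

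The missing observation is that you never need to know how $T_{M'}$ meets $W_{M,M'}$, only how large $W_{M,M'}$ is. For finite $M'$, $W_{M,M'}=\bigcup_{M<j\le M'}R_j$ is a finite union of congruence classes. By the equidistribution lemma of Section 2, its density along any F{\o}lner sequence exists, and
\[
\overline{d}_I(T_{M'}\cap W_{M,M'})\le d_I(W_{M,M'})=1-\prod_{M<j\le M'}\Bigl(1-\frac{|R_j|}{N(\gb_j)}\Bigr)\le \sum_{j>M}\frac{|R_j|}{N(\gb_j)}=\varepsilon_M .
\]
Subadditivity of upper density then gives
\[
\overline{d}_I(T_{M'})\le \overline{d}_I(T_{M'}\setminus W_{M,M'})+\varepsilon_M .
\]
Let $M'\to\infty$ with $M$ fixed and apply Step 1 to get $\limsup_{M'\to\infty}\overline{d}_I(T_{M'})\le \varepsilon_M$ for every $M$. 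Since $\varepsilon_M\to 0$ by the Erdős condition, $R$ has strong light tails for $I_N$. With this one-line replacement for Step 2, you no longer need equidistribution modulo $\gb_j$, uniform constants, or absorption, and your argument becomes a complete proof.
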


		\section{Minimal Sieves and Union of Sieves}

		In this section we study sieves as objects of interest in themselves. Our overarching objective is to study sets of the form $\fr$ for some sieve $R$, so one of the first questions we ask is when two sieves produce the same set $\fr$.  To study this, we define two sieves to be equivalent if they sieve out the same elements.
		
		\begin{definition}
			\label{def: equivalent sieves}
			We say two sieves $R$ and $R'$ over the same étale $\q-$algebra are \index{Equivalence of Sieves}\textit{equivalent} and write $R \sim R'$ if $\fr = \frp$.
		\end{definition}
		
		We want to know when two sieves are equivalent. We start with the following consideration. Notice that $2\z$ and $\{0,2\}+4\z$ are the same sets in $\z$. Therefore, two sieves $R$ and $R'$, defined by $R_1 = 2\z$, $R'_1 = \{0,2\}+4\z$ and $R_i = R'_i$ for $i > 1$ will be equivalent. Yet, they will not be the same, as $\cb_R \neq \cb_{R'}$. This leads us to define the notion of a minimal sieve, whose elements of $\cb_R$ are as small as they possibly can norm wise (so for example, $R'$ cannot be a minimal sieve, as we could replace $R'_1$ by $2\z$ and $2<4$).
		
		\subsection{Minimal Sieves}
		
		Given a sieve $R$, there are two main ways of producing an equivalent sieve $R'$. The composition of these will be what we will call a dilation. The first operation to consider, is one where we take a sieve $R$, and for each $\gb_i \in \cb_R$ choose $\gc_i$ such that $\gb_i \mid \gc_i$ and the $\gc_i$ are pairwise coprime. Then, $\gb_i + \gc_i = \gb_i$, and so a sieve $R'$ supported on $\cb_{R'} = \{\gc_i: i \in \mathbb{N}\}$ with $R'_i = R_i + \gc_i = R_i$ is equivalent to $R$. We give two examples of this operation.
		
		\begin{example}
			Let $R$ be the squarefree sieve supported on $\cb_R= \{p^2\z: p \text{ prime}\}$ and such such that $R_p = p^2 \z$. It is equivalent to the sieve $R'$ supported on $\cb_{R'} = \{p^3\z:p \text{ prime}\}$ and defined by $$R'_p = \{jp^2: 0\leq j < p\} + p^3\z = p^2\z + p^3\z  .$$ 
			
			Alternatively, let $q_i$ denote the $i-$th prime congruent to $1 \mod 4$, and $W$ the sieve $W_i = q_i^2\z$. Taking $r_i$ to be the primes congruent to  $3 \mod 4$, it is clear that $W$ is equivalent to the sieve $W'$ supported on $\cb_{W'} = \{q_i^2r_i^2: i \in \mathbb{N}\}$ and defined by $$W'_i  = \{jq_i^2: 0\leq j < r_i^2\} + q_i^2r_i^2\z = q_i^2\z + q_i^2r_i^2\z.$$
		\end{example}
		
		The second operation we can apply on $R$ is as follows. Take a finite set $\{\gb_1,\dots,\gb_m\}$ of elements of $\cb_R$, and write $\gc = \prod_{i=1}^m \gb_i.$ Consider the sieve $R'$ supported on $(\cb_R\cup \{\gc\}) \setminus \{\gb_1,\dots,\gb_m\}$ where we have removed from $R$ all the $R_{\gb_i}$, and instead have $R'_\gc = \bigcup R_{\gb_i} + \gc$. Since there was some $x_i \not \in R_{\gb_i}$ for every $i$, the Chinese Remainder Theorem guarantees that $R'_\gc \neq \co_K$. Take the following example.
		
		\begin{example}
			Let $R$ be an Erdős $\cb-$free system. Let $\cp$ be a partition of $\mathbb{N}$ into finite sets (so that $\cp$ is a collection of disjoint finite sets that together cover $\mathbb{N}$). For any element $A$ of $\cp$, let $b_A := \prod_{i\in A}b_i$, and write  $\cb' := \{ b_A \z: A \in \cp \}$. The sieve $R'$ supported on $\cb'$, and defined by $R'_A = \bigcup_{i\in A} b_i\z + b_A\z$ is equivalent to $R$, as $ b_i\z + b_A\z = b_i\z$ for any $i \in A$, and so $$ \frp^c = \bigcup_{A \in \cp} R'_A = \bigcup_{A \in \cp} \bigcup_{i\in A}  R_i = \bigcup_{i \in \mathbb{N}} R_i = \fr^c. $$
		\end{example}

		We can combine these two operation on sieves, to obtain the following general operation that we call  \index{Dilation of Sieves}\textit{dilation}. 
		
		\begin{definition}
			\label{def: dilation}
			Let $R$ be a sieve supported on $\cb_R$, and consider some partition $\cp$ of $\mathbb{N}$ into finite sets. For every $A \in \cp$, let $\ga_A$ be some ideal satisfying $(\ga_A,\gb_i) =1$ if $i \not \in A$, and such that for any $B \in \cp$, $(\ga_A,\ga_B) =1$ if $A\neq B$. Finally, let $\gc(A) = \text{lcm}(\{\ga_A\}\cup\{\gb_i:i\in A\})$ and $\cb' := \{\gc(A): A \in \cp \}.$ Given this initial data we define the corresponding dilation of $R$ to be the sieve $R'$ supported on $\cb'$ and defined by $$R'_A = \bigcup_{i \in A} R_i + \gc(A).$$
		\end{definition}
		
		We remark that since $\gb_i \mid \gc(A)$ for every $i \in A$, $R'_A$ is just the union of all the $R_i$ with $i \in A$. The choice to write the "$+ \gc(A)$" is a stylistic choice, so that we always write $R_\gb$ in the form $R_\gb = S + \gb$.
		
		In order to show that a dilation does indeed define a sieve, we have to show that the elements of $\cb'$ are pairwise coprime. To see this, note that if $(\gc(A),\gc(B)) \neq 1$, then one of the following four things must happen: either $(\ga_A,\ga_B) \neq 1$, $(\ga_A, \gb_j) \neq 1$ for some $j \in B$,$(\ga_B, \gb_i) \neq 1$ for some $i \in A$, or $(\gb_i, \gb_j) \neq 1$ for some $i \in A$ and $j \in B$. The first three options cannot happen by the definition of dilation, and the last one cannot happen since the elements of $\cb_R$ are assumed to be pairwise coprime. Furthermore, $R'$ is equivalent to $R$, since $\gc(A) + \gb_i = \gb_i$ for any $i \in A$, and so $$\frp^c = \bigcup_{A \in \cp }  R'_A =  \bigcup_{A \in \cp } \bigcup_{i \in A }  R_i = \fr^c.$$
		
		On the other hand, given some sieve $R$ and $\gb \in \cb_R$, it may be possible to write $$R_\gb = \bigcup_{i \in A} S_i + \gb_i,$$ where $A \subset \mathbb{N}$ and  $S_i\subset \co_K$ are non-empty finite sets, and the $\gb_i$ form a set of pairwise coprime ideals distinct from $\gb$ such that $\gb_i \mid \gb$.  In this case, we can define a  sieve $R'$ supported on $\cb\setminus\{\gb\}\cup\{\gb_1,\dots,\gb_{|A|}\}$ such that $R'_{\gb_{i}} = S_i + \gb_i $, and $R \sim R'$. We call this operation a \textit{contraction} of $R$, since it is always the inverse of a dilation. Indeed, if $R$ is our initial sieve, and we choose a finite set $A$, and an ideal $\gb$ such that $\text{lcm}(\{\gb_i:i\in A\})|\gb$, then by dilating we will obtain a sieve $R'$ supported on $\cb\cup\{\gb\}\setminus\{\gb_1,\dots,\gb_{|A|}\}$ such that $R'_\gb = \bigcup_{i \in A} R_i + \gb_i$. By contracting, it is clear we get the original sieve. 
		
		More generally, because a dilation is defined as an operation on all ideals simultaneously, we also define a (general) contraction as the repetition of this operation for each ideal for which it is possible.\index{Contraction of Sieves}
		
		\begin{definition}
			\label{def: contraction}
			Let $R$ be a sieve. We say a sieve $R'$ is a contraction of $R$ if there is a partition $\cp$ of $\cb_{R'}$ into finite sets and a bijection from $\cb_R$ to $\cp$ sending $\gb$ to $A_\gb \in \cp$, such that either $A_\gb = \{\gb\}$ and $R_\gb = R'_{\gb}$ or $$R_\gb  = \bigcup_{\gb' \in A_\gb} R'_{\gb'}, $$ where for every $\gb' \in A_\gb$ we have that $\gb' \mid \gb$, but $\gb' \neq \gb$.
		\end{definition}
		
		We show that dilations and contractions preserve the Erdős and light tail conditions for any Følner sequence $I_N$.

		\begin{lemma}
			\label{lm: Erds and light tails preserved by change of base}
			Let $R$ be a sieve and $R'$ a dilation of $R$. Then $R'$ will be an Erdős sieve with weak (respectively, strong light tails) if and only if $R$ is Erdős and has weak (respectively, strong light tails) for a Følner sequence $I_N$. 
		
		\end{lemma}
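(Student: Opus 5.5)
We argue directly from the definitions, treating the Erdős property and the two tail conditions separately. Throughout, $R'$ is the dilation of $R$ given by a partition $\cp$ of $\mathbb{N}$ into finite sets, with $R'_A = \bigcup_{i\in A} R_i$ supported on $\gc(A)$.

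\textbf{Erdős condition.} Since $\gb_i \mid \gc(A)$ for $i\in A$, each class of $R_i$ modulo $\gb_i$ splits into $N(\gc(A))/N(\gb_i)$ classes modulo $\gc(A)$. We first show that
$$\frac{|R'_A|}{N(\gc(A))} = \mathrm{dens}\Bigl(\bigcup_{i\in A}R_i\Bigr) = 1-\prod_{i\in A}\Bigl(1-\frac{|R_i|}{N(\gb_i)}\right),$$
where the last equality uses the Chinese Remainder Theorem and the pairwise coprimality of the $\gb_i$. Set $x_i = |R_i|/N(\gb_i)\in[0,1)$. For a finite set $A$ we have
$$\max_{i\in A} x_i \le 1-\prod_{i\in A}(1-x_i) \le \sum_{i\in A} x_i .$$
The upper bound immediately gives $\sum_A |R'_A|/N(\gc(A)) \le \sum_i x_i$, so $R$ Erdős implies $R'$ Erdős.

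The converse is the main obstacle, because the lower bound $\max_{i\in A} x_i$ is too weak when blocks are large. Instead we use that $\sum_A y_A<\infty$ with $y_A = 1-\prod_{i\in A}(1-x_i)$ forces $y_A\to0$. Hence $\prod_{i\in A}(1-x_i)\ge 1/2$ for all but finitely many $A$. For those $A$, the inequality $-\log(1-y_A)\le 2y_A$ gives
$$\sum_{i\in A} x_i \le -\sum_{i\in A}\log(1-x_i) = -\log(1-y_A) \le 2y_A .$$
The finitely many remaining blocks are finite sets and so contribute a finite amount. Therefore $\sum_i x_i<\infty$.

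\textbf{Light tails.} Order $\cb_{R'}$ as $A_1,A_2,\dots$ and set $M(L) = \max \bigcup_{k\le L}A_k$. We will use the invariance of weak and strong light tails under reordering (Lemmas 5.15 and 5.16 of \cite{part1}), and we order $\cb_R$ so that the blocks $A_1, A_2,\dots$ appear consecutively. With this ordering, $\bigcup_{k\le L}R'_{A_k}$ equals $\bigcup_{j\le M(L)} R_j$ and $\bigcup_{k>L} R'_{A_k}$ equals $\bigcup_{i>M(L)}R_i$ exactly. The tail quantities for $R'$ at $L$ are therefore exactly the tail quantities for $R$ at $M(L)$, and $M(L)\to\infty$.

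This gives the direction from $R$ to $R'$, since the quantities for $R'$ form a subsequence of those for $R$. For the converse, fix an index $M$ and let $L$ be the largest integer with $M(L)\le M$. Then the $R$-tail at $M$ is contained in the $R$-tail at $M(L)$ (the complement part only grows as $M$ increases), and the upper density is monotone under inclusion. The $R$-tail at $M$ is therefore bounded by the $R'$-tail at $L$, with $L\to\infty$ as $M\to\infty$. The same argument works verbatim for strong light tails, where the tails are simply $\bigcup_{i>M}R_i$ and are also nested.
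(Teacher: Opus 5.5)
Your proof is correct and follows essentially the same route as the paper: a union bound together with the complement-product identity $1-|R'_A|/N(\gc(A))=\prod_{i\in A}\bigl(1-|R_i|/N(\gb_i)\bigr)$ for the Erdős property, and a consecutive reordering of $\cb_R$ so that the tails of $R'$ form a subsequence of the monotone tails of $R$. Your use of the exact CRT equality with the bound $-\log(1-y)\le 2y$ makes the converse Erdős direction more explicit than the paper. The paper only displays a one-sided inequality between the two infinite products, which as written points the unhelpful way, although it is in fact an equality.
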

		
		\begin{proof}
			We will use the same notation as in \Cref{def: dilation}. Since $R'$ is a dilation of $R$, there must exist some partition $\mathcal{P}$ of $\mathbb{N}$ into non-empty finite sets, and some collection $\{\ga_A:A \in \cp\}$, such that writing $\gc(A) = \textnormal{lcm}(\{\ga_A\}\cup\{\gb_i:i\in A\})$ and $\mathcal{C} = \{\gc(A):A \in \cp\}$, the sieve $R'$ is supported on $\mathcal{C}$ and for any $\gc \in \mathcal{C}$ we have $$R'_\gc = \bigcup_{\gb \mid \gc} R_\gb + \gc.$$
			
			Take any $\gc \in \mathcal{C}$, and suppose that $\gb_{1}, \dots, \gb_{k}$ are the element of $\cb_R$ that divide $\gc$.
			
			Then,  $$\vol(R'_\gc) = \frac{|\bigcup_{i=1}^k R_{\gb_{i}} + \gc |}{N(\gc)} \leq \sum_{i=1}^k \frac{|R_{\gb_{i}}+\gc|}{N(\gc)} =  \sum_{i=1}^k\vol(R_{\gb_i}). $$ Since every $\gb$ divides at least one $\gc$,  it follows that $\sum_{\gc \in \mathcal{C} } \vol(R'_\gc) \leq  \sum_{\gb \in \cb} \vol(R_\gb) $, so $R'$ is Erdős if $R$ is. On the other hand, if $x \not \in R'_\gc$, then $x \not \in R_\gb$ for every $\gb \mid \gc$. It follows that $$\prod_{\gb \mid \gc}(1-\vol(R_\gb)) \leq (1-\vol(R'_\gc)),$$ and so $\prod_{\gb \in \cb}(1-\vol(R_\gb)) \leq \prod_{\gc\in \mathcal{C} }(1-\vol(R'_\gc))$, which implies that $R$ is Erdős if $R'$ is.
			
			Next we show that $R'$ has weak/strong light tails if and only if $R$ does. Since the light tail properties don't depend on a choice of the order of the support of the sieve, we are free to reorder $\cb_R$ and $\mathcal{C}$, so we order them in such a way, that there is a function $f$ defined by the rule that $\gb_i \mid  \gc_j$ is equivalent to $f(j) \leq i <f(j+1)$. Then we have $$R'_L = \bigcup_{f(L) \leq  j < f(L+1)}R_j,$$ and so $$\bigcup_{i>L} R'_i = \bigcup_{j\geq f(L+1)} R_j.$$
			
			Similarly, we have that $$\bigcup_{i \leq L} R'_i = \bigcup_{j<f(L+1)} R_j,$$ which gives the equality 
			$$ \bigcup_{i>L}R'_i\setminus \bigcup_{j \leq L} R'_j  =\bigcup_{i \geq f(L+1)} R_i \setminus \bigcup_{j \leq f(L+1)} R_j   ,$$

			We now show the equivalence in the case of strong light tails. If $R$ is Erdős and has strong light tails, then the right hand side of 
			\begin{equation}
				\label{eq: a_l subseq}
				\overline{d}_I\left(\bigcup_{i>L}R'_i\right)  = \overline{d}_I\left(\bigcup_{j \geq f(L+1)} R_j \right)   ,
			\end{equation}
			is a subsequence of a sequence that converges to $0$, and so the left hand side must converge to $0$, that is, $R'$ must have strong light tails. On the other hand, notice that the sequence $$a_L = \overline{d}_I\left(\bigcup_{i > L} R_i \right)$$ is monotonically non increasing. If $R'$ has strong light tails, then \Cref{eq: a_l subseq} shows that $a_L$ has a subsequence that converges to $0$, which implies that the entire sequence must converge to $0$, so $R$ must also have strong light tails.
			
			The equivalence in the case of weak light tails follows analogously, only having to point out that the sequence $$b_L = \overline{d}_I\left(\bigcup_{i > L} R_i\setminus \bigcup_{j \leq L} R_j \right)$$ is also monotonically non increasing, since $$\bigcup_{i > L+1} R_i\setminus \bigcup_{j \leq L+1} R_j \subset \bigcup_{i > L} R_i\setminus \bigcup_{j \leq L} R_j  $$

			for every $L$.
		\end{proof}

		This motivates us to define the notion of a \index{Sieve!Minimal}\textit{minimal} sieve.
		\begin{definition}
			\label{def: minimal sieve}
			We say a sieve $R$ is minimal if for every $\gb \in \cb_R$, there is no collection of pairwise coprime ideals $\gb_1 , \dots, \gb_r $ distinct from $\gb$ with $\gb_i \mid \gb$ for every $1\leq i\leq r$, and finite sets $S_i \subset \co_K$ such that  $$R_\gb = \bigcup_{i=1}^r (S_i + \gb_i).$$ Equivalently, we say that $R$ is minimal if it has no contractions or, if there is no sieve $R'$ such that $R$ is a dilation of $R'$.
			
		\end{definition}
		
		\begin{remark}
			Throughout this section, we will always assume that given a sieve $R$ and $\gb \in \cb_R$, we have $R_\gb \neq \emptyset$. The reason is as follows: consider a sieve $R'$ such that $\cb_{R'} = \cb_R \cup A$, with $R'_\gb = R_\gb$ if $\gb \in \cb_R$ and $R'_\gb =\emptyset$ if $\gb \in A$, to be a dilation of $R$. A sieve obtained from $R$ by removing those  $\gb $ in $\cb_R$ such that $R_\gb = \emptyset$ is then a contraction of $R$. In particular, a minimal sieve $R$ will then necessarily, as a sieve which has no contractions, be a sieve such that $R_\gb\neq \emptyset$ for every $\gb \in \cb_R$.
		\end{remark}

		\begin{example}
			\label{ex: B-free systems are minimal}
			The simplest example of minimal sieves are those coming from $\cb-$free systems. Say that $R_\gb = \gb$ for every $\gb \in \cbr$. Taking any $\gb' \mid \gb$ distinct from $\gb$, we cannot possibly have $x+\gb' \subset R_\gb = \gb$, given that $x+\gb'$ will contain $[\gb:\gb'] >1$ congruence classes mod $\gb$. 
			
			For another example, consider a sieve $R$ given by $R_p = p^2\z$ for every prime different from $5$, and such that $R_5 = \{0,5,6,10, 15,20\} + 25 \z$. Then, $R_5 = 5\z \cup (\{6\}+25\z)$, which cannot be written as the union of congruence classes mod $5$ (the only number that divides $25$ distinct from $1$ and $25$). Since $p^2\z$ cannot be written as the union of congruence classes mod $p$, it follows that $R$ is minimal. 
			
			Finally, let $R$ be the sieve defined by $R_1 = \{1,2,7,13,17,19,25\} + 30 \z$, and $R_i = p_{i+3}^2\z$ for $i \geq 2$. Then, $R_1 = (1+6\z) \cup (2+15\z). $ Since $6$ and $15$ are not coprime, and no other combination of the form $x+b\z$ is contained in $R_1$ with $b\mid 30$, it follow that $R$ is minimal, despite of the fact that for every $x \in R_1$, there is some $\ga \mid 30\z$ such that $x+\ga \subset R_1$.
			
			For an example of non-minimal sieves, let $W$ be a sieve such that $W_5 = \{0,5,10, 15,20\} + 25 \z$. We can write $W_5 = 5 \z$, so $W$ is not minimal. Alternatively, consider a sieve $W'$ such that $W'_6 =\{0,2,3,4\} + 6 \z$.  Since $W'_6 = 2\z \cup 3\z$, we also see that $W'$ is not minimal.
		\end{example}

		Intuitively, for any sieve $R$, we can keep contracting it, until we end up with a minimal sieve. In the next lemma we show that this is indeed the case.

		\begin{lemma}
			\label{lm: every sieve is equivalent to minimal sieve}
			Let $R$ be an Erdős sieve. There is a minimal sieve $R'$ equivalent to $R$. Additionally, if $R$ has weak/strong light tails for some Følner sequence $I_N$, then so does $R'$.
		\end{lemma}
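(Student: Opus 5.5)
The plan is to contract each $R_\gb$ separately as far as it will go, and then assemble the pieces into a single contraction of $R$. The point is that a contraction acts on each ideal of $\cb_R$ independently; moreover each $\gb$ has only finitely many divisors, so each local contraction process stops after finitely many steps.

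\textbf{Step 1: local minimal decomposition.} Fix $\gb \in \cb_R$. Consider all ways of writing
\[
R_\gb = \bigcup_{i=1}^r (S_i+\gb_i),
\]
where the $\gb_i \mid \gb$ are pairwise coprime and each $S_i+\gb_i \neq \emptyset$; the trivial decomposition $r=1$, $\gb_1=\gb$ is always allowed. Among these, choose one minimising $\sum_i N(\gb_i)$. Such a choice exists because $\gb$ has finitely many divisors, so there are only finitely many candidate decompositions.

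We claim the resulting pieces $R'_{\gb_i} := S_i+\gb_i$ are each minimal. Suppose some piece $S_i+\gb_i$ could itself be written as $\bigcup_j (S'_j+\gb'_j)$ with pairwise coprime proper divisors $\gb'_j$ of $\gb_i$. The $\gb'_j$ divide $\gb_i$, which is coprime to the other $\gb_k$, so they are coprime to those $\gb_k$ as well. Replacing the $i$-th piece by this finer union therefore gives a new admissible decomposition of $R_\gb$. Its norm sum is strictly smaller, since
\[
\sum_j N(\gb'_j) < N(\gb_i).
\]
This inequality is the key point. For proper pairwise coprime divisors one has $\prod_j N(\gb'_j) \mid N(\gb_i)$ with each factor at least $2$, and $\sum_j x_j \le \prod_j x_j$ when all $x_j\ge 2$. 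The only gap is the case $\prod_j N(\gb'_j) = N(\gb_i)$, which could give equality when everything equals $2$. To handle it cleanly, I will instead minimise the lexicographic quantity ``first maximise the number of pieces $r$, then minimise $\sum N(\gb_i)$''. Refining a non-minimal piece either increases $r$, or leaves $r$ unchanged while replacing $\gb_i$ by a single proper divisor, which strictly lowers the norm. Either way we contradict the choice. Note also that $r$ is bounded by the number of prime factors of $\gb$, so this lexicographic optimum exists.

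\textbf{Step 2: globalise.} Define $R'$ to be supported on the union over $\gb\in\cb_R$ of the chosen $\{\gb_1,\dots,\gb_r\}$, with $R'_{\gb_i}=S_i+\gb_i$. We check the required properties as follows.
\begin{itemize}
\item The support consists of pairwise coprime ideals: within a block by construction, and across blocks because divisors of coprime ideals are coprime.
\item Each $R'_{\gb_i}\neq\co_K$, since $R'_{\gb_i} \subset R_\gb \neq \co_K$ (pass to residues mod $\gb$).
\item $\cb_{R'}$ is infinite, since each $\gb$ contributes at least one ideal.
\item The blocks give a partition of $\cb_{R'}$ as in \Cref{def: contraction}, so $R'$ is a contraction of $R$. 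Equivalently, $R$ is a dilation of $R'$, and in particular $R\sim R'$.
\item $R'$ is minimal, since minimality is a condition on each $R'_{\gb'}$ separately, and Step 1 guarantees it for every piece.
\end{itemize}

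\textbf{Step 3: transfer of properties.} Since $R$ is a dilation of $R'$, \Cref{lm: Erds and light tails preserved by change of base} applies. It gives that $R'$ is Erdős, and that $R'$ has weak (respectively strong) light tails for $I_N$ exactly when $R$ does. One should verify that the dilation structure fits \Cref{def: dilation}: take the partition blocks to be the sets of $\gb_i$ dividing a given $\gb$, and take $\ga_A=\gb$. Then $\gc(A)=\mathrm{lcm}(\gb,\gb_1,\dots)=\gb$, and the coprimality requirements hold.

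The main obstacle I expect is the termination argument in Step 1, namely choosing the right potential function so that every refinement strictly decreases it. The lexicographic (number of pieces, total norm) choice is what I would try first.
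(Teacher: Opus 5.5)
Your proposal is correct and follows essentially the paper's route. Both decompose each $R_\gb$ into minimal pieces modulo pairwise coprime divisors of $\gb$, assemble these into $R'$, observe that $R$ is a dilation of $R'$ (with $\ga_A=\gb$), and invoke \Cref{lm: Erds and light tails preserved by change of base}. The only difference is the termination argument. The paper iterates contractions and bounds the number of steps by the number of divisors of $\gb$. You instead pick an extremal decomposition via the lexicographic potential (number of pieces, then total norm), which also correctly covers the case a pure norm-sum potential would miss: two coprime divisors of norm $2$ of an ideal of norm $4$.
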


		\begin{proof}
			Fix any $\gb \in \cb_R$. The sieve $R'$ can be constructed recursively using the following algorithm. If $R_\gb$ cannot be written as the union of some $W_{\gb'}$, with $\gb'$ in some finite set $A_\gb$ such that $\gb' \mid \gb$ and $\gb' \neq \gb$, then we add $\gb$ to $\cb_{R'}$ and set $R'_\gb := R_\gb$. Otherwise, we replace $\gb$ in $\cb_R$ by all of the $\gb' \in A_\gb$, and set $R_{\gb'} := W_{\gb'}$. Repeatedly applying this process now for every $R_{\gb'}$, we will eventually terminate in such a way that no $R'_{\gb'}$ can be further contracted for any $\gb' \in \cb_{R'}$ that divides our original $\gb$, and that the union of $R'_{\gb'}$ for all such $\gb'$ equals $R_\gb$. The number of steps until termination is finite, as it cannot be greater than the number of divisors of $\gb$. Repeating the process now for each $\gb \in \cb_R$ will produce the entire minimal sieve $R'$. 
			
			Letting $A_\gb$ be the set of all $\gb' \in \cb_{R'}$ that divide $\gb$, we will have that the collection $\{A_\gb\}_{\gb \in \cb_R}$ is a partition of $\cb_{R'}$, and $R_\gb = \bigcup_{\gb' \in A_\gb} R'_{\gb'}. $ It follows that $R$ is a dilation of $R'$, and so by \Cref{lm: Erds and light tails preserved by change of base}, the sieve $R$ is Erdős with weak/strong light tails for $I_N$ if and only if $R'$ also is.
		\end{proof}
		
		\begin{example}
			Consider the sieve $R$ with $\cb_R = \{p_{2i}p_{2i+1}^2: i \in \mathbb{N}\}$ defined by $$R_i = \{jp_{2i+1}^2: 0\leq j < p_{2_i}\} + p_{2i}p_{2i+1}^2\z.$$ This sieve is equivalent to the sieve $R'$ defined by $R'_i = p_{2i+1}^2\z$, which is minimal (as shown in \Cref{ex: B-free systems are minimal}). Also, since $R'$ has strong light tails (by  \Cref{thm: K etale algebra light tails for B_n finite}), so does $R$.
		\end{example}
		
		We now want to study the extent to which any particular sieve is uniquely equivalent to a minimal sieve. The weak light tails property will play an important role here. We start with two lemmas, which will help us characterize when $R \sim R'$.

		\begin{lemma}
			\label{lm: if contains coprime ideal then no weak light tails}
			Let $R$ be an Erdős sieve. If there is some ideal $\gc$ coprime to every element of $\cb_R$ such that $y+\gc \subset \fr^c$ for some $y \in \co_K$, then $R$ does not have weak light tails for any Følner sequence.
		\end{lemma}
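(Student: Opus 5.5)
Suppose for contradiction that $R$ has weak light tails for some Følner sequence $I_N$. The plan is to compute the density of $\fr \cap (y+\gc)$ in two ways and get a contradiction.

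First, apply the first part of \Cref{lm: Fr intercepted by ideal} with $x=0$, $b=y$ and the ideal $\gc$. This is allowed because $\gc$ is coprime to every element of $\cb_R$. It gives
$$d_I(\fr \cap (y+\gc)) = \frac{d_I(\fr)}{N(\gc)}.$$
Second, the hypothesis $y+\gc \subset \fr^c$ means $\fr \cap (y+\gc) = \emptyset$. So the left-hand side is $0$, which forces $d_I(\fr) = 0$.

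Now compare this with \Cref{thm:Fr is generic}. Since $R$ has weak light tails for $I_N$, that theorem gives
$$d_I(\fr) = \prod_{\gb \in \cb_R}\left(1-\frac{|R_\gb|}{N(\gb)}\right).$$
It remains to show this product is strictly positive. Each factor is positive, because $R_\gb \neq \co_K$ gives $|R_\gb| < N(\gb)$. The Erdős condition $\sum_\gb |R_\gb|/N(\gb) < \infty$ then makes the infinite product converge to a positive number. Hence $d_I(\fr) > 0$, contradicting $d_I(\fr) = 0$, and so $R$ has weak light tails for no Følner sequence.

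The only point needing care is this positivity of the product. It is the standard fact that $\prod(1-a_i) > 0$ when every $a_i < 1$ and $\sum a_i < \infty$. To see it, note that $a_i \le 1/2$ for all large $i$. On those terms $\log(1-a_i) \ge -2a_i$, so the tail product is bounded below by $e^{-2\sum a_i} > 0$. The finitely many remaining factors are each positive.

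Equivalently, one could bypass the lemma and apply \Cref{thm: density of x+A in fr} with $A=\{0\}$, which is $R$-admissible since $R_\gb \neq \co_K$. Either route reduces to the equidistribution of $\fr$ in residue classes modulo an ideal coprime to $\cb_R$, which is exactly what \Cref{lm: Fr intercepted by ideal} provides.
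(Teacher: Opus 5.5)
Your proof is correct, but it takes a different route from the paper's. You argue by contradiction. You feed the assumed weak light tails into \Cref{lm: Fr intercepted by ideal} to get $d_I(\fr\cap(y+\gc)) = d_I(\fr)/N(\gc)$. You then use \Cref{thm:Fr is generic}, together with the Erd\H{o}s condition, to get $d_I(\fr)>0$. This is a contradiction, since $\fr\cap(y+\gc)$ is empty.

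The paper argues directly instead. For each fixed $L$, the hypothesis $y+\gc\subset\fr^c$ gives
\[
(y+\gc)\setminus\bigcup_{i\le L}R_i \subset \bigcup_{i>L}R_i\setminus\bigcup_{j\le L}R_j .
\]
The density of the left-hand side is computed by the finite Chinese Remainder equidistribution lemma, applied to $(y+\gc)^c,R_1,\dots,R_L$. This yields the lower bound $\frac{1}{N(\gc)}\prod_{i\le L}\bigl(1-|R_i|/N(\gb_i)\bigr)\ge \frac{1}{N(\gc)}\prod_{i}\bigl(1-|R_i|/N(\gb_i)\bigr)>0$ on the weak-light-tails quantity, for every $L$ and every F\o lner sequence.

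Comparing the two:
\begin{itemize}
\item Your version is shorter given the black boxes. However, it rests on Part I results whose proofs already exploit weak light tails and genericity, and it only produces a contradiction.
\item The paper's version uses nothing beyond finite CRT counting, and it gives an explicit, uniform lower bound.
\item The same computation is what the paper reuses for \Cref{rmk: if contains coprime ideal then no weak light tails}. Your approach would also handle that remark, via the second part of \Cref{lm: Fr intercepted by ideal}.
\end{itemize}

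Your alternative via \Cref{thm: density of x+A in fr} with $A=\{0\}$ only replaces the positivity step. The equidistribution modulo $\gc$ still has to come from \Cref{lm: Fr intercepted by ideal}, as you note. Your elementary argument for positivity of the infinite product is fine.
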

		
		\begin{proof}
			Fix some integer $L \geq 1$. Due to our hypothesis, we have that for every $N$,
			$$ \left|\left(\bigcup_{i=L+1}^\infty R_i  \setminus \bigcup_{i=1}^L R_i \right) \cap I_N \right| \geq \left|\left((y+\gc) \setminus \bigcup_{i=1}^L R_i\right)  \cap I_N\right|. $$  By applying  \Cref{lm: equidistribution for Følner sequences} (for the sets $(y+\gc)^c,R_1,\dots,R_L$), we get that $$\lim_{N\rightarrow\infty} \frac{1}{|I_N|} \left|\left((y+\gc) \setminus \bigcup_{i=1}^L R_i\right)  \cap I_N\right| = \frac{1}{N(\gc)}\prod_{i=1}^L \left(1-\frac{|R_i|}{N(\gb_i)}\right).$$ Since $R$ is Erdős, the limit converges to some positive constant bigger than  $0$ as we take $L$ to infinity, which concludes the proof.
		\end{proof}
		
		\begin{remark}
			\label{rmk: if contains coprime ideal then no weak light tails}
			Assume that $R$ is an Erdős sieve for which there is some $x \not \in R_1$ such that $$x+\gb_1 \subset \bigcup_{i \geq 2} R_i.$$ Then, because  $ (x+\gb_1) \cap R_1  =\emptyset$, the computation done in  \Cref{lm: if contains coprime ideal then no weak light tails} also implies that $R$ does not have weak light tails for any Følner sequence. 
		\end{remark}

		\begin{lemma}
			\label{lm: aux for Characterization of equivalent sieves}
			Let $R$ and $R'$ be Erdős sieves such that $R$ has weak light tails for some Følner sequence $I_N$. If $R \sim R'$, then for any $\gb' \in \cb_{R'}$ we have $$R'_{\gb'} \subset \bigcup_{\gb \in \cb_{R}:(\gb,\gb') \neq 1} R_{\gb}.$$ In particular, there must be some $\gb\in \cb_R$ such that $(\gb,\gb') \neq 1$.
		\end{lemma}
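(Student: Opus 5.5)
We argue by contradiction, using \Cref{lm: if contains coprime ideal then no weak light tails}. Fix $\gb' \in \cb_{R'}$ and suppose some $y \in R'_{\gb'}$ avoids every $R_\gb$ with $(\gb,\gb') \neq 1$. Since $R'_{\gb'} = S + \gb'$ is a union of full congruence classes modulo $\gb'$, we have $y + \gb' \subset R'_{\gb'} \subset \frp^c = \fr^c$. The ideal $\gb'$ itself need not be coprime to $\cb_R$, so we cannot apply \Cref{lm: if contains coprime ideal then no weak light tails} directly. Instead we extract a sub-progression along an ideal coprime to $\cb_R$ that lies in $\fr^c$.

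The ideals $\gb \in \cb_R$ not coprime to $\gb'$ form a finite set, since they are pairwise coprime and each shares a prime factor with $\gb'$. Call them $\gb_1,\dots,\gb_k$, with $k$ possibly zero. Write $\gb' = \gc \cdot \gd$, where $\gd$ collects the prime-power parts of $\gb'$ at primes dividing some $\gb_j$, and $\gc$ is coprime to every element of $\cb_R$. Let $\ge = \gd\,\gb_1\cdots\gb_k$. By hypothesis $y \notin R_{\gb_j}$ for every $j$. So the class $y + \ge$ avoids $R_{\gb_1},\dots,R_{\gb_k}$, because $R_{\gb_j}$ is a union of classes mod $\gb_j$ and $\gb_j \mid \ge$.

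Consider the set $y + \gc\ge$. It lies in $y + \gb'$, hence in $\fr^c$, and it also lies in $y+\ge$, hence misses $R_{\gb_1},\dots,R_{\gb_k}$. Therefore $y+\gc\ge \subset \bigcup_{\gb \in \cb_R\setminus\{\gb_1,\dots,\gb_k\}} R_\gb$.

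Rather than quoting the lemma verbatim, we rerun the computation from the proof of \Cref{lm: if contains coprime ideal then no weak light tails} (see also \Cref{rmk: if contains coprime ideal then no weak light tails}). Order $\cb_R$ so that $\gb_1,\dots,\gb_k$ come first, and take $L \ge k$. For every $N$,
$$\left|\left(\bigcup_{i>L}R_i\setminus\bigcup_{i\le L}R_i\right)\cap I_N\right| \ge \left|\left((y+\gc\ge)\setminus \bigcup_{k<i\le L}R_i\right)\cap I_N\right|,$$
because $y+\gc\ge$ already avoids $R_1,\dots,R_k$.

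The ideal $\gc\ge$ is coprime to $\gb_i$ for $i>k$: $\gc$ is coprime to all of $\cb_R$, $\gd$ involves only primes dividing $\gb_1,\dots,\gb_k$, and those are coprime to $\gb_i$. So \Cref{lm: equidistribution for Følner sequences}, applied as in that proof, gives density
$$\frac{1}{N(\gc\ge)}\prod_{k<i\le L}\left(1-\frac{|R_i|}{N(\gb_i)}\right).$$
Because $R$ is Erdős, this stays bounded below by a positive constant as $L\to\infty$. This contradicts weak light tails. Hence $R'_{\gb'}\subset\bigcup_{(\gb,\gb')\neq1}R_\gb$.

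For the final claim, note that $R'_{\gb'}\neq\emptyset$ (the standing assumption of this section, or trivially if empty). A nonempty set cannot lie in an empty union, so some $\gb$ with $(\gb,\gb')\ne1$ must exist. Indeed, if $k=0$ the argument above already gives the contradiction with $\ge=\co_K$.

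The main obstacle is the coprimality bookkeeping in the second paragraph: splitting $\gb'$ into the parts coprime and not coprime to $\cb_R$, and checking that the finitely many non-coprime $R_{\gb_j}$ can be avoided on a whole class. This works because $y$ avoids them and each $R_{\gb_j}$ is periodic modulo $\gb_j$.
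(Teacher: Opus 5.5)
Your proof is correct and is in substance the paper's argument. Both proofs exhibit a full residue class inside $\fr^c$ that misses the finitely many $R_{\gb_j}$ with $(\gb_j,\gb')\neq 1$, with modulus (the paper's $\mathrm{lcm}(\gb',\gb_1,\dots,\gb_k)$, your $\gb'\gb_1\cdots\gb_k$) coprime to all remaining ideals of $\cb_R$, and both then contradict weak light tails via the equidistribution lemma and positivity of the Erdős product.

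The only difference is packaging. The paper merges $\gb',\gb_1,\dots,\gb_k$ into a single ideal of a dilation $W$ and invokes \Cref{rmk: if contains coprime ideal then no weak light tails} together with \Cref{lm: Erds and light tails preserved by change of base}, treating the case $k=0$ separately via \Cref{lm: Fr intercepted by ideal}. You instead reorder and rerun the computation directly, which covers $k=0$ uniformly.

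Your splitting of $\gb'$ into coprime and non-coprime parts is superfluous, since $\gb'\gb_1\cdots\gb_k$ is already coprime to every $\gb_i$ with $i>k$. The final clause rests on the standing assumption $R'_{\gb'}\neq\emptyset$, just as the paper's proof does.
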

		
		\begin{proof}
			We start by fixing some $\gb' \in \cb_{R'}$ and show that if there is no $\gb \in \cb_R$ such that $(\gb,\gb') \neq 1$, then $R \not \sim R'$. Indeed, if this is the case, then $(\gb',\gb)  =1$ for every $\gb \in \cb_R$. Take any $b$ such that $b+\gb' \subset R_{\gb'}$.  \Cref{lm: Fr intercepted by ideal} implies that $\fr \cap (b+\gb')  \neq \emptyset$, but $\frp \subset (b+\gb')^c$. It follows that $\fr \not\subset \frp$, so $R \not \sim R'.$

			In order to prove the result, we show that if there is some $\gb' \in \cb_{R'}$ such that  $$R'_{\gb'} \not \subset \bigcup_{\gb \in \cb_{R}:(\gb,\gb') \neq 1} R_{\gb}, $$ then $R \not \sim R'$. To do  this, we  assume that $R \sim R'$, and we will reach a contradiction.

			If $R \sim R'$, then $R'_{\gb'} \subset \bigcup_{\gb\in \cb_R }  R_\gb$. Writing $\gc = \text{lcm}(\{\gb \in \cb_R: (\gb,\gb') \neq 1 \}\cup\{\gb'\})$, and taking some $x$ such that $x \in R'_{\gb'} \setminus\bigcup_{(\gb,\gb') \neq 1} R_{\gb} $, we have that $x+\gb' \subset R'_{\gb'}$ and $(x + \gb)\cap  R_\gb = \emptyset$ for every $\gb$ such that $(\gb,\gb') \neq 1$. Therefore, \begin{equation}
				\label{eq: help lemma aux for Characterization of equivalent sieves 1}
				(x+\gc) \cap \bigcup_{\gb\in \cb_R:(\gb,\gb') \neq 1} R_{\gb} = \emptyset
			\end{equation}
			and it follows that
			\begin{equation}
				\label{eq: help lemma aux for Characterization of equivalent sieves 2}
				x+\gc \subset R'_{\gb'} \setminus \bigcup_{\gb\in \cb_R:(\gb,\gb') \neq 1} R_{\gb} \subset \bigcup_{\gb \in \cb_R}  R_{\gb} \setminus \bigcup_{\gb\in \cb_R:(\gb,\gb') \neq 1} R_{\gb} = \bigcup_{\gb\in \cb_R:(\gb,\gb') = 1   }  R_\gb.
			\end{equation}

			However, this cannot happen. Let $W$ be the sieve supported on the set $$\cb_W = \{\gc\} \cup \{\gb \in \cb_R: (\gb,\gb') = 1 \}$$ and defined by $$W_\gc = \bigcup_{\gb\in \cb_R:(\gb,\gb') \neq 1   }  R_\gb + \gc,$$ with $W_\gb = R_\gb$ if $\gb\in \cb_W$ is different from $\gc$. We see that $W$ is a dilation of $R$. Therefore, if $R$ has weak light tails for $I_N$, so does $W$ by  \Cref{lm: Erds and light tails preserved by change of base}. Yet, we can rewrite \Cref{eq: help lemma aux for Characterization of equivalent sieves 1} as $(x+\gc) \cap W_\gc = \emptyset$ and \Cref{eq: help lemma aux for Characterization of equivalent sieves 2} as  $(x+\gc) \subset \bigcup_{\gb \neq \gc} W_\gb$, which by \Cref{rmk: if contains coprime ideal then no weak light tails} implies that $W$ does not have weak light tails for any Følner sequence. We obtain the desired contradiction.	
		\end{proof}
		
		We can use  \Cref{lm: aux for Characterization of equivalent sieves} to provide a property that allows us to determine whether two sieves with weak light tails are equivalent.
		
		\begin{lemma}
			\label{lm:Characterization of equivalent sieves}
			Let $R$ and $R'$ be Erdős Sieves with weak light tails for some Følner sequence (not necessarily common to both). Then $R \sim R'$ holds if and only if for every $\gb' \in \cb_{R'}$, there is some $\gb \in \cb_{R}$ such that $(\gb,\gb') \neq 1 $, with $$R'_{\gb'} \subset \bigcup_{\gb \in \cb_{R}:(\gb,\gb') \neq 1} R_{\gb},$$ and if for every $\gb \in \cb_{R}$, there is some $\gb' \in \cb_{R'}$ such that $(\gb,\gb') \neq 1 $, with $$R_{\gb} \subset \bigcup_{\gb' \in \cb_{R'}:(\gb,\gb') \neq 1} R'_{\gb'}.$$
		\end{lemma}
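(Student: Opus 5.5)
The forward implication follows from \Cref{lm: aux for Characterization of equivalent sieves}, applied in both directions. Suppose $R \sim R'$. Since $R$ is Erdős with weak light tails for some Følner sequence, that lemma applied to the pair $(R,R')$ gives, for every $\gb' \in \cb_{R'}$, both the inclusion
$$R'_{\gb'} \subset \bigcup_{\gb \in \cb_R:(\gb,\gb')\neq 1} R_\gb$$
and the existence of some $\gb$ with $(\gb,\gb')\neq 1$. Equivalence is symmetric, so $R' \sim R$. Because $R'$ also has weak light tails for its own Følner sequence, applying the lemma with the roles of $R$ and $R'$ swapped gives the second family of conditions. The lemma only needs weak light tails for one sieve and for some Følner sequence, so it does not matter that the two sieves may use different Følner sequences.

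For the converse, assume both families of conditions hold and show $\fr = \frp$ by comparing complements. Let $x \in \fr^c$. Then $x \in R_\gb$ for some $\gb \in \cb_R$, and the second condition gives
$$R_\gb \subset \bigcup_{\gb' :(\gb,\gb')\neq 1} R'_{\gb'} \subset \bigcup_{\gb' \in \cb_{R'}} R'_{\gb'} = \frp^c,$$
so $x \in \frp^c$. Symmetrically, the first condition gives $\frp^c \subset \fr^c$. Therefore $\fr^c = \frp^c$, hence $\fr = \frp$ and $R \sim R'$. This direction uses neither the light tails hypothesis nor the existence clauses; those clauses are simply part of what the forward direction produces.

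There is no real obstacle here. All the substantive work, the contradiction via the dilation $W$ and \Cref{rmk: if contains coprime ideal then no weak light tails}, was already done in \Cref{lm: aux for Characterization of equivalent sieves}. The only point to check is that the hypotheses of that lemma hold in both orders, which they do because each sieve separately has weak light tails for some Følner sequence and is Erdős.
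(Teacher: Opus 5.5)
Your proposal is correct and follows the paper's proof: the forward direction is \Cref{lm: aux for Characterization of equivalent sieves} applied once to the pair $(R,R')$ and once to $(R',R)$, which is legitimate because each sieve separately is Erdős with weak light tails. The converse is the direct comparison of complements, $\fr^c = \frp^c$, which uses no light-tail hypothesis.
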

		
		\begin{proof}
			If $R \sim R'$, then because both sieves have weak light tails, the fact that $R$ and $R'$ satisfy this property follows directly from  \Cref{lm: aux for Characterization of equivalent sieves}. Hence, all we have to show is that if this property holds, then $R \sim R'$. But this is clear, since for every $\gb \in\cb_R$ and $\gb' \in\cb_{R'}$, we are assuming that $R_\gb \subset \frp^c$ and $R'_{\gb'} \subset \fr^c$, which shows that $\fr = \frp$, and so $R \sim R'$. 
		\end{proof}
		
		With this, we can show that if $R$ has weak light tails, then the minimal sieve $R'$ obtained by repeated contraction of $R$ is the unique minimal sieve equivalent to $R$ with weak light tails. In order to prove this result, we need the following lemma.
		
		\begin{lemma}
			\label{lm: if x gb in union must belong to one}
			Let $\gb$ be an ideal of $\co_K$, and $\gb_1,\dots,\gb_r$ a collection of ideals such that $(\gb,\gb_i) \neq 1$. Let $R_{\gb_i}$ be a collection of congruence classes modulo $\gb_i$ such that $R_{\gb_i} \neq \co_K$. If $$x+\gb \subset \bigcup_{i=1}^r R_{\gb_i},$$ then there is some $j$ such that $x+\gb \subset R_{\gb_j}$.
		\end{lemma}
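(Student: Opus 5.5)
The plan is to argue by contradiction using the Chinese Remainder Theorem in its general (non-coprime) form. I will assume, as in every application in this section, that the $\gb_i$ are pairwise coprime, since they come from the support of a sieve. This hypothesis is genuinely needed. For instance, with $\gb = 2\z$, $\gb_1=\gb_2=4\z$, $R_{\gb_1}=1+4\z$ and $R_{\gb_2}=3+4\z$, the class $1+2\z$ is covered by the union but by neither piece.

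Suppose that for every $j$ we have $x+\gb\not\subset R_{\gb_j}$. Then for each $j$ we can pick $y_j\in x+\gb$ with $y_j\notin R_{\gb_j}$. Because $R_{\gb_j}$ is a union of classes modulo $\gb_j$, it suffices to find a single $z$ satisfying the simultaneous congruences
\[
z\equiv x \pmod{\gb},\qquad z\equiv y_j \pmod{\gb_j}\quad (1\le j\le r).
\]
Such a $z$ lies in $x+\gb$ and satisfies $z+\gb_j=y_j+\gb_j$, so $z\notin R_{\gb_j}$ for every $j$. This contradicts $x+\gb\subset\bigcup_j R_{\gb_j}$.

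To solve the system I will use the generalized Chinese Remainder Theorem for a Dedekind domain, applied componentwise in the étale algebra $K$ (that is, in $\co_K=\co_{K_1}\times\dots\times\co_{K_l}$). It states that a system $z\equiv a_k \pmod{\ga_k}$ is solvable if and only if $a_k\equiv a_m \pmod{\ga_k+\ga_m}$ for all $k,m$. I will verify the two kinds of compatibility condition.
\begin{itemize}
\item For the pair $(\gb_j,\gb_k)$ with $j\neq k$, the ideals are coprime, so $\gb_j+\gb_k=\co_K$ and there is nothing to check.
\item For the pair $(\gb,\gb_j)$, we need $x\equiv y_j \pmod{\gb+\gb_j}$. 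This holds because $y_j-x\in\gb\subset\gb+\gb_j$.
\end{itemize}
Hence a solution $z$ exists, which gives the contradiction.

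The only step needing care is the generalized CRT itself. If I prefer not to quote it, I will prove it by first reducing to the coprime case. Since the $\gb_j$ are pairwise coprime, the ordinary CRT gives $w$ with $w\equiv y_j \pmod{\gb_j}$ for all $j$; note that $\gb_1\cdots\gb_r=\bigcap_j\gb_j$, which I write $\gb_1\cdots\gb_r$. It then remains to solve the two-ideal system $z\equiv x \pmod{\gb}$, $z\equiv w \pmod{\gb_1\cdots\gb_r}$. This is solvable precisely when
\[
x-w\in \gb+\gb_1\cdots\gb_r=\textstyle\prod_j(\gb+\gb_j),
\]
where the equality uses that the ideals $\gb+\gb_j$ are pairwise coprime. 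Finally, $x-w\in\gb+\gb_j$ for each $j$, since $x-w=(x-y_j)+(y_j-w)\in\gb+\gb_j$, and an element lying in each of a family of pairwise coprime ideals lies in their product. Note that the hypothesis $(\gb,\gb_i)\neq 1$ is not used in this argument.
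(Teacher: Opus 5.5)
Your proof is correct. You are also right that the statement needs the $\gb_i$ to be pairwise coprime, and your example is a valid counterexample without that hypothesis. The paper's proof uses coprimality tacitly: it applies the Chinese Remainder Theorem to $\gb_1,\gb_2$ and, in the induction step, to $\gb_1$ and $\gb_2\cdots\gb_r$. Neither your argument nor the paper's actually needs $(\gb,\gb_i)\neq 1$ or $R_{\gb_i}\neq\co_K$.

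The two proofs share a core idea: produce an element of $x+\gb$ whose residue modulo each $\gb_j$ is a prescribed class outside $R_{\gb_j}$. The execution differs.

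\begin{itemize}
\item The paper first reduces to $r=2$ by inducting, replacing $\gb_2,\dots,\gb_r$ with $\gc=\prod_{i\geq 2}\gb_i$ and $R_\gc=\bigcup_{i\geq 2}R_{\gb_i}$. For $r=2$ it translates to $x=0$. It then shows by a cardinality count that the image of $\gb$ in $\co_K/\gb_1\gb_2\cong\co_K/\gb_1\times\co_K/\gb_2$ is the full product of its images in the two factors. The count uses that the image of $\gb$ in $\co_K/\ga$ has $N(\ga)/N(\gb+\ga)$ elements, and that $N(\gb+\gb_1\gb_2)=N(\gb+\gb_1)N(\gb+\gb_2)$.
\item You handle all $r$ at once through simultaneous congruences. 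Your compatibility check rests on the identity $\gb+\gb_1\cdots\gb_r=\prod_j(\gb+\gb_j)$, which is exactly the ideal identity behind the paper's norm equality.
\end{itemize}

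Your version is purely algebraic. It avoids norms and the induction bookkeeping, and it works in any commutative ring with pairwise comaximal $\gb_j$, since $\gb+\prod_j\gb_j\subseteq\bigcap_j(\gb+\gb_j)=\prod_j(\gb+\gb_j)\subseteq\gb+\prod_j\gb_j$. The paper's counting argument is more concrete, but it relies on the quotients $\co_K/\gb_i$ being finite and on multiplicativity of the norm.
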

		
		\begin{proof}
			
			The first step is to show that if $x + \gb \subset R_{\gb_1} \cup R_{\gb_2}$, then $x + \gb$ is contained in either $R_{\gb_1}$ or $R_{\gb_2}$. Once we have this, then for any collection of ideals $\gb_1, \dots, \gb_r$, we can consider $\gc = \prod_{i=2}^r \gb_i$ and $R_\gc = \bigcup_{i=2}^r R_{\gb_i}$, such that the condition $x + \gb \subset \bigcup_{i} R_{\gb_i}$ becomes $x + \gb \subset R_{\gb_1} \cup R_{\gc}$. This will now imply that either $x + \gb \subset R_{\gb_1}$ or $x + \gb \subset R_{\gc}$. Using induction on $R_\gc$, we conclude that there is some $j$ such that $x + \gb \subset R_{\gb_j}$.
			
			By replacing $R_{\gb_i}$ by $R_{\gb_i}-x$, we reduce the problem to showing that if $\gb \subset R_{\gb_1} \cup R_{\gb_2}$, then $\gb$ is a subset of either $R_{\gb_1}$ or $R_{\gb_2}$. To show this, let us assume to the contrary, that $\gb$ is not a subset of either of these sets, but it is a subset of their union. Let $\phi: \co_K/\gb_1\gb_2 \rightarrow \co_K/\gb_1 \oplus\co_K/\gb_2 $ be the map that sends $x$ to $(x + \gb_1, x + \gb_2)$. Since $\gb_1$ and $\gb_2$ are coprime, the Chinese Remainder Theorem implies that it is a bijection. Let $\iota: \gb \rightarrow \co_K/\gb_1\gb_2$ be the map that sends $x$ to $x+\gb_1\gb_2$. Our hypothesis is that $\iota(\gb) \subset \phi^{-1}(R_{\gb_1}\times \co_K/\gb_2)\cup \phi^{-1}(\co_K/\gb_2 \times R_{\gb_2})$, but there are $x,y\in \gb$ such that $x \not \in R_{\gb_1}$ and $y \not \in R_{\gb_2}$. 
			
			Notice that the map $\phi$ is a bijection between $\iota(\gb)$ and elements of the form $(x+\gb_1,y+\gb_2)$ with $x,y \in \gb$. Indeed, it is injective by the Chinese Remainder Theorem, and it is clear that all elements of $\phi(\iota(\gb))$ are of the form given. So, to see the bijectivity, it is enough to check that both the domain and image have the same size. The image of $\gb$ inside of $\co_K/\gc$ for any ideal $\gc$ has cardinality $N(\gc)/N\left[(\gb,\gc)\right]$ (here $N\left[(\gb,\gc)\right]$ corresponds to the norm of the ideal $\gb+ \gc$), so the result follows from $$\frac{N(\gb_1\gb_2)}{N\left[(\gb,\gb_1\gb_2)\right]} = \frac{ N(\gb_1)}{N\left[(\gb,\gb_1)\right]} \frac{N(\gb_2)}{N\left[(\gb,\gb_2)\right]},$$ which is a consequence of $\gb_1, \gb_2$ being coprime, and multiplicativity of the norm. 
			
			Therefore, it cannot happen that there are $x,y\in \gb$ such that $x \not \in R_{\gb_1}$ and $y \not \in R_{\gb_2}$, since under these conditions $\phi^{-1}(x + \gb_1,y + \gb_2 )$ is not in $\phi^{-1}(R_{\gb_1}\times \co_K/\gb_2)\cup \phi^{-1}(\co_K/\gb_2 \times R_{\gb_2})$, but we have just shown it must be in $\iota(\gb)$.
		\end{proof}
		
		We can now show that if $R$ has weak light tails for $I_N$, we can contract it until we get a minimal sieve with weak light tails for $I_N$, and that this sieve is the unique minimal sieve equivalent to $R$ that has weak light tails for some Følner sequence.
		
		\begin{theorem}
			\label{thm: uniqueness of minimal sieve}
			Let $R$ be an Erdős sieve. If there exists a minimal sieve $R'$  equivalent to $R$ with weak light tails for any Følner sequence $I_N$, then it is the unique minimal sieve with weak light tails for $I_N$ that is equivalent to $R$.
		\end{theorem}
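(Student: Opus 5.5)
Suppose $R'$ and $W$ are both minimal Erdős sieves equivalent to $R$, each with weak light tails (for $I_N$, or more generally for some Følner sequences). The goal is to show $W = R'$. Since $\sim$ is an equivalence relation, $W \sim R'$. The plan has three steps: match the ideals of the two sieves, show the matched ideals are equal, and then show the congruence-class sets agree.

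\textbf{Step 1: pair the ideals.} Because both sieves have weak light tails, \Cref{lm:Characterization of equivalent sieves} applies in both directions. So for every $\gb' \in \cb_{R'}$ we have
$$R'_{\gb'} \subset \bigcup_{\gb \in \cb_W: (\gb,\gb')\neq 1} W_\gb ,$$
and the symmetric inclusion holds for each $\gb \in \cb_W$.

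Fix $\gb' \in \cb_{R'}$ and take any $x \in R'_{\gb'}$. Then $x + \gb' \subset R'_{\gb'}$, so $x+\gb'$ lies in the union of the $W_\gb$ with $(\gb,\gb')\ne 1$. The union is finite, since $\gb'$ has finitely many prime divisors and the $\gb$ are pairwise coprime. By \Cref{lm: if x gb in union must belong to one}, there is a single $\gb$ with $x+\gb' \subset W_\gb$.

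The key claim is that this forces $\gb \mid \gb'$. The argument: $W_\gb$ is a union of classes mod $\gb$, and $x+\gb' \subset W_\gb$ implies $x + (\gb+\gb') \subset W_\gb$, because $W_\gb + \gb = W_\gb$. Now $x+\gb' \subset W_\gb \subset \bigcup_{(\gc',\gb)\ne 1} R'_{\gc'}$. Applying \Cref{lm: if x gb in union must belong to one} again to the coset $x+(\gb+\gb')$ would place it inside a single $R'_{\gc'}$.

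\textbf{Step 2: show matched ideals are equal.} I would use minimality to exclude proper divisors. Write $\gd = \gb+\gb'$. We get $R'_{\gb'} = \bigcup_{x \in R'_{\gb'}} (x+\gd_x)$ with $\gd_x = \gb_x + \gb' \supseteq \gb'$. If every $\gd_x$ were a proper divisor of $\gb'$, then, after grouping by the $\gb_x$ (distinct $\gb_x$ are pairwise coprime, hence so are the $\gd_x$), this would express $R'_{\gb'}$ as a union of classes modulo pairwise coprime proper divisors. That contradicts minimality of $R'$. Hence some $x$ has $\gb' \mid \gb$.

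Running the symmetric argument from $\gb$ gives some $\gc' \in \cb_{R'}$ with $\gb \mid \gc'$. Then $\gb' \mid \gc'$, and pairwise coprimality in $\cb_{R'}$ forces $\gc' = \gb'$. Therefore $\gb=\gb'$.

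One more point is needed: every $x \in R'_{\gb'}$ must land in this same $\gb$. Suppose instead that $x$ lands in some other $\gb_x$, coprime to $\gb'$ except through proper factors. Then the classes would split, and the grouping argument would again give a proper contraction. This part needs care, because the split has to involve only pairwise coprime proper divisors. If the grouping fails, I would fall back on \Cref{rmk: if contains coprime ideal then no weak light tails} and weak light tails to rule out stray pieces.

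\textbf{Step 3: compare the sets.} Once $\cb_{R'}=\cb_W$ with $R'_{\gb}\subset W_\gb$, symmetry gives $W_\gb \subset R'_\gb$, so $W=R'$.

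I expect the main obstacle to be the step in Step 2 showing that the covering by the $W$-classes can be reorganised into a contraction of $R'$ unless the matching ideal is exactly $\gb'$. The difficulty is that different $x$ may be covered through different, non-coprime divisors $\gd_x$, much like the example $(1+6\z)\cup(2+15\z)$. To handle this, I would group the $x$ by the unique $\gb_x \in \cb_W$ and use pairwise coprimality of $\cb_W$ to make the $\gd_x$ pairwise coprime.
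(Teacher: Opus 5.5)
Your outline matches the paper's proof: the characterization lemma, the single-coset lemma, minimality forcing a divisibility, and then the symmetric argument. Grouping the $x$ by $\gb_x$ correctly handles the non-coprime issue you were worried about. The gap is that the step the minimality argument rests on is asserted rather than proved.

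Step 2 starts from $R'_{\gb'}=\bigcup_x\bigl(x+(\gb_x+\gb')\bigr)$, and this requires $x+(\gb_x+\gb')\subset R'_{\gb'}$. Step 1 only gives $x+(\gb_x+\gb')\subset W_{\gb_x}\subset \frp^c$. A union of cosets that merely contains $R'_{\gb'}$ says nothing about whether $R'$ can be contracted. The ``key claim'' you state instead ($\gb\mid\gb'$) is neither proved at that point nor what Step 2 uses.

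This is exactly where the paper uses weak light tails a second time. Take $z\in x+(\gb_x+\gb')$; the coset $z+\gb'$ lies in $\frp^c$. If $z\notin R'_{\gb'}$, then $z+\gb'$ is a coset of $\gb'$ disjoint from $R'_{\gb'}$ but covered by the other $R'_{\gc'}$. That contradicts \Cref{rmk: if contains coprime ideal then no weak light tails}.

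Your own setup also closes the gap. The single-coset lemma (whose proof only uses that the covering ideals are pairwise coprime) places $x+(\gb_x+\gb')$ inside a single $R'_{\gc'}$. Since $\gb'\subseteq\gb_x+\gb'$, the coset $x+\gb'$ lies in $R'_{\gc'}$. If $\gc'\neq\gb'$, then $\gc'+\gb'=\co_K$, so $x+\gb'$ meets every class modulo $\gc'$ and forces $R'_{\gc'}=\co_K$. Hence $\gc'=\gb'$. The same fix is needed when you run the symmetric argument from $\gb$.

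Separately, the ``one more point'' paragraph and its fallback are unnecessary. Once $\gb'\in\cb_W$, pairwise coprimality of $\cb_W$ makes $\gb'$ the only element of $\cb_W$ not coprime to $\gb'$. \Cref{lm:Characterization of equivalent sieves} then gives $R'_{\gb'}\subset W_{\gb'}$ directly, and symmetry finishes Step 3.
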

		
		\begin{proof}

			Let $W$ and $W'$ be two minimal sieves equivalent to $R$ with weak light tails for some (possibly distinct) Følner sequences, supported on the sets $\cb_W$ and $\cb_{W'}$ respectively. We want to show that $W = W'$. By transitivity, we have $W \sim W'$, and so  \Cref{lm:Characterization of equivalent sieves} tells us that for any $\gb \in \cb_W$, we have $$W_\gb \subset \bigcup_{\gc \in\cb_{W'}: (\gb,\gc) \neq 1 } W'_\gc. $$
			
			Take any $x \in W_\gb$.  By \Cref{lm: if x gb in union must belong to one},  we have that $x+\gb$ is contained in some $W'_\gc$. Hence, we have that $x+\gb + \gc \subset W'_\gc +\gc$, which means that $x+(\gb,\gc) \subset W'_\gc$. Since $W \sim W'$, it follows that $x+(\gb,\gc) \subset \cf_W^c$. We now show that this implies that $x+(\gb,\gc) \subset W_\gb$. Take any $y \in(\gb,\gc) $. If $(x+y+\gb)\cap W_\gb = \emptyset$, then because $x+y+\gb \subset \cf_W^c$,  \Cref{rmk: if contains coprime ideal then no weak light tails} would imply that $W$ does not have weak light tails for any Følner sequence. Since this is not the case, it follows that $(x+y+\gb) \subset W_\gb$ for any $y \in(\gb,\gc)$, so we must have $x+(\gb,\gc) \subset W_\gb$.

			Repeating this for every $x \in W_\gb$, we get that there are $\gc_1, \dots, \gc_r \in \cb_{W'}$ with $(\gb,\gc_i) \neq 1$, such that $x+(\gb,\gc_i) \subset W_\gb$. We get $W_\gb = \bigcup_{i=1}^r A_i + (\gb,\gc_i)$ for some finite sets $A_i$ (corresponding to those $x \in W_\gb$ such that $x+ (\gb,\gc_i) \subset W_\gb$). Since $W$ is minimal, this implies that there is at least one $i$ such that $(\gb,\gc_i) = \gb$, which implies that $\gb \mid \gc_i $. Since the elements of $\cb_{W'}$ are pairwise coprime,  $\gc_i$ must be the unique ideal in $\cb_{W'}$ that is not coprime to $\gb$, and so $W_\gb \subset W'_{\gc_i}$. Let us denote this $\gc_i$ by $\gc$.
			
			By reversing the argument, now using the minimality of $W'$, we will get that there is a unique ideal $\gb' \in \cb_W$ such that $\gc \mid \gb'$ and $W'_\gc \subset W_{\gb'}$. This together with $\gb \mid \gc$ implies that $\gb = \gb'$, and so, in fact, we must have $\gb = \gc$ and $W_\gb = W'_\gc$. Repeating this for every ideal, we conclude that $\cb_W = \cb_{W'}$, and that $W$ and $W'$ are the same sieve. Consequently, a sieve $R$ can only be equivalent to one unique minimal sieve with weak light tails for some $I_N$.                            
		\end{proof}
		
		\begin{remark}
			\label{rmk: equivalent sieves with and without weak light tails}
			First, notice that an Erdős sieve may be not equivalent to any minimal sieve with weak light tails. Indeed, the sieve $R_i = \{-i,i\} +p_i^2\z$ is such that $d_I(\fr) = 0$ for every Følner sequence, so if an Erdős sieve $R'$ with weak light tails for $I_N$ were to be equivalent to it, we would satisfy $0 = d_I(\fr) = d_I(\frp) >0$, which is absurd.
			
			On the other hand, notice that this theorem does not imply that if a sieve is minimal and has weak light tails, that then  there are no other minimal sieves equivalent to it, just that these sieves will not have weak light tails. Take the sieve $W$ defined by $$W_1 = 1+4\z \hspace{15pt} W_{2i} = 1+4(i-1) +p_{2i}^2\z \hspace{15pt}  W_{2i+1} = 1-4(i-1)+p_{2i+1}^2\z$$ and $W'$ defined by $W'_i = W_{i+1}$. In Example 5.14 of \cite{part1} we showed that $W$ has weak light tails for $I_N = [0,N]$, while $W'$ does not. Yet, both are minimal and they are equivalent. This shows that although $W$ and $W'$ are equivalent, they cannot be obtained from one another by contractions or dilations.
			
			Additionally, notice that if $R$ has weak light tails for some sequence $I_N$, and $R'$ is this unique minimal sieve with weak light tails for $I_N$, then $R$ will have weak light tails for some other Følner sequence $F_N$ if and only if $R'$ also does.
		\end{remark}
		
		Yet, if we assume that a sieve $R$ has strong light tails, then there is a unique minimal sieve $R'$ such that $R \sim R'$. This is a consequence of  \Cref{thm: uniqueness of minimal sieve} together with the following result, which is of interest in itself.
		
		\begin{theorem}
			\label{thm: Strong light tails preserved under isomorphism of sieves}
			Let $R$ and $R'$ be sieves, such that $R$ is Erdős and has strong light tails for some Følner sequence $I_N$. If $R \sim R'$, then $R'$ is also Erdős with strong light tails for $I_N$. 
		\end{theorem}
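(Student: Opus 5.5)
The plan is to transfer the smallness of the tails of $R$ to $R'$ through the containment of \Cref{lm: aux for Characterization of equivalent sieves}, and then read off both the Erdős property and the strong light tails of $R'$ from it. Since strong light tails imply weak light tails, $R$ has weak light tails for $I_N$. \Cref{lm: aux for Characterization of equivalent sieves} then gives, for every $\gb' \in \cb_{R'}$,
$$R'_{\gb'} \subset \bigcup_{\gb \in \cb_R : (\gb,\gb')\neq 1} R_\gb.$$
That lemma is stated for two Erdős sieves. Its proof, however, only uses the Erdős property and weak light tails of $R$: it applies \Cref{lm: Fr intercepted by ideal} to $R$ and a dilation of $R$. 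It never uses that $R'$ is Erdős, so I would first remark that it holds for an arbitrary sieve $R'$ equivalent to $R$.

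The key combinatorial observation comes next. Fix $L$. Each $\gb_j$ with $j \le L$ has finitely many prime ideal factors, and since the elements of $\cb_{R'}$ are pairwise coprime, each prime divides at most one $\gb' \in \cb_{R'}$. Hence only finitely many $\gb' \in \cb_{R'}$ fail to be coprime to $\gb_1\cdots\gb_L$. Order $\cb_{R'} = \{\gb'_1,\gb'_2,\dots\}$. Then there is $M(L)$ such that for $i > M(L)$ the ideal $\gb'_i$ is coprime to $\gb_1,\dots,\gb_L$, and the displayed containment gives $R'_i \subset \bigcup_{j>L} R_j$. Therefore
$$\overline{d}_I\Bigl(\bigcup_{i>M(L)} R'_i\Bigr) \le \overline{d}_I\Bigl(\bigcup_{j>L} R_j\Bigr) =: a_L \longrightarrow 0.$$
The sequence $L' \mapsto \overline{d}_I\bigl(\bigcup_{i>L'}R'_i\bigr)$ is non-increasing, so it tends to $0$. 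This is the strong light tails property of $R'$ for $I_N$, once Erdős is known.

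For the Erdős property, fix $L$ with $a_L < 1$; this is possible since $a_L \to 0$. Take any finite set $F \subset \{i : i > M(L)\}$. The ideals $\gb'_i$, $i \in F$, are pairwise coprime, so \Cref{lm: equidistribution for Følner sequences} gives
$$d_I\Bigl(\bigcup_{i\in F} R'_i\Bigr) = 1 - \prod_{i \in F}\Bigl(1 - \frac{|R'_i|}{N(\gb'_i)}\Bigr).$$
The left side is at most $a_L$ by the previous paragraph. Hence $\prod_{i\in F}(1 - |R'_i|/N(\gb'_i)) \ge 1 - a_L > 0$ uniformly in $F$, so the infinite product over $i > M(L)$ is bounded below by a positive constant. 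This forces $\sum_{i > M(L)} |R'_i|/N(\gb'_i) < \infty$. Adding the finitely many remaining terms, $R'$ is Erdős.

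I expect the main obstacle to be justifying the use of \Cref{lm: aux for Characterization of equivalent sieves} without assuming $R'$ Erdős. If its proof turned out to need that hypothesis, I would instead prove the containment directly. Suppose $x \in R'_{\gb'}$ avoids every $R_\gb$ with $(\gb,\gb') \neq 1$. Dilate $R$ by merging all such $\gb$ (finitely many) with $\gb'$ into a single modulus $\gc$. Then $x + \gc$ lies in the union of the remaining classes of the dilated sieve while missing its $\gc$-component. \Cref{rmk: if contains coprime ideal then no weak light tails} together with \Cref{lm: Erds and light tails preserved by change of base} contradicts the weak light tails of $R$. None of this uses any property of $R'$ beyond $R' \sim R$.
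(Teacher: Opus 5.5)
Your proof is correct. Its strong-light-tails half is the paper's argument. The paper likewise combines the containment $R'_{\gb'}\subset\bigcup_{(\gb,\gb')\neq 1}R_\gb$ from \Cref{lm: aux for Characterization of equivalent sieves} with the fact that only finitely many $\gb'_i$ can share a prime with a given $\gb_j$; its $G(L)=\min_{l\geq L}g(l)$ plays the role of the inverse of your $M(L)$.

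The difference lies in the order of the steps and in how you get the Erdős property of $R'$. The paper proves it first, by a one-line density comparison: $0<d_I(\fr)=d_I(\frp)\leq\prod_{i\leq L}\bigl(1-|R'_i|/N(\gb'_i)\bigr)$ for every $L$. This uses \Cref{thm:Fr is generic} for $R$ and the equidistribution lemma for the first $L$ moduli of $R'$. After that, \Cref{lm: aux for Characterization of equivalent sieves} can be quoted exactly as stated. You instead apply that lemma before knowing $R'$ is Erdős, so you have to audit its proof. Your audit is right: only the Erdős property and weak light tails of $R$, together with $\fr=\frp$, are used. You then recover the Erdős property from the tail bound $a_L<1$ by equidistribution on finite sets of moduli.

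Both routes are sound, and each buys something:
\begin{itemize}
\item The paper's route is shorter, and its Erdős step needs only $d_I(\fr)>0$, i.e.\ weak light tails of $R$. It therefore shows more generally that any sieve equivalent to an Erdős sieve with weak light tails is Erdős.
\item Your route already spends the strong light tails of $R$ at the Erdős step. In exchange, it handles a $\gb'$ coprime to all of $\cb_R$ (which forces $R'_{\gb'}=\emptyset$) without the section's convention that sieve classes are non-empty. The paper's definition of $g(i)$ as a minimum tacitly relies on that convention.
\end{itemize}
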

		
		\begin{proof}
			We will assume that both $\cb_R$ and $\cb_{R'}$ are ordered. We have that $$\nu_{R}(C^R_{\{0\}, \emptyset}) = \prod_{i}\left(1-\frac{|R_i|}{N(\gb_i)}\right) >0,$$ if and only if $R$ is Erdős. Since $R$ has strong light tails, we have that  gives us $$0 <d_I(\fr) = d_I(\frp) \leq \nu_{R'}(C_{\{0\}, \emptyset}),$$ which means that $R'$ must be Erdős. 
			
			We now show that $R'$ has strong light tails for $I_N$. By  \Cref{lm: aux for Characterization of equivalent sieves}, we know that since $R \sim R'$ and $R$ has strong light tails, we have for every $\gb' \in \cb_{R'}$,
			\begin{equation}
				\label{eq: aux bp in b}
				R'_{\gb'} \subset \bigcup_{\gb \in \cb_R:(\gb,\gb') \neq 1} R_{\gb}.
			\end{equation} $$$$ 
			
			Define a function $g:\mathbb{N}\rightarrow\mathbb{N}$ by $$g(i):= \min \{j \in \mathbb{N}: (\gb_j,\gb_i') \neq 1\}.$$ Notice that for any $j$, there are at most a finite number of $i's$ such that $g(i) = j$ (with an upper bound given by the number of prime divisors of $\gb_j$). It follows that $$\liminf_{l \rightarrow \infty} g(l) = \infty.$$ 
			
			By \Cref{eq: aux bp in b} every $R'_i$ is contained in $\bigcup_{j: (\gb_j,\gb'_i) \neq 1} R_j.$ It follows that for any $i$ we have  $$R'_i \subset \bigcup_{j \geq g(i)} R_{j},$$ and so, writing $G(L) = \min_{l \geq L}g(l)$, we have $$\bigcup_{i \geq L} R'_i \subset \bigcup_{j \geq G(L)} R_{j},$$ and so 			
			$$\overline{d}_I\left(\bigcup_{i \geq L} R'_i\right) \leq \overline{d}_I\left(\bigcup_{j \geq G(L)} R_{j}\right). $$
			
			Since $\liminf_{l \rightarrow \infty} g(l) = \infty,$ we have that $G(L)$ must go to infinity as we increase $L$. Since $R$ has strong light tails, the right hand side will go to $0$ as we take $L$ to infinity, so $R'$ must also have strong light tails.
		\end{proof}
		
		\begin{remark}
			Note that $R \sim R'$ with $R$ an Erdős sieve, does not necessarily imply that $R'$ is Erdős.  For example, $R_i = \{-i,i\} + p_i^2 \z$ is clearly Erdős. Taking any non Erdős sieve $R'$ such that $\frp = \fr$, for example $R'_p = (\z\setminus p\z)  + p\z$, we see that $R \sim R'$, but $R'$ is not Erdős. 
		\end{remark}
		
		We summarize the results of this subsection in the following theorem.
		
		\begin{theorem}
			\label{thm: equivalence minimal sieves main}
			Let $R$ be an Erdős sieve. 
			\begin{itemize}
				\item There exists a minimal Erdős sieve $R'$ such that $R \sim R'$.
				\item If $R$ has weak light tails for some Følner sequence $I_N$, there exists a minimal Erdős sieve $R'$  with weak light tails for $I_N$, such that if $W$ is minimal and $W \sim R$, then $W = R'$, or $W$ does not have weak light tails for any Følner sequence.
				\item If $R$ has strong light tails for $I_N$, then there exists a unique minimal sieve $R'$ (which will have strong light tails for $I_N$) such that $R \sim R'$. 
			\end{itemize}
		\end{theorem}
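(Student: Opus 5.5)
The plan is to assemble the three bullets from the results of this subsection. The one point needing care is the status of the Erdős condition for the competing sieve $W$ in the second bullet. Recall that the paper defines weak and strong light tails only for Erdős sieves: the definition begins ``Let $R$ be an Erdős sieve. We then say that $R$ has weak light tails\dots''. So the statement ``$W$ has weak light tails for some Følner sequence'' already includes the assertion that $W$ is Erdős. I will state this convention explicitly at the start. With it, the dichotomy in the second bullet reads: either $W$ is not (Erdős with weak light tails for any Følner sequence), or $W$ is an Erdős sieve with weak light tails for some Følner sequence. No extra hypothesis on $W$ is being added.

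\emph{First bullet.} Apply \Cref{lm: every sieve is equivalent to minimal sieve} to the Erdős sieve $R$. This yields a minimal sieve $R'$ with $R\sim R'$, constructed so that $R$ is a dilation of $R'$. By \Cref{lm: Erds and light tails preserved by change of base}, $R'$ is Erdős, since $R$ is.

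\emph{Second bullet.} Let $R'$ be the sieve from the first bullet. By the same two lemmas, if $R$ has weak light tails for $I_N$, then so does $R'$, and $R'$ is Erdős. Now let $W$ be minimal with $W\sim R$. Suppose $W$ does have weak light tails for some Følner sequence $F_N$; by the convention above, $W$ is then Erdős. Then $W$ and $R'$ are two minimal Erdős sieves, both equivalent to $R$, with weak light tails for the possibly distinct sequences $F_N$ and $I_N$. The proof of \Cref{thm: uniqueness of minimal sieve} handles exactly this situation: it starts from two minimal sieves equivalent to $R$ with weak light tails for possibly different Følner sequences. It uses \Cref{lm:Characterization of equivalent sieves} and \Cref{lm: if x gb in union must belong to one} to show that each $W_\gb$ is contained in a unique $R'_\gc$ with $\gb\mid\gc$, and symmetrically. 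Therefore $W=R'$. Otherwise $W$ has weak light tails for no Følner sequence, which is the other alternative in the statement.

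\emph{Third bullet.} Let $R$ have strong light tails for $I_N$, and take $R'$ from the first bullet. By \Cref{lm: Erds and light tails preserved by change of base}, $R'$ is Erdős with strong light tails for $I_N$. Let $W$ be any minimal sieve with $W\sim R$. By \Cref{thm: Strong light tails preserved under isomorphism of sieves}, applied to $R$ (which is Erdős with strong light tails) and $W$, the sieve $W$ is Erdős with strong light tails for $I_N$. Strong light tails imply weak light tails, because
$$\bigcup_{i>L}R_i\setminus\bigcup_{j\le L}R_j\subset\bigcup_{i>L}R_i.$$
Hence $W$ has weak light tails for $I_N$, and the second bullet forces $W=R'$. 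This gives existence and uniqueness.

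The main obstacle is not a computation but the bookkeeping in the second bullet. One has to make sure that \Cref{thm: uniqueness of minimal sieve} is applied only to Erdős sieves, which the light-tails convention guarantees. One also has to check that its proof tolerates different Følner sequences for the two sieves; it does, because \Cref{lm:Characterization of equivalent sieves} is stated for sieves with weak light tails for not necessarily common Følner sequences.
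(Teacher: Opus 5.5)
Your proposal is correct and takes the paper's route: the paper gives no separate proof, presenting the theorem as a summary of \Cref{lm: every sieve is equivalent to minimal sieve} (via \Cref{lm: Erds and light tails preserved by change of base}), \Cref{thm: uniqueness of minimal sieve} and \Cref{thm: Strong light tails preserved under isomorphism of sieves}, combined as you combine them. Your two bookkeeping points are exactly what the paper leaves implicit: light tails presuppose the Erdős property, and it is the proof of \Cref{thm: uniqueness of minimal sieve}, not its single-$I_N$ statement, that allows different Følner sequences.
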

		
		Here, we write for two sieves $R$ and $R'$ that $R= R'$\index{Equality of Sieves}, if $\cb_R = \cb_{R'}$, and $R_\gb = R'_\gb$ for every $\gb \in \cb_R$. Since every Erdős $\cb-$free system is a minimal sieve which has strong light tails by  \Cref{thm: K etale algebra light tails for B_n finite}, we get the following corollary.

		\begin{corollary}
			\label{crl: Bfree systems free elements equal iff same support}
			Let $R$ and $R'$ be an Erdős $\cb-$free systems over an étale $\q-$algebra $K$. Then $R \sim R'$ if and only if $R = R'$.
		\end{corollary}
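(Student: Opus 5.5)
The corollary should follow directly from the third bullet of \Cref{thm: equivalence minimal sieves main}; the only inputs still needed are that Erdős $\cb-$free systems are minimal and have strong light tails. The direction $R = R' \Rightarrow R \sim R'$ is trivial: equal sieves sieve out the same congruence classes, so $\fr = \frp$. All the work is in the converse.

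For the converse, assume $R \sim R'$ with both sieves Erdős $\cb-$free systems over $K$.

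\begin{enumerate}
\item By \Cref{ex: B-free systems are minimal}, both $R$ and $R'$ are minimal. The reason is that for $\gb' \mid \gb$ with $\gb' \neq \gb$, a coset $x+\gb'$ meets $[\gb:\gb'] > 1$ classes modulo $\gb$. So it cannot lie inside $R_\gb = \gb$, which is a single class. Hence no $R_\gb$ can be written as a union of cosets of proper divisors of $\gb$.
\item By \Cref{thm: K etale algebra light tails for B_n finite}, applied with the finite set $T = \{0\}$ (valid because $R_i = \gb_i = 0 + \gb_i$), the sieve $R$ has strong light tails for $B_N$. The Erdős hypothesis $\sum_i 1/N(\gb_i) < \infty$ is exactly what that theorem requires. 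The same argument applies to $R'$.
\item The third bullet of \Cref{thm: equivalence minimal sieves main} now gives a unique minimal sieve equivalent to $R$. Both $R$ (trivially) and $R'$ (by hypothesis) are minimal and equivalent to $R$, so $R = R'$. This means $\cb_R = \cb_{R'}$ and $R_\gb = R'_\gb$ for every $\gb$.
\end{enumerate}

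There is no real obstacle in this argument. The one point to check is that the uniqueness in \Cref{thm: equivalence minimal sieves main} is unconditional among all minimal sieves, not only those with weak light tails. This holds because \Cref{thm: Strong light tails preserved under isomorphism of sieves} shows that any sieve equivalent to $R$ inherits the Erdős property and strong light tails. Here that is automatic anyway, since $R'$ was shown directly in step 2 to have strong light tails. So one can instead invoke \Cref{thm: uniqueness of minimal sieve} with the common Følner sequence $B_N$.
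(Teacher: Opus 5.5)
Your proof is correct and is the paper's own argument: the corollary is derived from the third bullet of \Cref{thm: equivalence minimal sieves main}, using that Erdős $\cb-$free systems are minimal (\Cref{ex: B-free systems are minimal}) and have strong light tails for $B_N$ (\Cref{thm: K etale algebra light tails for B_n finite}). Your added check that this uniqueness covers all minimal sieves is also correct: it follows from \Cref{thm: Strong light tails preserved under isomorphism of sieves}, or directly because $R'$ itself has strong light tails, which lets you invoke \Cref{thm: uniqueness of minimal sieve} with $B_N$.
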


		\subsection{Union of Sieves}
		
		Suppose that we are given two sieves $R$ and $R'$, which we can assume to be minimal. We want to define the notion of union of sieves. If $R$ and $R'$ are defined over the same set of ideals $\cb$, then we can define this simply as $(R\cup R')_i := R_i \cup R'_i + \gb_i$.

		More generally, given two sieves $R$ and $R'$, it might be possible to dilate them in such a way that we obtain sieves $W$ and $W'$, such that $\cb_W = \cb_{W'}$, and then we can define $(R\cup R')_\gc = W_\gc \cup W'_\gc $ for $\gc \in \cb_W$.

		We therefore need to find a suitable set $\mathcal{C}$ on which both $W$ and $W'$ will be supported. The obvious choice would be $\cb_{R} \cup \cb_{R'}$, but we want $\mathcal{C}$ to be made of pairwise coprime ideals, so in general this won't do. Surely, if $\gb \in \cb_{R}$ is such that $(\gb, \gb') = 1$ for every $\gb' \in \cb_{R'}$, then it would make sense to add $\gb$ to $\mathcal{C}$. If this is not the case, we might still have that for example  $(\gb, \gb') \neq 1$ for some unique $\gb'\in \cb'$ (such that $\gb$ is also the unique element of $\cb$ such that $(\gb,\gb')\neq 1$). Then it would still make sense to add $\gc = \text{lcm}(\gb,\gb')$ to $\mathcal{C}$, and define $W_\gc = R_\gb + \gc$ and $W'_\gc = R'_{\gb'} + \gc$.

		These considerations lead us to the following definition. We define a graph $\mathcal{G}_{R,R'}$, whose vertices will be the elements of $\cb_R \cup \cb_{R'}$, and where there is an edge between $\gb$ and $\gb'$ if $(\gb,\gb') \neq 1$. It is clear that there can only be an edge between vertices if the corresponding ideals do not come from the same set (so  $\mathcal{G}_{R,R'}$ will always be a bipartite graph). The graph $\mathcal{G}_{R,R'}$ will be the union of distinct connected components, which may or may not be infinite. We will now use this graph to define the common base for our sieves, assuming that $\mathcal{G}_{R,R'}$ does not contain an infinite component.
		
		\begin{definition}
			Let $R$ and $R'$ be two sieves. Let $\mathcal{G}_{R,R'}$ be the graph with set of vertices $\cb_R \cup \cb_{R'}$, and an edge between $\gb, \gb'$ if $(\gb,\gb') \neq 1$. Let $\mathcal{C^*}(R,R')$ denote the collection of connected components of $\mathcal{G}_{R,R'}$.  Given $c \in \mathcal{C^*}(R,R')$ finite, we define $$\gc(c) := \text{lcm}(\{\gb: \gb \in c\}).$$  If every element of $\mathcal{C^*}(R,R')$ is finite, then we set $\mathcal{C}(R,R') = \gc(\mathcal{C^*}(R,R'))$ and say that $R$ and $R'$ have a common basis.\index{Sieves with a Common Basis}
		\end{definition} 
		
		We must verify that the ideals in $\mathcal{C}(R,R')$ are indeed pairwise coprime. Suppose that we can find some distinct $\ga_1,\ga_2$ such that $(\ga_1,\ga_2) \neq 1$. Then, there would be distinct connected components $c_1$ and $c_2$ of $\mathcal{G}_{R,R'}$ such that $\ga_i = \gc(c_i)$, and $(\gc(c_1),\gc(c_2)) \neq 1$. But it is a property of the least common multiple, that if $A$ and $B$ are finite sets, and $(\text{lcm}(A),\text{lcm}(B)) \neq 1$, then $(a,b) \neq 1$ for some $a\in A$ and $b \in B$. This would imply that there is an edge between elements of $c_1$ and $c_2$, which is impossible, as these were taken to be distinct connected components.
		
		\begin{remark}
			\label{rmk: sieves without common base}
			It might happen that two sieves don't have a common basis. Consider the example where we define two sieves by $R_i = p_{2i-1}p_{2i}\z$ and $R'_i = p_{2i}p_{2i+1}\z$. We see that for any $i$, $ p_{2i-1}p_{2i}$ is not coprime to $ p_{2i}p_{2i+1}$, which is not coprime to $ p_{2i+1}p_{2i+2)}$.
			
			Additionally, two sieves might not have have a common base, but be equivalent to sieves that do have a common base. Let $q_i$ denote the $i-$th prime that is congruent to $1 \mod 4$, and  $r_i$ the $i-$th prime that is congruent to $3 \mod 4$. Let $R$ be the sieve supported on the set $\cb_{R} = \{q_i^2r_i^2\z: i \in \mathbb{N}\}$ and defined by $R_i = q_i^2\z + q_i^2r_i^2\z$. Define also a sieve $R'$, with support on the set $\cb_{R'} = \{q_i^2r_{i+1}^2\z: i \in \mathbb{N}\}$,  by $R'_i = r_{i+1}^2\z + q_i^2r_{i+1}^2\z$. These sieves don't have a common basis, since for every $i$, $(q_i^2r_i^2, q_i^2r_{i+1}) \neq 1$ and $(q_{i+1}^2r_{i+1}^2, q_i^2r_{i+1}) \neq 1$, meaning that $\mathcal{G}_{R,R'}$ will be a connected infinite graph. Yet, we see that $R$ is equivalent to the sieve $W$ such that $\cb_W = \{q_i^2\z:i \in \mathbb{N}\}$ and $W_i = q_i^2\z$, and $R'$ is equivalent to the sieve $W'$ such that  $\cb_{W'} = \{r_{i}^2\z:i \in \mathbb{N}\}$ with $W'_1 = \emptyset$ and $W'_i = r_i^2\z$ otherwise. Although $R$ and $R'$ don't have a common base, $W$ and $W'$ do have one, since $\mathcal{G}_{W,W'}$ is a graph with no edges.
			
			Note that if $R$ and $R'$ are two sieves, and $W,W'$ are contractions of $R$ and $R'$ respectively, then there is a map from  $\mathcal{G}_{W,W'}$ into $\mathcal{G}_{R,R'}$ that sends to $\gb \in \cb_R$ all those $\gc \in \cb_{W}$ that divide $\gb$  (and similarly for ideals of $\cb_{R'}$ and $\cb_{W'}$). Additionally, if there is an edge from $\gc \in \cb_W $ to $\gc' \in \cb_{W'}$, this will be sent to the edge between the $\gb \in \cb_R$ divisible by $\gc$ and the  $\gb' \in \cb_{R'}$ divisible by $\gc'$. It follows that if there is an infinite connected component in $\mathcal{G}_{W,W'}$, it will be sent by this map into one in $\mathcal{G}_{R,R'}$. Hence, two sieves can only have a common base if the  minimal sieves to which they equivalent to (from \Cref{lm: every sieve is equivalent to minimal sieve}) have a common basis.
		\end{remark}
		
		If $R$ and $R'$ have a common basis, we define the sieves $W$ and $W'$ by $$W_{\gc(c)} = \bigcup_{\gb \in c\cap \cb_R} (R_\gb + \gc(c)) \hspace{30pt} W'_{\gc(c)} = \bigcup_{\gb' \in c\cap \cb_{R'}} (R'_{\gb'} + \gc(c)).$$
		Then $W$ is equivalent to $R$, since $R_\gb + \gc(c) = R_\gb$, and every $\gb \in \cb_R$ will belong to one connected component of $\mathcal{G}_{R,R'}$. Notice that it may happen that for some  $c \in \mathcal{C^*}(R,R')$, we have $c\cap \cb_R = \emptyset$ or $c\cap \cb_{R'} = \emptyset$. In this case, we will have $W_{\gc(c)} = \emptyset$ or $W'_{\gc(c)} = \emptyset$. Additionally, if $R$ and $R'$ are Erdős with weak/strong light tails for some $I_N$, then  \Cref{lm: Erds and light tails preserved by change of base} guarantees that so are $W$ and $W'$.

		This allows us to define the union of $R$ and $R'$ as the union of $W$ and $W'$. That is, as the sieve $R \cup R'$ defined over $\mathcal{C}$ by $$(R \cup R')_{\gc(c)} = W_{\gc(c)} \cup W'_{\gc(c)} = \bigcup_{\gb \in c\cap \cb_R} (R_\gb + \gc(c)) \cup \bigcup_{\gb' \in c\cap \cb_{R'}} (R'_{\gb'} + \gc(c)).$$ If $(R \cup R')_{\gc(c)} \neq \co_K$ for every $c \in\mathcal{C^*}$, then we say that the union of $R$ and $R'$ is well defined. \index{Union of Sieves}

			\begin{example}
				Let $K$ be an imaginary quadratic number field, and for every $p$ that splits in $K$, let $\gp_p$ be the prime such that $\gp_p \overline{\gp}_p = p \co_K$. Define a sieve $R$ as the $\cb-$free system supported on the set  $\cb_R = \{\gp_p^2: p \text{ splits in } \co_K\}$ and a sieve $R'$ as the $\cb-$free system supported on $\cb_{R'} = \{\overline{\gp}_p^2: p \text{ splits in } \co_K\}$. Then, we get a sieve $R\cup R'$ which is the $\cb-$free system supported on $\cb_R \cup \cb_{R'}$.
				
				For another example, let $q_i$ and $r_i$ be as in \Cref{rmk: sieves without common base}, and define the sieves $R$ and $R'$ by $R_i =q_i + q_i^2\z$, $R'_i = r_i + r_i^2\z$. Then, we have that $R \cup R'$ is the sieve supported on $\{q_i^2r_i^2\z:i \in \mathbb{N}\}$ and defined by $(R \cup R')_i = ((q_i+q_i^2\z) \cup ( r_i + r_i^2\z)) + q_i^2r_i^2\z$.
				
				On the other hand, if $R$ and $R'$ are sieves such that $R_1 = 0+2\z$ and $R'_1 = 1+2\z$, then their union is not well defined, since $R_1 \cup R'_1 = \z$.
			\end{example}
			
			Given any $\gb \in \cb_R \cup \cb_{R'}$, take the $\gc \in \mathcal{C}$ such that $\gb \mid \gc$. Then, it is clear from the definition that $R_\gb = R_\gb + \gc \subset (R \cup R')_{\gc}$. It follows that

			$$\mathcal{F}_{R \cup R'} = \left(\bigcup_{\gc \in \mathcal{C}} (R \cup R')_{\gc}\right)^c 
			= \left( \bigcup_{\gb \in \cb} R_\gb \cup \bigcup_{\gb \in \cb'} R'_\gb\right)^c = \fr \cap \frp. $$  
			
			Our interest in the union of sieves is twofold. First, as we have just shown, the $(R\cup R')-$free elements correspond to the intersection of $\fr$ and $\frp$. Second, the union of sieves allows us to take sieves with weak/strong light tails, and obtain new sieves with the same properties. This is shown in the following lemma.
			
			\begin{lemma}
				\label{lm: union of sieves}
				Let $R$ and $R'$ be two sieves for which their union is well defined and $I_N$ some Følner sequence. The sieve $R \cup R'$ is Erdős if and only if both $R$ and $R'$ are Erdős. If both $R$ and $R'$ have weak light tails for $I_N$, then so does $R \cup R'$. Additionally, $R \cup R'$ will have strong light tails for some $I_N$ if and only if both $R$ and $R'$ have strong light tails for $I_N$.

			\end{lemma}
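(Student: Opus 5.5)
First I reduce to a common basis. By definition $R\cup R'$ is the union of the dilations $W$ and $W'$ of $R$ and $R'$, both supported on $\mathcal{C}=\mathcal{C}(R,R')$. By \Cref{lm: Erds and light tails preserved by change of base}, $W$ (resp. $W'$) is Erdős, has weak light tails, or has strong light tails for $I_N$ exactly when $R$ (resp. $R'$) does. So it suffices to prove the lemma for two sieves $W,W'$ on the same support $\mathcal{C}=\{\gc_1,\gc_2,\dots\}$, with $(W\cup W')_i=W_i\cup W'_i$. I order $\mathcal{C}$ once and use that order for all three sieves. This is allowed because the light tail properties do not depend on the ordering.

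\textbf{Erdős property.} For the forward direction, $|W_i\cup W'_i|\le |W_i|+|W'_i|$, so $\sum_i \vol((W\cup W')_i)\le \sum_i\vol(W_i)+\sum_i\vol(W'_i)<\infty$. For the converse, $W_i\subset (W\cup W')_i$ gives $\vol(W_i)\le\vol((W\cup W')_i)$, and the same holds for $W'$. Hence the series for $W$ and $W'$ are dominated by the series for $W\cup W'$.

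\textbf{Strong light tails.} Write $U_L=\bigcup_{i>L}W_i$ and $U'_L=\bigcup_{i>L}W'_i$. Then $\bigcup_{i>L}(W\cup W')_i=U_L\cup U'_L$. Upper density is subadditive and monotone, so
$$\max\bigl(\overline{d}_I(U_L),\overline{d}_I(U'_L)\bigr)\le \overline{d}_I(U_L\cup U'_L)\le \overline{d}_I(U_L)+\overline{d}_I(U'_L).$$
Letting $L\to\infty$ gives both directions of the equivalence.

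\textbf{Weak light tails.} This is the step that needs care, because the tail sets involve set differences. Write $V_L=\bigcup_{j\le L}W_j$ and $V'_L=\bigcup_{j\le L}W'_j$. Then
$$\bigcup_{i>L}(W\cup W')_i\setminus\bigcup_{j\le L}(W\cup W')_j=(U_L\cup U'_L)\setminus(V_L\cup V'_L)\subset (U_L\setminus V_L)\cup(U'_L\setminus V'_L).$$
The upper density of the left side is therefore at most $\overline{d}_I(U_L\setminus V_L)+\overline{d}_I(U'_L\setminus V'_L)$, and this tends to $0$ by the weak light tails of $W$ and $W'$. This is the only implication claimed for weak light tails, so no converse is needed; indeed the converse fails, because removing the larger set $V_L\cup V'_L$ can hide tails of $W$ that are covered by $W'$.

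Along the way I must check that the weak light tail property for $W$ and $W'$ is exactly the hypothesis transferred from $R$ and $R'$ through \Cref{lm: Erds and light tails preserved by change of base}. I also must handle components $c$ with $W_{\gc(c)}=\emptyset$: the empty set contributes nothing to any union, and the lemma applies since dilations are allowed to carry empty classes (see the remark on empty $R_\gb$). I expect this bookkeeping of the reduction to a common basis to be the main point needing care; the density estimates themselves are routine.
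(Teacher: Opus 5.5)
Your proposal is correct and follows essentially the same route as the paper. You dilate to a common basis, then use $\max(|W_i|,|W'_i|)\le|W_i\cup W'_i|\le|W_i|+|W'_i|$ for the Erdős property, the analogous max/sum sandwich on the tail unions for strong light tails, and the containment $(U_L\cup U'_L)\setminus(V_L\cup V'_L)\subset(U_L\setminus V_L)\cup(U'_L\setminus V'_L)$ for weak light tails; your extra care about components carrying empty classes matches the paper's convention that adjoining empty classes counts as a dilation.
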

			
			\begin{proof}
				By dilating $R$ and $R'$ if necessary, we are free to assume that $R$ and $R'$ are defined over a common base $\cb$, and that $(R \cup R')_\gb = R_\gb \cup R_{\gb'}$. Hence, using the fact that $$\max(|R_\gb|,|R_\gb'|) \leq |(R \cup R')_\gb| \leq |R_\gb|+|R_\gb'|,$$ we see that $R \cup R'$ is Erdős if and only if both $R$ and $R'$ also are.
				
				Similarly, we have that, ordering $\cb = \{\gb_1,\gb_2,\dots\}$,
				$$ \max\left(\left|I_N \cap \bigcup_{i > L} R_i \right|, \left|I_N \cap \bigcup_{i> L} R'_i \right|\right)  \leq \left|I_N \cap \bigcup_{i>L} (R\cup R')_{i}\right| \leq \left|I_N \cap \bigcup_{i> L} R_i \right| + \left|I_N \cap \bigcup_{i>L} R'_i \right|, $$  so $(R \cup R')$ will have strong light tails for $I_N$ if and only if both $R$ and $R'$ have strong light tails with respect to $I_N$.
				
				Note that if $x \not \in (R \cup R')_i$, then both $x \not \in R_i$ and $x \not \in R'_i$, and so
				$$\left|I_N \cap \bigcup_{i>L} (R\cup R')_{i} \setminus \bigcup_{j \leq L}(R\cup R')_{j}\right| \leq \left|I_N \cap \bigcup_{i\geq L} R_i\setminus \bigcup_{j<L} R_j \right| + \left|I_N \cap \bigcup_{i> L} R'_i\setminus \bigcup_{j<L} R'_j \right|.$$
			
				Therefore if both $R$ and $R'$ have weak light tails for $I_N$, so does $R \cup R'$. 
			\end{proof}

			\begin{remark}
				Contrarily to the strong light tails property, it might be the case that $R\cup R'$ has weak light tails for $I_N$, without $R$ and $R'$ having both weak light tails for $I_N$. Consider the example where $R$ is defined by $R_1 = 0+4\z $ and $R_{i} = 1+4i +p_{i}^2\z$ and $R'$ is defined by $R'_1 = 1+ 4\z$ and $R'_i = p_i^2\z$ for any $i>1$. Proceeding as we did in Example 5.14 of \cite{part1}, we can show that $R$ does not have weak light tails for $B_N$ (but $R'$ has strong light tails for $B_N$ by  \Cref{thm: K etale algebra light tails for B_n finite}). The sieve $W$ defined by $W_1 = \{0,1\}+4\z$,  $W_{i} = \{0,1+4i\} +p_{i}^2\z$ is the union of $R$ and $R'$. It has weak light tails for $B_N$, as it can be written as the union of two sieves with weak light tails, $T$ and $T'$, defined by $T_1 = 1+4\z$,$T'_1 = 0+4\z$, $T_i = R_i$  and $T'_i = R'_i$ for $i>1$. 
			\end{remark}

			We provide an example over $\q \times \q$.
			
			\begin{example}
				\label{ex: carefree couples}
				In \cite{Moree}, the notion of a \textit{carefree couple}\index{Carefree Couples} is defined as a pair $(x,y) \in \z \times \z$, such that $x$ and $y$ are coprime and $x$ is squarefree. The sieve $R$ defined by $R_p = p\z\times p\z$ is such that $(x,y) \in \fr$ is equivalent to $x$ and $y$ being coprime. The sieve $R'$ defined by $R'_p = p^2\z \times \z$ is such that $(x,y) \in \frp$ is equivalent to $x \not \in p^2\z$ for every $p$, that is, to $x$ being squarefree. Consequently, the set of carefree couples corresponds to $\fr \cap \frp = \cf_{R \cup R'}$.
				
				To see that $R \cup R'$ is well defined, notice that $R$ is equivalent to the sieve $W$ defined by $$W_p = \{(jp,0):0 \leq j \leq (p-1)\}+p^2\z \times p\z$$ and $R'$ is equivalent to the sieve $W'$ defined by $$W'_p = \{(0,j): 0 \leq j \leq p-1\} + p^2\z\times p\z.$$ Hence, $R$ and $R'$ have a common basis, and writing $(R \cup R')_p = W_p \cup W'_p+ p^2\z\times p\z$, we see that $|(R \cup R')_p| = 2p-1< p^3$, so $R \cup R'$ is well defined and Erdős.
				
				Note that both $R$ and $R'$ have strong light tails for $B_N$ by  \Cref{thm: K etale algebra light tails for B_n finite}.  Consequently, by  \Cref{lm: union of sieves}, $R \cup R'$ has strong light tails for $B_N$. We can therefore compute the density of carefree pairs to be $$d(\cf_{R \cup R'}) = \prod_p\left(1-\frac{2p-1}{p^3}\right) = \frac{1}{\zeta(2)}\prod_p\left(1-\frac{1}{p(p+1)}\right) $$ as was also shown in  \cite{Moree}.
				
				If instead we want to consider the set of \textit{strongly carefree couples}\index{Strongly Carefree Couples}, where $(x,y)$ is squarefree and $y$ is also squarefree, we could instead consider the sieve $$Z_p =  \{(jp,kp):0 \leq j,k \leq (p-1)\} \cup \{(j,0): 0 \leq j \leq p^2-1\} \cup \{(0,k): 0 \leq k \leq p^2-1\} + p^2\z \times p^2\z, $$ which is such that $\cf_Z$ is the set of strongly carefree couples. Again by \Cref{thm: K etale algebra light tails for B_n finite} this is the union of three sieves with strong light tails with respect to $B_N$, so we can use  \Cref{lm: union of sieves} to conclude that it has strong light tails for $B_N$. Since $|Z_p| = 3p^2 -(p+p+1)+1 = 3p^2-2p$, we conclude that the density of strongly carefree couples is $$d(\cf_Z) = \prod_p \left(1-\frac{3p^2-2p}{p^4}\right) = \frac{1}{\zeta(2)^2}\prod_p\left(1-\frac{1}{(p+1)^2}\right).$$
			\end{example}

			\section{Spectrum and Equivalence of Dynamical Systems}

			In this section, we investigate the dynamical system $(\Omega_R,S, \nu_R)$.  First, we will give condition for when $\Omega_R = \Omega_{R'}$. We then show that $(\Omega_R,S, \nu_R)$ is isomorphic to a rotation of a compact group, and use this result to compute the spectrum of this system.

			\subsection{Equality of Dynamical Systems} We want to characterize when $\Omega_{R} = \Omega_{R'}$, and show that whenever this is the case, then $\nu_R = \nu_{R'}$. We start by doing this for sieves supported on the same set, which was already done in \cite{Fabian}. We provide a proof since some of the lemmas used to prove this result are needed for when we extend it.
			
			\begin{lemma}
				\label{lm: there are admissible representatives of R_gb^c  }
				Let $R$ be an Erdős sieve. For every $\gb \in \cb_R$, there is some $R-$admissible set $A$ such that $A+\gb = R_\gb^c$. 
			\end{lemma}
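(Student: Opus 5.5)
The plan is to build $A$ as a finite set containing exactly one representative of each congruence class modulo $\gb$ lying in $R_\gb^c$. The representatives will be chosen via the Chinese Remainder Theorem so that admissibility holds at the finitely many ideals of $\cb_R$ where it is not automatic. By definition of a sieve, $R_\gb = S_\gb + \gb \neq \co_K$, so $R_\gb^c$ is a non-empty union of $m := N(\gb) - |R_\gb|$ classes $c_1 + \gb, \dots, c_m + \gb$. Any set $A = \{a_1,\dots,a_m\}$ with $a_j \equiv c_j \bmod \gb$ satisfies $A + \gb = R_\gb^c$. It therefore only remains to arrange $-A + R_{\gb'} \neq \co_K$ for every $\gb' \in \cb_R$.

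\textbf{The ideal $\gb$ itself.} This case is immediate. Since $A \subset R_\gb^c$, we have $0 + a \notin R_\gb$ for all $a \in A$, so $0 \notin -A + R_\gb$.

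\textbf{All but finitely many $\gb' \neq \gb$.} Here I use the Erdős condition and a counting bound. The set $-A + R_{\gb'}$ is a union of at most $|A|\,|R_{\gb'}| = m|R_{\gb'}|$ classes modulo $\gb'$. Since $\sum_{\gb'} |R_{\gb'}|/N(\gb') < \infty$, we have $|R_{\gb'}|/N(\gb') < 1/m$ for all but finitely many $\gb' \in \cb_R$. For each such $\gb'$, the set $-A + R_{\gb'}$ misses some class modulo $\gb'$, so it is not $\co_K$. Note that this holds for every choice of representatives $a_j$.

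\textbf{The remaining exceptional ideals.} Let $\gb'_1, \dots, \gb'_k$ be the exceptional ideals other than $\gb$. For each $i$, pick $x_i \notin R_{\gb'_i}$, which exists because $R_{\gb'_i} \neq \co_K$. The ideals $\gb, \gb'_1, \dots, \gb'_k$ are pairwise coprime, so by the Chinese Remainder Theorem we may choose each $a_j$ with $a_j \equiv c_j \bmod \gb$ and $a_j \equiv 0 \bmod \gb'_i$ for all $i$. Then $x_i + a_j \equiv x_i \bmod \gb'_i$, so $x_i + a_j \notin R_{\gb'_i}$ for every $j$, and hence $x_i \notin -A + R_{\gb'_i}$. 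With this choice of $A$, all three cases hold at once, so $A$ is $R$-admissible.

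The only potentially delicate point is that infinitely many conditions cannot all be imposed by the Chinese Remainder Theorem. This is exactly why the Erdős condition is needed: it leaves only finitely many ideals to handle by hand. Once the cardinality bound $|-A + R_{\gb'}| \le |A|\,|R_{\gb'}|$ (counted modulo $\gb'$) is noted, the argument is routine.
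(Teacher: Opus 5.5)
Your proof is correct and follows essentially the same route as the paper. The Erdős condition leaves only finitely many $\mathfrak{b}'$ with $|R_{\mathfrak{b}'}|/N(\mathfrak{b}') \ge 1/|R_{\mathfrak{b}}^c|$, the counting bound $|-A+R_{\mathfrak{b}'}| \le |A|\,|R_{\mathfrak{b}'}| < N(\mathfrak{b}')$ handles all the other ideals, and the Chinese Remainder Theorem handles the finitely many exceptions. The only cosmetic difference is that the paper chooses the $a_j$ outside $R_{\mathfrak{b}'}$ directly, so $0$ witnesses $-A+R_{\mathfrak{b}'} \neq \mathcal{O}_K$; you instead take $a_j \equiv 0 \bmod \mathfrak{b}'_i$ and use a witness $x_i \notin R_{\mathfrak{b}'_i}$, which is the same thing up to translation.
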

			
			\begin{proof}
				Fix some $\gb \in \cb_R$, and define the set $$\Delta = \left\{\gb' \in \cb_R: \frac{|R_{\gb'}|}{N(\gb')} \geq \frac{1}{|R_\gb^c|} \right\}.$$ Because $R$ is Erdős, we have that $\Delta$ is finite. Therefore, we can use the Chinese Remainder Theorem to find $a_j$ with $1 \leq j \leq |R_\gb^c|$ such that each $a_j$ belongs to  a different congruence class not in $R_\gb$, and $a_j \not \in R_{\gb'}$ for any $\gb' \in \Delta$. 
				
				Let $A$ be the set containing each of these $a_j$. We will show that it is admissible, which implies the result. For any $\gb' \in \Delta \cup \{\gb\}$, we have by definition of $A$ that $A\cap R_{\gb'} = \emptyset$, so we are left with showing that for $\gb' \not \in \Delta \cup \{\gb\}$, we have $-A+R_{\gb'} \neq \co_K$.  But for any such $\gb'$, we have $|A|\frac{|R_{\gb'}|}{N(\gb')} < 1$. Therefore, the set $-A + R_{\gb'}$ cannot cover $\co_K$, and so $A$ is admissible.
			\end{proof}
			
			Using this we can show the following lemma.
			
			\begin{lemma}
				\label{lm: Conditions for Omega r subset omega s}
				Let $R, R'$ be two Erdős sieves supported on the same set $\cb$. Then, $\Omega_{R} \subset \Omega_{R'}$ if and only if for every $i$, there is some $\delta_i$ such that $\delta_i + R'_i \subset R_i$. 
			\end{lemma}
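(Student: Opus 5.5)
The easy direction is \emph{if}. Suppose that for every $i$ there is $\delta_i$ with $\delta_i + R'_i \subset R_i$, and take $A \in \Omega_R$. For each $i$ we have $-A+R_i \neq \co_K$, so we may pick $y \notin -A + R_i$. Then $y+A$ is disjoint from $R_i$, hence disjoint from $\delta_i + R'_i$. It follows that $(y-\delta_i) + A$ is disjoint from $R'_i$, so $y-\delta_i \notin -A+R'_i$. Thus $-A+R'_i\neq \co_K$ for every $i$, i.e.\ $A\in\Omega_{R'}$.

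For the \emph{only if} direction we use \Cref{lm: there are admissible representatives of R_gb^c  }. Fix $i$ and let $A$ be the $R$-admissible set it provides, so that $A+\gb_i = R_i^c$. Since $\Omega_R\subset\Omega_{R'}$, the set $A$ is also $R'$-admissible, and therefore $-A + R'_i \neq \co_K$. Choose $y$ with $(y+A)\cap R'_i = \emptyset$. Because $R'_i$ is a union of classes modulo $\gb_i$, this gives $(y + A + \gb_i) \cap R'_i=\emptyset$, that is, $(y + R_i^c)\cap R'_i = \emptyset$. Hence $R'_i \subset y + R_i$, and setting $\delta_i = -y$ yields $\delta_i + R'_i \subset R_i$.

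The main point is the existence of an admissible set representing every class of $R_i^c$ at once. That is exactly what \Cref{lm: there are admissible representatives of R_gb^c  } supplies, and it is where the Erdős hypothesis on $R$ is used. Once we have it, the rest is the observation that $R'_i$ is $\gb_i$-periodic, which lets us pass from $y+A$ to $y+A+\gb_i$. No light-tail assumption is needed.
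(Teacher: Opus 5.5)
Your proof is correct and follows the paper's argument. For the ``if'' direction you translate a witness $y\notin -A+R_i$ by $-\delta_i$; for the ``only if'' direction you take the $R$-admissible $A$ with $A+\gb_i=R_i^c$ from the preceding lemma, use $R'$-admissibility to find a translate of $A$ avoiding $R'_i$, and conclude via the $\gb_i$-periodicity of $R'_i$, a step the paper leaves implicit and you spell out.
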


			\begin{proof}
				Assume that for every $i$ there is some $\delta_i$ such that $R'_i \subset (R_i-\delta_i)$. For any $A$ that is $R-$admissible there is some $\epsilon_i$ such that $(\epsilon_i + A) \cap R_i = \emptyset$,  so we have $$(\epsilon_i-\delta_i + A) \cap R'_i \subset (\epsilon_i-\delta_i + A) \cap   (R_i-\delta_i) = \emptyset,$$ which means that $A$ is also $R'$-admissible.
				
				Let us now prove the other implication. By  \Cref{lm: there are admissible representatives of R_gb^c  } we can, for each $i$, find some $R-$admissible set $A$ such that $A + \gb_i = R_i^c$. Since $\Omega_{R} \subset \Omega_{R'}$, $A$ is also $R'-$admissible, so there is some $\delta_i$ such that $A+\delta_i \subset (R'_i)^c$, which means that $-\delta_i + R'_i \subset R_i$. 
			\end{proof}
			
			It is now easy to show the following equivalence.
			
			\begin{lemma}
				\label{lm: OmegaR = OmegaS same base}
				Let $R$ and $R'$ be two Erdős sieves supported on the same base $\cb$. Then $\Omega_{R} = \Omega_{R'}$ if and only if for every $i$, there is some $\delta_i$ such that $\delta_i + R_i = R'_i$.
			\end{lemma}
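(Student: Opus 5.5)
The plan is to deduce this directly from \Cref{lm: Conditions for Omega r subset omega s}, applied in both directions, together with a finite counting argument inside $\co_K/\gb_i$.

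For the easy direction, suppose $R'_i = \delta_i + R_i$ for every $i$. Then $\delta_i + R_i \subset R'_i$ holds trivially. Also $(-\delta_i) + R'_i = R_i \subset R_i$. So \Cref{lm: Conditions for Omega r subset omega s} gives both $\Omega_{R'} \subset \Omega_R$ and $\Omega_R \subset \Omega_{R'}$. I will check the direction of the translates against the exact phrasing of that lemma: it says $\Omega_R\subset\Omega_{R'}$ iff there is $\delta_i$ with $\delta_i+R'_i\subset R_i$. This only costs a sign, which is harmless because $\delta_i$ ranges over all of $\co_K$.

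For the converse, assume $\Omega_R = \Omega_{R'}$. The inclusion $\Omega_R \subset \Omega_{R'}$ gives, via the lemma, some $\delta_i$ with $\delta_i + R'_i \subset R_i$. The reverse inclusion gives some $\epsilon_i$ with $\epsilon_i + R_i \subset R'_i$.

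The key step is upgrading these inclusions to an equality. Both $R_i$ and $R'_i$ are unions of congruence classes modulo the same ideal $\gb_i$, so we can work in the finite group $\co_K/\gb_i$. Translation is a bijection there, so the two inclusions give $|R'_i| \le |R_i|$ and $|R_i| \le |R'_i|$, hence $|R_i| = |R'_i|$. A finite subset of a finite set with the same cardinality equals it, so $\delta_i + R'_i = R_i$, i.e.\ $R'_i = -\delta_i + R_i$. This is the required form after renaming $-\delta_i$.

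The main obstacle is essentially nil. The only care needed is that $R_i$ and $R'_i$ are taken modulo the same ideal $\gb_i$, which is why the hypothesis that the sieves share the common base $\cb$ is essential; otherwise comparing cardinalities of classes would be meaningless. All the real work, namely producing $R$-admissible representatives of $R_i^c$ via \Cref{lm: there are admissible representatives of R_gb^c  }, is already packaged in \Cref{lm: Conditions for Omega r subset omega s}.
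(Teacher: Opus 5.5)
Your proposal is correct and follows essentially the paper's argument: you get the two translated inclusions from \Cref{lm: Conditions for Omega r subset omega s} and upgrade them to an equality by counting classes in the finite group $\co_K/\gb_i$. The differences are cosmetic. The paper first composes the two translates to obtain $\delta_i+\delta'_i+R'_i=R'_i$ and then chains the inclusions, whereas you compare cardinalities directly, which is slightly shorter. The paper also checks the easy direction by hand rather than invoking the lemma twice.
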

			
			\begin{proof}
				If there is some $\delta_i$ such that $\delta_i + R_i = R'_i$, then $ A \cap (\delta_i + R_i) = A \cap R'_i, $ so it is clear $A$ is in $\Omega_{R}$ if and only if it is in $\Omega_{R'}$.
				
				On the other hand, assume that $\Omega_{R} = \Omega_{R'}$. By  \Cref{lm: Conditions for Omega r subset omega s}, for every $i$, there are some $\delta_i,\delta_i'$ such that $\delta_i + R_i \subset R'_i$, and $\delta'_i + R'_i \subset R_i$. But this implies that $\delta_i + \delta'_i + R'_i \subset R'_i$, and since $|\delta_i + \delta'_i + R'_i| = |R'_i|$, it follows that they must be the same. Therefore, we get the relations $$R_i \subset -\delta_i + R'_i  = \delta'_i + R'_i \subset R_i,$$ which imply that $R_i = \delta'_i +R'_i$.
			\end{proof}

			As a corollary, we get the following result.
			
			\begin{lemma}
				\label{lm: OmegaR = OmegaR' implies same meausre same base}
				Let $R$ and $R'$ be Erdős sieves. If $\Omega_R = \Omega_{R'}$ and $\cb_R = \cb_{R'}$, then $\nu_R = \nu_{R'}$.
			\end{lemma}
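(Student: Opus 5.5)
By \Cref{lm: OmegaR = OmegaS same base}, since $\cb_R = \cb_{R'}$ and $\Omega_R = \Omega_{R'}$, for every $\gb \in \cb_R$ there is some $\delta_\gb \in \co_K$ with $R'_\gb = \delta_\gb + R_\gb$. The plan is to show that the two Mirsky measures agree on the cylinder sets $C_{A,\emptyset}$ with $A$ finite, and then extend to all Borel sets.

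First, fix a finite $A \subset \co_K$. Since $\Omega_R = \Omega_{R'}$, we have $C^R_{A,\emptyset} = C^{R'}_{A,\emptyset}$ as subsets of the common space. Using \Cref{eq:Formula for nu_R with B empty} for both sieves, it suffices to compare the factors $|-A+R_\gb|$ and $|-A+R'_\gb|$ modulo $\gb$. But
$$-A + R'_\gb = -A + \delta_\gb + R_\gb = \delta_\gb + (-A+R_\gb),$$
which is a translate of $-A+R_\gb$. Both are unions of congruence classes modulo $\gb$, so they contain the same number of classes. Hence every factor in the infinite product coincides, and
$$\nu_R(C^R_{A,\emptyset}) = \nu_{R'}(C^{R'}_{A,\emptyset}).$$

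Next, the inclusion--exclusion formula stated after \Cref{eq:Formula for nu_R with B empty} expresses $\nu_R(C^R_{A,B})$, for finite disjoint $A,B$, in terms of the values $\nu_R(C^R_{D,\emptyset})$ with $A\subset D\subset A\cup B$. The same formula holds for $R'$. So the two measures agree on all cylinders $C_{A,B}$.

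Finally, the cylinders $C_{A,B}$ together with $\emptyset$ form a $\pi$-system: the intersection $C_{A,B}\cap C_{A',B'}$ is either $C_{A\cup A', B\cup B'}$ or empty. This $\pi$-system generates the Borel $\sigma$-algebra of the compact metrizable space $\Omega_R$, since its members form a base of the topology and this base is countable. By the $\pi$--$\lambda$ (Dynkin) uniqueness theorem, two probability measures agreeing on it are equal, so $\nu_R = \nu_{R'}$.

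There is no real obstacle here. The only points needing care are that the class count of a union of classes is translation invariant, and that the cylinders generate the Borel $\sigma$-algebra. The latter follows from countability of $\co_K$, which makes the family of finite subsets countable.

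An alternative route would use $\varphi_R$ and the pushforward of Haar measure: the translation $g \mapsto g - \delta$ on $G_R$, with $\delta = (\delta_\gb)_\gb$, preserves $\mmp$ and satisfies $\varphi_{R'}(g) = \varphi_R(g - \delta)$. However, the cylinder computation above is more direct.
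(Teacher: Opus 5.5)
Your proof is correct, but its main line differs from the paper's. The paper argues entirely on the group side. With $R_\gb = -\delta_\gb + R'_\gb$, the translation $V(g)_\gb = g_\gb + \delta_\gb$ of $G_R$ satisfies $\varphi_R = \varphi_{R'}\circ V$. Since $V$ preserves the Haar measure $\mmp$, this gives $\nu_{R'}(U) = \mmp(V(\varphi_R^{-1}(U))) = \mmp(\varphi_R^{-1}(U)) = \nu_R(U)$ for every Borel $U$ at once; this is exactly the alternative route you mention at the end.

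Your argument instead works on $\Omega_R$. It uses only the product formula for $\nu_R(C^R_{A,\emptyset})$, the translation invariance of the counts $|-A+R_\gb|$, inclusion--exclusion, and a $\pi$--$\lambda$ uniqueness argument, which you justify correctly via the countable base of cylinders.

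The paper's version is shorter and needs no generating-class step. It also shows that the two coding maps differ by a rotation of $G_R$ commuting with the $\co_K$-action, which is the viewpoint behind $\cs_R$ and $G_{R,F}$ later in that section.

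Your version costs the extra measure-theoretic step, but it shows that $\nu_R$ is determined by the products $\prod_\gb(1-|-A+R_\gb|/N(\gb))$ alone, which makes it more flexible. For a block $A$ of the partition and a finite set $D$, there is the CRT identity $1-|-D+R'_A|/N(\gc(A)) = \prod_{i\in A}(1-|-D+R_i|/N(\gb_i))$. With it, the same reasoning gives the measure equality for dilations in \Cref{lm: isomorphism of dynamical systems for contraction dilation}, without the cylinder computation in $G_R$ that the paper carries out there.
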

			\begin{proof} 
				
				By  \Cref{lm: OmegaR = OmegaS same base}, there is a sequence $\delta_i$ of elements of $\co_K$ such that $R_i = -\delta_i + R'_i$ for each $i$. Let $G := G_R = G_{R'}$ be the group defined in \Cref{eq:GR} and consider the map $V:G \rightarrow G$, such that $V(g)_i = g_i+\delta_i$. Notice that $\varphi_R = \varphi_{R'} \circ V$, given that $$y \in \varphi_R(g) \Leftrightarrow \mathlarger{\forall}_j \hspace{5pt} g_j + y \not \in R_j \Leftrightarrow \mathlarger{\forall}_j \hspace{5pt} V(g)_j + y \not \in \delta_j + R_j \Leftrightarrow \mathlarger{\forall}_j\hspace{5pt} V(g)_j + y \not \in R'_j  \Leftrightarrow  y \in \varphi_{R'}(V(g)). $$ 
				For any measurable $U$ we get  $$\nu_{R'}(U) = \mmp(\varphi_{R'}^{-1}(U)) = \mmp(V(\varphi_{R}^{-1}(U))) = \mmp(\varphi_{R}^{-1}(U)) = \nu_R(U),$$ using the fact that $V$ preserves the measure of $G$.
			\end{proof}

			When $R$ and $R'$ are not supported on the same set, we would like to have a similar result. If we could show that $\Omega_R = \Omega_{R'}$ already implies that $\cb_R = \cb_{R'}$, then we could simply remove this hypothesis from \Cref{lm: OmegaR = OmegaR' implies same meausre same base}. However, this is not the case, since, as we now show, dilations don't change $\Omega_R$ or $\nu_R$. Afterwards, we will show that if we restrict ourselves to considering minimal sieves, then it is the case that $\Omega_R = \Omega_{R'}$ implies that $\cb_R = \cb_{R'}$.
			
			\begin{lemma}
				\label{lm: isomorphism of dynamical systems for contraction dilation}
				Let $R$ be an Erdős sieve, and $R'$ a dilation of $R$. Then $\Omega_R = \Omega_{R'}$ and $\nu_R = \nu_{R'}.$ In particular, the identity map is an isomorphism of the dynamical systems $(\Omega_{R},S,\nu_{R})$ and $(\Omega_{R'},S,\nu_{R'})$.
			\end{lemma}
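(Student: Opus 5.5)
The plan is to prove the two equalities separately. First I show $\Omega_R=\Omega_{R'}$ by checking admissibility block by block with the Chinese Remainder Theorem. Then I show $\nu_R=\nu_{R'}$ by comparing the two measures on the cylinder sets $C_{A,\emptyset}$ using \Cref{eq:Formula for nu_R with B empty}. The identity map then intertwines the two shift actions trivially, so the last claim follows at once. Throughout I use the notation of \Cref{def: dilation}: there is a partition $\cp$ of $\mathbb{N}$ into finite sets, ideals $\gc(A)$, and $R'_A=\bigcup_{i\in A}R_i$ (as a set, since $\gb_i\mid\gc(A)$).

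\textbf{Equality of $\Omega$.} Let $Y\subset\co_K$ and fix $A\in\cp$. We have $-Y+R'_A=\bigcup_{i\in A}(-Y+R_i)$, and each $-Y+R_i$ is a union of classes modulo $\gb_i$. If $-Y+R'_A\neq\co_K$, then clearly $-Y+R_i\neq\co_K$ for each $i\in A$. Conversely, suppose that for each $i\in A$ we can pick $x_i\notin -Y+R_i$. The $\gb_i$ with $i\in A$ are pairwise coprime, so the Chinese Remainder Theorem gives $x$ with $x\equiv x_i \pmod{\gb_i}$ for all $i\in A$. By periodicity $x\notin -Y+R_i$ for every $i\in A$, hence $x\notin -Y+R'_A$. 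Since $\cp$ partitions $\mathbb{N}$, it follows that $Y$ is $R$-admissible if and only if it is $R'$-admissible.

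\textbf{Equality of measures.} Fix a finite $D\subset\co_K$ and $A\in\cp$. The complement of $-D+R'_A$ modulo $\gc(A)$ is $\bigcap_{i\in A}(-D+R_i)^c$. Each $(-D+R_i)^c$ is a union of classes modulo $\gb_i$, the $\gb_i$ are pairwise coprime and divide $\gc(A)$, so the Chinese Remainder Theorem (as in \Cref{lm: equidistribution for Følner sequences}) gives
\[
1-\frac{|-D+R'_A|}{N(\gc(A))}=\prod_{i\in A}\Bigl(1-\frac{|-D+R_i|}{N(\gb_i)}\Bigr).
\]
Taking the product over $A\in\cp$ and applying \Cref{eq:Formula for nu_R with B empty} to both sieves gives $\nu_{R'}(C^{R'}_{D,\emptyset})=\nu_R(C^R_{D,\emptyset})$.

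The regrouping of the infinite product into finite blocks needs a word of justification. All factors lie in $[0,1]$, so both infinite products equal the infimum of their finite subproducts and are independent of ordering and grouping. Since $\Omega_R=\Omega_{R'}$, the cylinders $C^R_{D,\emptyset}$ and $C^{R'}_{D,\emptyset}$ are the same subsets. The inclusion–exclusion formula then shows that $\nu_R$ and $\nu_{R'}$ agree on every $C_{D,B}$ with $D,B$ finite.

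\textbf{Extension to all Borel sets.} The sets $C_{D,B}$ with $D,B$ finite, together with $\emptyset$, form a $\pi$-system generating the Borel $\sigma$-algebra of $\Omega_R$. By the $\pi$–$\lambda$ theorem, $\nu_R=\nu_{R'}$.

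The main point requiring care is the block identity above, namely that the CRT count of the complement modulo $\gc(A)$ really factors over $i\in A$. This holds even though $\gc(A)$ may contain the extra factor $\ga_A$, because every set involved is periodic modulo $\gb_i$ and the proportion of residues is unaffected by refining the modulus.
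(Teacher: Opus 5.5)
Your proof is correct. The admissibility half matches the paper's: in both, the local witnesses are glued by the Chinese Remainder Theorem inside each block $A\in\cp$. Your witnesses are $x_i\notin -Y+R_i$; the paper's are $\delta_i$ with $(\delta_i+B+\gb_i)\cap R_i=\emptyset$.

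For the measures you take a different route.

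\emph{The paper's route.} It works on the groups. It forms $V\colon G_{R'}\to G_R$ as the product of the reductions $V_A\colon\co_K/\gc(A)\to\prod_{i\in A}\co_K/\gb_i$ and checks $\varphi_R\circ V=\varphi_{R'}$. It then verifies on cylinder sets that $V$ pushes Haar measure to Haar measure, splitting $\gc(A)=\ga'(A)\prod_{i\in A}\gb_i$ by CRT. This gives $\nu_{R'}=(\varphi_R)_*V_*\mmp_{R'}=\nu_R$.

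\emph{Your route.} You stay on $\Omega_R$ and compare the two measures only on the sets $C_{D,\emptyset}$, using the closed product formula. This reduces to a blockwise CRT density identity. You then extend by inclusion--exclusion and the $\pi$--$\lambda$ theorem.

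\emph{Comparison.} Your argument is shorter. It also handles the extension from cylinders to all Borel sets explicitly, where the paper only says it suffices to check cylinders in $G_R$. The cost is that it relies on the Part~I product formula and on regrouping an infinite product. The paper's version needs neither. It also yields a stronger structural fact: the Mirsky map of $R'$ factors through that of $R$ via an explicit measure-preserving homomorphism. The paper reuses the same device later for $\phi\colon G_R\to\grf$.
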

			
			\begin{proof}
				Let $R$ be an Erdős sieve. To define a dilation, let $\mathcal{P}$ be a partition of $\mathbb{N}$, $\mathcal{A}$ a collection of ideals $\ga_A$ indexed on $\mathcal{P}$ such that $(\ga_A,\ga_B) =1$ if $A \neq B$, and $\mathcal{C}$ the collection of ideals of the form $\gc(A) = \text{lcm}(\{\ga_A\}\cup\{\gb_i:i\in A\})$ for $A \in \mathcal{P}$. Let $R'$ be the associated dilation, that is, the sieve supported on $\mathcal{C}$ defined by $$R_A' = \bigcup_{i \in A} R_i + \gc(A).$$
				
				First, we point out that $\Omega_{R} = \Omega_{R'}$. To see this, take any $B \in \Omega_R$. For every $i$, there is some $\delta_i$ such that $(\delta_i + B + \gb_i ) \cap R_i = \emptyset$. Using the Chinese Remainder Theorem, define for every $A \in \mathcal{P}$ some $\delta_A$ such that $\delta_A \equiv \delta_i \mod \gb_i$ for every $i \in A$. Then
				\begin{equation}
					\label{eq: equality with deltas}
					(\delta_A + B) \cap R'_A = \bigcup_{i \in A} \left(\delta_A + B \right) \cap R_i \subset  \bigcup_{i \in A} \left(\delta_A + B + \gb_i \right) \cap R_i =  \bigcup_{i \in A} \left(\delta_i + B + \gb_i\right)  \cap R_i= \emptyset,
				\end{equation} so $B$ is $R'$-admissible. Conversely, if $B$ is $R'-$admissible, then, for any $A$, there is some $\delta_A$ such that $(\delta_A + B) \cap R'_A = \emptyset$, and so by the first equality in \Cref{eq: equality with deltas}, we have $\left(\delta_A + B \right) \cap R_i = \emptyset$ for every $i \in A$.
				
				We now have to show that $\nu_R = \nu_{R'}$. To do this, we consider $$G_R = \prod_i \co_K/\gb_i \hspace{ 10 pt} \text{ and } \hspace{10 pt} G_{R'} = \prod_{A \in \mathcal{P}} \co_K/\gc(A),$$ and consider the map $V: G_{R'}  \rightarrow \gr $ that is the product of the maps $V_A: \co_K/\gc(A)\rightarrow \prod_{i\in A} \co_K/\gb_i$ given by $V_A(x + \gc_A) = (x+ \gb_i)_{i \in A}$. We have that $\varphi_R \circ V  = \varphi_{R'}$, since for any $a \in \co_K$, $$a \in \varphi_{R'}(g) \Leftrightarrow \mathlarger{\forall}_{A \in \mathcal{P}} \hspace{5pt} a+g_A \not \in R'_A 
				\Leftrightarrow \mathlarger{\forall}_{A \in \mathcal{P}} \hspace{5pt} \mathlarger{\forall}_{i \in A} \hspace{5pt} a+g_A \not  \in R_i 
				\Leftrightarrow \mathlarger{\forall}_{i} \hspace{5pt} a+V(g)_i \not  \in R_i
				\Leftrightarrow a \in \varphi_R(V(g)),
				$$ where we are using the fact that $g_A + \gb_i = V(g)_i$ when $i \in A$. 
				
				Denoting the Haar measure in $G_R$ and $G_{R'}$ by $\mmp_R$ and $\mmp_{R'}$ respectively, it remains to show that $\mmp_R(U) = \mmp_{R'}(V^{-1}(U))$, since then it follows that for any measurable subset of $\Omega_{R'} = \Omega_{R}$, $$\nu_{R'}(U) =\mmp_{R'}(\varphi_{R'}^{-1}(U)) = \mmp_{R'}(V^{-1}(\varphi_{R}^{-1}(U))) = \mmp_R(\varphi_{R}^{-1}(U)) = \nu_R(U). $$
				
				In order to show that $\mmp_{R'}(U) = \mmp_R(V^{-1}(U))$, it is enough to prove that this holds for all cylinder sets $$C(x_1,\dots,x_k) := \{g \in G_R:g_i \equiv x_i \mod \gb_i\}$$ for every $k \geq 1$. Fix any $k$, and sequence $x_1,\dots,x_k$. We have that $\mmp_R(C(x_1,\dots,x_k)) = \prod_{i=1}^k N(\gb_i)^{-1},$ so now we have to show that this is also the value of $\mmp_{R'}(V^{-1}(C(x_1,\dots,x_k))).$ Let $A_1,\dots, A_l$ be elements of $\cp$ that cover the set $\{1,2,\dots,k\}$. By independence, we have that $$\mmp_{R'}(V^{-1}(C(x_1,\dots,x_k))) = \prod_{i=1}^l \mmp_{R'}(V_{A_i}^{-1}(C^{A_i}(x_1,\dots,x_k))),$$ where $$C^{A_i}(x_1,\dots,x_k) = \{g \in \prod_{j \in A_i} \co_K/\gb_j: g_j \equiv x_j \mod \gb_j \text{ for every } j \in A_i \cap [1,\dots,k]\}.$$
				Therefore, the result will follow if we can show that for any $i$ $$\mmp_{R'}(V_{A_i}^{-1}(C^{A_i}(x_1,\dots,x_k))) = \prod_{j \in A_i\cap [1,k]} N(\gb_j)^{-1}.$$ The ideal $\gc(A_i) = \text{lcm}(\{\ga_{A_i}\} \cup \{\gb_j\}_{j \in A_i} )$ can be written as the product of coprime ideals $\prod_{j \in A_i} \gb_i$ and some $\ga'_{A_i}$, which is uniquely defined since $\co_K$ has unique factorization of ideals into prime ideals. Hence  we have the commutative diagram
				\[\begin{tikzcd}
					{\co_K/\gc(A_i)} & {\co_K/\ga'(A_i)\times\co_K/\prod_{j\in A_i}\gb_j} & {\co_K/\prod_{j\in A_i}\gb_j} & {\prod_{j\in A_i}\co_K/\gb_j}
					\arrow["{\phi_1}", from=1-1, to=1-2]
					\arrow["V_{A_i}"', bend right=15, from=1-1, to=1-4]
					\arrow["\pi", from=1-2, to=1-3]
					\arrow["{\phi_2}", from=1-3, to=1-4]
				\end{tikzcd}\] 
				where $\phi_1, \phi_2$ are isomorphisms obtained by the Chinese Remainder Theorem, and $\pi$ is the projection on the second coordinate. From this, and using the fact that $\phi_1,\phi_2$ are bijections, we see that $$|V_{A_i}^{-1}(C^{A_i}(x_1,\dots,x_k))| = |\pi^{-1}(\phi_2^{-1}(C^{A_i}(x_1,\dots,x_k)))| = N(\ga'(A_i))|C^{A_i}(x_1,\dots,x_k)|.  $$  It is clear that $|C^{A_i}(x_1,\dots,x_k)| = \prod_{j \in A_i \cap [k+1,\infty[} N(\gb_i)$, and so $$\mmp_\cc(V_{A_i}^{-1}(C^{A_i}(x_1,\dots,x_k))) = \frac{|V_{A_i}^{-1}(C^{A_i}(x_1,\dots,x_k))|}{N(\gc(A_i))} = \frac{N(\ga'(A_i))\prod_{j \in A_i \cap [k+1,\infty[}N(\gb_j)}{N(\ga'(A_i)) \prod_{j \in A_i} N(\gb_j)} = \prod_{j \in A_i\cap [1,k]} \frac{1}{N(\gb_j)} $$ as we wanted to show. This implies that $\nu_R = \nu'_{R'}$, which concludes the proof of the lemma.
			\end{proof}
			
			We now show that if $\Omega_R = \Omega_{R'}$, for minimal sieves $R$ and $R'$, then they must be supported on the same set. This will allow us to show that if $\Omega_R = \Omega_{R'}$ then $\nu_R = \nu_{R'}$.
			
			\begin{lemma}
				\label{lm: OmegaR =omegar' implies equality of base}
				Let $R$ and $R'$ be minimal Erdős sieves. If $\Omega_{R} = \Omega_{R'}$, then $\cb_R = \cb_{R'}$. 
			\end{lemma}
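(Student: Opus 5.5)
The plan is to reconstruct, from the shift space $\Omega_R$ alone, which moduli are "responsible" for non-admissibility, and then to finish with the minimality argument used in \Cref{thm: uniqueness of minimal sieve}. By symmetry it suffices to show that every $\gb \in \cb_R$ belongs to $\cb_{R'}$. To do this I will show that for each $\gb\in\cb_R$ there is a unique $\gb'\in\cb_{R'}$ with $\gb\mid\gb'$. The symmetric statement, together with pairwise coprimality, then forces $\gb=\gb'$, exactly as in the last paragraph of the proof of \Cref{thm: uniqueness of minimal sieve}.

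\emph{Step 1 (a rigid admissible set).} Fix $\gb\in\cb_R$ and build an $R$-admissible finite set $A$ with $A+\gb=R_\gb^c$, as in \Cref{lm: there are admissible representatives of R_gb^c  }. I will strengthen that construction using the finite exceptional set $\Delta$ (those ideals of $\cb_R$ with large relative volume) and the analogous finite set $\Delta'$ for $R'$.
\begin{itemize}
\item By the Chinese Remainder Theorem, choose the elements of $A$ to avoid $R_{\gc}$ for every $\gc\in\Delta\setminus\{\gb\}$, and also to avoid the classes of $R'_{\gc'}$ for the finitely many $\gc'\in\Delta'$ coprime to $\gb$.
\item We still keep the freedom to move each element of $A$ inside any class modulo an ideal coprime to $\gb$.
\end{itemize}
For $y\in R_\gb$, the set $A_y=A\cup\{y\}$ satisfies $-A_y+R_\gb=\co_K$. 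Hence $A_y\notin\Omega_R=\Omega_{R'}$, so some $\gb'\in\cb_{R'}$ satisfies $-A_y+R'_{\gb'}=\co_K$.

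\emph{Step 2 (the witness is not coprime to $\gb$).} Suppose the witness $\gb'$ were coprime to $\gb$.
\begin{itemize}
\item If $\gb'\notin\Delta'$, counting as in \Cref{lm: there are admissible representatives of R_gb^c  } gives $|A_y|\,|R'_{\gb'}|/N(\gb')<1$ (after enlarging $\Delta'$ to account for $|A|+1$ points), so $\gb'$ cannot be a witness.
\item If $\gb'\in\Delta'$, the CRT choice in Step 1 forces $A\cap R'_{\gb'}=\emptyset$. I can also move $y$ within $y+\gb$ to avoid $R'_{\gb'}$, since $R'_{\gb'}\neq\co_K$ and $\gb'$ is coprime to $\gb$.
\end{itemize}
In either case $A_y$ would then be $R'$-admissible at $\gb'$. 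Carrying this out simultaneously for the finitely many relevant $\gb'$, I obtain, for every $x\in R_\gb$, some $y\in x+\gb$ whose witness satisfies $(\gb,\gb')\neq1$. Since only finitely many $\gb'\in\cb_{R'}$ meet $\gb$, I then vary the auxiliary choices and pass to the limit to conclude that $x+\gb$ is covered by $\bigcup_{(\gb,\gb')\neq 1}R'_{\gb'}$, up to shifts $\delta_{\gb'}$ coming from the admissibility translates.

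\emph{Step 3 (minimality).} Apply \Cref{lm: if x gb in union must belong to one} to put $x+\gb$ inside a single shifted $R'_{\gb'}$, so that $x+(\gb,\gb')$ is contained in a translate of $R'_{\gb'}$. Running the argument symmetrically, with roles swapped, pushes $x+(\gb,\gb')$ back into $R_\gb$. We thus obtain $R_\gb=\bigcup_i (S_i+(\gb,\gb'_i))$, and minimality of $R$ forces $(\gb,\gb'_i)=\gb$ for some $i$, that is, $\gb\mid\gb'_i$, with $\gb'_i$ unique by coprimality.

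\emph{Main obstacle.} The hard part is the bookkeeping in Step 2. Non-admissibility only says that $A_y$ meets every translate of $(R'_{\gb'})^c$, and the relevant translates $\delta$ vary. Extracting from this a containment of the form $x+(\gb,\gb')\subset \delta+R'_{\gb'}$ with a \emph{fixed} $\delta$ is the delicate point. I would first try to fix $\delta$ by choosing $A$ to be an exact set of representatives of $R_\gb^c$ that is rigid modulo $\gb'$, using the CRT freedom on the parts of $\gb'$ coprime to $\gb$.
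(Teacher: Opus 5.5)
Your architecture matches the paper's: force a witness of non-admissibility into $V_\gb=\{\gb'\in\cb_{R'}:(\gb,\gb')\neq 1\}$, then use \Cref{lm: if x gb in union must belong to one} and minimality in both directions. But there is a genuine gap exactly where you flag the main obstacle. You never obtain
\[
R_\gb\subset \delta+\bigcup_{\gb'\in V_\gb}R'_{\gb'}
\]
with a single shift $\delta$ independent of the point of $R_\gb$, and ``vary the auxiliary choices and pass to the limit'' does not produce one. Step 3 needs this, since cosets placed into various translates of the $R'_{\gb'_i}$ cannot otherwise be reassembled into a statement about $R_\gb$. (Also, non-admissibility of $A_y$ at $\gb'$ means $A_y$ meets every translate of $R'_{\gb'}$, not of $(R'_{\gb'})^c$.)

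The missing observation is that the shift depends only on $A$. Since $A\in\Omega_R=\Omega_{R'}$ and $V_\gb$ is finite and pairwise coprime, CRT gives one $t$ with $(t+A)\cap R'_{\gb'}=\emptyset$ for all $\gb'\in V_\gb$. If $A_y$ fails at some $\gb'\in V_\gb$, then $t\in -A_y+R'_{\gb'}$ forces $t+y\in R'_{\gb'}$. Your adjustments of $y$ only change it modulo ideals coprime to $\gc=\mathrm{lcm}(\{\gb\}\cup V_\gb)$, so this gives $R_\gb\subset -t+\bigcup_{\gb'\in V_\gb}R'_{\gb'}$.

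The paper reaches the same conclusion by a different device. It dilates $R'$ to $W$ with $W_\gc=\bigcup_{\gb'\in V_\gb}R'_{\gb'}+\gc$ and uses $\Omega_W=\Omega_{R'}=\Omega_R$ (\Cref{lm: isomorphism of dynamical systems for contraction dilation}). It then picks $A\in\Omega_W$ with $A+\gc=W_\gc^c$, i.e.\ representatives on the $R'$ side rather than of $R_\gb^c$. Any $t$ with $(t+A)\cap R_\gb=\emptyset$ then gives $R_\gb\subset t+W_\gc$ at once.

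Step 3 has the same issue in reverse. Pulling $y-\delta+(\gb,\gb'_i)$ back through the symmetric containment $R'_{\gb'_i}\subset t'_i+\bigcup_{(\ga,\gb'_i)\neq 1}R_\ga$ only places it in a \emph{translate} of $R_\gb$. You must then merge these shifts across the pairwise coprime moduli $(\gb,\gb'_i)$ by CRT, obtaining $R_\gb\subset\bigcup_i\bigl(S_i+(\gb,\gb'_i)\bigr)\subset\delta'+R_\gb$. Next, use $|R_\gb|=|\delta'+R_\gb|$ in $\co_K/\gb$ to upgrade this to equality. Only then does minimality of $R$ give $\gb\mid\gb'_i$.

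Finally, the simultaneous CRT in Step 1 may be impossible, because ideals in $\Delta\subset\cb_R$ and $\Delta'\subset\cb_{R'}$ need not be coprime. It is also unnecessary: $A$ is automatically $R'$-admissible, so it suffices to choose $y$ suitably modulo the finitely many $\gc'\in\Delta'$ coprime to $\gb$.
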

			
			\begin{proof}
				Take any $\gb \in \cb_R$. We claim that there must be some $\gb' \in \cb_{R'}$ such that $(\gb,\gb') \neq 1$. To show this we assume that  $(\gb,\gb') = 1$ for every $\gb' \in \cb_{R'}$, and show that this implies that $\Omega_{R} \neq \Omega_{R'}$. In order to do so, we proceed similarly to how we did in the proof of  \Cref{lm: there are admissible representatives of R_gb^c  } to find some $A \in \Omega_{R'}$ that is not in $\Omega_R$.
				
				We consider $$\Delta := \{\gb'\in \cb_{R'}: \frac{|R'_{\gb'}|}{N(\gb')} \geq \frac{1}{N(\gb)}\}.$$ Since $R'$ is Erdős, $\Delta$ is finite. Using the Chinese Remainder Theorem, we can find a finite set $A$ such that $A+ \gb = \co_K$, and $A \subset (R'_{\gb'})^c$ for every $\gb' \in \Delta$. This means that $A \not \in \Omega_R$, but we now show that it is in $\Omega_{R'}$. For ideals $\gb\in \Delta$, we  know by definition of $A$ that $A \subset (R'_{\gb'})^c$. For ideals $\gb \not \in \Delta$ we procced as in in  \Cref{lm: there are admissible representatives of R_gb^c  }. For every such ideal we have that $|A|\frac{|R'_{\gb'}|}{N(\gb')}  < 1$, so $A+ R'_{\gb'}$ cannot cover $\co_K$, and so  $A \in \Omega_{R'}$. We obtain the desired contradiction, so for any $\gb \in \cbr$, there must be some $\gb' \in \cb_{R'}$ such that $(\gb,\gb') \neq 1$.
				
				We now write $V_{\gb} := \{\gb' \in \cb_{R'}: (\gb, \gb') \neq 1 \},$ which must be a non-empty set. We will show that it equals $\{\gb\}$. The first step is to show that there exists some $x \in \co_K$ such that \begin{equation}
					\label{eq: Rgb subset of union}
					R_\gb \subset x+ \bigcup_{\gb' \in V_\gb} R'_{\gb'}.
				\end{equation}
				Writing $\gc = \text{lcm}(\{\gb\}\cup V_\gb)$, this is equivalent to showing that $$R_\gb + \gc \subset x+ \bigcup_{\gb' \in V_\gb} R'_{\gb'} + \gc.$$
				
				Consider the sieve $W$ supported on $\cb_W = (\cb_{R'}\cup\{\gc\})\setminus V_\gb$ and defined by $W_\gc = \bigcup_{\gb' \in V_\gb} R'_{\gb'} + \gc$ and $W_{\gb'} = R'_{\gb'}$ for any other $\gb' \in \cb_W$. This is a dilation of $R'$, therefore \Cref{lm: isomorphism of dynamical systems for contraction dilation} implies that $\Omega_W = \Omega_{R'} = \Omega_{R}$. By \Cref{lm: there are admissible representatives of R_gb^c  }, we can find some finite $A \in \Omega_W$ such that $A+\gc = (W_\gc)^c$. Assume that for all $x\in \co_K$, we have $$R_\gb +\gc \not \subset x+ \bigcup_{\gb' \in V_\gb} R'_{\gb'} + \gc =  x + W_\gc.$$ Then we get that $(R_\gb + \gc) \cap x + (W_\gc)^c \neq \emptyset$ for all $x\in \co_K$. This implies that $$\emptyset \neq R_\gb \cap (x+A+\gc) \subset R_\gb \cap (x+A+\gb) $$ for all $x\in \co_K$, which shows that $A$ is not in $\Omega_{R}$, contradicting the fact that $\Omega_W = \Omega_{R}$. Consequently, there must be some $x\in\co_K$ such that \Cref{eq: Rgb subset of union} holds, as we wanted to show.
				
				'Visually', the proof now looks as follows. As in Section 6, we consider a bipartite graph with edges labeled $R_\gb$ or $R'_{\gb'}$, with an edge between $R_\gb$ and  $R'_{\gb'}$ if $(\gb,\gb') \neq 1$. We first claim that, writing $V_\gb$ as $\{\gb'_1,\dots,\gb'_r\}$ and taking some $\ga \in \cb_R$ different from $\gb$, this graph cannot have a subgraph that looks as follows.
				
				% https://q.uiver.app/#q=WzAsNixbMCwwLCJSX1xcZ2IiXSxbMiwwLCJSJ197XFxnYidfMX0iXSxbMiwxLCJSJ197XFxnYidfMn0iXSxbMiwzLCJSJ197XFxnYidfcn0iXSxbMiwyLCJcXHZkb3RzIl0sWzAsMiwiUl9cXGdhIl0sWzAsMSwiIiwwLHsic3R5bGUiOnsiaGVhZCI6eyJuYW1lIjoibm9uZSJ9fX1dLFsyLDAsIiIsMCx7InN0eWxlIjp7ImhlYWQiOnsibmFtZSI6Im5vbmUifX19XSxbMywwLCIiLDAseyJzdHlsZSI6eyJoZWFkIjp7Im5hbWUiOiJub25lIn19fV0sWzEsNSwiIiwwLHsic3R5bGUiOnsiaGVhZCI6eyJuYW1lIjoibm9uZSJ9fX1dXQ==
				\[\begin{tikzcd}
					{R_\gb} && {R'_{\gb'_1}} \\
					&& {R'_{\gb'_2}} \\
					{R_\ga} && \vdots \\
					&& {R'_{\gb'_r}}
					\arrow[no head, from=1-1, to=1-3]
					\arrow[no head, from=1-3, to=3-1]
					\arrow[no head, from=2-3, to=1-1]
					\arrow[no head, from=4-3, to=1-1]
				\end{tikzcd}\]
				
				This means that this graph can be written as the union of disjoint graphs of the form 
				
				% https://q.uiver.app/#q=WzAsNSxbMCwwLCJSX1xcZ2IiXSxbMiwwLCJSJ197XFxnYidfMX0iXSxbMiwxLCJSJ197XFxnYidfMn0iXSxbMiwzLCJSJ197XFxnYidfcn0iXSxbMiwyLCJcXHZkb3RzIl0sWzAsMSwiIiwwLHsic3R5bGUiOnsiaGVhZCI6eyJuYW1lIjoibm9uZSJ9fX1dLFsyLDAsIiIsMCx7InN0eWxlIjp7ImhlYWQiOnsibmFtZSI6Im5vbmUifX19XSxbMywwLCIiLDAseyJzdHlsZSI6eyJoZWFkIjp7Im5hbWUiOiJub25lIn19fV1d
				\[\begin{tikzcd}
					{R_\gb} && {R'_{\gb'_1}} \\
					&& {R'_{\gb'_2}} \\
					&& \vdots \\
					&& {R'_{\gb'_r}}
					\arrow[no head, from=1-1, to=1-3]
					\arrow[no head, from=2-3, to=1-1]
					\arrow[no head, from=4-3, to=1-1]
				\end{tikzcd}\] or the equivalent mirrored graphs (meaning that for some $R'_{\gb'}$, it will be connected to some $R_{\gb_1}, \dots,R_{\gb_l}$), and then we will use the minimality of $R$ and $R'$ to show that we must have $r = 1$.
				
				More formally, take $y \in R_\gb$. By \Cref{eq: Rgb subset of union}, there is some $x$ (not depending on $y$) such that $y-x+\gb \subset \bigcup_{\gb' \in V_\gb} R'_{\gb'}$. By \Cref{lm: if x gb in union must belong to one}, it follows that $y-x + \gb \subset R'_{\gb'_i}$ for some $i$. Consequently, we must have that $y-x+(\gb,\gb'_i) \subset R'_{\gb'_i}$. We now claim that there must be some $x_i$ such that $y-x_i+(\gb,\gb'_i) \subset R_\gb$ if $y-x \subset R'_{\gb'_i}$.
				
				Indeed, assume that $y-x-t+(\gb,\gb'_i) \not \subset R_\gb$ for all $t\in \co_K$. We know that there is some $t'_i\in \co_K$ such that $$y-x+(\gb,\gb'_i) \subset R'_{\gb'_i} \subset t'_i+\bigcup_{\ga \in \cb_R: (\ga,\gb'_i) \neq 1 }R_\ga,$$ using \Cref{eq: Rgb subset of union} applied to $R'_{\gb'_i}$. If $y-x-t'_i+(\gb,\gb'_i) \not \subset R_\gb$, then there must be some $z \in (\gb,\gb'_i)$ such that $y-x-t'_i+z+\gb \not \subset R_\gb$. Consequently, we would have, using \Cref{lm: if x gb in union must belong to one}, that  $y-x-t'_i+\gb \subset R_\ga$ for some $\ga \in \cb_R$, distinct from $\gb$. But this is impossible, since this would imply that $\co_K = \ga+\gb \subset R_\ga$. It follows that if $y-x \subset R'_{\gb'_i}$, then taking $x_i = x+t'_i$, we must have $y-x_i + (\gb,\gb'_i) \subset R_\gb$.
				
				Choosing representatives $y_1,\dots,y_{|R_\gb|}$ for $R_\gb$ in $\co_K$, let $A_i$ be the set of $y_j$ such that $y_j + \gb \subset R'_{\gb'_i}$. Using the Chinese Remainder Theorem, we can find some $x$ such that $x \equiv x_i \mod (\gb,\gb'_i)$ for all $i$ (with the $x_i$ whose existence we showed in the last paragraph). We get that $$R_\gb \subset \bigcup_{i=1}^r A_i + (\gb,\gb_i') = \bigcup_{i=1}^r A_i +x-x_i + (\gb,\gb_i') = x+\bigcup_{i=1}^r A_i -x_i+ (\gb,\gb_i')  \subset x+ R_\gb.$$ As finite subsets of $\co_K/\gb$, both $R_\gb$ and $x+R_\gb$ have the same cardinality, so $R_\gb \subset x+R_\gb$ implies equality. Consequently, we get that
				$$\bigcup_{i=1}^r A_i + (\gb,\gb_i') = R_\gb.$$ 
				
				By minimality of $R$, we conclude that there must be some $i$ such that $ (\gb,\gb_i') = \gb$, which by coprimality of the $\gb'_i$, shows that there must be a unique $\gb'$ such that $V_\gb = \{\gb'\}$ and $\gb \mid \gb'$. Using the minimality of $R'$, we can use the same argument to show that the set of those $\ga \in \cb_R$ such that $(\ga,\gb') \neq 1$ must also have only one element which is divisible by $\gb'$. This unique element must be $\gb$, and since $\gb \mid \gb'$ and $\gb' \mid \gb$, we must have an equality $\gb = \gb'$, which shows that $\cb_R = \cb_{R'}$ as we wanted to show.
			\end{proof}
			
			Together, Lemmas \ref{lm: OmegaR = OmegaS same base} and \ref{lm: OmegaR =omegar' implies equality of base} give the following theorem.
			
			\begin{theorem}
				\label{thm: Equaivalence for omegaR = omegaRp}
				Let $R$ and $R'$ be minimal Erdős sieves. Then $\Omega_R = \Omega_{R'}$ if and only if $\cb_R = \cb_{R'}$, and for every $\gb \in \cb_R$, there is some $\delta_\gb \in \co_K$ such that $R_\gb = \delta_\gb + R'_\gb$.
			\end{theorem}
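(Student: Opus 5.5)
The theorem is an assembly of two earlier results, and I would prove the two implications separately.

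For the forward implication, assume $\Omega_R = \Omega_{R'}$ with $R$ and $R'$ minimal Erdős sieves.
\begin{itemize}
\item First apply \Cref{lm: OmegaR =omegar' implies equality of base}. Since both sieves are minimal and Erdős, it gives $\cb_R = \cb_{R'}$.
\item Now $R$ and $R'$ are Erdős sieves supported on the same base $\cb := \cb_R$, with $\Omega_R = \Omega_{R'}$. So \Cref{lm: OmegaR = OmegaS same base} applies directly.
\item That lemma produces, for each $\gb \in \cb$, some $\delta_\gb$ with $\delta_\gb + R_\gb = R'_\gb$. Replacing $\delta_\gb$ by $-\delta_\gb$ gives $R_\gb = \delta_\gb + R'_\gb$, which is the form in the statement.
\end{itemize}

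For the reverse implication, assume $\cb_R = \cb_{R'}$ and $R_\gb = \delta_\gb + R'_\gb$ for every $\gb$.
\begin{itemize}
\item This is exactly the hypothesis of the easy direction of \Cref{lm: OmegaR = OmegaS same base}, after the same sign change.
\item Directly: a set $A$ is $R$-admissible iff for each $\gb$ there is $\epsilon_\gb$ with $(\epsilon_\gb + A)\cap R_\gb = \emptyset$.
\item Since $R_\gb = \delta_\gb + R'_\gb$, this holds iff $(\epsilon_\gb - \delta_\gb + A)\cap R'_\gb = \emptyset$, i.e.\ iff $A$ is $R'$-admissible. Hence $\Omega_R = \Omega_{R'}$.
\end{itemize}

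The real difficulty, namely that minimality forces the supports to coincide, is already contained in \Cref{lm: OmegaR =omegar' implies equality of base}. Beyond citing the two lemmas, the only point needing care is checking hypotheses. Minimality is used only in the forward direction, through \Cref{lm: OmegaR =omegar' implies equality of base}. The Erdős assumption is what both lemmas require, since they rest on \Cref{lm: there are admissible representatives of R_gb^c  }.

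As a consequence worth noting, \Cref{lm: OmegaR = OmegaR' implies same meausre same base} then gives $\nu_R = \nu_{R'}$ whenever these equivalent conditions hold.
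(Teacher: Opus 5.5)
Your proposal is correct and follows exactly the route the paper takes: the paper obtains this theorem by combining \Cref{lm: OmegaR =omegar' implies equality of base} (minimality forces $\cb_R=\cb_{R'}$) with \Cref{lm: OmegaR = OmegaS same base} (equal supports and $\Omega_R=\Omega_{R'}$ give the translates $\delta_\gb$). Your direct argument for the reverse implication is just the easy half of the latter lemma, and the sign change $\delta_\gb \mapsto -\delta_\gb$ you note is indeed all that is needed to match the statement.
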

			
			Since every sieve is equivalent to some minimal sieve, this means that for every $R$ there is some minimal sieve $R'$ such that their associated dynamical systems are isomorphic. We get the following result.
			
			\begin{theorem}
				\label{thm: Omegar = Omegar' implies equal measures}
				Let $R$ and $R'$ be Erdős sieves. If $\Omega_R = \Omega_{R'}$, then $\nu_R = \nu_{R'}$.
			\end{theorem}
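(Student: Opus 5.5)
The plan is to reduce to minimal sieves and then apply the results already proved for sieves with a common support.

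First, by \Cref{lm: every sieve is equivalent to minimal sieve} we choose minimal Erdős sieves $W$ and $W'$ with $R \sim W$ and $R' \sim W'$. The proof of that lemma produces $W$ by repeated contraction, so $R$ is a dilation of $W$, and likewise $R'$ is a dilation of $W'$. Hence \Cref{lm: isomorphism of dynamical systems for contraction dilation} gives
\[
\Omega_W = \Omega_R,\qquad \nu_W = \nu_R,\qquad \Omega_{W'} = \Omega_{R'},\qquad \nu_{W'} = \nu_{R'}.
\]
We only use that $R$ is a dilation of $W$ here, not mere equivalence.

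It remains to check that $R$ really is a dilation of $W$ in the sense of \Cref{def: dilation}. The partition of $\cb_W$ is $\{A_\gb\}_{\gb\in\cb_R}$, where $A_\gb$ consists of the ideals of $\cb_W$ dividing $\gb$; each $A_\gb$ is finite because $\gb$ has only finitely many divisors. We then set $\ga_{A_\gb} = \gb$. Then $\text{lcm}(\{\gb\}\cup A_\gb) = \gb$, and $\gb$ is coprime to the elements of the other blocks, since those divide other elements of $\cb_R$, which are coprime to $\gb$.

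There is one technical point. Elements $\gb\in\cb_R$ whose contraction produced an empty family correspond to empty blocks, and the remark on empty $R_\gb$ treats such ideals as dilation data. The measure computation in \Cref{lm: isomorphism of dynamical systems for contraction dilation} goes through unchanged for these, because the coordinates indexed by such $\gb$ do not affect $\varphi_R$. The computation also goes through when $W$ is reindexed by a partition of $\mathbb{N}$ after ordering.

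Now $\Omega_W = \Omega_R = \Omega_{R'} = \Omega_{W'}$. Since $W$ and $W'$ are minimal and Erdős, \Cref{lm: OmegaR =omegar' implies equality of base} gives $\cb_W = \cb_{W'}$. Then \Cref{lm: OmegaR = OmegaR' implies same meausre same base} gives $\nu_W = \nu_{W'}$. Chaining these, we get
\[
\nu_R = \nu_W = \nu_{W'} = \nu_{R'}.
\]

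The main obstacle is the bookkeeping in the first step: confirming that the minimal sieve produced by \Cref{lm: every sieve is equivalent to minimal sieve} is related to $R$ by an honest dilation, including finiteness of the blocks and the handling of empty classes. Once that is verified, the remaining steps are direct citations.
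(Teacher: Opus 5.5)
Your proposal is correct and follows the paper's proof. Both arguments contract $R$ and $R'$ to minimal sieves, transfer $\Omega$ and $\nu$ via \Cref{lm: isomorphism of dynamical systems for contraction dilation}, obtain a common base from \Cref{lm: OmegaR =omegar' implies equality of base}, and finish with \Cref{lm: OmegaR = OmegaR' implies same meausre same base}; the only addition is that you check explicitly that the output of \Cref{lm: every sieve is equivalent to minimal sieve} is an honest dilation (choosing $\ga_{A_\gb}=\gb$ and handling empty classes), which the paper takes for granted.
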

			
			\begin{proof}
				We can assume without loss of generality that $R$ and $R'$ are minimal, since by  \Cref{lm: every sieve is equivalent to minimal sieve} these sieves can be contracted until they are minimal, and by  \Cref{lm: isomorphism of dynamical systems for contraction dilation} both $\Omega_R$ and $\nu_R$ are preserved by contractions.
				
				Hence, we can apply  \Cref{lm: OmegaR =omegar' implies equality of base} to conclude that $\cb_R= \cb_{R'}$. Since $\Omega_{R} = \Omega_{R'}$, the result now follows from  \Cref{lm: OmegaR = OmegaR' implies same meausre same base}.			
			\end{proof}

			\subsection{Isomorphisms of Dynamical Systems}
			
			We now show that the system $(\Omega_{R}, S,\nu_R)$ is isomorphic to a rotation on a compact group. This generalizes what had been previously done (see for example \cite{Abdalaoui},  \cite{Bartnicka}, \cite{celarosi} or \cite{Dymek}), but the use of sieves significantly simplifies the proof of this result. 
			
			A sketch of the proof when $R$ is a $\cb-$free system goes as follows. Given some $g \in G_R$, let $R(g)$ be the sieve defined by \begin{equation}
				\label{eq:R(g)}
				R(g)_i = -g_i+R_i.
			\end{equation} Note that $\varphi_R(g) = \cf_{R(g)}$. Let $G'_R$ be the set of those $g \in G_R$ such that $R(g)$ has strong light tails for $B_N$. By showing that if $R$ is minimal, then $R(g)$ is minimal, \Cref{thm: equivalence minimal sieves main} shows that $\varphi_R$ restricted to $G'_R$ will be a bijection, and so the result will follow if we show that
			\begin{equation}
				\label{eq: nuR of sieves with strong light tails is 1}
				\nu_R(\{\cf_{R(g)}\in \Omega_{R}: R(g) \text{ has strong light tails for }B_N \}) =1.
			\end{equation} 
			
			The reason why this sketch does not work for general sieves, is that it does not deal with two technical hurdles, with which we will now address. First, the sieve $R$ may not be minimal. But this is easily dealt with, since by \Cref{lm: every sieve is equivalent to minimal sieve} there is a minimal sieve $R'$ that can be obtained from $R$ by successive contractions, and by \Cref{lm: isomorphism of dynamical systems for contraction dilation} the systems $(\Omega_R,S,\nu_R)$ and $(\Omega_{R'},S,\nu_{R'})$ will be isomorphic. Therefore, we are free to assume that $R$ is minimal.
			
			The second hurdle comes from assuming that the map sending $g$ to $R(g)$ is a bijection. This is clear when $R$ is a $\cb-$free system, but for general sieves it may be the case that we have $x+R_i = R_i$ without $x \in \gb_i$.
			
			\begin{example}
				Take a sieve such that $R_1 = 2\z = \{0,2\}+ 4\z$. Then, $2+R_1 = R_1$, although $2 \not \in 4\z$. Taking some $A \in Y_R$ such that $A +4\z = \{0,2\}+ 4\z$, we have that $1+A + 4\z=3+A+ 4\z$, despite both having empty intersection with $R_1$.
			\end{example} 
			
			Note that in this example the sieve $R$ is not minimal. Indeed, in the special case of sieves over $\q$,  if $R$ is a minimal, then $x+R_i = R_i$ implies that $x \in \gb_i$.  This is because if $x+ R_i = R_i$, then $x\z + R_i = R_i$, which means that $R_i = \bigcup_{r \in R_i} r+ (x,b_i)\z.$ Since $R$ is minimal, this requires that $(x,b_i) = b_i$, and so $x \in b_i\z$. 
			
			Yet, if $R$ is not a sieve over $\q$, this is no longer the case. This is because $x\z$ stops being an ideal of $\co_K$. We provide an example.
			
			\begin{example}
				Let $R$ be a sieve over $\q[i]$ for which $R_1 = \{i,1+i\}+2\z[i]$. We have that $$\co_{\q[i]} = \{0,1,i,1+i\}+2\z[i],$$ and $2\z[i] = (1+i)^2\z[i]$ is the square of a prime. The set $R_1$ contains elements from both congruence classes modulo $ (1+i)\z[i]$ so it is minimal. Yet, we have that $1+ R_1 =  R_1$, in spite of $1 \not \in 2\z[i]$.
			\end{example} 
			
			To solve this, the key insight is to notice that those $x$ such that $x+R_i = R_i$ form a (additive) subgroup of $\co_K$ that contains $\gb_i$. Indeed, if $x+R_i = R_i$, then $-x+R_i = R_i$, and it is clear that if $x,y$ belong to this subgroup, so does $x+y$. Defining for each $R_i$ the set  $$F(R_i) = \{x\in \co_K: x+ R_i = R_i\}$$ of those elements of $\co_K$ that fix $R_i$, we now define the group $\grf$ by \begin{equation}
				\grf := \prod_i \co_K/F(R_i).
			\end{equation} 
			
			If, given $g,g' \in G_R$, we have $R(g) = R'(g)$, we get that for all $i\in \mathbb{N},$ $-g_i + R_i = -g'_i + R_i$, which implies that $(g'_i-g_i) \in F(R_i)$, and so $g$ and $g'$ must be mapped into the same element of $\grf$ under the map that sends $(g_i)_{i \in \mathbb{N}} \in G_R$ to $(g_i + F(R_i))_{i \in \mathbb{N}} \in \grf$.
			
			Writing 
			\begin{equation}
				\label{eq: definition of G'RF}
				G'_{R,F} := \{g \in \grf: R(g) \text{ has strong light tails for }B_N\}
			\end{equation}
			we now have that by \Cref{thm: equivalence minimal sieves main} the map from $G'_{R,F}$ to $\Omega_{R}$ that sends $g$ to $\cf_{R(g)}$ is a bijection. We are now missing two things, in order to show that  $(\Omega_{R}, S,\nu_R)$ is isomorphic to $(\grf,T^F,\mmp^F)$, where $T^F$ is the action of $\co_K$ in $\grf$ defined by $T^F_a(g)_i = g_i +a$, and $\mmp^F$ is the Haar measure in $\grf$.
			
			The first is that the map $\vrf: \grf \rightarrow \Omega_{R}$, that sends $g$ to $\cf_{R(g)}$ is a factor map. The second is \Cref{eq: nuR of sieves with strong light tails is 1}. We start with the first. An equivalent way of defining $\vrf$ is though the equivalence $$a \in \vrf(g) \Leftrightarrow \mathlarger{\forall_{i}} \hspace{5pt} (a+g_i) \cap R_i = \emptyset. $$
			
			In Lemma 3.12 of \cite{part1}, we showed that $(\Omega_{R}, S, \nu_R)$ is a factor of $(G_R,T,\mmp)$. We now show that the same holds true for the system $(\grf, T, \mmp^F)$. 
			
			\begin{lemma}
				\label{lm: vrf factor map}
				The map $\vrf$ is a factor map from $(\grf, T^F, \mmp^F)$ to $(\Omega_{R}, S, \nu_R)$.
			\end{lemma}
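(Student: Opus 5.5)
The plan is to reduce to the already known factor map $\varphi_R:(G_R,T,\mmp)\to(\Omega_R,S,\nu_R)$ from Lemma 3.12 of \cite{part1}, via the natural projection $\pi: G_R\to\grf$, $\pi((g_i)_i)=(g_i+F(R_i))_i$. Since $\gb_i\subset F(R_i)$, each coordinate map $\co_K/\gb_i\to\co_K/F(R_i)$ is a well-defined surjective homomorphism of finite groups. Hence $\pi$ is a continuous surjective homomorphism of compact groups, and it pushes the Haar measure $\mmp$ forward to the Haar measure $\mmp^F$: $\pi_*\mmp$ is a translation-invariant probability measure on $\grf$, since $\pi(g+h)=\pi(g)+\pi(h)$ and $\pi$ is onto. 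One can also check this directly on cylinders, because each fibre of $\co_K/\gb_i\to\co_K/F(R_i)$ has the same size $[F(R_i):\gb_i]$. Moreover $\pi$ intertwines the actions, $\pi\circ T_a=T^F_a\circ\pi$, since both add $a$ in every coordinate.

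Next I will verify that $\vrf$ is well defined and that $\varphi_R=\vrf\circ\pi$. If $g_i-g'_i\in F(R_i)$, then $(a+g_i)\cap R_i=\emptyset$ if and only if $(a+g'_i)\cap R_i=\emptyset$, because $g'_i+R_i=g_i+R_i$ after translating. So the defining condition $a\in\vrf(g)\Leftrightarrow\forall_i\,(a+g_i)\cap R_i=\emptyset$ depends only on the cosets. Comparing with the definition of $\varphi_R$ gives the factorization; here one reads $a+g_i\notin R_i$ as a condition on the class, and the two descriptions agree since $R_i$ is a union of $\gb_i$-classes. Equivariance follows directly: $a\in\vrf(T^F_b g)$ iff $a+b+g_i\notin R_i$ for all $i$, iff $a+b\in\vrf(g)$, iff $a\in S_b(\vrf(g))$.

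For measurability I will check that preimages of cylinders $C^R_{A,B}$ with $A,B$ finite are Borel. The set $\{g:\forall_i\, A+g_i\cap R_i=\emptyset\}$ is an intersection of clopen sets depending on one coordinate each, hence closed. Sets with $B$ nonempty are finite Boolean combinations of such sets. The measure identity then follows from
$$(\vrf)_*\mmp^F=(\vrf)_*\pi_*\mmp=(\varphi_R)_*\mmp=\nu_R.$$
Surjectivity onto $\Omega_R$ is not needed for a factor map in the measure-theoretic sense, since the image has full $\nu_R$ measure.

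The main obstacle is the careful identification of $\pi_*\mmp$ with $\mmp^F$ and the Borel measurability of $\vrf$, given that it need not be continuous. I would handle the first by the uniqueness of Haar measure, with a cylinder computation as backup, and the second by the closed-set description above.
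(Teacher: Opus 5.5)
Your proposal is correct and follows the paper's proof: the paper factors $\varphi_R=\vrf\circ\phi$ through the same coordinatewise projection $\phi:G_R\to\grf$, checks that $\phi$ pushes $\mmp$ to $\mmp^F$ on cylinders by the Lagrange fibre count you give as your backup, and deduces $(\vrf)_*\mmp^F=\nu_R$. The differences are cosmetic: you verify equivariance directly on $\grf$ rather than lifting it from Lemma 3.12 of Part~I via surjectivity of $\phi$, and you add an explicit Borel-measurability check for $\vrf$ that the paper leaves implicit.
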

			
			\begin{proof}
				
				We have to show that for any measurable $U \subset \grf$, $\nu_R(U) = \mmp^F(\vrf^{-1}(U))$ and that $$S \circ \vrf = \vrf \circ T^F.$$ Then, the result will automatically follow, since the image of $\vrf$ in $\Omega_R$ will have measure $1$.
				
				We start by defining $\phi_i:\co_K/\gb_i \rightarrow \co_K/F(R_i)$ to be the projection $\phi_i(x) = x+F(R_i)$ which is well defined since $\gb_i \subset F(R_i)$.  By taking the product of all the $\phi_i$, we obtain a map $\phi:G_R \rightarrow \grf $. We have that $\varphi_R = \vrf \circ  \phi$,  given that $a \in \varphi_R(g)$ is equivalent to $ \forall_i \hspace{5pt} (a+ g_i) \cap R_i = \emptyset$,  which is equivalent to $ \forall_i \hspace{5pt} (a+F(R_i) + g_i) \cap R_i = \emptyset$ by definition of $F(R_i)$. We get the following commutative diagram. 
				\[\begin{tikzcd}
					G_R && \grf \\
					\\
					& {\Omega_R}
					\arrow["\phi", from=1-1, to=1-3]
					\arrow["{\varphi_R}"', from=1-1, to=3-2]
					\arrow["\vrf", from=1-3, to=3-2]
				\end{tikzcd}\]
				
				Let $U$ be a cylinder set (which form a  base for the topology of $\grf$), that is, a set so that there are finite sets $S \subset \mathbb{N}$ and $U_i \subset \co_K/F(R_i)$ for $i \in S$ such that $$U = \{g \in \grf: g_i \in U_i \text{ for } i \in S\}.$$  By Lagrange's Theorem we have that $|\phi_i^{-1}(U_i)| = |U_i||F(R_i)|$ and $|\co_K/F(R_i)| = N(\gb_i)/|F(R_i)|$, therefore $$\mmp(\phi^{-1}(U)) = \prod_{i \in S} \frac{|U_i||F(R_i)|}{N(\gb_i)} = \prod_{i \in S} \frac{|U_i|}{|\co_K/F(R_i)|} = \mmp^F(U).  $$
				As desired, we obtain $$\nu_R(U) = \mmp(\varphi_{R}^{-1}(U)) = \mmp(\phi^{-1}(\vrf^{-1}(U))) = \mmp^F(\vrf^{-1}(U)).$$ 
				
				It remains to prove that $S \circ \vrf = \vrf \circ T^F$. Since $\phi(T_a(g)) = T^F_a(\phi(g))$, and $\phi$ is surjective, we get that for any $a \in \co_K$, $g \in \grf$, there is some $g' \in G_R$ such that $\phi(g') = g$. Then we have that $S_a(\vrf(g))$ is equal to (using that $S_a(\varphi_R(g')) = \varphi_{R}(T_a(g'))$ as shown in Lemma 3.12 of \cite{part1}), $$ S_a(\vrf(\phi(g'))) = S_a(\varphi_R(g')) = \varphi_{R}(T_a(g')) = \vrf(\phi(T_a(g'))) = \vrf(T_a^F(\phi(g'))) = \vrf(T_a^F(g)). $$ It follows that $S \circ \vrf = \vrf \circ T^F$ which concludes our proof.
			\end{proof}
			
			\begin{remark}
				\label{rmk: grf minimal rotation}
				The map $\phi$ used in the proof of  \Cref{lm: vrf factor map} is surjective and continuous, since the pre-image of cylinder sets in $\grf$ will be cylinder sets in $G_R$. We have that $G_R= \overline{\{T_a(\textbf{0}):a \in \co_K\}}$. Write $\textbf{0}^F$ for the identity element of $\grf$. Using continuity and surjectivity of $\phi$, we get $$\overline{\{T^F_a(\textbf{0}^F):a \in \co_K\}} = \overline{\{\phi(T_a(\textbf{0})):a \in \co_K\}} \supset \phi\left(\overline{\{T_a(\textbf{0}):a \in \co_K\}}\right) = \phi(G_R) = \grf. $$ Consequently, we have that $\grf = \overline{\{T^F_a(\textbf{0}^F):a \in \co_K\}}$, that is, $(\grf, T^F)$ is a minimal rotation of a compact group.
			\end{remark}
			
			It remains to show \Cref{eq: nuR of sieves with strong light tails is 1}. Given a sieve $R$ we define \begin{equation}
				\label{eq: CS_R}
				\cs_R := \{R': \Omega_R = \Omega_{R'} \text{ and } \cb_R = \cb_{R'}\}
			\end{equation} to be the set of all sieves supported on the same set as $R$ and with the same admissible sets.  By  \Cref{lm: OmegaR = OmegaS same base}, there is a bijection $\Phi:\grf \rightarrow \cs_R$ that is the map that sends $g \in \grf$ to the sieve $\Phi(g)$ defined by $$\Phi(g)_i = -g_i + R_i.$$ 
			
			Let $\Psi: \cs_R \rightarrow \Omega_{R}$ be the map that sends $R'$ to $\frp$. We have that $\vrf = \Psi \circ \Phi$, since $$a \in \vrf(g) \Leftrightarrow \mathlarger \forall_i \hspace{2pt} a+g_i \not \in R_i \Leftrightarrow \mathlarger \forall_i \hspace{2pt} a \not \in \Phi(g)_i \Leftrightarrow  a\in \cf_{\Phi(g)},$$ that is, we have the following commutative diagram.
			
			\[\begin{tikzcd}
				\grf && \cs_R \\
				\\
				& {\Omega_R}
				\arrow["\Phi", from=1-1, to=1-3]
				\arrow["\vrf"', from=1-1, to=3-2]
				\arrow["\Psi", from=1-3, to=3-2]
			\end{tikzcd}\]
			
			We now define \begin{equation}
				\label{def: sigmaR}
				\sigma_R = \Phi_*\mmp^F
			\end{equation}  to be the pushforward of $\mmp^F$ in $\cs_R$.  \Cref{thm: ergodic theorem} (the Ergodic Theorem) together with  \Cref{thm:Fr is generic} give us the following result.
			
			\begin{lemma}
				\label{lm: sieves with weak light tails have measure 1}
				Let $R$ be an Erdős sieve and $I_N$ a tempered Følner sequence. We have $$\sigma_R(\{R'\in\cs_R: R' \text{ has weak light tails with respect to } I_N\}) = 1.$$ 
			\end{lemma}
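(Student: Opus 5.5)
The plan is to transfer the statement to the group $G_R$, where the Ergodic Theorem applies, and then push it forward. By construction $\sigma_R = \Phi_*\mmp^F$, and by the proof of \Cref{lm: vrf factor map} we have $\mmp^F = \phi_*\mmp$. Hence $\sigma_R = (\Phi\circ\phi)_*\mmp$, and the map $\Phi\circ\phi$ sends $g\in G_R$ to the sieve $R(g)$ with $R(g)_i = -g_i + R_i$. It therefore suffices to show that
$$\mmp\bigl(\{g \in G_R : R(g) \text{ has weak light tails for } I_N\}\bigr) = 1.$$
Here we use the standard fact that the weak light tail property of a sieve depends only on the sieve. Thus the set of $g$ in question is $(\Phi\circ\phi)^{-1}$ of the set in the statement. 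Measurability of that set will follow from the argument below, since we will exhibit it as a countable intersection of Borel conditions, or as containing a full-measure Borel set.

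\textbf{Step 1: reduce to genericity.} By \Cref{thm:Fr is generic}, applied to the Erdős sieve $R(g)$, the sieve $R(g)$ has weak light tails for $I_N$ if and only if $\cf_{R(g)} = \varphi_R(g)$ is a generic point of $(\Omega_{R(g)},S,\nu_{R(g)})$ with respect to $I_N$. Note that $R(g)$ is Erdős, since $|R(g)_i| = |R_i|$. By \Cref{lm: OmegaR = OmegaS same base} and \Cref{lm: OmegaR = OmegaR' implies same meausre same base} we have $\Omega_{R(g)} = \Omega_R$ and $\nu_{R(g)} = \nu_R$. So the condition becomes: $\varphi_R(g) \in \Gen(\nu_R, I_N)$.

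\textbf{Step 2: apply the Ergodic Theorem on $G_R$.} The rotation $(G_R,T,\mmp)$ is ergodic because it is a minimal rotation of a compact group; the argument is the same as in \Cref{rmk: grf minimal rotation}. Take a countable dense family $\{f_k\}$ in $C(\Omega_R)$, for instance the indicator functions of the cylinders $C^R_{A,B}$ with $A,B$ finite. For each $k$, the function $f_k\circ\varphi_R$ lies in $L^1(G_R)$. By \Cref{thm: ergodic theorem} with the tempered sequence $I_N$, for $\mmp$-almost every $g$ we get
$$\frac{1}{|I_N|}\sum_{a\in I_N} f_k(\varphi_R(T_a g)) \to \int_{G_R} f_k\circ\varphi_R \, d\mmp = \int_{\Omega_R} f_k \, d\nu_R .$$
Using the equivariance $\varphi_R\circ T_a = S_a\circ\varphi_R$ from Lemma 3.12 of Part I, the left-hand side is exactly the ergodic average of $f_k$ along the $S$-orbit of $\varphi_R(g)$. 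Intersecting these countably many full-measure sets, and then approximating a general $f \in C(\Omega_R)$ uniformly by the $f_k$, shows that $\varphi_R(g)$ is generic for $\mmp$-almost every $g$.

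\textbf{Step 3: push forward.} Combining Steps 1 and 2, $R(g)$ has weak light tails for $\mmp$-almost every $g$. Pushing forward through $\Phi\circ\phi$ then gives the claim for $\sigma_R$.

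The main point needing care is ergodicity of $(G_R,T,\mmp)$. For this I would argue via characters: every nontrivial character of $G_R$ factors through a finite product $\prod_{i\le L}\co_K/\gb_i$. By the Chinese Remainder Theorem this product equals $\co_K/\prod_{i \le L}\gb_i$, on which $\co_K$ acts transitively. Hence no nontrivial character is invariant, and the system is ergodic.
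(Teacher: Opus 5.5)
Your proof is correct and is essentially the paper's argument. Both reduce weak light tails of $R'\in\cs_R$ to genericity of $\cf_{R'}$ for $\nu_R$ via \Cref{thm:Fr is generic} (using $\Omega_{R'}=\Omega_R$ and $\nu_{R'}=\nu_R$), and then apply the pointwise ergodic theorem along the tempered sequence $I_N$. The only difference is where the ergodic theorem is applied: the paper uses $(\Omega_R,S,\nu_R)$ together with ergodicity of $\nu_R$ and $\Psi_*\sigma_R=\nu_R$, while you use $(G_R,T,\mmp)$ with the functions $f\circ\varphi_R$ and check ergodicity by characters; this is the same statement, since $\mmp(\varphi_R^{-1}(\Gen(\nu_R,I_N)))=\nu_R(\Gen(\nu_R,I_N))$. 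One small correction: the cylinder indicators themselves are not dense in $C(\Omega_R)$, but their rational linear span is, which is all your argument needs.
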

			
			\begin{proof}
				Since $\vrf = \Psi \circ \Phi$, we have that $$\Psi_*\sigma_R = (\Psi \circ\Phi)_* \mmp^F = (\vrf)_*\mmp^F = \nu_R,$$ where the last equality was shown in  \Cref{lm: vrf factor map}. It follows that $$\nu_R(\Psi(\cs_R)) = \sigma_R(\Psi^{-1}(\Psi(\cs_R))) = \sigma_R(\cs_R) = 1.$$ 
				
				By  \Cref{thm:Fr is generic}, we have that $$\{R'\in\cs_R: R' \text{ has weak light tails with respect to } I_N\} = \Psi^{-1}(\Psi(\cs_R) \cap \Gen(\nu_R,I_N)),$$ so the result follows from showing that $\nu_R(\Psi(\cs_R) \cap \Gen(\nu_R,I_N)) = 1.$ But since $I_N$ is tempered and $\nu_R$ is ergodic,   \Cref{thm: ergodic theorem} implies that $\nu_R(\Gen(\nu_R,I_N)) = 1$, which concludes the proof.
			\end{proof}
			
			The following theorem together with the fact that $\Psi_*\sigma_R = \nu_R$ implies that \Cref{eq: nuR of sieves with strong light tails is 1} holds.
			
			\begin{theorem}
				\label{thm: R is OmegaR=OmegaRp to some sieve with strong light tails}
				Let $R$ be an Erdős sieve, and $I_N$ a tempered Følner sequence. We have $$\sigma_R(\{R'\in\cs_R: R' \text{ has strong light tails with respect to } I_N\}) = 1.$$
			\end{theorem}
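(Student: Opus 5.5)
The plan is to combine \Cref{lm: sieves with weak light tails have measure 1} with \Cref{thm: R has strong light tails if finite translations}, using the fact that $\cs_R$ is closed under the finite translations appearing in the hypothesis of the latter. For $g\in\grf$, \Cref{thm: R has strong light tails if finite translations} says that $\Phi(g)$ has strong light tails for $I_N$ provided two things hold. First, $\Phi(g)$ has weak light tails. Second, every sieve $\Phi(g')$ with $g'$ differing from $g$ in only finitely many coordinates also has weak light tails. Note that $\Phi(g')_i=-g'_i+R_i=x_i+\Phi(g)_i$ with $x_i=g_i-g'_i$, so these are exactly the finite translations in that theorem.

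Set $W:=\{g\in\grf:\Phi(g) \text{ has weak light tails for } I_N\}$. By \Cref{lm: sieves with weak light tails have measure 1}, $\mmp^F(W)=1$. Let $H\le\grf$ be the subgroup of elements with finitely many nonzero coordinates; it is countable, since each $\co_K/F(R_i)$ is finite. Define
$$W^*:=\bigcap_{h\in H}(W+h).$$
Haar measure is translation invariant, so each $W+h$ has full measure, and therefore the countable intersection $W^*$ has full measure.

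Now take $g\in W^*$. For every $h\in H$ we have $g-h\in W$, so $\Phi(g-h)$ has weak light tails. This covers all finite modifications of $g$: for any finite $A\subset\mathbb{N}$ and any $x_i$, the sieve with $R'_i=x_i+\Phi(g)_i$ for $i\in A$ is $\Phi(g-h)$ for the appropriate $h\in H$. Hence \Cref{thm: R has strong light tails if finite translations} gives that $\Phi(g)$ has strong light tails for $I_N$. Pushing forward by $\Phi$, the set of $R'\in\cs_R$ with strong light tails contains $\Phi(W^*)$, so its $\sigma_R$-measure is at least $\mmp^F(W^*)=1$.

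The main obstacle is measurability, which is needed both to apply \Cref{lm: sieves with weak light tails have measure 1} and to push forward. I would handle it by noting that $W=\vrf^{-1}(\Gen(\nu_R,I_N))$ up to the identification of \Cref{thm:Fr is generic}, and that $\Gen(\nu_R,I_N)$ is Borel, since it is defined by countably many limits of continuous functions. A minor point is that the translations $x_i$ in \Cref{thm: R has strong light tails if finite translations} range over $\co_K$, while elements of $H$ range over the quotients $\co_K/F(R_i)$. This is harmless, because translating by an element of $F(R_i)$ does not change the set.
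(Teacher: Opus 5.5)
Your proof is correct and is essentially the paper's argument: the paper uses the countable group $H_R=\bigoplus_i \co_K/F(R_i)$, acting on $\cs_R$ by finite translations. It intersects the $H_R$-translates of the full-measure set of sieves with weak light tails (\Cref{lm: sieves with weak light tails have measure 1}), uses $H_R$-invariance of $\sigma_R$, and concludes with \Cref{thm: R has strong light tails if finite translations}. You do the same on $\grf$, using translation invariance of $\mmp^F$, and then push forward by $\Phi$; your measurability check via $W=\vrf^{-1}(\Gen(\nu_R,I_N))$ covers a point the paper passes over silently.
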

			
			\begin{proof}
				Let $$H_{R} = \bigoplus_i \co_K/F(R_i).$$ We can write elements $h \in H_R$ as sequences $h = (h_1,h_2,\dots)$ such that $h_i = 0$ except for a finite number of indices. Let $V$ be the action of $H_R$ in $\cs_R$ given by $V_h(R')_i = R'_i + h_i$.  \Cref{thm: R has strong light tails if finite translations} can be restated as saying that $$\{R'\in\cs_R: R' \text{ has strong light tails for } I_N\} = \bigcap_{h \in H_R} V_h(\{R'\in\cs_R: R' \text{ has weak light tails for } I_N\}).$$ Clearly $\sigma_R$ is $V$ invariant, so, by  \Cref{lm: sieves with weak light tails have measure 1}, the right hand side is a countable intersection of sets of measure $1$. Therefore, the left hand side also has measure $1$, as we wanted to show.
			\end{proof}
			\begin{remark}
				In particular, for every sieve $R$ there is some sieve $R'$ with strong light tails with respect to $B_N$ such that $\Omega_R = \Omega_{R'}$.
			\end{remark}
			
			With this, it is now easy to show the desired isomorphism.

			\begin{theorem}
				\label{thm: main theorem}
				Let $R$ be an Erdős sieve.  Then $(\Omega_R,S,\nu_R)$ is isomorphic to $(\grf,T^F, \mmp^F)$.
			\end{theorem}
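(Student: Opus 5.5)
We reduce first to the case where $R$ is minimal. By \Cref{lm: every sieve is equivalent to minimal sieve} there is a minimal sieve $R'$ obtained from $R$ by successive contractions. By \Cref{lm: isomorphism of dynamical systems for contraction dilation}, $(\Omega_R,S,\nu_R)$ and $(\Omega_{R'},S,\nu_{R'})$ are the same system. The groups $\grf$ and $G_{R',F}$ also need to be identified. For this we will check that, when $R_\gb=\bigcup_{\gb'\in A_\gb}R'_{\gb'}$ with the $\gb'$ pairwise coprime, the quotient $\co_K/F(R_\gb)$ is isomorphic (via CRT) to $\prod_{\gb'\in A_\gb}\co_K/F(R'_{\gb'})$, compatibly with the $\co_K$-actions and Haar measures. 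If this identification turns out to be awkward, we will avoid it altogether. Both rotations are ergodic with discrete spectrum, so by \Cref{thm: Halmos Von Neumann} it suffices to compare point spectra, and we would only need the isomorphism in the minimal case to transfer spectra. So from now on $R$ is minimal.

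By \Cref{lm: vrf factor map}, $\vrf:(\grf,T^F,\mmp^F)\to(\Omega_R,S,\nu_R)$ is a factor map. It remains to find a full-measure set on which $\vrf$ is injective.

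Fix a tempered Følner sequence $I_N$, for instance a tempered subsequence of $B_N$. Let $G'$ be the set of $g\in\grf$ such that $R(g)=\Phi(g)$ has strong light tails for $I_N$. By \Cref{thm: R is OmegaR=OmegaRp to some sieve with strong light tails}, $\mmp^F(G')=\sigma_R(\{\text{strong light tails}\})=1$. We may intersect $G'$ with its $T^F$-translates, still a countable intersection of full-measure sets, to make it $T^F$-invariant; in fact it already is, since translating all $R_i$ by a common $a$ preserves light tails.

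We then prove injectivity on $G'$. Suppose $g,g'\in G'$ and $\vrf(g)=\vrf(g')$, i.e. $\cf_{R(g)}=\cf_{R(g')}$, so $R(g)\sim R(g')$. Each $R(g)$ is a translate coordinatewise of $R$, so it is minimal: a decomposition $R_i-g_i=\bigcup (S_j+\gb_j)$ translates back to one of $R_i$. Both are Erdős and have strong light tails for $I_N$. The third item of \Cref{thm: equivalence minimal sieves main} (or \Cref{thm: uniqueness of minimal sieve}) then gives $R(g)=R(g')$, so $-g_i+R_i=-g'_i+R_i$ for all $i$. Hence $g_i-g'_i\in F(R_i)$, i.e. $g=g'$ in $\grf$. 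Thus $\vrf|_{G'}$ is an injective, measure-preserving, equivariant Borel map between standard Borel spaces. Its image is Borel (Lusin–Souslin) with $\nu_R$-measure $\mmp^F(G')=1$, and its inverse is measurable, so $\vrf$ is a measure-theoretic isomorphism.

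The main obstacle is the reduction to the minimal case, that is, making the identification of $\grf$ with $G_{R',F}$ rigorous rather than only that of the $\Omega$'s. If a direct CRT computation of $F(R_\gb)$ in terms of the $F(R'_{\gb'})$ fails, the fallback is the Halmos–von Neumann route. We would note that $\vrf$ for the non-minimal $R$ still factors through the minimal one. Then we would compute that the characters of $\grf$ pulled back to $\co_K$ coincide with those of $G_{R',F}$, namely the characters of $\co_K$ trivial on $\bigcap_{i\in S}F(R_i)$ for some finite $S$. Minor care is also needed that the strong light tails provided for the tempered $I_N$ suffice for \Cref{thm: equivalence minimal sieves main}; they do, since that theorem is stated for arbitrary Følner sequences.
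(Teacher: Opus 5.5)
Your proposal is correct and is essentially the paper's proof. As in the paper, $\vrf$ is the factor map of \Cref{lm: vrf factor map}, the set of $g$ for which $R(g)$ has strong light tails has full $\mmp^F$-measure by \Cref{thm: R is OmegaR=OmegaRp to some sieve with strong light tails}, and injectivity on that set comes from the minimality of the translates $R(g)$ combined with the uniqueness of minimal sieves with light tails.

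Your concern about identifying $\grf$ with $G_{R',F}$ when reducing to a minimal $R$ is legitimate; the paper passes over this step. The direct route does work. The set $R_\gb^c=\bigcap_{\gb'\in A_\gb}(R'_{\gb'})^c$ is the preimage, under the surjection $\co_K\to\prod_{\gb'\in A_\gb}\co_K/\gb'$, of a product of nonempty sets. Hence $F(R_\gb)=\bigcap_{\gb'\in A_\gb}F(R'_{\gb'})$, and $\co_K/F(R_\gb)\cong\prod_{\gb'\in A_\gb}\co_K/F(R'_{\gb'})$ equivariantly, so the Halmos--von Neumann fallback is unnecessary.
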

			
			\begin{proof}
				Let $G'_{R,F}$ be the set defined in \Cref{eq: definition of G'RF} and $$L_R := \{R'\in\cs_R: R' \text{ has strong light tails with respect to } B_N\}.$$ We have to show that the map $\vrf$ that sends $g \in \grf$ to $\cf_{R(g)}$ is injective when restricted $G'_{R,F}$, and that $\mmp_F(G'_{R,F}) = 1$. Since $\sigma_R = \Phi_*\mmp_F,$ and $G'_{R,F} = \Phi^{-1}(L_R),$ the fact that $\mmp_F(G'_{R,F}) = 1$ follows directly from \Cref{thm: R is OmegaR=OmegaRp to some sieve with strong light tails}, which states that $\sigma_R(L_R)=1$. 
				
				It remains to show that $\vrf$ when restricted to $G'_{R,F}$ is injective. Since $\vrf = \Psi \circ \Phi$, and $\Phi$ is a bijection, it remains to show that $\Psi$ restricted to $L_R$  is injective. As we have pointed out before, by \Cref{lm: isomorphism of dynamical systems for contraction dilation}, we are free to assume that $R$ is minimal. We now claim that this implies that every $R(g)$ is also minimal. Indeed, if $R(g)$ was not minimal, then there would be some $i$ and some finite set $A$ of proper divisors of $\gb_i$ such that $R(g)_i$ can be written as the union of congruence classes modulo the elements of $A$. But if $R(g)_i = \bigcup_{\gb \in A} E_i + \gb $, then $R_i = \bigcup_{\gb \in A} (g_i+E_i) + \gb $. Hence, we see that $R(g)$ is minimal if and only if $R$ also is. \Cref{thm: equivalence minimal sieves main} implies that $\Psi$ restricted to $L_R$  is injective, which completes the proof.
			\end{proof}
			
			In the particular case where $R$ is a sieve over $\q$, we know by  \Cref{lm: isomorphism of dynamical systems for contraction dilation} that by successively contracting $R$, we will obtain an equivalent minimal sieve $R'$ such that $(\Omega_R,S,\nu_R)$ is isomorphic to $(\Omega_{R'},S,\nu_{R'})$. In this case, we have seen that $F(R'_i) = \gb_i$ for every $i$. Therefore, we get the following corollary\footnote{We point out that  \Cref{thm: main theorem} was already known for sieves over $\q$, see Lemma 2.2.21 of \cite{Kulaga}.} of  \Cref{thm: main theorem}.

			\begin{corollary}
				\label{crl: Sieves over Q are all isomorphic to odometer}
				Let $R$ be an Erdős sieve over $\q$. Let $R'$ be the minimal sieve equivalent to $R$ obtained by successively contracting $R$, and let $\cb_{R'}$ be the set on which $R'$ is supported. Then $(\Omega_R,S,\nu_R)$ is isomorphic to the system $(G_{R'},T,\mmp)$ where $G_{R'}$ is the group $$G_{R'} = \prod_{b \in \cb_{R'}} \z/b\z.$$
			\end{corollary}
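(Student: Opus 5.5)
The plan is to chain three earlier results: minimal contraction of $R$, Theorem~\ref{thm: main theorem}, and the fact that a minimal sieve over $\q$ has $F(R'_i)=\gb_i$.

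First, by Lemma~\ref{lm: every sieve is equivalent to minimal sieve}, repeatedly contracting $R$ yields a minimal Erdős sieve $R'$. Since $R$ is then a dilation of $R'$, Lemma~\ref{lm: isomorphism of dynamical systems for contraction dilation} gives $\Omega_R=\Omega_{R'}$ and $\nu_R=\nu_{R'}$. Hence the identity map is an isomorphism between $(\Omega_R,S,\nu_R)$ and $(\Omega_{R'},S,\nu_{R'})$.

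Next, apply Theorem~\ref{thm: main theorem} to $R'$. This gives $(\Omega_{R'},S,\nu_{R'})\cong(G_{R',F},T^F,\mmp^F)$, where $G_{R',F}=\prod_{b\z\in\cb_{R'}}\z/F(R'_b)$. It remains to show that $F(R'_b)=b\z$ for every $b\z\in\cb_{R'}$. Once this holds, $G_{R',F}=G_{R'}$, $T^F=T$, and the Haar measures coincide, which gives the corollary.

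The identification $F(R'_b)=b\z$ is the only step with content, and it uses the argument given just before the corollary.
\begin{itemize}
\item $F(R'_b)$ is an additive subgroup of $\z$ containing $b\z$, so $F(R'_b)=d\z$ for some $d\mid b$. Over $\z$ every subgroup is an ideal, which is exactly what fails over general $\co_K$.
\item Then $R'_b+d\z=R'_b$, so $R'_b=\bigcup_{r\in S}(r+d\z)$ for a finite set $S$ of representatives.
\item If $d\neq b$, this writes $R'_b$ as a union of classes modulo the single proper divisor $d\z$ of $b\z$, with $r=1$ in Definition~\ref{def: minimal sieve}. This contradicts the minimality of $R'$.
\item Hence $d=b$ (up to sign), so $F(R'_b)=b\z$.
\end{itemize}

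Two small points need checking. One should confirm that $R'_b\neq\emptyset$, which the remark on minimal sieves guarantees, so that $S$ is nonempty and the contraction is genuine. One should also note that a single ideal $d\z$ trivially forms a pairwise coprime family. No step should be a real obstacle; the only care needed is that isomorphism is transitive and that it is applied to the minimal $R'$ rather than to $R$.
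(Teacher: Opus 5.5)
Your proposal is correct and is essentially the paper's own argument: pass to the minimal contraction $R'$ (which leaves $\Omega$ and $\nu$ unchanged by the dilation lemma), apply the main isomorphism theorem to $R'$, and use minimality over $\z$ to get $F(R'_b)=b\z$, so that $G_{R',F}=G_{R'}$. The only difference is cosmetic: you identify the whole stabilizer as a subgroup $d\z$ with $d\mid b$, while the paper argues element by element that $x+R'_b=R'_b$ makes $R'_b$ a union of classes modulo $(x,b)\z$; both rest on the same idea.
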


			This means that over $\z$, if $R$ is a minimal Erdős sieve, then the dynamical system associated to $\Omega_R$ does not depend on the congruence classes being sieved, only on the support $\cb_R$ of $R$. This is rather surprising since by  \Cref{lm: OmegaR = OmegaS same base} we would expect two random sieves supported on the same set to have very different sets of admissible sets. For example, let $R$ be the squarefree sieve $R_p = p^2\z$, and $R'$ the sieve defined by $R'_p = \{0,1\}+p^2\z$. Then, $\Omega_{R'} \subset \Omega_R$, but have $\nu_R(\Omega_{R'}) = 0$. Still, the associated measure theoretical dynamical systems will be isomorphic.

			Indeed, not even the support  $\cb_R$ characterizes $(\Omega_R,S,\nu_R)$, as we now show. Given a set of pairwise coprime ideals $\cb$, let $\cp(\cb)$ be the set
			\begin{equation}
				\label{eq: def of cp}
				\cp(\cb) := \{\gp^{\max_{\gb \in \cb}v_\gp(\gb)}: \gp \text{ prime ideal of } \co_K\},
			\end{equation} where $v_\gp$ is the $\gp-$adic valuation. For example, if $\cb = \{p_{2i}^2p_{2i+1}^2\z\}$, then $\cp(\cb)$ is the set of all the primes squared. Let $G(\cb) = \prod_{\gb \in \cb} \co_K/\gb$. We will now show that $(G(\cb),T)$ and $(G(\cp(\cb)),T)$ are always topologically conjugate. 
			
			By the Chinese Remainder Theorem, we have for any $\gb \in \cb$ isomorphisms $$V_\gb: \co_K/\gb  \rightarrow \prod_{\gp \mid \gb}\co_K/\gp^{v_\gp(\gb)}. $$
			The product of all these maps gives a map $V:G(\cb) \rightarrow G(\cp(\cb)) $, which, since every $V_\gb$ is a bijection and satisfies $V_\gb(x+y) = x + V_\gb(y)$ for every $x \in \co_K$, is our desired isomorphism. Since both systems $(G(\cb),T)$ and $(G(\cp(\cb)),T)$ are uniquely ergodic, this implies that  $(G(\cb),T,\mmp)$ and $(G(\cp(\cb)),T,\mmp)$ are also isomorphic.
			
			In the next subsection we will compute the spectrum of these dynamical systems, which will show that for minimal sieves over $\q$, the set $\cp(\cb_R)$ characterizes these dynamical systems. 
			
			\begin{remark}\index{Measure of Maximal Entropy}
				
				When $R$ is a sieve over $\q$,  Theorem 2.2.25 in \cite{Kulaga} shows that there is a unique invariant measure of $(\Omega_R,S)$ that has maximum entropy. For sieves over an étale $\q-$algebra $K$ of degree greater than $1$, it is currently not known whether this is the case.
			\end{remark}

			\subsection{Spectrum Computation} 
			
			We now compute the spectrum of  $(\Omega_R,S,\nu_R)$. This is relevant to us since by the Halmos-von Neumman Theorem (\Cref{thm: Halmos Von Neumann}), this is an invariant of measure theoretical dynamical systems. 
			
			\begin{theorem}
				\label{thm:Spectrum}
				Let $R$ be an Erdős sieve. Then, we have $$\sigma_p(\Omega_R,S, \nu_R) =  \{\chi \in \widehat{\co_K}: \text{ there exists some } C \subset \mathbb{N} \text{ finite such that } \hspace{2pt}   \chi\vert_{\bigcap_{j\in C}F(R_j)} = 1\} .$$ 
				
			\end{theorem}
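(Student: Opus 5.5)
By \Cref{thm: main theorem}, $(\Omega_R,S,\nu_R)$ is isomorphic to $(\grf,T^F,\mmp^F)$. Isomorphic systems have the same point spectrum, so it is enough to compute $\sigma_p(\grf,T^F,\mmp^F)$. This is an ergodic minimal rotation of a compact abelian group (\Cref{rmk: grf minimal rotation}). For such a rotation the point spectrum is classical. Every continuous character $\xi\in\widehat{\grf}$ is an eigenfunction, since
\[
\xi(T^F_{-a}g)=\xi(\iota(-a))\,\xi(g)=\overline{\xi(\iota(a))}\,\xi(g),
\]
where $\iota:\co_K\to\grf$ is the diagonal map $a\mapsto(a+F(R_i))_i$. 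By Peter–Weyl the characters form an orthonormal basis of $L^2(\grf,\mmp^F)$. Hence the eigenvalues are exactly the characters $\chi=\xi\circ\iota$ of $\co_K$, up to the inversion/conjugation convention in the Koopman representation. That convention is harmless, because the target set is closed under $\chi\mapsto\overline{\chi}$. So the whole statement reduces to identifying the image of the map $\widehat{\grf}\to\widehat{\co_K}$, $\xi\mapsto\xi\circ\iota$.

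Next I describe $\widehat{\grf}$. Since $\grf=\prod_i\co_K/F(R_i)$ is a product of finite groups with the product topology, its dual is $\bigoplus_i\widehat{\co_K/F(R_i)}$. In words, every continuous character factors through the projection onto finitely many coordinates $C\subset\mathbb{N}$, and has the form $\xi(g)=\prod_{j\in C}\xi_j(g_j)$ with $\xi_j$ a character of $\co_K/F(R_j)$. Then $\chi=\xi\circ\iota$ is given by $\chi(a)=\prod_{j\in C}\xi_j(a+F(R_j))$. This is trivial on each $a\in\bigcap_{j\in C}F(R_j)$, which gives the inclusion $\subseteq$.

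For $\supseteq$, take $\chi$ with $\chi|_{\bigcap_{j\in C}F(R_j)}=1$. Then $\chi$ factors through the finite group $\co_K/\bigcap_{j\in C}F(R_j)$. The natural map
\[
\co_K/\textstyle\bigcap_{j\in C}F(R_j)\hookrightarrow\prod_{j\in C}\co_K/F(R_j)
\]
is an injective group homomorphism. By Pontryagin duality for finite abelian groups, every character of a subgroup extends to the ambient group. So $\chi$ extends to some $\xi$ on $\prod_{j\in C}\co_K/F(R_j)$. Composing $\xi$ with the coordinate projection gives a continuous character of $\grf$ whose restriction along $\iota$ is $\chi$.

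The main point requiring care is the duality/extension step together with the claim that the eigenvalues are exactly the characters pulled back along $\iota$. Showing that no other eigenvalues appear needs two facts: discrete spectrum via Peter–Weyl, and the observation that distinct characters $\xi$ give orthogonal eigenfunctions, so any eigenfunction for $\chi$ must have a nonzero Fourier coefficient at some $\xi$ with $\xi\circ\iota=\chi$. I would also double-check two smaller points. First, the well-definedness used above: $F(R_j)\supseteq\gb_j$ has finite index, so each quotient is finite. Second, the sign convention $U_g f=f\circ T_{-g}$.
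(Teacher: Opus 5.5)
Your proposal is correct and follows the paper's proof essentially step for step. Both reduce to $(G_{R,F},T^F,\mathbb{P}^F)$ via the isomorphism theorem, observe that the characters of this group are eigenfunctions forming an orthonormal basis, and use the Fourier expansion of an arbitrary eigenfunction to rule out any other eigenvalues. The one small difference is in matching characters of $\mathcal{O}_K$ with characters of $\prod_{j\in C}\mathcal{O}_K/F(R_j)$. The paper uses the Chinese Remainder Theorem (pairwise coprimality of the $\mathfrak{b}_j$) to show that $\mathcal{O}_K/\bigcap_{j\in C}F(R_j)\to\prod_{j\in C}\mathcal{O}_K/F(R_j)$ is an isomorphism, and then splits $\chi=\prod_j\chi_j$; you need only the evident injectivity plus extension of characters from a subgroup of a finite abelian group. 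Your treatment of the sign convention $U_a f=f\circ T_{-a}$ is also slightly more careful than the paper's.
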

			
			\begin{proof}
				By  \Cref{thm: main theorem}, our problem reduces to computing the spectrum of $(\grf,T^F,\mmp^F)$. For any finite $C \subset \mathbb{N}$, we have a surjection $$\co_K \mathlarger \twoheadrightarrow \prod_{j\in C} \co_K/F(R_j),$$ obtained from composing the surjections from $\co_K$ to $\prod_{j\in C}\co_K/\gb_j$ and from this to $\prod_{j\in C}\co_K/F(R_j)$. Hence, we get an isomorphism $$\co_K\bigcap_{j\in C} F(R_j)  \rightarrow  \prod_{j\in C}\co_K/F(R_j)$$ which takes $x$ and sends it to $(x_j+F(R_j))_{j\in S}$. Note that $\gb_j \subset F(R_j)$, so the product of the $\gb_j$ is contained in $\bigcap_{j\in C} F(R_j)$. Since these are finite abelian groups, we get an isomorphism between the character groups $$
				\widehat{\mathcal{O}_K / \bigcap_{j \in C} F(R_j)} \rightarrow \widehat{\prod_{j \in C} \mathcal{O}_K \left/ F(R_j) \right.} .
				$$
				
				Hence, given any character $\chi$ of $\co_K$ such that $ \chi\vert_{\bigcap_{j\in C}F(R_j)} = 1$ for some finite $C \subset \mathbb{N}$, there are characters $\chi_j$ of $\co_K$ such that $\chi_j\vert_{F(R_j)} = 1$ and $\chi = \prod_{j\in C} \chi_j$. Consequently, by considering the map $\zeta_\chi:\grf \rightarrow \mathbb{C}$ given by $\zeta_\chi(g) = \prod_{j\in C}\chi_j(g_j)$, we have $$\zeta_\chi(T_a(g)) = \prod_{j\in C}\chi_j(a+g_j) = \prod_{j\in C}\chi_j(a)\chi_j(g_j) = \chi(a)\zeta_\chi(g).$$ Therefore we have that $$ \{\chi \in \widehat{\co_K}: \text{ there exists some } C \subset \mathbb{N} \text{ finite such that } \hspace{2pt}   \chi\vert_{\bigcap_{j\in C}F(R_j)} = 1\} \subset \sigma_p(\grf,T, \mmp^F).$$
				
				Let $\chi' \in \sigma_p(\grf,T, \mmp^F).$ We want to show that there is some finite $S$ such that $\chi'\vert_{\bigcap_{j\in C}F(R_j)} = 1$. Note that the functions $\zeta_\chi$ with $\chi$ some character such that $\chi\vert_{\bigcap_{j\in C}F(R_j)} = 1$ correspond exactly to the characters of $\grf$, as we have $$\widehat{\grf} = \bigoplus_i \co_K/F(R_i).$$
				
				By Parseval's Theorem, these form an orthonormal basis of $L^2(\grf)$. Let $f\in L^2(\grf)$ be a non-zero eigenfunction of the Koopman representation with eigenvalue $\chi'$, that is,  we have $f(T_{a}(g)) = \chi'(a)f(g)$ for all $a\in \co_K$ and $g \in \grf$. Writing  $f = \sum_{\zeta_{\chi} \in \widehat{\grf}} c_{\chi} \zeta_{\chi}(g)$, we get $$\sum_{\zeta_{\chi} \in \widehat{\grf}} c_{\chi}(\chi(a)-\chi'(a)) \zeta_{\chi}(g) = 0.$$ By linear independence, this means that for every $a \in \co_K$, and for every $\chi$ such that $\zeta_{\chi} \in \widehat{\grf}$ and $c_\chi  \neq 0$, we have $\chi(a) = \chi'(a)$. Consequently, there must be a unique $\chi$ such that $\zeta_{\chi} \in \widehat{\grf}$ and $c_\chi  \neq 0$ for which we have $\chi = \chi'$. This concludes the proof.
			\end{proof}
			
			\begin{example}
				Let $R$ be a minimal Erdős sieve over $\q$, supported on the set $\cb_R= \{b_1,b_2,b_3,\dots\}$. We can identify a character $\chi$ such that $\chi \vert_{b_i\z} = 1$ with $\chi(1)$ which will be a root of unity of degree $b_i$. By  \Cref{thm:Spectrum}, we conclude that the spectrum of  $(\Omega_R,S, \nu_R)$ is the union of all roots of unity of degree $\prod_{j \in S}b_j$ over all finite  $S \subset \mathbb{N}$. As we pointed out, this shows that  $(\Omega_R,S, \nu_R)$ is not determined by $\cb_R$, but rather by $\cp(\cb_R)$ (as defined in \Cref{eq: def of cp}).
			\end{example}

			\section{$X_R$ and  $\Omega_{R}$}

			The objective of this section is to generalize point (3) of Sarnak's Program for sieves. As proven in \cite{Bartnicka}, if  $R$ is an Erdős $\cb-$free system, then $X_R = \Omega_R$. The following example shows that this will not hold for general sieves, even if they are Erdős with strong light tails for some Følner sequence.
			
			\begin{example}
				\label{ex: sieve such that X different Omega}
				Let $R$ be the sieve supported on the set $\cb_R = \{p^2\z:p \text{ prime}\}$ defined by $R_p = \{0,1\}+p^2\z$ for $p\geq 3$ and $R_2 = \emptyset$. The set $A = \{2,4\}$ is admissible, since $A\cap R_p = \emptyset$ for every $p \geq 3$. However, we see that if $x \not \in \fr$, then either one of $x-1$ or $x+1$ are not in $\fr$. It follows that $d(A,\fr+a) = 1$ for every $a$, so $A \not \in X_R$. 
			\end{example}
			
			This leads us to a number of distinct questions. First, given a sieve $R$ when does a sieve $R$ satisfy $X_R = \Omega_R$? We answer this question by characterizing when a set $A$ is in $X_R$, with two different conditions, shown in  \Cref{lm: Conditions for A in X_R} and  \Cref{thm: Finite Admissible sets have positive density}.

			Secondly, note how from the point of view of measure theoretical dynamics, it is not relevant that $X_R \neq \Omega_R$, as long as $\nu_R(X_R) =1 $. In \Cref{thm: nu_R(X_R) = 1} we show that this happens if and only if $R$ has weak light tails with respect to at least one Følner sequence.
			
			In the context of general $\cb-$free systems, there is a space $\widetilde{X}_R$ contained in $\Omega_R$ that is sometimes considered (see \cite{Dymek}), which corresponds to the smallest hereditary system that contains $X_R$ (see \Cref{def: hereditary}). In \Cref{thm: tilde X R = Omega R} we show that $\widetilde{X}_R$ must equal $\Omega_{R}$ whenever $R$ has weak light tails for some Følner sequence $I_N$.
			
			Finally, in the previous section we showed that if $R$ and $R'$ are minimal Erdős sieves, then $\Omega_R = \Omega_{R'}$ implies that $\cb_R = \cb_{R'}$, and for every $\gb \in \cb_R$, there is some $\delta_\gb \in \co_K$ such that $R_\gb = \delta_\gb + R'_\gb$. In  \Cref{thm: equivalence XR =XRp and OmegaR = OmegaRp}, we show that if $R$ and $R'$ have weak light tails for some (not necessarily common) Følner sequence, then $X_R = X_{R'}$ if and only if $\Omega_R = \Omega_{R'}$. We conclude by providing in \Cref{prop: lim lambda infty implies XR = OmegaR} an easy to check condition that is sufficient but not necessary in order for $X_R$ to equal $\Omega_R$.

			We start with the following lemma, which gives an answer to the first question.
			
			\begin{lemma}
				\label{lm: Conditions for A in X_R}
				Let $R$ be an Erdős sieve that has weak light tails for some Følner sequence $I_N$. An $R-$admissible set $A$ belongs to $X_R$ if and only if for every  of its finite subsets $A'$ and sequences $b_1, \dots, b_l $ of elements of $\co_K$ not in $A$, there are indexes $i_1, \dots, i_l$ such that $-b_j + R_{i_j} \not \subset -A'+ R_{i_j}$ and if $i_j = i$ for every $j$ in some finite set $Q$, then $\bigcap_{j \in Q} (-b_j + R_i) \setminus (-A'+R_i) \neq \emptyset$, .
			\end{lemma}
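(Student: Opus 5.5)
The plan is to reduce membership in $X_R$ to a statement about finite patterns and then use the positive density from \Cref{thm: density of x+A in fr}, or a variant of it, via a Chinese Remainder argument. Since $X_R$ is the orbit closure of $\fr$ and the topology is the product topology, $A\in X_R$ if and only if for every finite $A'\subset A$ and every finite set $\{b_1,\dots,b_l\}$ disjoint from $A$ there is some $x$ with $x+A'\subset\fr$ and $x+b_j\notin\fr$ for all $j$. I will rewrite this as follows: we need $x$ such that $x+A'$ avoids every $R_i$, while for each $j$ there is an index $i_j$ with $x+b_j\in R_{i_j}$. If several $j$ share the same index $i$, grouped in a set $Q$, then we need $x\in\bigcap_{j\in Q}(-b_j+R_i)\setminus(-A'+R_i)$. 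This explains the form of the condition in the statement.

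\emph{Necessity.} Suppose such an $x$ exists. For each $j$, pick $i_j$ with $x+b_j\in R_{i_j}$. Then $x\in -b_j+R_{i_j}$ and $x\notin -A'+R_{i_j}$, because $x+A'\subset\fr$. This gives $-b_j+R_{i_j}\not\subset -A'+R_{i_j}$. Grouping the $j$ with equal index, the same $x$ lies in $\bigcap_{j\in Q}(-b_j+R_i)\setminus(-A'+R_i)$, so that intersection is nonempty. This direction does not use weak light tails.

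\emph{Sufficiency.} Fix indices $i_1,\dots,i_l$ as in the hypothesis and let $I=\{i_1,\dots,i_l\}$. For each $i\in I$ choose a residue $c_i$ modulo $\gb_i$ lying in $\bigcap_{j:i_j=i}(-b_j+R_i)\setminus(-A'+R_i)$. Consider the modified sieve $R'$ defined by $R'_i = R_i\cup(-c_i+\ldots)$. More simply, I restrict $x$ to the residue class $x\equiv c_i \pmod{\gb_i}$ for $i\in I$, which is a single class modulo $\gc=\prod_{i\in I}\gb_i$. Inside that class the conditions $x+A'\not\subset R_i$ for $i\in I$ hold automatically, and $x+b_j\in R_{i_j}$ also holds automatically. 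It remains to find $x$ in this class with $x+A'$ avoiding all $R_i$ for $i\notin I$.

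To do this, let $W$ be the sieve obtained from $R$ by deleting the indices in $I$. Removing finitely many ideals preserves the Erdős property and weak light tails; this can be checked directly from the definition, possibly after noting that the tail sets only shrink. Then $A'$ is $W$-admissible, being a subset of an $R$-admissible set. The ideal $\gc$ is coprime to every ideal of $\cb_W$. A version of \Cref{lm: Fr intercepted by ideal} applied to the set $\{x: x+A'\subset\cf_W\}$ gives positive density in the class $c+\gc$. Combined with \Cref{thm: density of x+A in fr}, this produces the required $x$.

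\emph{Main obstacle.} The hardest step is this last one: \Cref{lm: Fr intercepted by ideal} is stated only for $\fr$ itself, not for the set of translates of a pattern. My first attempt would be to apply \Cref{thm: density of x+A in fr} to the union sieve $W\cup Z$, where $Z$ is supported on $\{\gc\}$ together with dummy ideals and removes every class except $c$ modulo $\gc$. Alternatively, I would repeat the equidistribution computation of \Cref{lm: equidistribution for Følner sequences} with the extra congruence condition, and then use weak light tails to control the tail.
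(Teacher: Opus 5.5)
Your necessity direction is correct and is the paper's argument. The gap is in sufficiency, at the claim that deleting the finitely many indices in $I$ preserves weak light tails. That claim is false, and ``the tail sets only shrink'' is backwards: removing $R_i$, $i\in I$, from the head of the sieve can only \emph{enlarge} $\bigcup_{i>L}W_i\setminus\bigcup_{j\le L}W_j$.

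The paper's own \Cref{rmk: equivalent sieves with and without weak light tails} is a counterexample. The sieve $W_1=1+4\z$, $W_{2i}=1+4(i-1)+p_{2i}^2\z$, $W_{2i+1}=1-4(i-1)+p_{2i+1}^2\z$ has weak light tails for $[0,N]$. But the sets $W_i$ with $i\ge 2$ cover all of $1+4\z$, so once $W_1$ is deleted, \Cref{lm: if contains coprime ideal then no weak light tails} rules out weak light tails for every F\o lner sequence.

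This situation arises inside your argument. Take $R$ to be that sieve, $A=A'=\{0\}$, $b_1=1$, $i_1=1$, $c_1=0$. Your truncated sieve is then exactly the one with $W_1$ removed, and \Cref{lm: Fr intercepted by ideal} is not merely unjustified for it but false: the class $1+4\z$ contains no free elements, while the free set has positive density.

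Your fallback via $W\cup Z$ also fails as stated. \Cref{thm: density of x+A in fr} would need $A'$ to be $(W\cup Z)$-admissible, i.e.\ $y+A'\subset c+\gc$ for some $y$. That fails as soon as $A'$ meets two classes modulo $\gc$, because the congruence constraint is on $x$, not on $x+A'$.

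The fix is to delete nothing. Since $x\equiv c\pmod{\gc}$ already forces $(x+A')\cap R_i=\emptyset$ for $i\in I$, what you need is $\cf_{R''}\cap(c+\gc)\neq\emptyset$ with $R''=-A'+R$. Note that $\{x:x+A'\subset\fr\}=\cf_{R''}$ is itself a free set, so patterns are not the obstacle you feared.

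$R''$ has weak light tails for $I_N$, for two reasons:
\begin{enumerate}
\item Each $-a+R$ does, since $\cf_{-a+R}=-a+\fr$, its tail sets are translates of those of $R$, and upper density along a F\o lner sequence is translation invariant.
\item The union over $a\in A'$ is well defined by admissibility of $A'$, is Erd\H{o}s since $|R''_i|\le|A'||R_i|$, and inherits weak light tails by \Cref{lm: union of sieves}.
\end{enumerate}

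Now merge the ideals $\gb_i$, $i\in I$, into $\gc$ by a dilation, which preserves weak light tails by \Cref{lm: Erds and light tails preserved by change of base}. As $c\notin\bigcup_{i\in I}R''_i$, the second part of \Cref{lm: Fr intercepted by ideal} gives $d_I(\cf_{R''}\cap(c+\gc))=d_I(\cf_{R''})/m>0$. Here $m$ is the number of classes modulo $\gc$ outside $\bigcup_{i\in I}R''_i$, and $d_I(\cf_{R''})>0$ by \Cref{thm:Fr is generic}. This is precisely the ``local global principle'' for $-A'+R$ that the paper invokes.

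Your equidistribution alternative would also work, provided you bound the tail using the weak light tails of $-A'+R$, keeping the indices of $I$ among the first $L$, rather than those of $-A'+W$.
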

			
			\begin{proof}
				To show that $A \in X_R$, we have to show that for any finite set $M$, there is some $x \in \co_K$ such that $S_x(\fr) \cap M = A \cap M$. Fix $M$, and write $A' = A \cap M$, $B = M \setminus A'$. The equality $S_x(\fr) \cap M = A \cap M$ is equivalent to $A' \subset S_x(\fr)$ and $b \not \in S_x(\fr)$ for every $b \in B$. The first condition $A' \subset S_x(\fr)$ is equivalent to $x \in \frp$, where $R'$ is the sieve defined by the condition $R'_i = -A' + R_i$ for all $i$. This sieve can be written as the union of $|A'|$ sieves, and since $A'$ is admissible (given that it is a subset of $A$), $-A' + R_i$ is always distinct from $\co_K$. It follows that $R'$ is an Erdős sieve with weak light tails for some Følner sequence, and therefore satisfies the local global principle. 
				
				The second condition $b \not \in S_x(\fr)$, which can be written as $x+b \not \in \fr$, is equivalent to there being some $i$ such that $x + b  \in R_i$. By our hypothesis, there is for every $b \in B$, some index $i_b$ and $x_{i_b} \in \co_K$ such that $x_{i_b} \in -b + R_{i_b}$, but $x_{i_b} \not \in R'_{i_b}$. We now want to apply the local global principle of $R'$ for the congruence relations $x_{i_b} \mod \gb_{i_b}$. 
				If the $i_b$ are all unique, this is well defined, and we will  get some $x$ that satisfies both of our desired conditions, so we will get $S_x(\fr) \cap M = A \cap M$. 
				
				If the $i_b$ are not unique, then we proceed as follows. Order $B$ so that we can write $B = \{b_1,b_2,\dots,b_l\}$. We define an equivalence relation on the set of numbers from $1$ to $l$ such that $j \sim k$ if and only if $i_{b_j} = i_{b_k}$. For any congruence class $C$, we associate to it $i_C$, which equals $i_{b_j}$ for any $j \in C$. By hypothesis, there is some $x_C \in \co_K$ such that $x_C \in \bigcap_{j \in C} (-b_j + R_{i_C}) \setminus (R'_{i_C}) \neq \emptyset$. Now, using the local global principle for $R'$, we can find some $x \in \frp$ such that $x \equiv x_C \mod \gb_{i_C}$ for every $C$, which means that for any $j \in C$, $x+b_j \in R_{i_C}$. Again, these two conditions together imply that  $S_x(\fr) \cap M = A \cap M$.

				On the other hand, suppose that there is a sequence $b_1, \dots, b_l $ of elements not in $A$, and some finite subset $A'$ of $A$, such that there is no choice of $i_j$ for which $-b_j + R_{i_j} \not \subset -A' + R_{i_j}$ for every $j$, with $\bigcap_{j \in Q} (-b_j + R_i) \setminus (-A'+R_i) \neq \emptyset$, if $i_j = i$ for every $j$ in some finite set $Q$. We have two cases. First, it might be that there is some $j$ such that  $-b_j + R_{i} \subset -A' + R_{i}$ for every $i$. Then, there is no $x \in \frp$ (that is, for which $A' \subset S_x(\fr)$) such that $b_j \not \in S_x(\fr)$, as such an $x$ would have to be in $-b_j + R_i$ for some $i$, while simultaneously not being in $-A +R_i$ for every $i$. Therefore, for any finite set $M \subset \co_K$ containing $b_j$, there is no $x$ such that  $S_x(\fr) \cap M = A'$, and so $A$ (along with any other admissible set containing $A'$) cannot be in $X_R$.
				
				Alternatively, we could have that for every $j$, there is a finite positive number of indexes $i_j$ such that $-b_j + R_i \not \subset -A + R_i$. Write  $M = A' \cup \{b_1, \dots, b_l\}$. We will show that there is no $x \in \frp$ such that $S_x(\fr) \cap M = A'$. We will do this by contradiction, assuming that this is the case, and concluding that there is a set of indexes $i_j$ contradicting our hypothesis. 
				
				Assume that such an $x \in \frp$ exists. Then, for each $j$, there is some $i_j$ such that $b_j \in -x+R_{i_j}$, that is, $x \in -b_j + R_{i_j}$. Since $x \not \in R'_{i_j}$ (as it is an element of $\frp$), this means that for each $i_j$ we have $x \in (-b_{j} + R_{i_j})\setminus  R'_{i_j}$, that must therefore be a non-empty set for every $i_j$. We are left with showing that if we have $i_j = i$ for $j$ in a finite set $Q\subset \{1,\dots,l\}$, then $$\bigcap_{j \in Q} (-b_j + R_i) \setminus R'_i \neq \emptyset.$$ But this is clear, since $x $ must belong to this set.
				
				We conclude that if there is no choice of $i_j$ for which $-b_j + R_{i_j} \not \subset -A' + R_{i_j}$ for every $j$, with $\bigcap_{j \in Q} (-b_j + R_i) \setminus (-A'+R_i) \neq \emptyset$, if $i_j = i$ for every $j$ in some finite set $Q$, then $A' \not \in X_R$, and indeed, there is no $x$ such that $S_x(\fr) \in C^R_{A',\{b_1,\dots,b_l\}}$. 
			\end{proof}
			
			\begin{remark}
				\label{rmk:infinite indexes Xr OmegaR}
				Let $R$ be a sieve satisfying the hypothesis of  \Cref{lm: Conditions for A in X_R}. If we can show that for every finite admissible set $A'$ and $x \not \in A'$, there are infinitely many $i$ such that $-x + R_i \not \subset -A'+R_i$, then for any finite collection of $x_1, \dots, x_k$ not in some admissible set $A$, we can find indexes $i_1, \dots, i_k$ all distinct such that $-x_j + R_{i_j} \not \subset -A + R_{i_j}$. By  \Cref{lm: Conditions for A in X_R}, it follows that this is a sufficient condition to show that  $X_R = \Omega_{R}$. We will sometimes use this in what follows. Yet, this condition is too strong, as it may happen that we have some $x$ such that $-x + R_i \not \subset -A+R_i$ for only finitely many $i$, but $A \in X_R$.
				
				Take the example of the sieve $R$ defined by $R_1 = \{3,4\} + 8\z$ and $R_i = \{4,5,6\} + p_i^2\z$, whenever $i \geq 2$. The set $A = \{0,3\}$ is clearly an admissible set for $R$, and we have $-A+R_1 = \{0,1,3,4\}+8\z$, $-A+R_i = \{1,2,3,4,5,6\}+p_i^2\z$ when $i \geq 2$.  We now consider the numbers $1$ and $2$, which are not in $A$. The set $-1+R_1$ equals $ \{2,3\}+8\z$ which is not contained in $-A+R_1$, and neither is $-2+R_1 = \{1,2\}+8\z$.  But for any other $i$, both $-1+R_i = \{3,4,5\} + p_i^2\z$ and $-2+R_i = \{2,3,4\} + p_i^2\z$ are contained in $-A+R_i $. Since $2 \in (-1+R_1) \cap (-2+R_2)$, but it is not in $-A+R_1$, and for every other $j \not \in \{0,1,2,3\}$, we have $-j +R_i = \{-j+4,-j+5,-j+6\} + p_i^2\z  \not \subset \{1,2,3,4,5,6\}+p_i^2\z$ if $i$ is big enough,  \Cref{lm: Conditions for A in X_R} implies that $A \in X_R$. 
				
			\end{remark}
			
			We can tweak the previous example, to show that it is not enough to just assume that for every $b \not \in A$, there is some $i $ such that $-b + R_{i} \not \subset -A+ R_{i}$, to show that $A \in X_R$. Let $R$ be now the sieve defined by $R_1 = 0+8\z$, and $R_i = \{4,5,6\} + p_i^2\z$, whenever $i \geq 2$. We again consider the admissible set $A = \{0,3\}$, which satisfies $-A+R_1 = \{-3,0\}+8\z$, and the elements $1$ and $2$, which are not in $A$. Clearly both $-1 + R_1$ and $-2+R_1$ are not contained in $-A+R_1$, but this fails when we take $i \geq 2$. Notice how $A$ cannot be in $X_R$, since if $\fr$ has a "hole" (a sequence $x,x+1,\dots, x+k$ all of which are not in $\fr$), then it must either be of size $1$, or of size $\geq 3$, but $A$ has a hole of size $2$.

			\begin{theorem}
				\label{thm: nu_R(X_R) = 1}
				Let $R$ be an Erdős sieve. Then, there is a Følner sequence $I_N$ with respect to which $R$ has weak light tails if and only if $$\nu_R(X_R) = 1.$$
				
			\end{theorem}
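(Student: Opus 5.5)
We prove the two implications separately, using the genericity criterion \Cref{thm:Fr is generic} for both.

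\emph{Forward direction.} Suppose $R$ has weak light tails with respect to some Følner sequence $I_N$. By \Cref{thm:Fr is generic}, $\fr$ is a generic point of $(\Omega_R,S,\nu_R)$ for $I_N$. The measure $\nu_R$ is ergodic, since it is a factor of the ergodic rotation $(\grf,T^F,\mmp^F)$ of \Cref{thm: main theorem}, or alternatively of $(G_R,T,\mmp)$. We may therefore apply \Cref{lm: x generic implies full meausure orbit closure} to $x=\fr$. This gives $\nu_R(\mathbb{O}_S(\fr))=1$, and $\mathbb{O}_S(\fr)=X_R$ by definition. No further work is needed for this direction.

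\emph{Converse.} Assume $\nu_R(X_R)=1$. Fix a tempered Følner sequence $J_N$, for instance a tempered subsequence of $B_N$. By \Cref{thm: ergodic theorem} and \Cref{rmk: ergodic measures are mutualy singular}, $\nu_R(\Gen(\nu_R,J_N))=1$. Hence there is some $Y\in X_R$ that is generic for $\nu_R$ along $J_N$.

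The plan is to transfer genericity from $Y$ to $\fr$ along a new Følner sequence. Since $Y\in X_R$, there are shifts $a\in\co_K$ with $S_a(\fr)$ arbitrarily close to $Y$, that is, agreeing with $Y$ on any prescribed finite window.

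We use a diagonal argument. Fix a countable dense family $f_1,f_2,\dots$ in $C(\Omega_R)$; each $f_k$ may be taken to depend on finitely many coordinates. For each $N$:
\begin{itemize}
\item choose $M_N$ large enough that the averages of $f_1,\dots,f_N$ along $J_{M_N}$ at $Y$ are within $1/N$ of $\int f_k\,d\nu_R$;
\item choose $a_N$ such that $S_{a_N}(\fr)$ agrees with $Y$ on a finite window large enough to contain every coordinate that $f_1,\dots,f_N$ read along $J_{M_N}$.
\end{itemize}
Set $I_N:=a_N+J_{M_N}$. Translates of a Følner sequence along a subsequence remain Følner, because the Følner condition is translation invariant. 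By construction, the averages of $f_k$ at $\fr$ along $I_N$ converge to $\int f_k\,d\nu_R$. By density of the $f_k$, this holds for all $f\in C(\Omega_R)$, so $\fr$ is generic for $\nu_R$ along $I_N$. Applying \Cref{thm:Fr is generic} in the direction (1)$\Rightarrow$(2), we conclude that $R$ has weak light tails with respect to $I_N$.

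The main obstacle is the bookkeeping in the converse. The averages in the definition of genericity are computed in $\{0,1\}^{\co_K}$, and the point $\fr$ itself lies in $\Omega_R$ (it is admissible, since $R$ is Erdős), so \Cref{thm:Fr is generic} applies to it. One must still check two things: that the finite-window approximation really controls the averages over the whole set $J_{M_N}$, which is exactly why the window is chosen to contain every coordinate read by $f_1,\dots,f_N$ after shifting by each element of $J_{M_N}$; and that the shifted sequence $I_N$ satisfies the Følner property. The genericity criterion itself is supplied by \Cref{thm:Fr is generic}.
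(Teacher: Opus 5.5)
Your proof is correct. The forward direction coincides with the paper's; for the converse you obtain the good point by a different mechanism.

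The paper does not take an arbitrary generic point. It applies \Cref{thm: R is OmegaR=OmegaRp to some sieve with strong light tails}, pushed forward via $\Psi_*\sigma_R=\nu_R$, to find a translated sieve $R'\in\cs_R$ with strong light tails for $B_N$ and $\frp\in X_R$. It then chooses $a_N$ with $S_{a_N}(\fr)\cap B_N=\frp\cap B_N$ and sets $I_N=a_N+B_N$. Finally it checks only the density criterion (3) of \Cref{thm:Fr is generic}, namely $d_I(\fr)=d(\frp)=\nu_R(C^R_{\{0\},\emptyset})$, using \Cref{thm: Omegar = Omegar' implies equal measures}.

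Your converse replaces that Section 4 machinery by a direct use of the pointwise ergodic theorem on $(\Omega_R,S,\nu_R)$. This avoids the space $\cs_R$, the strong-light-tails theorem (and with it \Cref{thm: R has strong light tails if finite translations}), and the equality of Mirsky measures, so it is more self-contained. The paper's gain is that its good point is itself a free set of known density, so it never needs full genericity or a diagonal argument.

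You can get the same economy. Since $C^R_{\{0\},\emptyset}$ is clopen, it suffices that $Y$ be generic for this single function. Matching $S_{a_N}(\fr)$ with $Y$ on $J_{M_N}$ then gives $|\fr\cap(a_N+J_{M_N})|=|Y\cap J_{M_N}|$, and criterion (3) applies directly.

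Two small corrections. First, choose $M_N\ge N$ (or strictly increasing) so that $a_N+J_{M_N}$ is genuinely Følner. Second, $\fr\in\Omega_R$ holds for every sieve because $\fr\cap R_\gb=\emptyset$, not because $R$ is Erdős.
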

			
			\begin{proof}
				If $R$ has weak light tails with respect to some $I_N$, then $\fr$ is generic with respect to $I_N$ by  \Cref{thm:Fr is generic}, which implies that $\nu(X_R) = 1$ by  \Cref{lm: x generic implies full meausure orbit closure}.
				
				On the other hand, assume that $\nu_R(X_R) = 1$. By \Cref{thm: R is OmegaR=OmegaRp to some sieve with strong light tails}, it follows that there is some sieve $R'$ such that $\Omega_R = \Omega_{R'}$, $R'$ has strong light tails for $B_N$ and $\frp \in X_R$. This means that for every $N$, there are $a_N$ such that $$S_{a_N}(\fr)\cap B_N = \frp \cap B_N. $$ Let $I_N$ be the Følner sequence $$I_N:= a_N+B_N.$$ Then we have $$\fr \cap I_N = \fr \cap (a_N+B_N) = a_N + (S_{a_N}(\fr) \cap  B_N) = a_N + (\frp \cap B_N).$$ By  \Cref{thm: Omegar = Omegar' implies equal measures}, we know that since $\Omega_R = \Omega_{R'}$ we have $\nu_R = \nu_{R'}$. Consequently,  we have $$d_I(\fr) = d(\frp)  = \nu_{R'}(C^{R'}_{\{0\},\emptyset}) = \nu_R(C^R_{\{0\},\emptyset}) .$$ Therefore $R$ has weak light tails for $I_N$ by  \Cref{thm:Fr is generic}.		
			\end{proof}

			We provide some extra intuition to the fact that if $R$ has weak light tails for some $I_N$ then $\nu(X_R) = 1$. When $R$ has weak light tails for some Følner sequence $I_N$, the Mirsky measure $\nu_{R}$ quantifies the prevalence of patterns in $\fr$ (in the sense that $\nu_R(C^R_{A,B}) = d_I(\{x\in \co_K: x+A \subset \fr \text{ and } (x+B)\cap \fr = \emptyset\})$). Therefore any set of the form $C^R_{A,B}$, where $A,B$ describe a finite pattern that does not appear in $\fr$, should have measure $0$. Consequently, the admissible sets that contain a finite pattern that does not appear in $\fr$ should all be contained in a set of measure $0$. By \Cref{lm: Conditions for A in X_R}, any admissible set $A$ that contains any of these patterns, does not belong to $X_R$, hence we should expect for $\nu_R(X_R)$ to be $1$. This same line of thinking was used in \cite{Dymek} to show that an analogue of $\nu_R(X_R) =1$ holds for every pseudosieve over $\z$ of the form $R_i = b_i\z$ (see Corollary 4.3 in \cite{Dymek}).

			By \Cref{lm: Conditions for A in X_R}, if a finite admissible set $A$ is not in $X_R$, then there is some $B$ such that $\nu_R(C^R_{A,B}) = 0$. The following theorem, which generalizes a result in  \cite{Bartnicka}, elucidates the relation between these implications.

			\begin{theorem}
				\label{thm: Finite Admissible sets have positive density}
				Let $R$ be an Erdős sieve with weak light tails for any Følner sequence $I_N$. Then a finite set $A$ is $R-$admissible, if and only if $\nu_R(C^R_{A,\emptyset}) > 0$.
				
				Additionally, an $R-$admissible set $A$, we have $A\in X_R$  if and only if for any finite $A' \subset A$ and $B\subset \co_K$ disjoint from $A$, we have $\nu_R(C^R_{A,B}) > 0$. 
				
				In particular, we have that $X_R = \Omega_{R}$ if and only if for every finite $R-$admissible set $A$ and finite $B$ disjoint from $A$, we have $\nu_R(C^R_{A,B}) > 0$.
			\end{theorem}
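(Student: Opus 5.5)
The plan is to use the product formula for $\nu_R(C^R_{A,\emptyset})$ for the first claim, and the genericity of $\fr$ (\Cref{thm:Fr is generic}) to turn $\nu_R$-positivity into actual occurrences of patterns in $\fr$ for the second.

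\emph{First claim.} By the formula for $\nu_R(C^R_{A,\emptyset})$,
$$\nu_R(C^R_{A,\emptyset})=\prod_{\gb\in\cbr}\left(1-\frac{|-A+R_\gb|}{N(\gb)}\right).$$
If $A$ is not admissible, some factor vanishes, so the measure is $0$. Conversely, suppose $A$ is finite and admissible. Every factor is then positive, because $-A+R_\gb\neq\co_K$. Moreover $|-A+R_\gb|\le |A||R_\gb|$, and $\sum_\gb|R_\gb|/N(\gb)<\infty$ since $R$ is Erdős. Hence the product converges to a positive number: only finitely many factors can be close to $0$, and each of them is nonzero. Equivalently, one may quote \Cref{thm: density of x+A in fr} directly; weak light tails are not even needed for this part.

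\emph{Second claim.} I read the condition as $\nu_R(C^R_{A',B})>0$ for finite $A'\subset A$ and finite $B$ disjoint from $A$.

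Since $R$ has weak light tails for $I_N$, \Cref{thm:Fr is generic} says $\fr$ is generic for $\nu_R$ along $I_N$. The cylinder $C^R_{A',B}$ is clopen in $\Omega_R$ (both $A'$ and $B$ are finite). Genericity therefore gives
$$\nu_R(C^R_{A',B})=d_I\bigl(\{x: x+A'\subset\fr,\ (x+B)\cap\fr=\emptyset\}\bigr).$$
The set on the right is exactly $\{x: S_x(\fr)\in C^R_{A',B}\}$.

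Now, $A\in X_R$ means that for every finite window $M$ there is an $x$ with $S_x(\fr)\cap M=A\cap M$. Taking $A'=A\cap M$ and $B=M\setminus A$, this is the same as $\{x:S_x(\fr)\in C^R_{A',B}\}\neq\emptyset$ for every such pair.

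\emph{The step I expect to be the main obstacle.} Positive density trivially implies nonempty. The converse, nonempty implies positive density, is the real content. I would handle it by revisiting the proof of \Cref{lm: Conditions for A in X_R}. There, nonemptiness was shown to be equivalent to a finite list of local conditions modulo finitely many $\gb_i$. These conditions produce residues $x_C\bmod\gb_{i_C}$ that are compatible with $x\in\frp$, where $R'_i=-A'+R_i$.

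Next I would apply \Cref{lm: Fr intercepted by ideal} (or \Cref{lm: equidistribution for Følner sequences} together with weak light tails for $R'$, which has them as a finite union of translates of $R$ by \Cref{lm: union of sieves}). This gives that the set of $x\in\frp$ in the prescribed residue class has positive density.

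Every such $x$ satisfies $x+b_j\in R_{i_C}$, so $b_j\notin S_x(\fr)$. Hence the pattern set contains a set of positive lower density, and $\nu_R(C^R_{A',B})>0$.

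The delicate point is the density of $\frp$ intersected with a class modulo some $\gb_i$ in the support. I would use the second part of \Cref{lm: Fr intercepted by ideal}, applied one index at a time via CRT, after checking that the chosen class avoids $R'_{i_C}$.

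\emph{Last claim.} If $X_R=\Omega_R$, then every finite admissible $A$ lies in $X_R$, so the second claim (with $A'=A$) gives $\nu_R(C^R_{A,B})>0$. Conversely, suppose the positivity condition holds for all finite admissible sets. Any finite subset $A'$ of an admissible $A$ is admissible, so the second claim gives $A\in X_R$. Thus $\Omega_R\subset X_R$, and the reverse inclusion always holds.
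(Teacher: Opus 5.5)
Your proof is correct, but for the key direction ($A\in X_R \Rightarrow \nu_R(C^R_{A',B})>0$) it takes a different route from the paper.

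\textbf{The paper's route.} The paper works on the group side. The lemma characterizing membership in $X_R$ supplies local data: a set of indices $S=\{s_1,\dots,s_r\}$ and residues $x_j\in\bigcap_{k:\,s_k=s_j}(-b_k+R_{s_j})\setminus(-A'+R_{s_j})$. From these the paper takes the set of $g\in G_R$ with $g_{s_j}=x_j$ and $g_i\notin -A'+R_i$ for $i\notin S$. This set has Haar measure
\[
\prod_{s\in S}N(\gb_s)^{-1}\prod_{i\notin S}\bigl(1-|-A'+R_i|/N(\gb_i)\bigr)>0,
\]
and $\varphi_R$ maps it into $C^R_{A',B}$. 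Genericity of $\fr$ is used only for the converse, exactly as you use it.

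\textbf{Your route.} You use genericity in both directions. You identify $\nu_R(C^R_{A',B})$ with the $I_N$-density of occurrences of the pattern $(A',B)$ in $\fr$. You then upgrade a single occurrence to a positive-density family, using the residues of the witness, the equidistribution lemma, and weak light tails of $-A'+R$.

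\textbf{Comparison.} Your argument is valid and gives an extra fact: a pattern occurring once in $\fr$ occurs with positive density. The cost is a second use of weak light tails, including the check that the uniform translates $-a+R$ inherit them (true because $I_N$ is Følner). The group-side argument, fed with residues read off from any single witness $x$, shows this direction needs only the Erdős condition.

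\textbf{A small caution.} The second part of the lemma on $\fr$ meeting an ideal handles only a single class modulo a single $\gb_i$. So ``one index at a time via CRT'' would require passing to the sieve in which $R'_{i_C}$ is replaced by the complement of $x_C+\gb_{i_C}$. That sieve keeps weak light tails, since enlarging finitely many coordinates only shrinks the tail sets. Your alternative via the equidistribution lemma plus weak light tails of $R'$ avoids this detour and is the cleaner choice.
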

			
			\begin{proof}
				
				If $A$ is a finite set and $\nu_R(C^R_{A,\emptyset}) > 0$, then there is some $Q \in C^R_{A,\emptyset}$ which by definition is in $\Omega_R$ and $A \subset Q$. Consequently, $A$ must also be in $\Omega_R$.
				
				On the other hand, using \Cref{eq:Formula for nu_R with B empty}, we have that $\nu_R(C^R_{A,\emptyset}) > 0$ is equivalent to $$\prod_i\left(1-\frac{|-A+R_i|}{N(\gb_i)}\right) >0.$$ 
				
				Since $R$ is Erdős, this is equivalent to $-A+R_i \neq \co_K$ for all $i$, which is equivalent to $A \in\Omega_{R}$ by definition.
				
				We now show that if $X_R = \Omega_R$, then for any finite admissible set $A$ and some $B$ disjoint from $A$ we have $\nu_R(C^R_{A,B}) > 0$.
				Take any finite $A \in \Omega_R$, and $B$ a finite set disjoint from $A$. Let us write $B = \{b_1,\cdots,b_r\}$. Since $A$ is admissible we have that $\nu_R(C^R_{A,\emptyset}) > 0$. We will use this to prove that $\nu_R(C^R_{A,B}) > 0$.

				By hypothesis, since $A \in \Omega_{R}$, it is in $X_R$. By  \Cref{lm: Conditions for A in X_R}  there is a set $\mathcal{S} = \{s_1,\dots, s_r\}$ of indexes, such that $-b_j +R_{s_j} \not \subset -A +R_{s_j} $ for every $j$, with $\bigcap_{k \in T} (-b_k + R_i) \setminus (-A+R_i) \neq \emptyset$, if $s_k = i$ for every $k$ in some finite set $T$. We partition $\{1,2,\dots,r\}$ by an equivalence relation where $i$ and $j$ are equivalent if $s_i = s_j$. For $j$ in an equivalence class $C$ where $s_j = i$, we choose $x_j$ that satisfy $x_j \in \bigcap_{k \in C} (-b_k + R_i) \setminus (-A+R_i)$.
				
				Given any $h \in \varphi_R^{-1}(C^R_{A,\emptyset})$, we define $$g_i = \begin{cases}
					x_j,  & \text{ if } i = s_j \in \mathcal{S}, \\
					h_i,
					& \text{ otherwise.} 
				\end{cases} $$ 
				At most a finite number of distinct $h$ can produce the same $g$ by this process (they can only be distinct for indexes in $\mathcal{S}$), so by showing that every such $g$ belongs to $\varphi_R^{-1}(C^R_{A,B})$, we will get that $\nu_R(C^R_{A,B}) >0$.  This corresponds to showing that $g_i \not \in -A + R_i$ for every $i$, and that for every $1\leq j \leq r$, there is some $k_j$ such that $b_j + g_{k_j} \in R_{k_j}$.
				
				If $i = s_j \in \mathcal{S}$, then $b_j + g_{s_j}  \in R_{s_j}$ since $x_j \in -b_j + R_{s_j}$, so we see that $g$  is contained in $ \varphi_R^{-1}(C^R_{\emptyset,B})$. It remains to show that $g_i \not \in -A + R_i $ for every $i$. By the definition of $x_j$, this is immediate for $i \in S$. If $i \not \in S$, then $g_i = h_i$ for some $h \in \varphi_R^{-1}(C^R_{A,\emptyset})$, which is equivalent to $h_i \not \in -A+R_i$ for every $i$. This conclude the proof that $\nu_R(C^R_{A,B}) >0$.
				
				We now show the converse. Take any $A \in \Omega_R$. To show that $A$ is in $X_R$, we will show that for any finite set $M$, there is some $x \in \co_K$ such that $A\cap M = S_x(\fr) \cap M$. Let $A' := A \cap M$ and $B := M\setminus A'$. Then $A'$ is finite admissible and disjoint from $B$, so $\nu_R(C^R_{A',B})>0$ by hypothesis. Since $R$ has weak light tails for $I_N$, we have by \Cref{thm:Fr is generic} that $$ \frac{1}{|I_N|}\sum_{a \in I_N} \one_{C^R_{A',B}}(S_a(\fr)) \rightarrow  \nu_R(C^R_{A',B}) > 0,$$
				and so we conclude that there must exist some $x \in \co_K$ such that $S_x(\fr) \cap M = A' = A \cap M$. 			
			\end{proof}

			In \cite{Dymek}, pseudo sieves of the form $R_i = b_i \z$ are considered. In this case, not only can we have $X_R \subsetneq \Omega_R$, but there can also exist a third system $\widetilde{X}_R$\index{Smallest Hereditary Subshift} between these two, of the smallest hereditary subshift containing $X_R$.
			
			\begin{definition}
				\label{def: hereditary}
				We say a set $X \subset \{0,1\}^{\co_K}$ is \index{Hereditary System}\textit{hereditary} if for $A \in X$, if $B \subset A$, then $B \in X$.
			\end{definition}
			
			We have that \begin{equation}
				\label{eq: tilde(X) R}
				\widetilde{X}_R = \overline{\{A: A \subset B \text{ for some }B \in X_R\}},
			\end{equation}given that this set contains $X_R$, is closed, hereditary, and it must be contained in any hereditary closed system containing $X_R$.
			
			\begin{example}
				Consider the sieve $R$ defined by $$R_{1} = 0 + 4\z \hspace{15pt} R_{2i} = (i+1) + p_{2i}^2\z \hspace{15pt} R_{2i+1} = -(i+1) +  p_{2i+1}^2\z. $$ We have that $\fr = \{-1,1\}$ since $0 \in R_1$, $i \in R_{2(i-1)}$ if $i\geq 2$, and $i \in R_{2(-i)-1}$ if $i \leq -2$. The set $\fr$ has arbitrarily large holes so $\{\emptyset\} \cup (\co_K+ \fr) \subset X_R$. We show this is an equality. Take any $Y$ not in $\{\emptyset\} \cup (\co_K+ \fr)$. If $|Y| > 2$, it is clear that for any $N$ such that $|B_N \cap Y| >2$, we cannot have $S_a(\fr) \cap B_N = Y \cap B_N$, so $Y \not \in X_R$. If $Y = \{y_1,y_2\}$ with $|y_1 - y_2| \neq 2$, taking $N \geq 5$, we again see that it is impossible to have  $S_a(\fr) \cap B_N = Y \cap B_N$. Finally, taking $|Y|= \{y\}$, we see that there cannot be closer elements to $Y$ in $\co_K+\fr$  than $S_{-y+1}(\fr)$ and $S_{-y-1}(\fr)$ but they don't equal it.
				
				Because $X_R$ does not contain any set with one element, we have that $X_R$ is not hereditary. Taking $\widetilde{X}_R$ to be the set that contains $X_R$ and all singletons (sets of the form $\{x\}$ with $x \in \co_K$), we get an hereditary system that contains $X_R$. Note that this is a very different set from $\Omega_R$, since it is countable, in opposition to  $\Omega_R$ which is uncountable.    
			\end{example}
			
			This example shows that we can have $X_R \subsetneq \widetilde{X}_R \subsetneq \Omega_{R}$ if $R$ does not have weak light tails for any Følner sequence. Yet, as a consequence of  \Cref{thm: Finite Admissible sets have positive density}, this cannot happen if $R$ has weak light tails for some Følner sequence $I_N$. 
			
			\begin{theorem}
				\label{thm: tilde X R = Omega R}
				Let $R$ be an Erdős sieve with weak light tails for some Følner sequence $I_N$. Then, $$\widetilde{X}_R = \Omega_{R}.$$
			\end{theorem}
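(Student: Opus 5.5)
We prove the two inclusions separately.

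The inclusion $\widetilde{X}_R \subset \Omega_R$ uses only the definitions. First, $X_R \subset \Omega_R$, since $\fr$ is $R$-admissible: $0 \in -\fr + R_\gb$ fails, so $-\fr+R_\gb \neq \co_K$. Moreover $\Omega_R$ is shift invariant and closed. Next, $\Omega_R$ is hereditary: if $B \subset A$ and $-A+R_\gb \neq \co_K$, then $-B + R_\gb \subset -A+R_\gb \neq \co_K$. Since $\Omega_R$ is closed, hereditary and contains $X_R$, the description \Cref{eq: tilde(X) R} of $\widetilde{X}_R$ as the smallest closed hereditary set containing $X_R$ gives $\widetilde{X}_R \subset \Omega_R$.

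For the reverse inclusion, $\widetilde{X}_R$ is closed, so it suffices to show that every finite $R$-admissible set $A$ lies in $\widetilde{X}_R$. Indeed, an arbitrary $A \in \Omega_R$ is the limit in $\{0,1\}^{\co_K}$ of the finite sets $A \cap B_N$, and each of these is admissible because $\Omega_R$ is hereditary. So fix a finite $A \in \Omega_R$. We need some $Y \in X_R$ with $A \subset Y$, and we take $Y$ to be a shift of $\fr$ itself. By \Cref{thm: density of x+A in fr}, which applies since $R$ has weak light tails for $I_N$ and $A$ is finite and admissible, the set $\{x : x + A \subset \fr\}$ has positive density; in particular it is nonempty. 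Choosing such an $x$, we get $A \subset \fr - x = S_x(\fr) \in X_R$, and therefore $A \in \widetilde{X}_R$. Equivalently, one can argue via \Cref{thm: Finite Admissible sets have positive density}: $\nu_R(C^R_{A,\emptyset})>0$, and genericity of $\fr$ (\Cref{thm:Fr is generic}) gives some $x$ with $S_x(\fr) \in C^R_{A,\emptyset}$.

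No step here is a real obstacle. The only point needing care is the density argument, namely the reduction from arbitrary admissible sets to finite ones. This relies on $\widetilde{X}_R$ being closed and on hereditary sets being approximable by their finite subsets. The weak light tails hypothesis is used exactly once, to guarantee that every finite admissible pattern actually occurs in $\fr$.
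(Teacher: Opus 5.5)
Your proof is correct and follows the same route as the paper. A finite admissible set $A$ occurs inside a shift of $\fr$, so it lies in $\widetilde{X}_R$ by heredity, and closedness of $\widetilde{X}_R$ then handles infinite admissible sets. The paper obtains the first step from $\nu_R(C^R_{A,\emptyset})>0$ together with genericity of $\fr$, which is your stated alternative and amounts to the density theorem you cite. You also verify the easy inclusion $\widetilde{X}_R \subset \Omega_R$, using that $\Omega_R$ is closed, shift invariant and hereditary and that $\fr \in \Omega_R$; the paper leaves this implicit.
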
 
			
			\begin{proof}
				
				We start by showing that any finite $R-$admissible set $A$ is in $ \widetilde{X}_R$. By  \Cref{thm: Finite Admissible sets have positive density}, if $A$ is finite then $\nu_R(C^R_{A,\emptyset}) > 0$. Because $R$ has weak light tails for some Følner sequence $I_N$, this implies (by  \Cref{thm:Fr is generic}) that there is some $x\in \co_K$  such that $S_x(\fr) \in C^R_{A,\emptyset}$, that is, such that $A \subset S_x(\fr)  $. Since $\widetilde{X}_R$ contains $X_R$, we must have $S_x(\fr) \in \widetilde{X}_R$, and by the hereditary property, we necessarily have $A \in \widetilde{X}_R $. 
				
				Now, take any arbitrary $A \in \Omega_{R}$, and let $A_1\subset A_2\subset \dots$ be a sequence of finite subsets of $A$ such that $\bigcup_i A_i = A$. As we have seen, all of these belong to $ \widetilde{X}_R$. The sequence $A_i$ is Cauchy, since for any $N$, taking the smallest index $m$ such that $\bigcup_{i=1}^\infty A_i \cap B_N = \bigcup_{i=1}^m A_i \cap B_N$, we have that $d(A_i,A_j) \leq 1/N$ if $i,j \geq m$. Since $ \widetilde{X}_R$ is closed, the sequence $A_i$ converges to some $Y \in  \widetilde{X}_R$. But this sequence converges in $\Omega_R$ to $A$, so we must have $Y=A$, that is, $A \in \widetilde{X}_R$.
			\end{proof}
			
			In particular, this shows that if $R$ is Erdős and has weak light tails from some Følner sequence, then $X_R$ is hereditary if and only if it is equal to $\Omega_{R}$.
			
			Given two sieves $R$ and $R'$, both with weak light tails with respect to some Følner sequences, we would now expect that if $X_R = X_{R'}$, then $\widetilde{X}_R = \widetilde{X}_{R'}$ which by  \Cref{thm: tilde X R = Omega R} will imply that $\Omega_R =\Omega_{R'}$. Indeed, this is an equivalence, as we now show.

			\begin{corollary}
				\label{crl: OMEGAR= OMEGAR' AND XR = OMEGAR implies XR = XR'}
				Let $R$ and $R'$ be two Erdős sieves with weak light tails for some (not necessarily common) Følner sequence. Then $X_R = X_{R'}$ if and only if  $\Omega_R =\Omega_{R'}$.
			\end{corollary}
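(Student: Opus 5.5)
The plan is to prove the two implications separately; the forward one is quick and the reverse one carries the content.

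\emph{($X_R = X_{R'} \Rightarrow \Omega_R = \Omega_{R'}$).} By \eqref{eq: tilde(X) R}, the smallest closed hereditary system $\widetilde{X}_R$ depends only on $X_R$. So $X_R = X_{R'}$ gives $\widetilde{X}_R = \widetilde{X}_{R'}$. Both sieves are Erdős with weak light tails for some Følner sequence, so \Cref{thm: tilde X R = Omega R} applies to each and yields $\Omega_R = \widetilde{X}_R = \widetilde{X}_{R'} = \Omega_{R'}$.

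\emph{($\Omega_R = \Omega_{R'} \Rightarrow X_R = X_{R'}$).} Here I would use the characterization in \Cref{thm: Finite Admissible sets have positive density}. That result is stated for sieves with weak light tails for some Følner sequence, and it may be applied to $R$ and to $R'$ with their own sequences. It says that an admissible set $A$ lies in $X_R$ if and only if $\nu_R(C^R_{A',B})>0$ for every finite $A'\subset A$ and every finite $B$ disjoint from $A$. The same holds for $R'$ with $\nu_{R'}$.

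Now $\Omega_R = \Omega_{R'}$, so the cylinder sets $C^R_{A',B}$ and $C^{R'}_{A',B}$ are literally the same subset of $\Omega_R$. By \Cref{thm: Omegar = Omegar' implies equal measures} we also have $\nu_R = \nu_{R'}$. Hence the positivity conditions coincide, and the two characterizations single out the same subset of $\Omega_R = \Omega_{R'}$. That gives $X_R = X_{R'}$.

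\emph{Main obstacle.} The delicate point is applying \Cref{thm: Finite Admissible sets have positive density} correctly, since its second statement mixes $A$ and $A'$. As in its proof, the relevant condition is $\nu_R(C^R_{A',B})>0$ for finite $A'=A\cap M$ and $B = M\setminus A'$, over all finite $M$. I would restate it this way to be safe. Alternatively, I would argue directly from genericity. By \Cref{thm:Fr is generic}, $\mathcal{F}_R$ is generic for $\nu_R$ along $I_N$. So for finite disjoint $A'$ and $B$, some translate $S_x(\fr)$ realizes the pattern $(A',B)$ if and only if $\nu_R(C^R_{A',B})>0$: the frequency of the pattern converges to that measure. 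The pattern is realized if and only if $\nu_R(C^R_{A',B})>0$, and hence if and only if $\nu_{R'}(C^{R'}_{A',B})>0$, which in turn holds if and only if the pattern is realized in $\frp$.

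Since $X_R$ consists exactly of the sets all of whose finite patterns appear in some translate of $\fr$, and likewise for $X_{R'}$, this gives $X_R = X_{R'}$. This direct route avoids any subtlety in the statement of the theorem.
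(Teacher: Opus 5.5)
Your main argument is correct and is essentially the paper's proof. The forward direction passes through the hereditary closure $\widetilde{X}_R$ and \Cref{thm: tilde X R = Omega R}. The reverse direction combines the characterization in \Cref{thm: Finite Admissible sets have positive density} with $\nu_R=\nu_{R'}$ from \Cref{thm: Omegar = Omegar' implies equal measures}. You are right that the characterization should be read with $C^R_{A',B}$.

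One caution about your alternative ``direct'' route: it does not avoid the subtlety you say it does. Genericity of $\fr$ (\Cref{thm:Fr is generic}) gives only one direction. It shows that $\nu_R(C^R_{A',B})>0$ implies the pattern $(A',B)$ occurs in some translate of $\fr$. A frequency tending to $0$ says nothing about whether the pattern occurs at all.

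The converse is what you need to pass from $A\in X_R$ to $A\in X_{R'}$, and it requires a separate argument. Suppose $x+A'\subset\fr$ and, for each $b\in B$, $x+b\in R_{i_b}$. Consider the set of $g\in G_R$ with $g_i=x+\gb_i$ for $i\in\{i_b: b\in B\}$ and $g_i\notin -A'+R_i$ for all other $i$.
\begin{itemize}
\item $\varphi_R$ maps this set into $C^R_{A',B}$.
\item Its Haar measure is $\prod_{i\in\{i_b\}}N(\gb_i)^{-1}\prod_{i\notin\{i_b\}}\bigl(1-|-A'+R_i|/N(\gb_i)\bigr)$.
\item This is positive because $A'$ is finite and admissible and $R$ is Erdős.
\end{itemize}
This is exactly the nontrivial half of \Cref{thm: Finite Admissible sets have positive density}. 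The direct route works once you add it, but it is not a shortcut.
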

			
			\begin{proof}
				Suppose that $X_R = X_{R'}$.	Then, the smallest hereditary set that contains $X_R$ and the smallest hereditary set that contains $X_{R'}$  must agree as can be seen from \Cref{eq: tilde(X) R}. This means that $\widetilde{X}_R = \widetilde{X}_{R'}$, which by  \Cref{thm: tilde X R = Omega R} implies that $\Omega_R =\Omega_{R'}$.

				We now show that if $\Omega_R =\Omega_{R'}$, then $X_R = X_{R'}$. Since $\Omega_R =\Omega_{R'}$, we always have that $C^R_{A,B} = C^{R'}_{A,B}$. By  \Cref{thm: Finite Admissible sets have positive density}, we know that $A \in X_R$ if and only if for every finite $A' \subset A$, and $B \cap A = \emptyset$, we have $\nu_R(C^R_{A',B}) >0$. Since $\Omega_R =\Omega_{R'}$ implies that $\nu_R = \nu_{R'}$ (\Cref{thm: Omegar = Omegar' implies equal measures}), this holds if and only if we have $\nu_{R'}(C^{R'}_{A',B}) >0$, so $X_R=X_{R'}$.

			\end{proof}

			\begin{remark}
				There is another proof of the fact that if $X_R = X_{R'}$, then $\Omega_{R} = \Omega_{R'}$. Indeed, suppose that we have sieves $R$ and $R'$ such that $X_R = X_{R'}$ but $\Omega_{R} \neq \Omega_{R'}$. Then, there is a set $A \in\Omega_{R'} $ that is not $R-$admissible. Consequently, this set has a finite subset $A'$ that is also not $R-$admissible (there must be some $i$ such that $-A+R_i = \co_{K}$, so we just take a finite subset $A'$ such that $-A'+R_i = \co_{K}$). Since $R'$ is Erdős, we will have $\nu_{R'}(C^{R'}_{A',\emptyset}) >0$. But $X_R \cap C^{R'}_{A',\emptyset} = \emptyset$, since $A' \not \subset S_a(\fr)$ for any $a \in\co_K$. Consequently, we must have $\nu_{R'}(X_{R'}) = \nu_{R'}(X_{R}) < 1$. This contradicts  \Cref{thm: nu_R(X_R) = 1}, so we must have that $\Omega_{R} = \Omega_{R'}$.
			\end{remark}
			
			Let $R$ and $R'$ be minimal Erdős sieves with weak light tails for some (not necessarily common) Følner sequences. Using  \Cref{thm: Equaivalence for omegaR = omegaRp} together with  \Cref{crl: OMEGAR= OMEGAR' AND XR = OMEGAR implies XR = XR'}  we get the following result.

			\begin{theorem}
				\label{thm: equivalence XR =XRp and OmegaR = OmegaRp}
				Let $R$ and $R'$ be minimal Erdős sieves with weak light tails for some (not necessarily common) Følner sequences. The following are equivalent. 
				
				\begin{enumerate}
					\item $\cb_R = \cb_{R'}$, and for every $\gb \in \cb_R$, there is some $\delta_\gb\in \co_K$ such that $R_\gb = \delta_\gb+ R'_\gb,$ 
					\item $X_R = X_{R'}$,
					\item $\Omega_{R} = \Omega_{R'}.$
				\end{enumerate} 
			\end{theorem}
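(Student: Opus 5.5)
The plan is to chain two earlier results. \Cref{thm: Equaivalence for omegaR = omegaRp} gives $(1)\Leftrightarrow(3)$, and \Cref{crl: OMEGAR= OMEGAR' AND XR = OMEGAR implies XR = XR'} gives $(2)\Leftrightarrow(3)$. Most of the work is checking that the hypotheses of each result are met by the data in the statement.

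For $(1)\Leftrightarrow(3)$, both $R$ and $R'$ are assumed minimal and Erdős, which is exactly what \Cref{thm: Equaivalence for omegaR = omegaRp} requires. That theorem states that $\Omega_R=\Omega_{R'}$ if and only if $\cb_R=\cb_{R'}$ and for each $\gb$ there is $\delta_\gb$ with $R_\gb=\delta_\gb+R'_\gb$. This is verbatim condition $(1)$. The weak light tails hypothesis is not used in this step. Recall how that theorem is built: the direction $(3)\Rightarrow\cb_R=\cb_{R'}$ is \Cref{lm: OmegaR =omegar' implies equality of base}, which is where minimality enters. Once the supports agree, \Cref{lm: OmegaR = OmegaS same base} supplies the translates $\delta_\gb$. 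The converse $(1)\Rightarrow(3)$ is immediate, since translating each $R_\gb$ does not change which sets are admissible.

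For $(2)\Leftrightarrow(3)$, \Cref{crl: OMEGAR= OMEGAR' AND XR = OMEGAR implies XR = XR'} needs only that $R$ and $R'$ are Erdős with weak light tails for some Følner sequences, which need not be the same. This is part of our hypotheses, and minimality is not needed here.

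\begin{itemize}
\item $(2)\Rightarrow(3)$: if $X_R=X_{R'}$, the smallest hereditary closures coincide, so $\widetilde X_R=\widetilde X_{R'}$. By \Cref{thm: tilde X R = Omega R} these closures are $\Omega_R$ and $\Omega_{R'}$.
\item $(3)\Rightarrow(2)$: \Cref{thm: Omegar = Omegar' implies equal measures} gives $\nu_R=\nu_{R'}$. The characterization of $X_R$ in \Cref{thm: Finite Admissible sets have positive density} depends only on which cylinders $C_{A',B}$ have positive measure, so $X_R=X_{R'}$.
\end{itemize}

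Combining the two equivalences gives the theorem. I do not expect a genuine obstacle. The only point needing care is that the Følner sequences for $R$ and $R'$ may differ. This is harmless because each of \Cref{thm: tilde X R = Omega R} and \Cref{thm: Finite Admissible sets have positive density} is applied to one sieve at a time, with that sieve's own Følner sequence.
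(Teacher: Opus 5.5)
Your proposal is correct and is essentially the paper's own argument: the theorem is obtained by combining \Cref{thm: Equaivalence for omegaR = omegaRp} for $(1)\Leftrightarrow(3)$ with \Cref{crl: OMEGAR= OMEGAR' AND XR = OMEGAR implies XR = XR'} for $(2)\Leftrightarrow(3)$. Your account of how that corollary is proved also matches the paper. The direction $(2)\Rightarrow(3)$ uses hereditary closures together with \Cref{thm: tilde X R = Omega R}, and the direction $(3)\Rightarrow(2)$ uses $\nu_R=\nu_{R'}$ together with \Cref{thm: Finite Admissible sets have positive density}, applied to each sieve with its own Følner sequence.
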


			By using  \Cref{lm: Conditions for A in X_R}, we can give a full characterization of those sieves $R$ for which $X_R = \Omega_{R}$. Yet, it is not very easy to use this condition to say if a given sieve $R$ satisfies $X_R = \Omega_{R}$ or not. We now provide an example of a condition that is easy to verify, and that is sufficient (but not necessary) for a sieve $R$ to satisfy  $X_R = \Omega_{R}$.
			
			To do this, we start by defining a function $\lambda$ on the powerset of $\co_K$ such that for $S \subset \co_K$ we have
			\begin{equation}
				\label{eq: lambdaS}
				\lambda(S) := \min_{x,y \in S} |x-y|.
			\end{equation}  We now want to consider sieves $R$ such that $\limsup_{i} \lambda(R_i) = \infty$. For any $\cb$-free system this clearly holds, since $\lambda_1(\gb_i)$ goes to infinity as the norm of the ideal $\gb_i$ grows. Meanwhile, for the sieve $R$ defined by $R_p = \{0,1\} + p^2\z$, we have $\limsup_{i} \lambda(R_i) = 2$.
			
			\begin{proposition}
				\label{prop: lim lambda infty implies XR = OmegaR}
				Let $R$ be an Erdős sieve with weak light tails for some Følner sequence, such that $\limsup_{i} \lambda(R_i) = \infty$. Then $X_R = \Omega_{R}$.
			\end{proposition}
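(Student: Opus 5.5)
We will use the sufficient criterion of \Cref{rmk:infinite indexes Xr OmegaR}. It says that $X_R = \Omega_R$ as soon as the following holds: for every finite $R-$admissible set $A'$ and every $x \notin A'$, there are infinitely many indices $i$ with $-x+R_i \not\subset -A'+R_i$. The remark extracts this from \Cref{lm: Conditions for A in X_R}, since the indices $i_1,\dots,i_l$ can then be chosen pairwise distinct, and that lemma only needs weak light tails for some Følner sequence. So the whole proof reduces to verifying this condition from the hypothesis $\limsup_i \lambda(R_i) = \infty$.

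Fix a finite admissible $A'$ and $x \notin A'$. Set $D := \max_{a \in A'} \|x-a\|$, which is a positive finite number because $A'$ is finite and $x\notin A'$. Throughout, $|x-y|$ in the definition of $\lambda$ is read as $\|x-y\|$, the minimum taken over distinct points. By hypothesis there are infinitely many indices $i$ with $\lambda(R_i) > D$; fix any such $i$ and take any $r \in R_i$. We claim $-x + r \notin -A' + R_i$. Suppose instead that $-x + r = -a + r'$ with $a\in A'$ and $r' \in R_i$. Then $r' - r = a - x \neq 0$, so $r,r'$ are distinct points of $R_i$ with $\|r-r'\| = \|x-a\| \le D < \lambda(R_i)$. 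This contradicts the definition of $\lambda(R_i)$. Hence $-x+r$ witnesses $-x+R_i\not\subset -A'+R_i$ for each of these infinitely many $i$.

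The claim used nothing beyond the definition of $\lambda$, so the criterion of \Cref{rmk:infinite indexes Xr OmegaR} holds and $X_R = \Omega_R$ follows.

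The only real obstacle is making sure the remark's reduction is airtight. For distinct $i_j$, the conditions of \Cref{lm: Conditions for A in X_R} on $\bigcap_{j\in Q}$ are vacuous beyond singletons, and the lemma needs finite subsets $A'$ of an arbitrary admissible $A$. Our argument works for every finite $A'$, including any subset of an admissible set, so nothing further is needed. A minor point to check is that $\lambda$ is taken over distinct pairs, since otherwise it would be identically $0$.
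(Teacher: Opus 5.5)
Your proof is correct and is essentially the paper's argument. You reduce via \Cref{rmk:infinite indexes Xr OmegaR} to finding infinitely many $i$ with $-x+R_i\not\subset -A'+R_i$. You then pick $i$ with $\lambda(R_i)>\max_{a\in A'}\|x-a\|$ to get $-x+r\notin -A'+R_i$ for every $r\in R_i$; the paper instead takes $A-x\subset B_N$ and $\lambda(R_i)>N$, and concludes the slightly stronger disjointness $(-x+R_i)\cap(-A+R_i)=\emptyset$. As in the paper, you implicitly use that these $R_i$ are nonempty, which is needed for a witness $r$ to exist and is the natural reading of $\lambda$.
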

			
			\begin{proof}
				As pointed out in \Cref{rmk:infinite indexes Xr OmegaR}, it is enough to show that for any finite admissible $A$ and $x \not \in A$ there are infinitely many indexes $i$ such that $-x +R_i \not \subset -A'+R_i$.
				
				To show this, take any $N$ so big that $A-x \subset B_N$. The condition $\limsup_{i} \lambda(R_i) = \infty$ implies that there are infinitely many $i$'s such that $R_i \cap (r_i + B_N) = \{r_i\}$ for any $r_i \in R_i$. Since $A-x \subset B_N$ and $x \not \in A$, the condition $(r_i + B_N) \cap R_i  = \{r_{i}\}$ implies that $r_{i} + (A-x) \cap R_{i} = \emptyset$ (since $A-x$ does not contain $0$), and so $(-x+r_i)\not \in (-A+R_i)$. Since $r_i$ was an arbitrary element of $R_i$, it follows that $(-x+R_i)\cap(-A+R_i) = \emptyset$ holds for infinitely many $i$, as we wanted to show.		
			\end{proof}
			
			\begin{remark}
				\label{rmk: sieve with not infinite lambda but X and Omega equal}
				The condition  $\limsup_{i} \lambda(S_i) = \infty$ is too powerful. Indeed, consider the sieve $R$ defined by $R_p = \{0,1\}+p^2\z$, with $\cb_R= \{p^2\z: p \text{ prime}\}$. The set $\{0,2\}$ is not admissible, and indeed, if $A$ is any admissible set,  $x \in A$ implies $x+2\not \in A$. We now show that $X_R = \Omega_{R} $, by showing that for every finite admissible set $A$, $x\not \in A$, and $i$ large enough, we have $$-x+R_i = \{-x,-x+1\}+p_i^2\z \not \subset -A+R_i.$$ 
				
				If $i$ is big enough, computations in $\z/p_i^2\z$ are the same as in $\z$, and we have the equality $$-A+R_i = (-A +p_i^2\z)\cup (-A + 1+ p_i^2\z). $$ Therefore, $\{-x,-x+1\}+p_i^2\z  \subset -A+R_i$ implies $-x \in  -A$ or $-x \in -A+1$. The first case can't happen since $x \not \in A$, so we get $-x \in -A+1$, that is $x+1 \in A$. Similarly, we can't have $-x+1 \in -A+1$, so we get $-x+1 \in -A $, that is $x-1 \in A$. But then, $x-1$ and $x+1$ must both be in $A$, which cannot happen since $A$ is admissible and $(x+1)-(x-1) = 2$.
			\end{remark}

			Comparing the examples of the sieves in \Cref{rmk: sieve with not infinite lambda but X and Omega equal} and \Cref{ex: sieve such that X different Omega}, we see that small changes in a sieve can change whether $X_R = \Omega_{R}$, since in the remark we get a sieve $R$ for which this holds, but in \Cref{ex: sieve such that X different Omega} we provide a sieve $R'$  obtained from $R$ by removing just one ideal, such that $X_{R'} \neq \Omega_{R'}$.  
			
			Consider a case where we have two sieves $R$ and $R'$ which are Erdős $\cb-$free systems. They are both minimal, so  \Cref{lm: OmegaR =omegar' implies equality of base} implies that $\Omega_R = \Omega_{R'}$ if and only if $\cb_R = \cb_{R'}$. Since Erdős $\cb-$free systems have strong light tails for $B_N$ (as shown in  \Cref{thm: K etale algebra light tails for B_n finite}),  \Cref{thm: equivalence XR =XRp and OmegaR = OmegaRp} together with \Cref{crl: Bfree systems free elements equal iff same support} give the following result, which generalizes Proposition 2.3.1 of \cite{Kulaga} for Erdős $\cb-$free systems over étale $\q-$algebras.
			
			\begin{corollary}
				\label{crl: equivalence XR OmegaR for b free systems}
				Let $R$ and $R'$ be Erdős $\cb-$free systems over an étale $\q-$algebra $K$. Then the following are equivalent
				
				\begin{enumerate}
					\item $\cb_R = \cb_{R'}$,
					\item $\fr = \frp$,
					\item $X_R = X_{R'}$,
					\item $\Omega_{R} = \Omega_{R'}.$
				\end{enumerate} 
			\end{corollary}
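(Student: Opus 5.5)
The plan is to assemble \Cref{crl: equivalence XR OmegaR for b free systems} from results already proved: \Cref{crl: Bfree systems free elements equal iff same support}, \Cref{lm: OmegaR =omegar' implies equality of base} together with \Cref{lm: OmegaR = OmegaS same base}, and \Cref{crl: OMEGAR= OMEGAR' AND XR = OMEGAR implies XR = XR'}. The only facts we must check by hand are two standing hypotheses. First, an Erdős $\cb-$free system is minimal; this is \Cref{ex: B-free systems are minimal}, since $R_\gb=\gb$ contains no coset $x+\gb'$ with $\gb'$ a proper divisor of $\gb$. Second, it has strong (hence weak) light tails for $B_N$; this is \Cref{thm: K etale algebra light tails for B_n finite} with $T=\{0\}$.

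We get $(1)\Leftrightarrow(2)$ directly from \Cref{crl: Bfree systems free elements equal iff same support}. If $\cb_R=\cb_{R'}$, then $R_\gb=\gb=R'_\gb$ for all $\gb$, so $R=R'$ and hence $\fr=\frp$. Conversely, $\fr=\frp$ means $R\sim R'$, which forces $R=R'$ and in particular $\cb_R=\cb_{R'}$.

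For $(1)\Leftrightarrow(4)$ we split into the two directions.
\begin{itemize}
\item If $\cb_R=\cb_{R'}$, then the sieves coincide, so $\Omega_R=\Omega_{R'}$ trivially. One could also invoke \Cref{lm: OmegaR = OmegaS same base} with $\delta_\gb=0$.
\item If $\Omega_R=\Omega_{R'}$, then since both sieves are minimal and Erdős, \Cref{lm: OmegaR =omegar' implies equality of base} gives $\cb_R=\cb_{R'}$.
\end{itemize}
Equivalently, this is \Cref{thm: Equaivalence for omegaR = omegaRp}: the translate condition $R_\gb=\delta_\gb+\gb$ with $R'_\gb=\gb$ is automatic, since every translate of $\gb$ by an element of $\gb$ is $\gb$ itself, and here $R_\gb=\gb$ forces $\delta_\gb\in\gb$.

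Finally, $(3)\Leftrightarrow(4)$ is \Cref{crl: OMEGAR= OMEGAR' AND XR = OMEGAR implies XR = XR'}. That result needs both sieves to be Erdős with weak light tails for some Følner sequence, and both hold with $B_N$ by the second observation above. One could equally cite \Cref{thm: equivalence XR =XRp and OmegaR = OmegaRp}, whose hypotheses (minimality and weak light tails) are exactly the two facts we checked.

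There is no real obstacle, since everything is bookkeeping. The only point that needs care is confirming that \Cref{thm: K etale algebra light tails for B_n finite} applies: its hypothesis $\sum_i 1/N(\gb_i)<\infty$ is precisely the Erdős condition for a $\cb-$free system, because $|R_\gb|=1$.
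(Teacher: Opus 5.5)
Your proposal is correct and is essentially the paper's own argument. The paper derives $(1)\Leftrightarrow(4)$ from minimality of $\cb$-free systems plus the lemma that $\Omega_R=\Omega_{R'}$ forces $\cb_R=\cb_{R'}$ for minimal Erdős sieves. It then gets $(3)\Leftrightarrow(4)$ from strong light tails for $B_N$ together with the $X_R$/$\Omega_R$ equivalence theorem (you cite the corollary just before it, which has the same content), and $(1)\Leftrightarrow(2)$ from the corollary that equivalent Erdős $\cb$-free systems are equal.
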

			
			If $R$ is an Erdős $\cb-$free system over a number field $K$ of degree $n$, then  \Cref{prop: lim lambda infty implies XR = OmegaR} implies that $X_R = \Omega_R$, using the fact that $\lambda(R_i)$ will grow to infinity as $i$ increases by Corollary 4 in \cite{Fraczyk}. Hence, the equivalence between (3) and (4) in \Cref{crl: equivalence XR OmegaR for b free systems} becomes 'trivial' in the sense that it is stating the same thing twice. Yet, it may happen that this is not the case if $R$ is a sieve over an étale-$\q$ algebra.
			
			\begin{example}
				Let $R$ be the sieve over $\q\times \q$ given by $R_p = p^2\z \times \z$ for all primes $p \geq 2$. Writing $\cs$ to be the squarefree numbers, we have $\fr = \cs\times \z$. The set $A = \{(0,0),(0,2)\}$ is admissible, since $|-A+R_p| = 2 < p^2$ for all primes $p$. Yet, it is impossible for $A \cap B_N$ to be equal to $ S_a(\fr) \cap B_N$ for any $a \in \co_K$ and $N \geq 3$, given that $(x,y) \in S_a(\fr)$ implies that $(x,y+1) \in S_a(\fr)$, but $(0,0) \in A$ while $(0,1) \not \in A$. It follows that $A \in \Omega_{R} \setminus X_R$.
			\end{example}

			\section{Applications to Number Theory}
			
			In this section we provide number theoretic applications of the theory of Erdős sieves. First we investigate sets $C$ such that there are infinite $A,B \subset \z$, with $d(B)>0$ and $A+B \subset C$. We then look at squarefree values of polynomials as $R-$free numbers of specific sieves. We conclude with an Ergodic Prime Number Theorem that generalizes the results of \cite{Wang}.
			
			\subsection{Infinite Patterns in $\fr$}
			
			We again consider the measure space $(\cs_R, \sigma_R)$ as defined in Equations (\ref{eq: CS_R}) and (\ref{def: sigmaR}). Given a sieve $R$ and a set $A \subset \co_K$, we will write $A+R$ to be the sieve defined by $$(A+R)_i = A+R_i$$ for every $i\in \mathbb{N}$. We have the following result.
			
			\begin{theorem}
				\label{thm: meaure of sieves such that A+R' has weak light tails is 1}
				Let $R$ be an Erdős sieve and $A$ an infinite $R-$admissible set. If $$\prod_i \left(1-\frac{|-A+R_i|}{N(\gb_i)}\right) >0,$$ then $$\sigma_R(\{R'\in \cs_R: (-A+R') \text{ has strong light tails with respect to } I_N\}) = 1$$ for any tempered Følner sequence $I_N$.
			\end{theorem}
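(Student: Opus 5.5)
Since $A$ is infinite, $-A+R_i$ need not be a finite union of classes modulo $\gb_i$. However, $-A+R_i = -A+S_i+\gb_i$ is a union of classes mod $\gb_i$, so it is determined by the finite set of residues it occupies. Hence $-A+R$ is still a collection of congruence classes over $\cb_R$. The hypothesis $\prod_i(1-|-A+R_i|/N(\gb_i))>0$ forces $-A+R_i\neq\co_K$ for every $i$ and $\sum_i|-A+R_i|/N(\gb_i)<\infty$. So $-A+R$ is an Erdős sieve supported on $\cb_R$. The plan is to transport the problem to this sieve, set $W:=-A+R$, and apply \Cref{thm: R is OmegaR=OmegaRp to some sieve with strong light tails} to $W$.

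To do this, I need to compare the measure spaces $(\cs_R,\sigma_R)$ and $(\cs_W,\sigma_W)$. For $R'=\Phi(g)$ with $g\in\grf$, we have $-A+R'_i=-g_i-A+R_i=W(g)_i$. Since $F(R_i)\subset F(W_i)$ (if $x+R_i=R_i$ then $x-A+R_i=-A+R_i$), the natural projection $\pi:\grf\to G_{W,F}=\prod_i\co_K/F(W_i)$ is a surjective continuous homomorphism. It therefore pushes the Haar measure $\mmp^F$ to the Haar measure of $G_{W,F}$; this is checked on cylinder sets exactly as in the proof of \Cref{lm: vrf factor map}, by Lagrange's theorem. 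The map $R'\mapsto -A+R'$ from $\cs_R$ to $\cs_W$ corresponds under $\Phi$ to $\pi$, so it pushes $\sigma_R$ forward to $\sigma_W$.

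With this, the set in the statement is the preimage under $R'\mapsto -A+R'$ of
\[
\{W'\in\cs_W: W' \text{ has strong light tails with respect to } I_N\}.
\]
By \Cref{thm: R is OmegaR=OmegaRp to some sieve with strong light tails} applied to the Erdős sieve $W$, this set has $\sigma_W$-measure $1$. Its preimage therefore has $\sigma_R$-measure $1$.

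The main obstacle is the bookkeeping that makes $W$ a legitimate sieve in the sense of \Cref{def: Sieve}, that is, with $W_i=S'_i+\gb_i$ for finite $S'_i$. This holds by reducing $-A+S_i$ modulo $\gb_i$. I also need $\cs_W$ to be parametrized by $G_{W,F}$ via \Cref{lm: OmegaR = OmegaS same base}, which only needs $W$ Erdős. A further point to verify is that the earlier results (\Cref{lm: sieves with weak light tails have measure 1} and \Cref{thm: R has strong light tails if finite translations}) used only the Erdős property of the base sieve. If any step instead requires $W$ to be minimal, I would first contract $W$ using \Cref{lm: every sieve is equivalent to minimal sieve} and \Cref{lm: Erds and light tails preserved by change of base}, noting that contraction preserves the light tail properties.
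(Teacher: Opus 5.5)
Your argument is correct, and it takes a different and shorter route than the paper.

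\textbf{The paper's route.} The paper stays inside $(\Omega_R,S,\nu_R)$. It applies the pointwise ergodic theorem to the discontinuous observable $\one_{C^R_{A,\emptyset}}$. It then observes that $\frp$ is generic for this observable exactly when $d_I(\cf_{-A+R'})=\nu_{-A+R}(C^{-A+R}_{\{0\},\emptyset})$, which by \Cref{thm:Fr is generic} means $-A+R'$ has weak light tails. Finally it reruns the upgrade to strong light tails, using the translation action of $H_R=\bigoplus_i\co_K/F(R_i)$ together with \Cref{thm: R has strong light tails if finite translations}. Finite $A$ is treated separately via unions of sieves.

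\textbf{Your route.} You first check that $W=-A+R$ is an Erdős sieve on $\cb_R$; the positive product gives both $W_i\neq\co_K$ and summability. You then note $F(R_i)\subset F(W_i)$. Hence $\Theta(R')=-A+R'$ satisfies $\Theta\circ\Phi_R=\Phi_W\circ\pi$ on $\grf$, where $\pi:\grf\to G_{W,F}$ is the coordinatewise projection. Since $\pi$ is a surjective homomorphism of compact groups, it carries Haar measure to Haar measure, so $\Theta_*\sigma_R=\sigma_W$. The statement is then the pullback of \Cref{thm: R is OmegaR=OmegaRp to some sieve with strong light tails} applied to $W$.

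\textbf{Comparison.} Your reduction treats finite and infinite $A$ uniformly. It also avoids redoing both the ergodic step and the translation step, and it never needs the value of $\nu_R(C^R_{A,\emptyset})$ for infinite $A$. The paper's version is self-contained within $\Omega_R$: it phrases the result directly as genericity of $\frp$ for the pattern set $C^R_{A,\emptyset}$, without comparing the two group models $\grf$ and $G_{W,F}$.

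\textbf{Two small corrections.} Your opening caveat is unnecessary: $-A+R_i$ is always a finite union of classes mod $\gb_i$, because $\co_K/\gb_i$ is finite. Your contingency about contracting $W$ to a minimal sieve is also not needed, since the theorem you invoke holds for every Erdős sieve with no minimality hypothesis.
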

			
			\begin{proof}
				When $A$ is finite, note that if $R$ has strong light tails for $I_N$, then so does $-A+R$, given that $-A+R = \bigcup_{a\in A}(-a+R)$, and the union of sieves with strong light tails has strong light tails by \Cref{lm: union of sieves}. Hence, for finite $A$ the result follows directly from \Cref{thm: R is OmegaR=OmegaRp to some sieve with strong light tails}.
				
				To show the result for infinite $A$, we start by showing that $$\sigma_R(\{R'\in \cs_R: (-A+R') \text{ has weak light tails with respect to } I_N\}) = 1$$ and then proceed as in the proof of \Cref{thm: R is OmegaR=OmegaRp to some sieve with strong light tails}.
				
				Let $A$ be an infinite $R-$admissible set, and let us write $\Gen_A(\Omega_{R},I_N)$ to be the set of points of $\Omega_{R}$ such that $$\lim_{N \rightarrow \infty}\frac{1}{|I_N|}\sum_{a \in I_N} \one_{C^R_{A,\emptyset}}(S_a(x)) = \nu_R(C^R_{A,\emptyset}). $$
				The function $\one_{C^R_{A,\emptyset}}$ is not continuous, but it is in $L^1(\Omega_{R})$, given that it is a bounded function in a compact space. Hence, the Pointwise Ergodic Theorem  (\Cref{thm: ergodic theorem}) implies that for any tempered Følner sequence $I_N$, we have \begin{equation}
					\label{eq: nu Gen A is 1 }
					\nu_R(\Gen_A(\Omega_{R},I_N)) = 1.
				\end{equation}
				
				By our hypothesis that $$\prod_i \left(1-\frac{|-A+R_i|}{N(\gb_i)}\right) >0,$$ we have that $\nu_R(C^R_{A,\emptyset})>0$ and that $-A+R'$ is an Erdős sieve for any $R' \in \cs_R$.

				Let $\Psi_R:\cs_R\rightarrow\Omega_{R}$ be  the map that sends $R'$ to $\frp$. For $\frp$ to be in $\Psi_R(\cs_R) \cap \Gen_A(\Omega_{R},I_N)$ is equivalent to $$\nu_R(C^R_{A,\emptyset}) = \lim_{N \rightarrow \infty}\frac{1}{|I_N|}\sum_{a \in I_N} \one_{C^R_{A,\emptyset}}(S_a(\frp)) = d_I(\{x \in \co_K: x+ A \subset \frp \}) = d_I(\cf_{(-A+R')}).  $$ 
				Using $\nu_R(C^R_{A,\emptyset}) = \nu_{(-A+R)}\left(C^{(-A+R)}_{\{0\},\emptyset}\right)$, this is equivalent to $(-A+R')$ having weak light tails for $I_N$ by \Cref{thm:Fr is generic}. 
				
				Therefore showing that $$\sigma_R(\{R'\in \cs_R: (-A+R') \text{ has weak light tails with respect to } I_N\}) = 1$$ follows from showing that $\nu_R(\Psi_R(\cs_R) \cap \Gen_A(\Omega_{R},I_N)) = 1$. From the proof of \Cref{lm: sieves with weak light tails have measure 1} we know that $\nu_R(\Psi_R(\cs_R)) = 1$. Together with \Cref{eq: nu Gen A is 1 } the result follows. 
				
				Let $$H_R = \bigoplus_i \co_K/F(R_i)$$ and $V$ be the action of $H_R$ is $\cs_R$ given by $V_h(R)_i = h_i+R_i$. By \Cref{thm: R has strong light tails if finite translations}, it is equivalent for the sieve $-A+R$ to have strong light tails for $I_N$, and for $-A+V_h(R)$ to have weak light tails with respect to $I_N$ for every $h \in H_R$. Hence, the set $$\{R'\in \cs_R: (-A+R') \text{ has strong light tails with respect to } I_N\}$$ is equal to $$\bigcap_{h\in H_R} V_{h}^{-1}(\{R'\in \cs_R: (-A+R') \text{ has weak light tails with respect to } I_N\}).$$ Since $\sigma_R$ is invariant under $V$, this is a countable union of sets of measure 1, consequently so is their intersection, as we wanted to show.
			\end{proof}
			
			Of course, this does not imply that if $R$ has strong light tails for some $I_N$, then $-A+R$ will also have strong light tails, just that there will be some $g_i \in \grf$ and a sieve $R = R(g_i)'$ such that $-A+R'$ has strong light tails for $I_N$.
			
			\begin{example}
				Let $R$ be the cubefree sieve given by $R_i = p_i^3\z$ and define a set $A = \{a_1,a_2,\dots\}$ by choosing the $a_i$ in such a way that
				
				\begin{enumerate}
					\item $a_1 = -1,$
					\item $a_i \equiv -i \mod p_i^3$ for all $i$,
					\item $a_i \equiv a_j \mod p_j^3$ for all $j<i$.
					
				\end{enumerate} 
				
				The Chinese Remainder Theorem guarantees us that we can build such a set. Additionally, we have that $|-A+p_i^3\z| \leq i$ for all $i$, and so $-A+R$ will be an Erdős sieve. Yet, it is clear that it does not have weak light tails for $I_N := [1,N]$, since $\cf_{-A+R}\cap I_N = \emptyset$, given that $i \in -A+R_i$ for all $i \in \mathbb{N}$. 
			\end{example}

				When $R$ is a sieve with weak/strong light tails, then for any admissible finite $A$ we will have that $-A+R$ will have weak/strong light tails, as implied by \Cref{lm: union of sieves}. On the other hand, if $A$ is any admissible set, we have that if $-A+R$ has strong light tails, then $R$ has strong light tails. This is because if $-A+R$ has strong light tails, then for any $a \in A$, we get that $a-A+R$ has strong light tails. Since $R_i \subset a-A+R_i $ for every $i$, this implies that $R$ will have strong light tails. 
				
				Yet, the same does not happen when we replace strong by weak light tails, even assuming that $A$ is finite.
				
				\begin{example}
					Let $A = \{-2,-1,0\}$ and $R$ be the sieve defined by $$R_1 = \{0,2\}+8\z \hspace{15pt} R_i = 8(i-1) +\{0,1\}+p_i^2\z.$$ As in Example 5.14 of \cite{part1}, we can show that $R$ does not have weak light tails for any Følner sequence. But $-A+R_1 = \{0,1,2,3,4\} + 8\z$, and $$-A+R_i = 8(i-1)+\{0,1,2,3\}+p_i^2\z,$$ so $-A+R$ does have weak light tails for $I_N = [0,N]$.
				\end{example} 
				
				As a corollary of 	\Cref{thm: meaure of sieves such that A+R' has weak light tails is 1}, we get the following result.
				
				\begin{corollary}
					\label{crl: A B subset of FR}
					Let $A$ be a subset of $\co_K$ and $R$ an Erdős sieve such that $$\prod_i \left(1-\frac{|-A+R_i|}{N(\gb_i)}\right) >0.$$ Then, there exists a $g = (g_i)_{i \in \mathbb{N}}$ in $\grf$ and some $B$ with $d(B)>0$ such that $$A+B \subset \cf_{R(g)},$$ where $R(g)$ is defined as in \Cref{eq:R(g)}.
				\end{corollary}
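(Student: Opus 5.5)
The plan is to derive the corollary directly from \Cref{thm: meaure of sieves such that A+R' has weak light tails is 1}, applied with the tempered Følner sequence $I_N = B_N$ (or a tempered subsequence of it, which exists by Proposition 1.4 of \cite{Lindestrauss}; the density along a subsequence is enough for positive density in the sense we need, but if one wants $d(B)$ with respect to the full $B_N$ one uses that strong light tails for $B_N$ is what the theorem delivers when $B_N$ itself is tempered, which holds since $B_N$ are balls in a lattice and $|B_{2N}|\le C|B_N|$). The theorem applies whether $A$ is finite or infinite. If $A$ is finite it is covered by the first paragraph of that proof. If $A$ is infinite, the hypothesis on the product forces $-A+R_i\neq\co_K$ for all $i$, so $A$ is $R$-admissible.

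Since the set of $R'\in\cs_R$ for which $-A+R'$ has strong (hence weak) light tails for $B_N$ has full $\sigma_R$-measure, it is in particular nonempty. Because $\Phi:\grf\to\cs_R$ is a bijection, such an $R'$ equals $\Phi(g)=R(g)$ for some $g\in\grf$, where $R(g)_i=-g_i+R_i$ as in \Cref{eq:R(g)}. Fix this $g$.

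Now set $B := \cf_{-A+R(g)} = \{x\in\co_K : x\notin -A+R(g)_i \text{ for all } i\}$. Unwinding the definition, $x\in B$ if and only if $x+a\notin R(g)_i$ for every $a\in A$ and every $i$, that is, $x+A\subset\cf_{R(g)}$. Hence $A+B\subset\cf_{R(g)}$.

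It remains to show $d(B)>0$. Since $-A+R(g)$ has weak light tails for $B_N$, \Cref{thm:Fr is generic} gives
$$d(B) = \prod_i\left(1-\frac{|-A+R(g)_i|}{N(\gb_i)}\right).$$
Since $|-A-g_i+R_i| = |-A+R_i|$, this equals the product in the hypothesis, which is positive.

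The only delicate point is the requirement that $-A+R(g)$ be an Erdős sieve, so that \Cref{thm:Fr is generic} applies. This again follows from the positivity of the product, as already noted in the proof of the theorem. The real work, namely the measure-one statement for infinite $A$, is entirely contained in \Cref{thm: meaure of sieves such that A+R' has weak light tails is 1}, so the corollary should present no further obstacle.
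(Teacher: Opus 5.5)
Your proposal is correct and follows the paper's own argument. You apply the measure-one theorem for the sieves $-A+R'$ with $I_N=B_N$ to obtain $g\in\grf$ such that $-A+R(g)$ has strong light tails, and take $B=\cf_{-A+R(g)}$, which gives $A+B\subset\cf_{R(g)}$ and $d(B)=\prod_i\bigl(1-|-A+R_i|/N(\gb_i)\bigr)>0$. Beyond this, you only make explicit what the paper leaves implicit: $B_N$ is tempered (as $-B_L+B_N\subset B_{2N}$ and $|B_{2N}|\ll|B_N|$), the hypothesis forces $A$ to be $R$-admissible and $-A+R(g)$ to be Erdős, and $|-A-g_i+R_i|=|-A+R_i|$.
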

				
				\begin{proof}
					By \Cref{thm: meaure of sieves such that A+R' has weak light tails is 1}, there exists some $g\in\grf$ such that $-A+R(g)$ has strong light tails. Taking $B$ to equal $\cf_{-A+R(g)}$, we have that $d(B) >0$ by hypothesis, and $A+B \subset \cf_{R(g)}$. 
				\end{proof}
				
				In \cite{Moreira}, it was shown that for any $C\subset \mathbb{N}$, if $\overline{d}(C)>0$, then there are infinite $A,B \subset \mathbb{N}$ such that $A+B \subset C$. Yet, Host has shown in Proposition 2 of \cite{Host}, that there are sets $C$ with $\overline{d}(C)>0$ such that if there are infinite $A$ and $B$ are such that $A+B \subset C$, then we must have $\overline{d}(A) = \overline{d}(B) = 0$. It would be interesting to know if such an example can appear with $C = \fr$ for some sieve $R$ with strong light tails for $B_N$. We conjecture that this is not the case.
				
				\begin{conjecture}
					\label{cnj: the only one}
					For every Erdős sieve  $R$ with strong light tails for $B_N$, there exists some infinite $A \subset \co_K$ such that $-A+R$ has strong light tails for $B_N$.
				\end{conjecture}
				
				Recently the case where $R$ is the squarefree sieve over $\q$ was solved by van Doorn and Tao in \cite{van Doorn} for the Følner sequence $I_N  = [0,N]$ (see Theorem 8 of this paper). Although their construction does not directly apply for $B_N$, we adapt their argument for completeness, as it is not immediate that the constructed object is indeed a sieve with strong light tails for $I_N$. In order to do this, we will use the following lemma (see Lemma 13 in  \cite{van Doorn}). 
				
				\begin{lemma}
					\label{lm:taovandoorn}
					There exists an infinite sequence of positive integers $m_j$ such that 
					\begin{enumerate}
						\item $m_j \equiv 0 \mod p^2$ for $p \leq 3\exp\exp j$, and
						\item $m_j+a \not \equiv 0 \mod p^2$ if $p> 3 \exp \exp j$ and $1 \leq a \leq \frac{p}{(\log\log p)^2}.$
					\end{enumerate}
				\end{lemma}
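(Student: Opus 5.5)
The plan is a counting argument: take $m_j$ to be a multiple of the small-prime modulus and show that most such multiples in a very long range satisfy (2). Fix $j$ and put $P = 3\exp\exp j$, $Q = \prod_{p \leq P} p^2$, and $A_p = \lfloor p/(\log\log p)^2 \rfloor$. I will look for $m_j$ of the form $m_j = Qk$ with $1 \leq k \leq X$, where $X$ is a parameter chosen huge compared with $Q$; I will commit to $X = \exp(Q^2)$. Condition (1) then holds automatically. Call $k$ \emph{bad} if there is a prime $p > P$ and some $1 \leq a \leq A_p$ with $p^2 \mid Qk + a$. It suffices to show that the number of bad $k \leq X$ is $< X$ for all large $j$. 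The finitely many small $j$ can then be discarded or handled by reindexing, since only an infinite sequence is required. Note that distinct $j$ give distinct $m_j$ once $Q$ grows, so the sequence is infinite.

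First I discard the large primes. If $p^2 > QX + p$, then $0 < Qk + a < p^2$, so $p^2 \nmid Qk+a$. Hence only primes with $P < p \leq \sqrt{2QX}$ matter. Since $(Q,p)=1$, for each such $p$ the bad $k$ are exactly those with $Qk \bmod p^2$ lying in an interval $J_p$ of length $A_p$ (namely $-A_p \leq Qk \leq -1$ modulo $p^2$). I will bound their number in two ways.
\begin{itemize}
\item \emph{Small range.} Here $p^2 \leq X$. Each $a$ fixes a single residue class of $k$ modulo $p^2$, so the count is at most $A_p(X/p^2 + 1)$.
\item \emph{Medium range.} Here $X < p^2 \leq 2QX$. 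Write $Qk + a = tp^2$ with $1 \leq t \leq 2Q$. For each $t$, the values $Qk$ lie in an interval of length $A_p$ and are spaced $Q$ apart, so there are at most $A_p/Q + 1$ of them. This gives at most $2A_p + 2Q$ bad $k$ for this $p$.
\end{itemize}

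Now sum over $p$. The main term is
\[
X\sum_{p>P}\frac{1}{p(\log\log p)^2}.
\]
By Mertens/partial summation it is $\ll X/\log\log P \ll X/\log j$, which is $< X/2$ for large $j$. This is exactly where the threshold $P = 3\exp\exp j$ and the $(\log\log p)^{-2}$ saving in $A_p$ are used: the sum $\sum_p 1/(p(\log\log p)^2)$ converges, so its tail beyond $P$ is small. The remaining terms are
\[
\sum_{p \leq \sqrt{2QX}} \bigl(3A_p + 2Q\bigr) \ll \frac{QX}{\log X} + Q\sqrt{QX}.
\]
With $X = \exp(Q^2)$ this is $\ll X/Q + o(X)$, which is $< X/2$.

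The main obstacle is the medium range $\sqrt X < p \leq \sqrt{2QX}$. There the naive ``$+1$ per residue class'' error, summed over all $a \leq A_p$ and all $p$, is of size $\sum_p A_p \approx QX/\log(QX)$. That is larger than $X$ unless $X$ is taken super-exponentially large in $Q$. The fix is twofold: group the $a$'s into an interval, as in the medium-range bound above, and take $\log X \gg Q$. I expect the care to lie in checking the interval-counting bound $A_p + Q$ per prime. With that in hand, the rest is routine prime-sum estimates.
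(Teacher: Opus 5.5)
The paper gives no proof of this lemma: it is quoted from van Doorn and Tao (their Lemma~13) and used as a black box. So there is nothing to match against. Your union-bound count is a correct, self-contained proof that makes this step of the paper independent of the citation, using only Mertens and Chebyshev-type prime estimates.

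The argument holds up as written. You restrict to $p^2\le QX+p$ and use the per-class bound $A_p(X/p^2+1)$ for $p^2\le X$. The tail is $X\sum_{p>P}1/(p(\log\log p)^2)\ll X/\log\log P$. The error is $\sum_{p\le\sqrt{2QX}}(3A_p+2Q)\ll QX/\log X+Q\sqrt{QX}$, which for $X=\exp(Q^2)$ is $O(X/Q)+o(X)$. Together these leave fewer than $X$ bad $k$ once $j$ is large.

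A few points to tidy:
\begin{enumerate}
\item The ``medium range'' is not really an obstacle. For $p^2>X$ the per-class bound already gives at most $A_p(X/p^2+1)<2A_p$ bad $k$, which beats your $2A_p+2Q$. The interval-grouping is therefore superfluous. What controls the accumulated $+1$ errors $\sum_{p\le\sqrt{2QX}}A_p\ll QX/\log(QX)$ is the choice $\log X\gg Q$, which you make anyway.
\item Since $\log\log P=\log(e^j+\log 3)\ge j$, the main term is $\ll X/j$, not merely $X/\log j$. This is harmless.
\item The reduction to large $j$ needs one line of justification. If $m$ satisfies (1)--(2) for index $j_0$, it satisfies them for every $j<j_0$. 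Condition (1) only weakens. For primes $3\exp\exp j<p\le 3\exp\exp j_0$ one has $m\equiv 0\pmod{p^2}$, so $m+a\equiv a\not\equiv 0\pmod{p^2}$, because $1\le a\le p/(\log\log p)^2<p^2$ for these $p$. Hence you may set $m_j:=m_{j_0}$ for $j<j_0$, or shift indices.
\item The same observation shows that distinct $j$ need not give distinct $m_j$, so that claim as stated is wrong. However, $m_j\ge\prod_{p\le 3\exp\exp j}p^2\to\infty$, so the sequence takes infinitely many values. That is all the application to $-A+R$ requires.
\end{enumerate}
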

				
				Let $A = \{m_j:j \in \mathbb{N}\}$ be a set such that each $m_j$ satisifes the properties in \Cref{lm:taovandoorn}. We claim that if $R$ is the squarefree sieve defined by $R_p = p^2\z$, then the sieve $-A+R$ has strong light tails for $I_N$. Let $W$ be the sieve defined by $W_p = (-A+R_p)\setminus R_p$. Since $-A+R = W \cup R$, and $R$ has strong light tails for $I_N$, \Cref{lm: union of sieves} implies that it is enough to show that $W$ has strong light tails for $I_N$.
				
				Assume that $a \in W_p \cap I_N = I_N\cap (-A+p^2\z)\setminus p^2\z$ for some $p$. This means that there is some $m_j \in A$ such that $a \equiv -m_j \not \equiv 0 \mod p^2$. It follows that $m_j \not \equiv 0 \mod p^2$, and so $p$ cannot be smaller than or equal to $3 \exp\exp j$, as this would contradict point (1) in \Cref{lm:taovandoorn}. We must therefore have that $p > 3 \exp \exp j$, and since $m_j+a \equiv 0 \mod p^2$, we must have that \begin{equation}
					\label{eq:TaoDoorn1}
					N \geq a > \frac{p}{(\log\log p)^2}.
				\end{equation} Note that here we are using the fact that $a$ is positive, and that is why this argument does not immediately apply with $I_N$ replaced by $B_N$. Taking the logarithm on both sides of \Cref{eq:TaoDoorn1} gives that $$\log p < \log(N) + 2 \log \log \log p,$$ and so $\log p \ll \log N$. Replacing this in \Cref{eq:TaoDoorn1} gives $$p \ll N(\log \log N)^2.$$ We conclude that for any fixed $L \geq 1$,  $$\bigcup_{L <p} W_p \cap I_N \subset \bigcup_{L <p \ll N(\log \log N)^2 } W_p \cap I_N. $$
				
				For any fixed $p$, we have that $m_j \equiv 0 \mod p^2$ for all $j$ such that $p \leq 3 \exp \exp j$. All $j$ that are bigger than $\log \log p$ satisfy this, so we conclude that $|W_p| \leq \log \log p$. Since each congruence class modulo $p^2$ intersects $I_N$ in up to $N/p^2+1$ points, we get 
				\begin{align*}
					\left|\bigcup_{L < p} W_p \cap I_N\right|
					&\le
					\sum_{L < p \ll N(\log \log N)^2}
					(\log \log p)\left(\frac{N}{p^2} + 1 \right) \\[4pt]
					&\ll
					N \sum_{L < p \ll N(\log \log N)^2}
					\frac{\log \log p}{p^2}
					\;+\;
					\sum_{L < p \ll N(\log \log N)^2}
					\log \log p .
				\end{align*}
				
				Since the series $\sum (\log \log p)/p^2$ converges, the first term will go to zero once we divide by $N$ and take $L$ to infinity. By using the prime number theorem to count primes up to $N(\log \log N)^2$, it is then easy to see that the second sum is in $o(N)$, which completes the proof.

				If \Cref{cnj: the only one} holds, then an obvious follow up question is which sorts of sets $A$ are such that $\fr \subset A$ for some Erdős sieve $R$ with strong light tails for $B_N$. First, this would surely imply that $\underline{d}(A)>0$. But it is not true that all such sets contain some $\fr$ as a subset, with $R$ an Erdős sieve with strong light tails. This follows quickly from the following observation: if $R$ and $R'$ are Erdős sieves, $R$ with strong light tails for $B_N$, and $\fr \subset \frp$, then $R'$ must also have strong light tails for $B_N$. This shows that simply taking $A = \frp$ for some sieve $R'$ which only has weak light tails for $B_N$, or does not have weak light tail at all, already gives us the example of some $A$ such that $\fr \not \subset A$ for every $R$ with strong light tails for $B_N$.
				
				\begin{proposition}
					\label{prop: slt subset imples slt}
					Let $R$ be an Erdős sieve with strong light tails with respect to some Følner sequence $I_N$. If $R'$ is an Erdős sieve such that $$\fr \subset \frp,$$ then $R'$ has strong light tails with respect to $I_N$.
				\end{proposition}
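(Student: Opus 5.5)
The plan is to repeat the proof of \Cref{thm: Strong light tails preserved under isomorphism of sieves} almost word for word. The only change is that the equivalence $R\sim R'$ is replaced by the one-sided inclusion $\fr\subset\frp$. The key intermediate statement is a one-sided version of \Cref{lm: aux for Characterization of equivalent sieves}: for every $\gb'\in\cb_{R'}$,
$$R'_{\gb'}\subset \bigcup_{\gb\in\cb_R:(\gb,\gb')\neq 1} R_\gb ,$$
and in particular some $\gb\in\cb_R$ is not coprime to $\gb'$.

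First, I would check that the proof of \Cref{lm: aux for Characterization of equivalent sieves} only ever uses $\fr\subset\frp$, together with the weak light tails of $R$. Strong light tails imply weak light tails for the same $I_N$.

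The first part of that proof goes through unchanged. Suppose $\gb'$ were coprime to every $\gb\in\cb_R$. Pick $b$ with $b+\gb'\subset R'_{\gb'}$. Then \Cref{lm: Fr intercepted by ideal} gives $\fr\cap(b+\gb')\neq\emptyset$. But $\frp\cap(b+\gb')=\emptyset$, which contradicts $\fr\subset\frp$.

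The second part also goes through. The inclusion $\fr\subset\frp$ is equivalent to $\frp^c\subset\fr^c$, so $R'_{\gb'}\subset\bigcup_{\gb\in\cb_R}R_\gb$. This is exactly what was used in that proof to derive \Cref{eq: help lemma aux for Characterization of equivalent sieves 2}. The remainder of the argument only concerns the dilation $W$ of $R$: via \Cref{lm: Erds and light tails preserved by change of base} and \Cref{rmk: if contains coprime ideal then no weak light tails}, we obtain a contradiction with $W$ having weak light tails. So the displayed inclusion holds.

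Next, I order $\cb_R$ and $\cb_{R'}$ and define $g(i):=\min\{j:(\gb_j,\gb'_i)\neq1\}$, which is finite by the previous step. For a fixed $j$, the ideals $\gb'_i$ are pairwise coprime, so each prime ideal dividing $\gb_j$ divides at most one $\gb'_i$. Hence $|g^{-1}(j)|$ is at most the number of prime divisors of $\gb_j$, and therefore $G(L):=\min_{l\ge L}g(l)\to\infty$. The inclusion gives $R'_i\subset\bigcup_{j\ge g(i)}R_j$, and hence
$$\overline{d}_I\Big(\bigcup_{i\ge L}R'_i\Big)\le \overline{d}_I\Big(\bigcup_{j\ge G(L)}R_j\Big)\longrightarrow 0$$
by the strong light tails of $R$. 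This shows that $R'$ has strong light tails for $I_N$.

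The only step needing genuine care is the verification that the lemma's proof does not secretly use the reverse inclusion $\frp\subset\fr$. Beyond that, the argument is bookkeeping. The Erdős hypothesis on $R'$ is harmless here: it is only needed so that strong light tails is defined for $R'$, since it is not used in the bounds above.
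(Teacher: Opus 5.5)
Your proposal is correct and is essentially the paper's own proof: the paper likewise notes that the argument in the proof of \Cref{lm: aux for Characterization of equivalent sieves} only needs $\bigcup_i R'_i\subset\bigcup_j R_j$ (i.e.\ $\fr\subset\frp$) together with the light tails of $R$, and then reruns the $g(i)$, $G(L)$ argument from \Cref{thm: Strong light tails preserved under isomorphism of sieves}. One cosmetic caveat: your first step needs $R'_{\gb'}\neq\emptyset$ in order to pick $b$, so any indices with $R'_i=\emptyset$ should simply be discarded (or assigned $g(i)=\infty$), which is harmless since they contribute nothing to $\bigcup_{i\ge L}R'_i$.
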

				
				\begin{proof}
					Our hypothesis is equivalent to the fact that $$\bigcup_i R_i \supset \bigcup_i R'_i.$$ Take any $i \in \mathbb{N}$. As part of the proof of \Cref{lm: aux for Characterization of equivalent sieves}, we showed that if $R'_i \subset \bigcup_j R_j$, then we have $$R'_i \subset \bigcup_{j:(\gb_j,\gb'_i) \neq 1} R_j. $$
					The result now follows by the same argument used in the proof of \Cref{thm: Strong light tails preserved under isomorphism of sieves}.
				\end{proof}
				
				\begin{example}
					Take the sieve $W$ defined by $W_1 = 4\z$ and $W_i = 1+4i + p_i^2\z$ for $i>1$. This sieve does not have weak light tails for any Følner sequence $I_N$ (as seen in Example 5.14. of \cite{part1}), but we have $d(\cf_W)>0$. By \Cref{prop: slt subset imples slt}, there is no sieve $R$ with strong light tails for some Følner sequence $I_N$ such that $\fr \subset \cf_W$.
				\end{example}

				\subsection{Squarefree Values of Polynomials}

				Let $f\in \z[X]$ be an irreducible polynomial. The set of squarefree values of $f$, that is $$ \Sigma_f = \{x \in \z: f(x) \text{ is squarefree}\}$$ is an important object of study in number theory.\index{Squarefree Values of a Polynomial} Consider the sieve $R^f$ such that $\cb_{R^f} = \{p^2\z:  p\text{ prime}\}$ and defined by $$R^f_p = \{m\in \z/p^2\z: f(m) \equiv 0 \mod p^2\} + p^2\z.$$ 
				Noting that $f(y+p^2k) \equiv f(y) \mod p^2$ for any $k \in \z$, we see that for $y$ to be in $\frf$ is equivalent to $f(y) \not \equiv 0 \mod p^2$ for every $p$, which is equivalent to $f(y)$ being squarefree. That is, for any irreducible polynomial $f \in \z[X]$ we have
				\begin{equation}
					\label{eq: sigmaf equals frf}
					\Sigma_f = \frf.
				\end{equation}
				
				Let $\rho_f$ be the function given by $$\rho_f(x) = |\{m\in \z/x\z: f(m) \equiv 0 \mod x\}|.$$ When $x = p^2$ for some prime, it is clear we have $\rho_f(p^2) = |R^f_p|$. We have the following result (see for example Lemma 2.2 in \cite{Brwoning}).
				
				\begin{lemma}
					\label{lm: rho smaller than degree}
					If $f$ is an irreducible polynomial of degree $d$  we have $$\rho_f(p^2) \leq d$$ for all but a finite number of primes $p$.
				\end{lemma}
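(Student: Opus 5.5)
We need to show that $\rho_f(p^2)\le d$ for all but finitely many primes $p$, where $f$ is irreducible of degree $d$. The approach is Hensel lifting, carried out away from the primes dividing the discriminant. Since $f$ is irreducible over $\q$, it is separable, so its discriminant $\Delta(f)$ is a nonzero integer. We also exclude the finitely many primes dividing the leading coefficient of $f$. Let $\mathcal{E}$ be this finite set of excluded primes (those dividing $\Delta(f)$ or the leading coefficient). We then prove the bound for every $p\notin\mathcal{E}$.

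Fix $p\notin\mathcal{E}$ and let $\bar f$ denote the reduction of $f$ modulo $p$.

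\textbf{Step 1 (few roots mod $p$).} Since $p$ does not divide the leading coefficient, $\bar f$ is a nonzero polynomial of degree $d$ over the field $\mathbb{F}_p$. Hence it has at most $d$ roots in $\z/p\z$.

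\textbf{Step 2 (roots mod $p$ are simple).} Since $p\nmid\Delta(f)$, the polynomial $\bar f$ has nonzero discriminant, so $\gcd(\bar f,\bar f')=1$. Thus every root $r$ of $\bar f$ satisfies $f'(r)\not\equiv 0 \pmod p$. This step can also be phrased through the resultant identity $\operatorname{Res}(f,f')=\pm a_d\Delta(f)$, which writes this resultant as an integer combination $u f+v f'$. From that, a common root modulo $p$ would force $p\mid a_d\Delta(f)$.

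\textbf{Step 3 (each root lifts uniquely to $p^2$).} Any $m\in\z/p^2\z$ with $f(m)\equiv0\pmod{p^2}$ reduces to a root $r$ modulo $p$. Write $m=r+tp$ with $0\le t<p$. Taylor expansion gives
\[
f(r+tp)\equiv f(r)+tp\,f'(r)\pmod{p^2}.
\]
The condition $f(m)\equiv 0 \pmod{p^2}$ then becomes the linear congruence $f(r)/p+t f'(r)\equiv0\pmod p$. Because $f'(r)$ is invertible modulo $p$, this has exactly one solution $t$. So the reduction map from roots modulo $p^2$ to roots modulo $p$ is injective, which gives $\rho_f(p^2)\le\rho_f(p)\le d$.

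The only mildly delicate point is Step 2, namely justifying that $p\nmid\Delta(f)$ rules out repeated roots modulo $p$. The resultant formulation above handles it cleanly. Alternatively, we can cite the standard fact directly, as in Lemma 2.2 of \cite{Brwoning}.
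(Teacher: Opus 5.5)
Your proof is correct. You exclude the finitely many primes dividing $a_d\Delta(f)$, where $\Delta(f)\neq 0$ because an irreducible $f$ is separable over $\mathbb{Q}$; the identity $uf+vf'=\operatorname{Res}(f,f')$ then rules out common roots of $f$ and $f'$ modulo $p$, and the Taylor expansion shows that each of the at most $d$ roots modulo $p$ lifts to exactly one root modulo $p^2$. The paper gives no proof of its own and only cites Lemma 2.2 of \cite{Brwoning}; your Hensel-lifting argument is the standard proof of that fact, and the paper itself uses the same mechanism (Hensel's lemma at primes not dividing the discriminant) later when it shows $R^f\neq R^g$.
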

				
				As a consequence of \Cref{lm: rho smaller than degree}, we see that if $f$ is irreducible, then $R^f$ is Erdős. Notice that this is not the case for the polynomial $f(X) = X^2$, that has only $-1$ and $1$ as squarefree values.
				
				When studying $\Sigma_f$, we usually want to show that it is infinite, by proving that, for $I_N = [0,N]$, 
				\begin{equation}
					\label{eq: formula if squarefree polynomials }
					|\Sigma_f \cap I_N| = N\prod_{p \text{ prime}} \left(1-\frac{\rho_f(p^2)}{p^2}\right) + o(N).
				\end{equation}
				
				Since $R^f$ is Erdős, \Cref{thm:Fr is generic} together with \Cref{eq: sigmaf equals frf}, imply that \Cref{eq: formula if squarefree polynomials } will hold for some polynomial $f$ if and only if $R^f$ has weak light tails for $I_N$. Many authors have worked on this problem, usually also providing an explicit bound for the $o(N)$ error term. Estermann showed in \cite{estermann} that \Cref{eq: formula if squarefree polynomials } holds for the polynomial $f(X) = X^2+l$ for any non-zero integer $l$. 
				
				We can easily show this, by showing that $R^f$ has strong light tails for $I_N$ if $f(X) = aX^2+bX+C$ is a degree 2 polynomial such that $R^f$ is Erdős. Indeed, if $R^f_p$ intersects $I_N$ in some point $x$, then $p^2\mid ax^2+bx+c$ which implies that $p \ll_f N$. Hence, there is some $C$ such that $$\left|I_N \cap \bigcup_{p> L}R^f_p\right| \leq \sum_{L < p < CN}\rho_f(p^2)\left(1+\frac{N}{p^2} \right) \leq N\sum_{L < p < CN}\frac{\rho_f(p^2)}{p^2} + \pi(CN). $$ Dividing both sides by $N$ and letting $N$ go to infinity will make the right hand side go to $0$ if $R^f$ is Erdős, showing that $R^f$ has strong light tails.
				
				The same approach cannot be used to show that any polynomial of degree $3$ has strong light tails, since if $p^2 \mid x^3$ with $|x| \leq N$, the best bound we can provide is $p \ll N^{3/2}$, and then we would be bounding $\left|I_N \cap \bigcup_{i> L}R^f_i\right|$ by $\pi(N^{3/2})$, which is worse than the trivial bound. When $f$ is an irreducible polynomial of degree $3$, Hooley showed in \cite{Hooley 2} that $R^f$ has strong light tails. More generally, he showed the following result.
				
				\begin{theorem}
					\label{thm: Hooley Polynomials}
					Let $f$ be an irreducible polynomial of degree $d \geq 3$. The sieve $R^{f,d-1}$ defined by $$R^{f,d-1}_p = \{m \in \z/p^{d-1}\z: f(m) \equiv 0 \mod p^{d-1}\z \} + p^{d-1}\z$$ has strong light tails.
				\end{theorem}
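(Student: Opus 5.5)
Since $\Sigma$-type sets of this kind are only understood through analytic input, I would not try to derive the statement from the dynamical machinery above. Instead I would verify the definition of strong light tails directly for $I_N=[0,N]$ (and for $B_N$ the same argument applies to $[-N,N]$). That is, I would show that
$$\lim_{L\to\infty}\limsup_{N\to\infty}\frac1N\Bigl|I_N\cap\bigcup_{p>L}R^{f,d-1}_p\Bigr|=0 .$$
Erdősness of $R^{f,d-1}$ comes first. Hensel's lemma and \Cref{lm: rho smaller than degree} in the form $\rho_f(p^{d-1})\le d$ for all but finitely many $p$ (those dividing the discriminant or the leading coefficient) give $\sum_p \rho_f(p^{d-1})/p^{d-1}<\infty$. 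As in the degree $2$ computation above, if $x\in I_N\cap R^{f,d-1}_p$ then $p^{d-1}\mid f(x)$ with $f(x)\neq 0$ (by irreducibility, apart from finitely many $x$). Hence $p\le C_f N^{d/(d-1)}$, so only primes in $(L,\,C_fN^{d/(d-1)}]$ matter.

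I would split this range at $P=N/(\log N)^2$. For $L<p\le P$, each of the $\rho_f(p^{d-1})\le d$ residue classes meets $I_N$ in at most $N/p^{d-1}+1$ points. This gives at most
$$N\sum_{p>L}\frac{d}{p^{d-1}}+d\,\pi(P).$$
The first term is $o_L(1)\cdot N$ and the second is $o(N)$, exactly as in the quadratic case. What remains is to show that
$$\#\{x\le N:\ p^{d-1}\mid f(x)\text{ for some } p>P\}=o(N).$$

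This large-prime range is where I expect all the difficulty, and I would follow Hooley's idea. Write $f(x)=p^{d-1}m$. For $p>P$ the cofactor satisfies $|m|\ll N^d/P^{d-1}=N(\log N)^{2(d-1)}$. I would then split again according to the size of $p$.

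\emph{Moderate primes}, $P<p\le N(\log N)^{A}$. Here I would count pairs $(x,m)$ by fixing $m$ and using $f(x)\equiv 0 \bmod m$ together with the fact that $f(x)/m$ must be a perfect $(d-1)$-th power of a prime in a short dyadic window. Hooley's treatment uses the equidistribution of the roots $\nu/p^{d-1}$ of $f(\nu)\equiv0 \pmod{p^{d-1}}$. This is controlled by Weyl/Kloosterman-type exponential sum bounds over $\sum_{p\sim Q}\sum_{f(\nu)\equiv 0 \bmod p^{d-1}} e(h\nu/p^{d-1})$, and from it I would obtain the bound $\ll N/\log N$ on each dyadic block $p\sim Q$ for $Q$ in this range. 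Summed over the $O(\log\log N)$ dyadic blocks, this gives $o(N)$.

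\emph{Very large primes}, $N(\log N)^A<p\ll N^{d/(d-1)}$. Here $|m|\ll N/(\log N)^{A(d-1)}$ is small. For each fixed $m$, the number of $x\le N$ with $f(x)=m\,y^{d-1}$ is a count of integer points on the curve $f(x)=my^{d-1}$, which has genus $\ge1$ since $d\ge3$. I would use a uniform (Bombieri–Pila type) bound of $\ll_\varepsilon N^{1/d+\varepsilon}$ points per $m$ in the box. Summing over $m$ gives $N^{1+1/d+\varepsilon}(\log N)^{-A(d-1)}$, which is too weak on its own. So the plan is to choose $A$ large and combine this with the congruence $f(x)\equiv 0 \pmod m$: for each $m$ I restrict $x$ to $\rho_f(m)$ classes mod $m$ and count the $x$ via the determinant method in each class. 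The total should then come to $\sum_m \rho_f(m)\,(N/m)^{1/d+\varepsilon}$ plus an acceptable error, which is $o(N)$.

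The main obstacle is the moderate range $p\approx N$, where neither the trivial count nor curve point-counting suffices. There the real input is Hooley's equidistribution estimate for roots of $f$ modulo prime powers. If forced to proceed, I would cite it as a black box, i.e. use \cite{Hooley 2} for that range exactly as the statement does, rather than attempt to reprove it.
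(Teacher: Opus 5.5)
The paper does not prove this statement; it is quoted from Hooley \cite{Hooley 2}. Your reformulation as a tail estimate, the bound $p\ll N^{d/(d-1)}$ and the small-prime range are all correct. However, the two estimates you supply for larger $p$ do not hold up, so the proposal amounts to citing Hooley.

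First, the difficulty is not at $p\approx N$. The trivial count $N\sum_{p>L}d\,p^{1-d}+d\,\pi(\xi)$ is already $o(N)$ for $\xi=N(\log N)^{1/2}$. So your block bound $\ll N/\log N$ is trivial for $Q\le N$, and what actually needs proof is $p\gtrsim N\log N$. For $N<Q\le N(\log N)^A$ you must beat the trivial bound $Q/\log Q$ by a power of $\log N$. You attribute this to cancellation in $\sum_{p\sim Q}\sum_{\nu}e(h\nu/p^{d-1})$, with $\nu$ running over roots of $f$ modulo $p^{d-1}$. Such a sum would have to detect $\nu\le N$, an interval of relative length about $N/Q^{d-1}$ for each modulus. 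You give no argument for this, and nothing of the kind is available. Equidistribution of roots of $f$ modulo primes is already open for $\deg f\ge 3$. Hooley's equidistribution theorem concerns $\nu/n$ averaged over all moduli $n$, not the thin family $n=p^{d-1}$.

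Second, in the range $p>N(\log N)^A$ the per-class bound $\ll(N/m)^{1/d+\varepsilon}$ is unjustified. Your summation over $m$ works only because you attach the $\varepsilon$ to $N/m$. Writing $n=\nu+mt$, you are counting integer points on $f(\nu+mt)/m=y^{d-1}$ with $t\le N/m$, but with $y$ up to $\asymp (N^d/m)^{1/(d-1)}\ge N$. The uniform determinant-method bounds (Bombieri--Pila, or Heath-Brown's variant for boxes with unequal sides) give something like $N^{\varepsilon}(N/m)^{1/(d-1)}$, with the $\varepsilon$-loss governed by the whole box. Summed against $\rho_f(m)$ over $m\le N(\log N)^{-A(d-1)}$, this gives $N^{1+\varepsilon}(\log N)^{-A(d-2)}$, and no power of $\log N$ can absorb $N^{\varepsilon}$.

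So this step only covers $p\ge N^{1+\delta}$, and the range $N(\log N)^{1/2}<p<N^{1+\delta}$ has no valid argument in your plan. What is needed there is a count of $t\le N/|m|$ with $f(\nu+|m|t)/m$ a perfect $(d-1)$-th power, uniform in $(m,\nu)$. It would suffice for this count to lose at most $(\log\log N)^{O(1)}$ relative to $(N/|m|)^{1/2}$. Indeed, $\sum_{|m|\le M}\rho_f(|m|)(N/|m|)^{1/2}\ll (NM)^{1/2}$, and $M\ll N(\log N)^{-(d-1)/2}$ once $p>N(\log N)^{1/2}$. Handling this range is precisely what Hooley's theorem does. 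Citing it, as you ultimately propose, is what the paper does, but then your other steps are superfluous rather than a proof.
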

				
				More generally, taking any integer $l \geq 2$, we write $R^{f,l}$ to be the sieve defined by\index{$l-$free Values of Polynomials} \begin{equation}
					\label{eq: R fl}
					R^{f,l}_p = \{m \in \z/p^{l}\z: f(m) \equiv 0 \mod p^{l}\z \} + p^{l}\z.
				\end{equation} Because $p^{l+1}\mid f(m)$ implies that $p^{l}\mid f(m)$, we have that $R^{f,l+1}_p \subset R^{f,l}_p$ for every $l \geq 2$. Consequently, if $f$ is an irreducible polynomial of degree $d \geq 3$, \Cref{thm: Hooley Polynomials} implies that $R^{f,l}$ has strong light tails for all $l \geq d-1$. In the particular case where $f(X) = X^d+c$ is an irreducible polynomial, Heath-Brown showed in Theorem 1 of \cite{HeathBrown2} that if $l \geq (5d+3)/9$, then $R^{f,l}$ has strong light tails. 
				
				There is no specific irreducible polynomial of degree $d\geq 4$ for which it is currently known that $R^{f,2}$ has weak light tails for $I_N = [0,N]$. Yet, Filaseta showed in \cite{Filaseta} that almost all irreducible polynomials (with respect to their height) have infinitely many squarefree values. Browning and Shparlinski further improved this, by showing that for almost all irreducible polynomials \Cref{eq: formula if squarefree polynomials } holds (for the specific result, see Theorem 1.1 in \cite{Brwoning}).
				
				Finally, we remark that Granville has shown (see Theorem 1 of \cite{granville}) that \Cref{eq: formula if squarefree polynomials } holds for all irreducible polynomials, independently of degree, if we assume the abc-conjecture. A more extensive review of the history of this problem can be found in \cite{Kowalski}.
				
				The squarefree value of multivariate polynomials is also a problem of interest (see \cite{Bhargava} for an application to counting number fields with Galois group $S_n$ for $n \leq 5$). For this reason, in what will follow, we will work in the following level of generality. Given a number field $K$ of degree $n$, integers $v,l \geq 1$ and a polynomial $f \in \co_K[X_1,\dots, X_v]$, we write $R^{f,l}$ to be the sieve supported on $$\cb_R = \{\gp^l \times \dots \times \gp^l: \gp \text{ prime ideal of } \co_K\}$$ and defined by $$R^{f,l}_\gp = \{m \in \co_K/\gp^l \times \dots \times \co_K/\gp^l: f(m) \in \gp^l\} + \gp^l \times \dots \times \gp^l,$$ where $\gp^l \times \dots \times \gp^l$ is the Cartesian product of $v$ copies of $\gp^l$. Note that there may be $\gp$ such that $R^{f,l}_\gp = \emptyset$.
				
				Note that $R^{f,l}$ is always Erdős when $f$ is an irreducible polynomial. Over $\z$ with $l =2$ this is \Cref{lm: rho smaller than degree}. For $\z[X_1,\dots,X_v],$ with $l =2$, see the proof of Theorem 3.2 in \cite{Poone}. The same methods show that $$\left|R^{f,l}_\gp\right| = O_K(N(\gp)^{lv-2})$$ for any arbitrary number field $K$. Using \Cref{thm: density of x+A in fr} we automatically get the following result.
				
				\begin{corollary}
					\label{crl: admissible sets polynomials}
					Let $f \in \co_K[X_1,\dots, X_v]$ be an irreducible polynomial such that $R^{f,l}$ is a sieve with weak light tails with respect to a Følner sequence $I_N$, and $A$ a finite $R^{f,l}-$admissible set. Then, $$d_I(\{x \in \co_K/\gp^l \times \dots \times \co_K/\gp^l: \mathlarger\forall_{a \in A} \hspace{2pt} f(x+a) \text{ is } l-\text{free} \}) = \prod_{\gp} \left(1-\frac{|-A+R^{f,l}_\gp|}{N(\gp)^{lv}}\right).$$
				\end{corollary}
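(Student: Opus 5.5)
The plan is to read \Cref{crl: admissible sets polynomials} directly off \Cref{thm: density of x+A in fr}, applied to the sieve $R^{f,l}$ over the étale $\q$-algebra $K^v = K\times\dots\times K$. Its ring of integers is $\co_K^v$, and its invertible ideals include $\gp^l\times\dots\times\gp^l$.

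First I will check that $R^{f,l}$ satisfies the hypotheses of that theorem. The ideals $\gp^l\times\dots\times\gp^l$ are pairwise coprime for distinct primes $\gp$. Their norms are $N(\gp)^{lv}$, since $|\co_K^v/(\gp^l)^v| = N(\gp^l)^v$. The sieve is Erdős by the remark preceding the corollary, namely $|R^{f,l}_\gp| = O_K(N(\gp)^{lv-2})$, so that
\[\sum_\gp \frac{|R^{f,l}_\gp|}{N(\gp)^{lv}} \ll \sum_\gp N(\gp)^{-2}<\infty.\]
Weak light tails for $I_N$ is part of the hypothesis. The empty classes $R^{f,l}_\gp=\emptyset$ contribute factor $1$ and can be discarded, or kept, since they change nothing.

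Second, I will identify the set in the statement. An element $x$ lies in $\cf_{R^{f,l}}$ iff for every $\gp$ we have $x \bmod \gp^l \notin R^{f,l}_\gp$, i.e.\ $f(x)\notin\gp^l$ for all $\gp$, i.e.\ $f(x)$ is $l$-free. Here I use that $f(y+z)\equiv f(y) \bmod \gp^l$ whenever $z\in(\gp^l)^v$, because $f$ has coefficients in $\co_K$. Hence $x+A\subset \cf_{R^{f,l}}$ iff $f(x+a)$ is $l$-free for all $a\in A$. The set in the corollary, read as a subset of $\co_K^v$ (the quotient notation there is a typo), is therefore exactly $\{x: x+A\subset \cf_{R^{f,l}}\}$.

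Finally, \Cref{thm: density of x+A in fr} gives that this set has density $\prod_\gp\bigl(1-|-A+R^{f,l}_\gp|/N(\gp)^{lv}\bigr)$ with respect to $I_N$, which is the claimed formula. The only point needing care is the bookkeeping in the first step, namely the norm computation in the product algebra and the Erdős condition. Everything else is a direct application, so I do not expect a genuine obstacle.
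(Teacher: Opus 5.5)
Your proposal is correct and is the paper's own argument: the paper obtains the corollary by applying \Cref{thm: density of x+A in fr} to $R^{f,l}$ over $K^v$, using that $\{x: x+A\subset\cf_{R^{f,l}}\}$ is exactly the set of $x\in\co_K^v$ for which every $f(x+a)$ is $l$-free, and your checks of the norms $N(\gp)^{lv}$ and your reading of the set as a subset of $\co_K^v$ are right. One small point: weak light tails is only defined for Erdős sieves, so the Erdős property is already part of the hypothesis, and you need not invoke the bound $|R^{f,l}_\gp| = O_K(N(\gp)^{lv-2})$ (which in any case fails for $l=1$).
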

				
				Similarly, let $f_1 , \dots, f_m \in \co_K[X_1,\dots, X_v]$ be a collection of irreducible polynomials. If we want to study those $x$ such that $f_i(x)$ is $l-$free for every $i$, we then just have to consider the sieve $R = \bigcup_{i=1}^m R^{f_i,l}$, since $\fr =  \bigcap_{i=1}^m \mathcal{F}_{R^{f_i,l}}$. Given that weak light tails are preserved under union of sieves by \Cref{lm: union of sieves}, we get the following corollary. 
				
				\begin{corollary}
					\label{crl:intersection of squarefree values of polynomials}
					Let $f_1 , \dots, f_m \in \co_K[X_1,\dots, X_v]$ be irreducible polynomials such that $R^{f_i,l}$ has weak light tails for every $i$. Then, 
					
					$$d_I(\{x \in \co_K/\gp^l \times \dots \times \co_K/\gp^l:  \mathlarger\forall_{1\leq i \leq m} \hspace{2pt} f_i(x) \text{ is } l-\text{free} \}) = \prod_{\gp} \left(1-\frac{|\bigcup_{i=1}^m R^{f_i,l}_\gp|}{N(\gp)^{lv}}\right).$$
				\end{corollary}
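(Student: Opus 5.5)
The plan is to reduce \Cref{crl:intersection of squarefree values of polynomials} to \Cref{thm:Fr is generic}, applied to the union sieve $R = \bigcup_{i=1}^m R^{f_i,l}$.

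First, check that this union is a well-defined sieve. All the $R^{f_i,l}$ are supported on the same set $\{\gp^l\times\dots\times\gp^l\}$ of ideals of $\co_K^v$. So the graph $\mathcal{G}$ has only finite components, the common basis is this same set, and
$$R_\gp = \bigcup_{i} R^{f_i,l}_\gp.$$
We must have $R_\gp \neq \co_K^v$. Since $|R^{f_i,l}_\gp| = O_K(N(\gp)^{lv-2})$, the union has size less than $N(\gp)^{lv}$ for all but finitely many $\gp$. For the finitely many remaining primes, one may need that the density is positive or else interpret the product as $0$. If $R_\gp = \co_K^v$ for some $\gp$, then the set in question is empty, and the right-hand side vanishes as well, because that factor is $0$. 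So the formula holds trivially in that case, and we may assume the union is well defined.

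Second, \Cref{lm: union of sieves} gives that $R$ is Erdős, since each $R^{f_i,l}$ is. Applying the lemma inductively, $R$ also has weak light tails for $I_N$, because each $R^{f_i,l}$ does. Here we use the common $I_N$ from the hypothesis.

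Third, from the computation following the definition of union of sieves,
$$\fr = \bigcap_i \mathcal{F}_{R^{f_i,l}},$$
and $x \in \mathcal{F}_{R^{f_i,l}}$ means exactly that $f_i(x) \notin \gp^l$ for every $\gp$, that is, $f_i(x)$ is $l$-free. Applying \Cref{thm:Fr is generic}, (2)$\Rightarrow$(3), then gives
$$d_I(\fr) = \prod_\gp \left(1 - \frac{|R_\gp|}{N(\gp)^{lv}}\right),$$
which is the claimed formula, since $N(\gp^l\times\dots\times\gp^l) = N(\gp)^{lv}$.

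The main obstacle is bookkeeping rather than substance. It consists of the degenerate primes where the union covers everything, and of reading the set in the statement as a subset of $\co_K^v$ rather than of the quotient.
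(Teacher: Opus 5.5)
Your proposal is correct and follows the paper's own argument: you form the union sieve $R=\bigcup_i R^{f_i,l}$ over the common support, use $\fr=\bigcap_i\mathcal{F}_{R^{f_i,l}}$ and \Cref{lm: union of sieves} to transfer the Erdős and weak light tails properties, and conclude with \Cref{thm:Fr is generic}. Your treatment of the degenerate case, where some $\bigcup_i R^{f_i,l}_\gp$ covers all of $\co_K^v$ and both sides vanish, is a small point the paper leaves implicit.
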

				
				\begin{example}
					\label{ex: Dimitrov}
					In \cite{Dimitrov1}, Dimitrov showed that there are infinitely many $x$ such that $x^2+1$ and $x^2+2$ are squarefree, and provides the density of such $x$. We can use \Cref{crl:intersection of squarefree values of polynomials} to obtain this density easily. 
					
					Let $f(X) = X^2+1$ and $g(X) = X^2+2$. Since both are polynomials of degree $2$, we have that $R^f$ and $R^g$ have strong light tails with respect to $I_N = [0,N]$. We also have that $R^f_2 = R^g_2 = \emptyset$ and that $R_p^f \cap R_p^g = \emptyset$ for all $p$. Since $|R^f_p| = \left(\frac{-1}{p}\right)+1$ and $|R^g_p| = \left(\frac{-2}{p}\right)+1$ for $p> 2$ (where $\left(\frac{a}{p}\right)$ denotes the Legendre symbol), it follows that the union $R^f \cup R^g$ is well defined and  $$|R_p^f \cup R_p^g| = \left(\frac{-1}{p}\right)+\left(\frac{-2}{p}\right)+2$$ for all $p \geq 2$, given that $f(x) = g(x)+1$. \Cref{crl:intersection of squarefree values of polynomials} now shows that
					$$d_I\left(\{x \in \z: x^2+1 \text{ and } x^2+2 \text{ are squarefree} \}\right) =\prod_{p>2} \left(1-\frac{\left(\frac{-1}{p}\right)+ \left(\frac{-2}{p}\right)+2 }{p^2}\right).$$
					
				\end{example}

				In what remains of this section, we will work over $\q$ with $l =2$, although similar results could be obtained more generally. A first question that is very natural, is of when do we have $R^f \sim R^g$, for some arbitrary polynomials $f,g \in \z[X]$. Since both sieves are supported on the same set, \Cref{lm:Characterization of equivalent sieves} shows that this is equivalent to $R^f = R^g$, if at least one of the sieves has strong light tails for some $I_N$.
				
				Consequently, we need to answer when $R^f = R^g$. Note that this does not require that $f = g$, if one of these polynomials is not irreducible (over $\z$).
				
				\begin{example}
					Let $f(X) = 2X^2+1$ and $g(X) = 2f(X)$. If $p=2$, it is easy to see that $R^f_2 = R^g_2 = \emptyset$. On the other hand, for any $p>2$, we have that $p^2\mid g(m)$ is equivalent to $p^2\mid f(m)$. Consequently, it follows that $R^f = R^g$.
				\end{example}
				
				More generally, if $f$ is an irreducible polynomial, and $\cp_f$ is the set of primes $p$ such that $R_p^f = \emptyset$, then for any $m = p_1\dots p_k$ with $p_i \in \cp_f$, we will have that $R^f = R^{mf}$. If we take both $f$ and $g$ to be irreducible, then $R^f = R^g$ implies that $f= g$.
				
				\begin{theorem}
					Let $f,g \in \z[X]$ be distinct irreducible polynomials. We have $R^f \neq R^g$.
				\end{theorem}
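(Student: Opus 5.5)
The plan is to argue by contradiction. Assume $R^f = R^g$, show first that $f$ and $g$ have a common complex root, and then use irreducibility. One caveat first: $R^{-f} = R^f$ always holds, since $p^2 \mid f(m)$ if and only if $p^2 \mid -f(m)$. So the statement can only be proved for $f,g$ distinct up to sign. I would make this explicit by reading "distinct" as $g \neq \pm f$, with $f,g$ primitive (content $1$), which is the natural meaning of irreducible over $\z$.

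\emph{Step 1: from $p^2$ to $p$.} Fix a finite set $\cs_0$ of bad primes: those dividing the leading coefficients of $f$ and $g$, or the discriminants of $f$ and $g$. For $p \notin \cs_0$, every root of $f$ modulo $p$ is simple. By Hensel's lemma, reduction modulo $p$ then gives a bijection between the roots of $f$ in $\z/p^2\z$ and the roots of $f$ in $\z/p\z$, and likewise for $g$. Hence $R^f_p = R^g_p$ for such $p$ implies that $f$ and $g$ have the same set of roots modulo $p$. In particular, every root of $f$ modulo $p$ is a root of $g$ modulo $p$.

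\emph{Step 2: a common root.} Let $\alpha$ be a complex root of $f$, and let $L$ be the Galois closure of $\q(\alpha)$. After multiplying $\alpha$ by the leading coefficient $a$ of $f$, we may assume $\alpha$ is integral, and we only use primes $p \nmid a$. By Chebotarev (or just the infinitude of primes splitting completely in $L$), there are infinitely many primes $p \notin \cs_0$ that split completely in $L$. For such $p$ and any prime $\gp$ of $\co_L$ above $p$, the residue field is $\z/p\z$, so $\alpha \equiv r \pmod{\gp}$ for some $r \in \z$. Then $f(r) \equiv 0 \pmod p$, so by Step 1 $g(r)\equiv 0 \pmod p$, and therefore $g(\alpha) \in \gp$. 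Thus the algebraic number $g(\alpha)$ (integral up to a fixed power of $a$) lies in prime ideals above infinitely many distinct rational primes. A nonzero element of $\co_L$ has only finitely many prime divisors, namely those dividing its norm, so $g(\alpha) = 0$. Equivalently, $p$ divides $\mathrm{Res}(f,g)$ for infinitely many $p$, which forces $\mathrm{Res}(f,g)=0$.

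\emph{Step 3: conclusion.} Since $f$ is irreducible and $g(\alpha)=0$, the minimal polynomial of $\alpha$ divides $g$ in $\q[X]$. Since $g$ is also irreducible, $g = cf$ with $c \in \q^\times$. Primitivity of both polynomials (Gauss's lemma) gives $c = \pm 1$, contradicting $g \neq \pm f$.

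I expect the main care to be needed in Step 1, where one must ensure that the excluded primes are genuinely finitely many. Lemma \ref{lm: rho smaller than degree} already handles the analogous count for $\rho_f(p^2)$, and the nonvanishing of the discriminant of an irreducible (hence separable) polynomial gives the Hensel lifting. The conclusion only needs the equality $R^f_p = R^g_p$ for infinitely many split primes, so discarding finitely many primes costs nothing.
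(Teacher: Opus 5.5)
Your argument is correct and is essentially the paper's proof run in contrapositive form. The paper picks a single prime $p$ at which $f$ has a root modulo $p$ and $p \nmid \mathrm{disc}(f)\,\mathrm{Res}(f,g)$, then Hensel-lifts that root to an element of $R^f_p\setminus R^g_p$, using the same three ingredients as you (Chebotarev, Hensel lifting, and finiteness of the primes at which $f$ and $g$ share a root). Your caveat is also right: the paper's proof tacitly needs $\mathrm{Res}(f,g)\neq 0$, which fails exactly when $g=\pm f$, so $g=-f$ is a genuine counterexample to the statement as written. Distinct prime constants such as $f=2$, $g=3$ are another, since both give empty sieves, so you should also assume $f,g$ nonconstant, as your Step 2 already implicitly does.
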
 
				
				For this proof, we will make use of the resultant $\text{Res}(f,g) \in \z$ (see Section 3 in \cite{Cox} for the definition). If $f$ and $g$ are irreducible polynomials in $\z[X]$, then there are polynomials $a,b \in \z[X]$ such that $$a(X)f(X)+b(X)g(X) = \text{Res}(f,g).$$ Consequently, if $f(x) \equiv g(x) \equiv 0 \mod p$ for some prime $p$, we must have that $p \mid \text{Res}(f,g)$, and so there are only finitely many such primes $p$.
				
				\begin{proof}
					We start by noticing that there are infinitely many primes $p$ such that $f(x) \equiv 0 \mod p$. These correspond to primes that split completely in the splitting field of $f$, so there are infinitely many by the Cheboratev Density Theorem (see for example Theorem 13.4 in Chapter 7 of \cite{Neukirch})\footnote{This result was apparently first shown by Schur in \cite{Schur}, although we were not able to access this source.}. 
					
					Taking one such prime $p$ that does not divide the discriminant of $f$, nor the resultant of $f$ and $g$, we will have that there is some $x$ such that $f(x) \equiv 0 \mod p$, and by Hensel's Lemma (see for example (4.6) in Chapter 2 of \cite{Neukirch}) there is some $s$ such that $f(s) \equiv 0 \mod p^2$ and $s \equiv x \mod p$.  Given that $g(x)$ is not congruent to $0$ modulo $p^2$, we  have $g(s) \not \equiv 0 \mod p^2$, and so $s \in R^f_p$ while $s \not \in R^g_p$.
				\end{proof}
				
				\begin{remark}
					\label{rmk: products of polynomials}
					Take $f$ and $g$ to be two irreducible polynomials. We have pointed out that the there are only finitely many $p$ such that $$W_p = \{x \in \z/p^2\z: f(x) \equiv g(x) \equiv 0 \mod p\} + p^2\z$$ is non-empty. Noting that if $p^2$ divides $(fg)(x)$, then either $p^2$ divides one of $f(x)$ or $g(x)$, or alternatively, $p$ divides both $f(x)$ and $g(x)$, we get $$R_p^{fg} = R_p^f\cup R_p^g \cup W_p.$$ It follows that, assuming that this union is well defined, the sieve $R^{fg}$ will have weak light tails for $B_N$ if the same holds for $R^f$ and $R^g$ (by \Cref{lm: union of sieves}) and $\cf_{R^{fg}} = \Sigma_{fg}$ (as pointed out in \Cref{eq: sigmaf equals frf}). Consequently, if $f$ is a polynomial that can be written as the product of distinct irreducible polynomials $f_i$ all of which satisfying that $R^{f_i}$ has weak light tails for $B_N$, we will have that \Cref{crl: admissible sets polynomials} also holds for $f$.
				\end{remark}

				We conclude this section by showing that if $f$ is irreducible and $R^f$ has weak light tails for some $I_N$, then $X_{R^f} = \Omega_{R^f}$.
				
				\begin{proposition}
					\label{prop: XRf equals ORf}
					Let $f\in \z[X]$ be an irreducible polynomial such that $R^f$ has weak light tails for some $I_N$. Then $$X_{R^f} = \Omega_{R^f}.$$
				\end{proposition}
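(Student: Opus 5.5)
We will use the sufficient criterion of \Cref{rmk:infinite indexes Xr OmegaR}. Since $R^f$ is Erdős with weak light tails for $I_N$, the hypotheses of \Cref{lm: Conditions for A in X_R} hold. It therefore suffices to show the following: for every finite $R^f$-admissible set $A$ and every $x\notin A$, there are infinitely many primes $p$ with
$$-x+R^f_p\not\subset -A+R^f_p.$$
This is the same strategy as in \Cref{prop: lim lambda infty implies XR = OmegaR}. However, $\lambda(R^f_p)$ need not tend to infinity: roots of $f$ mod $p^2$ can be close in $\z$, for instance $s$ and $s'$ with $s'-s$ a fixed integer. So we argue with roots directly.

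First, the primes we restrict to. Let $P$ be the set of primes $p$ such that:
\begin{itemize}
\item $p$ does not divide the discriminant of $f$ or its leading coefficient;
\item $f$ has a root mod $p$;
\item $p$ does not divide any of the finitely many nonzero integers $\text{Res}(f(X),f(X+t))$ with $t\in (A-x)\cup(x-A)$.
\end{itemize}
Note that $t\neq 0$ since $x\notin A$. By the Chebotarev argument used in the proof of the preceding theorem, $P$ is infinite.

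Next, the resultants are indeed nonzero. Since $f$ is irreducible and $t\neq 0$, the polynomials $f(X)$ and $f(X+t)$ are distinct irreducible polynomials when $\deg f\ge 1$. Indeed, $f(X+t)=f(X)$ would force the root set to be invariant under translation by $t$, which is impossible for a nonempty finite set. Two distinct irreducible polynomials share no root, so $\text{Res}(f(X),f(X+t))\neq 0$. In the constant case the sieve is trivial.

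Now fix $p\in P$ and a root of $f$ mod $p$. By Hensel's lemma it lifts to some $s$ with $f(s)\equiv 0 \bmod p^2$, so $s\in R^f_p$. We claim that $-x+s\notin -A+R^f_p$. Suppose otherwise. Then $-x+s\equiv -a+s'\pmod{p^2}$ for some $a\in A$ and $s'\in R^f_p$. This gives $s'\equiv s+(a-x)$, so both $f(s)$ and $f(s+t)$ vanish mod $p$ with $t=a-x\neq 0$. Hence $p$ divides $\text{Res}(f(X),f(X+t))$, contradicting $p\in P$. Therefore $-x+s$ lies in $-x+R^f_p$ but not in $-A+R^f_p$, and this holds for every $p\in P$. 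The remark then yields $X_{R^f}=\Omega_{R^f}$.

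The main obstacle is the resultant step. We must justify that $f(X)$ and $f(X+t)$ share no root for $t\neq 0$; this is where irreducibility is essential. We must also account for the finitely many exceptional primes, which is handled simply by excluding them from $P$. One should also double-check the reduction in \Cref{rmk:infinite indexes Xr OmegaR}: it supplies distinct indices for the finitely many points $b_j$, and this works because each $b_j$ has infinitely many valid primes available.
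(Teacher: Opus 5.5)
Your proof is correct and is essentially the paper's argument. The paper uses the same fact, $\text{Res}(f(X),f(X+t))\neq 0$ for $t\neq 0$, to show $\limsup_p\lambda(R^f_p)=\infty$, and then invokes \Cref{prop: lim lambda infty implies XR = OmegaR}; the proof of that proposition is exactly your direct check of the criterion in \Cref{rmk:infinite indexes Xr OmegaR}.

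Two side remarks need correcting. First, a fixed gap $t\neq 0$ between elements of $R^f_p$ can occur only for the finitely many $p$ dividing $\text{Res}(f(X),f(X+t))$, so your own argument gives $\lambda(R^f_p)>N$ for all large $p$ with $R^f_p\neq\emptyset$; there was no need to avoid $\lambda$. Second, for constant $f=\pm p$ the sieve is empty, but then $X_{R^f}=\{\z\}\neq\Omega_{R^f}$, so $\deg f\geq 1$ must be assumed rather than that case being called trivial.
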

				
				\begin{proof}
					Let $f$ be an irreducible polynomial in $\z[X]$. We want to show that $\limsup_{p \rightarrow \infty} \lambda(R_p^f) = \infty$ (with $\lambda$ defined as in \Cref{eq: lambdaS}) and then the result will follow from \Cref{prop: lim lambda infty implies XR = OmegaR}. 
					
					Since $f$ is irreducible, so is $g(X) := f(X+y)$ for any $y \in \z$, given that the map that sends $f(X)$ to $f(X+y)$ is a ring automorphism of $\z[X]$. Consequently, there are only finitely many primes $p$ for which $f(x)\equiv g(x) \equiv 0 \mod p$ has a solution (at most those that divide the resultant of $f$ and $g$). Since any solution to $f(x)\equiv g(x) \equiv 0 \mod p^2$ would be a solution modulo $p$, it follows that for any $y$, there are only finitely many primes $p$ such that $f(x)\equiv f(x+y) \equiv 0 \mod p^2$ has a solution. 
					
					Consequently, for any $N\geq1$, we can always find some big enough $p$ such that $f(x) \equiv 0 \mod p^2$ has a solution, but $f(x+y) \not \equiv 0 \mod p^2$ for any non-zero $y \in [-N,N]$. It follows that $\limsup_{p \rightarrow \infty} \lambda(R_p^f) = \infty.$ which concludes the proof.		
				\end{proof}
				
				\begin{remark}
					If $f$ is the product of irreducible polynomials, this might not hold. Take $$f(X) = (2X+1)(2(X-1)+1).$$ We have that $R_2^f = \emptyset$. Additionally, we have that the resultant of $(2X+1)$ and $(2(X-1)+1)$ is $-4$ (a power of $2$), so $R^f_p = R^{(2X+1)}_p \cup R^{(2(X-1)+1)}_p$ for every $p  \geq 3$. It follows that $R^f_p$ is always of the form $R^f_p = \{x_p,x_p+1\}+p^2\z$, where $x_p$ is the unique solution to $2x_p +1 \equiv 0 \mod p^2$. By the same argument as in \Cref{ex: sieve such that X different Omega}, it follows that $X_{R^f} \neq \Omega_{R^f}$.
				\end{remark}

				\subsection{Ergodic Prime Number Theorem for $R$-free Numbers}
				
				Inspired by the work in \cite{Wang}, we will show in this section an ergodic Prime Number Theorem for $R-$free numbers.
				
				Given a function $a: \mathbb{N} \rightarrow \mathbb{N}$, we say it is \textit{Besicovitch almost periodic}\index{Besicovitch Almost Periodic Function} (see \cite{Bergelson2}), if for every $\epsilon > 0$, there is some trigonometric polynomial $P_\epsilon(x) = \sum_{j=1}^k c_j e(x\alpha_j)$ (where $e(x) = e^{2\pi i x}$) such that $$\lim_{N \rightarrow \infty} \frac{1}{N}\sum_{m = 1}^N |a(m) - P_\epsilon(m)| < \epsilon.$$ We will show that if $R$ is an Erdős sieve with weak light tails for the Følner sequence $I_N =[1,N]$, then $\one_{\fr}$ is a Besicovitch almost periodic function, in order to apply Corollary 1.26 from \cite{Bergelson2}. Let $v_p$ denote the $p-$adic valuation, and
				\begin{equation}
					\label{eq: omega}
					\Omega(m) = \sum_{p \text{ prime}} v_p(m).
				\end{equation} This result states the following.
				
				\begin{lemma}
					\label{lm:Richter}
					Let $(X,T)$ be a uniquely ergodic dynamical system with unique $T-$invariant measure $\mu$. Let $a(n):\mathbb{N}\rightarrow \mathbb{C}$ be a Besicovitch almost periodic function and let $M(a):= \lim_{N \rightarrow \infty} \frac{1}{N}\sum_{m=1}^Na(m)$ be its mean value. Then for every function $f \in C(X)$ and $x \in X$ we have 	$$\lim_{N\rightarrow\infty} \frac{1}{N}\sum_{m=1}^N a(m)f(T^{\Omega(m)}x) = M(a)\int_X f \,d \mu.$$
				\end{lemma}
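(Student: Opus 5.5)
The statement is Corollary 1.26 of \cite{Bergelson2}, so within this paper we invoke it directly. If we wanted to reconstruct it from the main theorem of \cite{Bergelson2}, the plan has three steps.

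The first step reduces to a single exponential $a(m)=e(m\alpha)$. Fix $f\in C(X)$, $x\in X$ and $\epsilon>0$, and choose a trigonometric polynomial $P_\epsilon=\sum_j c_j e(m\alpha_j)$ as in the definition of Besicovitch almost periodicity. Since $f$ is bounded, for all large $N$
$$\Bigl|\frac1N\sum_{m\le N}(a(m)-P_\epsilon(m))f(T^{\Omega(m)}x)\Bigr|\le \|f\|_\infty\,\epsilon .$$
Averaging the same inequality with $f\equiv 1$ gives $|M(a)-M(P_\epsilon)|\le\epsilon$. Moreover $M(P_\epsilon)$ is the sum of the $c_j$ over those $j$ with $\alpha_j\in\z$. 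Letting $\epsilon\to0$, it therefore suffices to prove
$$\frac1N\sum_{m\le N}e(m\alpha)f(T^{\Omega(m)}x)\longrightarrow \one_{\alpha\in\z}\int_X f\,d\mu \qquad (\ast)$$
for every real $\alpha$.

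The second step treats $\alpha\in\z$ and rational $\alpha$. For $\alpha\in\z$, $(\ast)$ is exactly the Bergelson--Richter ergodic prime number theorem, $\frac1N\sum_{m\le N} f(T^{\Omega(m)}x)\to\int f\,d\mu$ for uniquely ergodic $(X,T)$. For $\alpha=r/q$ we split $m$ into residue classes modulo $q$. It then suffices to know that the same theorem holds along each arithmetic progression $m\equiv s \bmod q$, with limit $\frac1q\int f\,d\mu$. Summing against $e(sr/q)$ then yields $0$ when $q\nmid r$, because the roots of unity sum to zero. To get the progression version, I would rerun their argument with the progression built in, or equivalently apply their theorem to the product of $(X,T)$ with the finite rotation $\z/q\z$.

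The third step, irrational $\alpha$, is the main obstacle. Here I would mimic the Bergelson--Richter mechanism. By Turán--Kubilius-type estimates, for a suitable finite set of primes $P$ one can replace the average of $F(m)=e(m\alpha)f(T^{\Omega(m)}x)$ by an average over $m=pn$ with $p\in P$. On these terms $\Omega(pn)=\Omega(n)+1$, so the sum becomes one involving $e(pn\alpha)\,(f\circ T)(T^{\Omega(n)}x)$, averaged over $p\in P$. Applying Cauchy--Schwarz in $n$ reduces matters to correlations $\sum_n e(n(p-p')\alpha)\,g(T^{\Omega(n)}x)\overline{g(T^{\Omega(n)}x)}$ for pairs of distinct primes $p\neq p'$. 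This is a Daboussi--Kátai-type step, using that $(p-p')\alpha$ is irrational. The delicate point is choosing $P$ so that both the Turán--Kubilius error and the off-diagonal exponential sums are small simultaneously. That is precisely the technical core of \cite{Bergelson2}, which is why in practice we simply cite Corollary 1.26 there.
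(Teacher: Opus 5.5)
Your proposal matches the paper, which gives no argument of its own and simply quotes the lemma as Corollary 1.26 of \cite{Bergelson2}, exactly as you do. One caution about your optional reconstruction: the product of $(X,T)$ with the rotation on $\z/q\z$ records $\Omega(m) \bmod q$ rather than $m \bmod q$ (and need not even be uniquely ergodic), so it does not give the progression version you need for rational $\alpha$. Likewise, in the K\'atai step the Cauchy--Schwarz should pull out the dynamical factor, which does not depend on $p$ because $f(T^{\Omega(pn)}x)=(f\circ T)(T^{\Omega(n)}x)$, leaving only the pure sums $\sum_n e(n(p-p')\alpha)$; keeping $|f\circ T|^2(T^{\Omega(n)}x)$ inside the correlations, as you wrote, would make the argument circular.
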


				We get the following result.\index{Prime Number Theorem ! for Sieves with Weak Light Tails}
				
				\begin{theorem}
					\label{thm: ergodic prime number theorem for weak light tails}
					Let $R$ be an Erdős sieve with weak light tails for $I_N = [1,N]$. Let $(X,T)$ be a uniquely ergodic dynamical system, and $\mu$ its unique invariant measure. Then for every function $f \in C(X)$ and $x \in X$ we have
					$$\lim_{N\rightarrow\infty} \frac{1}{N}\sum_{m \in \fr \cap I_N} f(T^{\Omega(m)}x) = d_I(\fr)\int_X f \,d \mu.$$
				\end{theorem}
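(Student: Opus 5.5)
The plan is to apply \Cref{lm:Richter} with $a = \one_{\fr}$. Since $\frac{1}{N}\sum_{m\in\fr\cap I_N} f(T^{\Omega(m)}x) = \frac1N\sum_{m=1}^N \one_{\fr}(m) f(T^{\Omega(m)}x)$, and by \Cref{thm:Fr is generic} the weak light tails hypothesis gives that $M(\one_{\fr})$ exists and equals $d_I(\fr)$, the theorem follows once we show that $\one_{\fr}$ is Besicovitch almost periodic. Here $K=\q$, so $\co_K=\z$; note that $I_N=[1,N]$ only sees positive integers, which matches the setting of \Cref{lm:Richter}.

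For Besicovitch almost periodicity, approximate $\one_{\fr}$ by the truncated indicator $\one_{C_L}$, where $C_L = \z\setminus\bigcup_{i\le L} R_i$. This set is periodic modulo $b_L := \prod_{i\le L} b_i$. Any $b_L$-periodic function $h$ is a finite trigonometric polynomial, by the discrete Fourier expansion
\[
h(m)=\sum_{k=0}^{b_L-1}\hat h(k)\,e(km/b_L).
\]
So $P_L := \one_{C_L}$ is admissible as $P_\epsilon$, and no further approximation error arises there.

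It remains to estimate $\frac1N\sum_{m\le N}|\one_{\fr}(m)-\one_{C_L}(m)|$. Since $\fr\subset C_L$, the difference is exactly the indicator of $C_L\setminus\fr = \bigcup_{i>L}R_i\setminus\bigcup_{j\le L}R_j$. Hence
\[
\limsup_{N}\frac1N\sum_{m=1}^N|\one_{\fr}(m)-P_L(m)| = \overline{d}_I\Bigl(\bigcup_{i>L}R_i\setminus\bigcup_{j\le L}R_j\Bigr),
\]
and this tends to $0$ as $L\to\infty$ precisely by the weak light tails assumption for $I_N=[1,N]$. Given $\epsilon$, choose $L$ with this quantity below $\epsilon$.

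One technical point is that the definition of Besicovitch almost periodicity is phrased with a $\lim$, whereas we only control a $\limsup$. The limit in fact exists: by \Cref{lm: equidistribution for Følner sequences}, $d_I(C_L)$ exists, and by \Cref{thm:Fr is generic}, $d_I(\fr)$ exists, so the difference $d_I(C_L\setminus\fr)$ is a genuine limit. I expect this to be the only delicate step; everything else is bookkeeping.

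Finally, apply \Cref{lm:Richter} with $a = \one_{\fr}$ and $M(a)=d_I(\fr)$. This yields the stated limit for every $f\in C(X)$ and every $x\in X$.
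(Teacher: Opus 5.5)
Your proposal is correct and follows the paper's proof. The paper also reduces via \Cref{lm:Richter} to showing that $\one_{\fr}$ is Besicovitch almost periodic, approximates it by the periodic (hence trigonometric-polynomial) indicator of $\z\setminus\bigcup_{i\le L}R_i$, and bounds the error by the density of $\bigcup_{i>L}R_i\setminus\bigcup_{j\le L}R_j$, which tends to $0$ by weak light tails. Your observation that this density is a genuine limit, namely $d_I(C_L)-d_I(\fr)$, fills in a point the paper leaves implicit. Your $P_L=\one_{C_L}$ is also exactly what the paper's product formula is meant to give: as printed, the paper's $1-\prod(\cdots)$ is the complementary indicator.
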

				
				\begin{proof}
					By \Cref{lm:Richter} it is enough to show that $\one_{\fr}$ is Besicovitch almost periodic. We start by pointing out that for any arithmetic sequence $a+b\z$, we have $$\one_{a+b\z}(x) = \frac{1}{b}\sum_{j= 0}^{b-1}e\left(j\frac{(x-a)}{b}\right),$$ which follows from character orthogonality for the cyclic group $\z/b\z$. Consequently, denoting by $\overline{R}_i$ the image of $R_i$ in $\z/b_i\z$, we have that for any $L$ the function $$P_L(x) = 1- \prod_{i \leq L} \prod_{y \in \overline{R}_i} (1-\one_{y+b_i\z}(x))$$ is a trigonometric polynomial, which is $0$ if $x \in R_i$ for some $i \leq L$, and $1$ otherwise.
					
					We can partition the set $I_N = [1,N]$ like $I_N = (\fr \cap [1,N]) \cup S_1 \cup S_2 $, where $$  S_1 =   I_N \cap \bigcup_{i \leq L} R_i  \text{  and  } S_2 = I_N \cap \left( \bigcup_{i > L} R_i\setminus  \bigcup_{j \leq L} R_j \right).    $$ If $m \in \fr$, then for all $L$, $P_L(m) = 1$, and so $|\one_{\fr}(m) - P_L(m)| = 0$. If $m \in S_1$, both values are $0$, and we get $|\one_{\fr}(m) - P_L(m)| = 0$. Finally, if $m \in S_2$, we get $|\one_{\fr}(m) - P_L(m)| = 1$, so $$\lim_{N \rightarrow \infty} \frac{1}{N}\sum_{m = 1}^N |\one_{\fr}(m) - P_L(m)| = \lim_{N \rightarrow \infty} \frac{1}{N}\sum_{m \in S_2} 1 = d_I\left( \bigcup_{i > L} R_i\setminus  \bigcup_{j \leq L} R_j \right).$$ Since $R$ has weak light tails if and only if this density goes to $0$ as $L$ goes to infinity, the result follows.
				\end{proof}
				
				\begin{remark}
					\label{rmk: primen number theorem numenclature}
					Let us explain why we call this result a 'Prime Number Theorem'\index{Prime Number Theorem ! for Number Fields}. It is well known (see \cite{Bergelson2}) that the Prime Number Theorem is equivalent to the fact that $$\lim_{N\rightarrow\infty}\frac{1}{N}\sum_{m =1}^N (-1)^{\Omega(n)} = 0.$$ When considering the system $X = \{-1,1\}$ with $T(x) = -x$ and $f(x) = x$, the result implies the Prime Number Theorem, as we get $$\frac{1}{N}\sum_{m =1}^Nf(T^{\Omega(m)x}) \rightarrow \frac{1}{2}(f(1)+f(-1)) = 0.$$ 
					
				\end{remark}

				We point out that  \Cref{thm: ergodic prime number theorem for weak light tails} implies all results in \cite{Wang}, since each of the sets the authors consider in this paper can be realized as R-free numbers for some sieve $R$ with proven strong light tails. As previously mentioned, Heath-Brown showed in Theorem 1 of \cite{HeathBrown2} that if $f(X) = X^d+c$ is an irreducible polynomial and $l \geq (5d+3)/9$, then $R^{f,l}$ (see \Cref{eq: R fl}) has strong light tails for $I_N$. Applying \Cref{thm: ergodic prime number theorem for weak light tails} gives the following result, which is Theorem 1.1 in \cite{Wang}.
				
				\begin{corollary}
					Let $f(X) = X^d+c$ be an irreducible polynomial, $l $ an integer such that $l \geq (5d+3)/9$, and $(X,T)$ a uniquely ergodic dynamical system, with $\mu$ its unique invariant measure. Then for every function $g \in C(X)$ and $x \in X$ we have
					$$\lim_{N\rightarrow\infty} \frac{1}{N}\sum_{1\leq m\leq N: m^d+c \text{ is squarefree}} g(T^{\Omega(m)}x) = \prod_p\left(1-\frac{|R^{f,l}_p|}{p^l}\right)\int_X g \,d \mu.$$
				\end{corollary}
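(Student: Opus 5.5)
The plan is to apply \Cref{thm: ergodic prime number theorem for weak light tails} directly, with $R = R^{f,l}$. Then $\fr = \{m : p^l \nmid m^d+c \text{ for all } p\}$, the set of $m$ for which $m^d+c$ is $l$-free. For $l=2$ this is the squarefree condition in the statement; for general $l$, the phrase ``is squarefree'' in the sum should be read as ``is $l$-free'', which is what the density product $\prod_p(1-|R^{f,l}_p|/p^l)$ encodes.

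The first step is to check that $R^{f,l}$ meets the hypotheses of that theorem.
\begin{itemize}
\item \emph{Erdős.} By the remark preceding \Cref{crl: admissible sets polynomials}, $R^{f,l}$ is Erdős whenever $f$ is irreducible. Concretely, $|R^{f,l}_p| \le d$ for all but finitely many $p$, in the spirit of \Cref{lm: rho smaller than degree}: for $p$ not dividing $d\,c$, Hensel lifting gives at most $d$ roots modulo $p^l$. Hence $\sum_p |R^{f,l}_p|/p^l < \infty$ for $l \ge 2$.
\item \emph{Weak light tails for $I_N = [1,N]$.} By Heath-Brown's theorem, quoted in the text, $R^{f,l}$ has strong light tails for $I_N$ when $l \ge (5d+3)/9$. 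Strong light tails immediately imply weak light tails, since $\bigcup_{i>L}R_i\setminus\bigcup_{j\le L}R_j \subset \bigcup_{i>L}R_i$ and upper density is monotone.
\end{itemize}

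The second step is to apply \Cref{thm: ergodic prime number theorem for weak light tails} with the continuous function $g$. This gives
\[
\lim_{N\to\infty}\frac1N\sum_{m\in\fr\cap[1,N]} g(T^{\Omega(m)}x) = d_I(\fr)\int_X g\,d\mu .
\]

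The third step is to identify the density. \Cref{thm:Fr is generic}(3) gives $d_I(\fr)=\prod_p\bigl(1-|R^{f,l}_p|/p^l\bigr)$, which is the stated constant.

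I expect the only delicate point to be the first step, specifically confirming that Heath-Brown's result is stated for the interval $[1,N]$ or $[0,N]$. Dropping the single point $0$ does not affect densities, so either form suffices. Everything else is bookkeeping.
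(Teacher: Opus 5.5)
Your proposal is correct and follows the paper's own argument. Heath-Brown's theorem gives strong (hence weak) light tails for $R^{f,l}$ on $[1,N]$; the ergodic prime number theorem for sieves with weak light tails then gives the limit, and the genericity theorem identifies $d_I(\mathcal{F}_{R^{f,l}})$ with $\prod_p\bigl(1-|R^{f,l}_p|/p^l\bigr)$. Your reading of ``squarefree'' as ``$l$-free'' is the right one, since only that reading matches the stated constant.
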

				
				\Cref{thm: Hooley Polynomials} together with \Cref{rmk: products of polynomials} imply that if $f$ is a polynomial that can be written as the product of distinct irreducible polynomials all of degree smaller than or equal to $3$, then $R^f$ has strong light tails for $I_N$. Applying \Cref{thm: ergodic prime number theorem for weak light tails} for this sieve gives the following result, which corresponds to Theorem 4.1 in \cite{Wang}.
				
				\begin{corollary}
					Let $f \in \z[X]$ be a polynomial that can be written as the product of distinct irreducible polynomials all of degree smaller than or equal to $3$. Let $(X,T)$ be a uniquely ergodic dynamical system, with $\mu$ its unique invariant measure. Then for every function $g \in C(X)$ and $x \in X$ we have
					$$\lim_{N\rightarrow\infty} \frac{1}{N}\sum_{1\leq m\leq N: g(m) \text{ is squarefree}} g(T^{\Omega(m)}x) = \prod_p\left(1-\frac{|R^{f}_p|}{p^2}\right)\int_X g \,d \mu.$$
				\end{corollary}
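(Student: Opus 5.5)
This corollary should follow immediately from \Cref{thm: ergodic prime number theorem for weak light tails} once we identify the relevant sieve and check its hypotheses. Take $f$ as in the statement and write $f = f_1\cdots f_k$ with the $f_i \in \z[X]$ distinct, irreducible, of degree at most $3$. The sieve to use is $R^f$, supported on $\{p^2\z\}$. By \Cref{eq: sigmaf equals frf}, $\cf_{R^f} = \Sigma_f$, so the sum over $1\le m\le N$ with $f(m)$ squarefree is exactly the sum over $m \in \cf_{R^f}\cap I_N$. The constant $d_I(\cf_{R^f})$ supplied by \Cref{thm: ergodic prime number theorem for weak light tails} equals $\prod_p(1-|R^f_p|/p^2)$ by \Cref{thm:Fr is generic}. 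It therefore remains to show that $R^f$ is an Erdős sieve with weak light tails (indeed strong) for $I_N=[1,N]$.

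For each factor $f_i$ I would argue as follows. If $\deg f_i \le 2$, the argument given after Estermann's result shows that $R^{f_i}$ has strong light tails for $I_N$: a point $x\in I_N$ lying in $R^{f_i}_p$ forces $p \ll_{f_i} N$. For linear $f_i$ this bound is immediate. If $\deg f_i = 3$, \Cref{thm: Hooley Polynomials} gives exactly this, since $R^{f_i,d-1}=R^{f_i,2}$. Each $R^{f_i}$ is Erdős by \Cref{lm: rho smaller than degree}.

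Next, following \Cref{rmk: products of polynomials}, I would write $R^f_p = \bigcup_i R^{f_i}_p \cup W_p$. Here $W_p$ collects the residues at which two distinct factors both vanish modulo $p$. Since distinct irreducibles have nonzero resultant, $W_p\neq\emptyset$ for only finitely many $p$. Adding finitely many nonempty congruence conditions does not affect the Erdős property or strong light tails, because the tail union eventually ignores them. Strong light tails of $R^f$ then follow from \Cref{lm: union of sieves}, provided the union is well defined, that is, $R^f_p\neq \z$ for every $p$. This is the one delicate point, and it is the step I expect to be the main obstacle. If $f(m)\equiv 0 \bmod p^2$ for all $m$, then $\Sigma_f=\emptyset$ and the statement degenerates: both sides are zero, since that Euler factor vanishes. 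So in that case the statement holds trivially, and otherwise the union is well defined. The Erdős property of $R^f$ comes from $|R^f_p|\le \sum_i|R^{f_i}_p|$ for all but finitely many $p$.

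With $R^f$ Erdős and having strong (hence weak) light tails for $[1,N]$, \Cref{thm: ergodic prime number theorem for weak light tails} applied to the continuous test function (called $g$ in the statement) gives the claimed limit. One caveat: the displayed statement reuses the letter $g$ both for the test function and in ``$g(m)$ is squarefree''; I would read the latter as $f(m)$.
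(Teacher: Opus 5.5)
Your proposal is correct and is essentially the paper's argument. Each irreducible factor gets strong light tails: the elementary bound $p\ll N$ handles degree at most $2$, and Hooley's theorem handles degree $3$. The remark on products of polynomials together with the union lemma then passes this to $R^f$, and the ergodic prime number theorem finishes, with the constant identified through the genericity theorem.

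Your extra care fills in points the paper leaves implicit: the finitely many nonempty $W_p$, the degenerate case $R^f_p=\z$ (where both sides vanish), and the $g(m)$/$f(m)$ typo. The only cosmetic slip is that for a linear factor with an integer root in $[1,N]$, the bound $p\ll N$ fails at that single root, which does not affect densities.
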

				
				Finally, using \Cref{ex: Dimitrov}, we get the following result, which corresponds to Corollary 4.2 in \cite{Wang}.
				
				\begin{corollary}
					Let $(X,T)$ be a uniquely ergodic dynamical system, with $\mu$ its unique invariant measure. Then for every function $g \in C(X)$ and $x \in X$ we have
					$$\lim_{N\rightarrow\infty} \frac{1}{N}\sum_{\substack{1\leq m\leq N \\ m^2+1,m^2+2 \text{ squarefree}}} g(T^{\Omega(m)}x) = \prod_{p>2} \left(1-\frac{\left(\frac{-1}{p}\right)+ \left(\frac{-2}{p}\right)+2 }{p^2}\right)\int_X g \,d \mu.$$
				\end{corollary}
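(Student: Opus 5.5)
The plan is to realize the set of $m$ with $m^2+1$ and $m^2+2$ both squarefree as the $R$-free numbers of a single Erdős sieve with weak light tails for $I_N=[1,N]$. Once that is done, \Cref{thm: ergodic prime number theorem for weak light tails} applies directly. The sieve is the one from \Cref{ex: Dimitrov}: $R := R^f\cup R^g$ with $f(X)=X^2+1$ and $g(X)=X^2+2$, both supported on $\{p^2\z\}$.

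First, I will check that this union is well defined and Erdős. Both $R^f$ and $R^g$ are supported on the same set $\{p^2\z: p \text{ prime}\}$, so no dilation is needed and $(R^f\cup R^g)_p=R^f_p\cup R^g_p$. As computed in \Cref{ex: Dimitrov}, $R^f_2=R^g_2=\emptyset$. For $p>2$ the two sets are disjoint, and their union has at most $4<p^2$ classes, so it is never all of $\z$. Moreover
\[
|R_p|=\left(\tfrac{-1}{p}\right)+\left(\tfrac{-2}{p}\right)+2,
\]
so $\sum_p |R_p|/p^2<\infty$ and $R$ is Erdős.

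Second, by \eqref{eq: sigmaf equals frf} together with the identity $\mathcal{F}_{R\cup R'}=\fr\cap\frp$ shown in the subsection on unions, $\fr=\Sigma_f\cap\Sigma_g$, which is exactly the set being summed over.

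Third, I need the light tails property. The degree-two argument given before \Cref{thm: Hooley Polynomials} shows that $R^f$ and $R^g$ each have strong light tails for $[0,N]$. If $x\in R^f_p\cap[0,N]$, then $p^2\mid x^2+1$, so $p\ll N$, and the tail is bounded by $N\sum_{L<p}\rho_f(p^2)/p^2+\pi(CN)=o(N)$ as $L\to\infty$. By \Cref{lm: union of sieves}, $R$ then has strong light tails, hence weak light tails, for $[0,N]$. Passing from $[0,N]$ to $[1,N]$ is harmless, since the two sets differ by one point and the densities defining light tails are unchanged.

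Finally, \Cref{thm: ergodic prime number theorem for weak light tails} gives the limit $d_I(\fr)\int_X g\,d\mu$. By \Cref{thm:Fr is generic}, $d_I(\fr)=\prod_p(1-|R_p|/p^2)$, and since the $p=2$ factor equals $1$ this is the stated product over $p>2$.

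No step here is a real obstacle. The only point needing care is the well-definedness and disjointness of $R^f_p$ and $R^g_p$, and that is already settled in \Cref{ex: Dimitrov}: since $g(x)=f(x)+1$, a common root modulo $p$ is impossible.
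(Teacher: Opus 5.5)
Your proposal is correct and follows the paper's own route. The paper obtains this corollary from \Cref{ex: Dimitrov}, which writes the summation set as $\mathcal{F}_{R^f\cup R^g}$, establishes well-definedness, derives strong light tails from the degree-two argument and \Cref{lm: union of sieves}, and computes the density product. It then applies \Cref{thm: ergodic prime number theorem for weak light tails}. Your remarks on passing from $[0,N]$ to $[1,N]$ and on the trivial factor at $p=2$ fill in small details the paper leaves implicit.
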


			\bigskip
			\footnotesize
			\noindent
			Francisco Ara\'{u}jo\\
			\textsc{Institute of Mathematics, Paderborn University, Warburger Str. 100, 33098 Paderborn, Germany}\par\nopagebreak
			\noindent
			\textit{E-mail address:} \texttt{faraujo@math.uni-paderborn.de}

			\end{document}